\documentclass[a4paper,reqno,11pt]{amsart}
\usepackage{amsmath}       
\usepackage{mathtools}     
\usepackage[margin=2.5cm]{geometry}
\usepackage[foot]{amsaddr}
\usepackage{amsfonts}      
\usepackage{amssymb}       
\usepackage{amsthm}        
\usepackage{bm}            
\usepackage{enumitem}
\usepackage{graphicx}      
\usepackage{mathtools}     
\usepackage{xcolor}        
\usepackage{hyperref}      
\usepackage[capitalize]{cleveref}      
\usepackage{dsfont}
\usepackage{tikz}
\usetikzlibrary{positioning,arrows.meta,calc}
\usepackage{caption}

\usepackage{todonotes} 
\usepackage{soul} 

\newcommand{\BV}{\mathcal{{BV}}}
\newcommand{\R}{\mathbb{R}}
\newcommand{\D}{\mathrm{D}}
\newcommand{\cD}{\mathcal{D}}
\newcommand{\N}{\mathbb{N}}
\newcommand{\Q}{\mathbb{Q}}
\renewcommand{\P}{\mathcal{P}}
\newcommand{\M}{\mathcal{M}}
\newcommand{\nnu}{\boldsymbol{\nu}}
\newcommand{\vv}{\boldsymbol{v}}
\newcommand{\xxi}{\boldsymbol{\xi}}
\newcommand{\rrho}{\boldsymbol{\rho}}
\renewcommand{\L}{\mathcal{L}}
\newcommand{\Ha}{\mathcal{H}}

\newcommand{\spt}{\mathrm{supp}}
\newcommand{\Geo}{\mathrm{Geo}}

\newcommand{\Div}{\mathrm{div}\,}
\newcommand{\lip}{\mathrm{lip}}
\newcommand{\Lip}{\mathrm{Lip}}
\newcommand{\Lipb}{\mathrm{Lip}_{\mathrm{b}}}
\newcommand{\Var}{\mathrm{Var}}
\newcommand{\essVar}{\mathrm{ess\, Var}}
\newcommand{\pr}{\mathrm{Pr}}
\newcommand{\X}{X}
\newcommand{\wsconver}{\overset{*}{\rightharpoonup}}

\usepackage{xpatch} 
\xpatchcmd{\proof} 
  {\itshape}
  {\bfseries}
  {}
  {}

\usepackage{glossaries}
\expandafter\def\csname ver@etex.sty\endcsname{3000/12/31}

\usepackage{autonum}

\usepackage[square,numbers]{natbib}

\numberwithin{equation}{section}
\newtheorem{theorem}{Theorem}[section]
\newtheorem{lemma}[theorem]{Lemma}
\newtheorem{corollary}[theorem]{Corollary}
\newtheorem{proposition}[theorem]{Proposition}

\theoremstyle{definition}
\newtheorem{definition}[theorem]{Definition}

\theoremstyle{definition}
\newtheorem{remark}[theorem]{Remark}

\newtheorem{example}[theorem]{Example}

\renewcommand{\d}{{\mathrm{\,d}}}
\DeclareMathAlphabet{\mathbbmsl}{U}{bbm}{m}{sl}

\makeatletter
\renewcommand\subsubsection{\@startsection{subsubsection}{3}{\z@}%
                                     {.5\linespacing\@plus.7\linespacing}{-.5em}
                                     {\normalfont\bfseries}}

\title[Continuity equation via measure-valued derivations and BV-Wasserstein curves]{Continuity equation on metric spaces via measure-valued derivations and BV-Wasserstein curves}

\author{Ehsan Abedi$^\dagger$}
\author{Zhenhao Li$^\ddag$}
\author{Timo Schultz$^\ast$}

\address{$^\dagger$Institute of Mathematics \\
        TU Berlin\\
        Str. des 17. Juni 136 \\
        10623 Berlin \\
        Germany.}
\address{$^\ddag$Faculty of Mathematics \\
         Bielefeld University \\
         Postfach 10 01 31 \\
         33501 Bielefeld \\
         Germany.}
\address{$^\ast$Department of Mathematics and Statistics\\
    University of Jyv\"askyl\"a \\
    P.O.Box 35\\
    FI-40014\\
    Finland.}
        
\email{ehsan.abedi@tu-berlin.de}
\email{zhenhao.li@math.uni-bielefeld.de}
\email{timo.m.schultz@jyu.fi}

\thanks{}

\keywords{Continuity equations, metric spaces, derivations, spaces of probability measures, Wasserstein distance, bounded variation, probabilistic representations}

\subjclass[2020]{49Q22, 49J52, 26A45}

\begin{document}

\begin{abstract}
    We introduce a notion of continuity equation on metric spaces that is capable of describing curves of probability measures which are absolutely continuous, and more generally of bounded variation (BV), with respect to the 1-Wasserstein distance. This continuity equation is based on a notion of measure-valued derivations, whose basic theory is also developed in this paper. On $\mathbb{R}^n$, our formulation is consistent with the continuity equation with singular flux introduced by Almi--Rossi--Savar\'e (arXiv:2506.15333), including the corresponding notion of minimal solutions. In this work, we characterize BV-curves in the space of probability measures equipped with the (extended) 1-Wasserstein distance as those curves satisfying the continuity equation with a measure-valued derivation of finite mass. To this aim, we extend our previous work (Calc.Var.(2024)63:16) on probabilistic representations on BV-curves and construct from them measure-valued derivations (resp. flux measures) on geodesic metric spaces (resp. on $\mathbb{R}^n$).
\end{abstract}

\maketitle
\tableofcontents

\section{Introduction}\label{sec:introduction}
\subsubsection*{Characterization of $p$-absolutely continuous $p$-Wasserstein curves ($p>1$): existing results}
The characterization of curves of probability measures in Wasserstein spaces over metric spaces $(X,d)$ plays an important role in the study of evolution equations, their gradient-flow interpretation and probabilistic representations, as well as in the geometry and analysis of metric measure spaces.
We first recall an existing result on $X = \R^n$, $n \in \N$, with the distance induced by a norm $|\,{\cdot}\,|$.
We denote by $\P (\R^n)$ the space of Borel probability measures on $\R^n$, and by $\P_p(\R^n)$ its subspace of measures with finite $p$-th moment, endowed with the $p$--(Kantorovitch--Rubinstein--)Wasserstein distance $W_p$, where $p > 1$. Let $(\mu_t) \coloneqq (\mu_t)_{t \in I} \subset \P_p(\R^n)$ be a narrowly continuous curve of probability measures indexed by
$t$ in a time interval $I\coloneqq (0,T) \subset \R$. A result of Ambrosio--Gigli--Savar\'e \cite[Chapter 8]{AGS2008GFs} states that the following are equivalent:
\begin{enumerate}[label=(\Alph*), font=\normalfont,itemsep=0.25em]
	\item\label{itm:intro_A} $(\mu_t) \in AC^p(I;\P_{p}(\mathbb{R}^n))$;
	\item\label{itm:intro_B} there exists a path measure $\pi \in \P(C(I;\R^n) )$ concentrated on $AC^p(I;\R^n)$ such that the $p$-energy condition $ \int \int_{I} |\dot{\gamma}_t|^p \d t \d \pi (\gamma) < + \infty $ holds and  $(e_t)_{\#} \pi  = \mu_t$ for all $t \in I$ (where $e_t: \gamma \in C(I;\R^n) \mapsto \gamma_t \in \R^n$ is the evaluation map); 
	\end{enumerate}
    \begin{enumerate}[label=(\Alph*$_{\mathrm{v}}$), font=\normalfont,itemsep=0.25em,start=3]
	\item\label{itm:intro_C} there exists a vector field $\vv \in L^p (I \times \R^n, \d \mu_t \d t  ; \R^n)$ such that the pair $((\mu_t), \vv)$ solves the continuity equation
	\begin{equation}\label{eq:CE_classical}
		\partial_t \mu_t  + \Div(\vv \mu_t ) = 0 \quad \qquad \text{in } I \times \mathbb{R}^{n}, 
	\end{equation}
	in the distributional sense, i.e., for every $\varphi \in C_c^1 (I \times \R^n)$,
	\begin{equation}\label{eq:CE_classical_distributional_solution}
		\int_{I} \int_{\R^n}\partial_t\varphi(t,x)\d \mu_t(x) \d t+	\int_{I} \int_{\R^n} \nabla  \varphi (t,x) \cdot \vv(t,x) \d \mu_t(x) \d t= 0. 
	\end{equation}
\end{enumerate}
In the above, $AC^p (I;Y)$ denotes the set of $p$-absolutely continuous curves in a general metric space $(Y,d_Y)$, and $|\dot{\gamma}_t|$ denotes the metric derivative of such a curve $(\gamma_t)$. 
Item~\ref{itm:intro_B} is usually referred to as a \emph{Lagrangian description}, in which $(\mu_t)$ is represented by the superposition of sample paths $(\gamma_t)$.
This, in particular, provides a probabilistic representation of $(\mu_t)$. Item~\ref{itm:intro_C} is usually referred to as an \emph{Eulerian description}, in which the evolution of $(\mu_t)$ is encoded by a vector field $\vv$ through the continuity equation (CE). 

The extension of Lagrangian characterization \ref{itm:intro_B} to the metric setting $(X,d)$ was established by Lisini \cite{Lisini2007}; see also \cite{Lisini2016,AmbrosioErbarSavare2016}. The extension of the Eulerian formulation \ref{itm:intro_C} is more delicate, as one needs to give meaning to the notions of a gradient and a vector field, or at least to their pairing. Observe that, in \eqref{eq:CE_classical_distributional_solution}, for each fixed time, $\vv(t,x)\cdot\nabla\varphi(t,x)$ can be interpreted as the action of the vector field $\vv(t,\cdot)$ on the spatial test function $\varphi(t,\cdot)$. This action has three important properties: it is linear, satisfies the Leibniz rule, and satisfies the pointwise estimate $|\vv\cdot\nabla\varphi| \leq |\vv| |\nabla\varphi|_*$. In the metric setting, a common replacement for test functions is elements of $\Lipb(X)$ (the set of bounded Lipschitz functions on $X$) multiplied by functions of time.
Furthermore, operators on Lipschitz functions that are linear and satisfy the Leibniz rule are \emph{derivations}, studied e.g. by \cite{Weaver00-JFA, Dimarino2014}.

The perspective above was used by Stepanov--Trevisan \cite{Stepanov-Trevisan2017} to formulate a CE on metric spaces, where the aforementioned action is replaced by the action of a suitable derivation. 
More precisely, fix an algebra $\mathcal A\subseteq \Lipb(X)$, and define the space-time measure $\mu(\mathrm{d} t,\mathrm{d} x)\coloneqq \mu_t(\mathrm d x) \d t$.
A \emph{derivation} $V$ (with reference measure $\mu$) is a linear map
    \begin{equation}\label{eq:derivation_ST}
    	V:\mathcal A\to L^1(I\times X, \mu ),
    \end{equation}
that satisfies the \emph{Leibniz rule} i.e., for every $f,g\in\mathcal A$
    \begin{equation}\label{eq:Leibniz_ST_intro}
	V(fg)=f\,Vg+g\,Vf\,.
    \end{equation}
In addition, we say that it satisfies the \emph{pointwise mass bound} if for every $f\in \mathcal{A}$,
    \begin{equation}\label{eq:mass_bound_ST_intro}
        |V f(t,x)| \leq \Lambda (t,x) \lip_{\rm a} (f) (x), \qquad\text{$\mu$-a.e. } (t,x)\in I\times X,
    \end{equation}
for some (fixed) Borel function $\Lambda: I \times X \to [0,\infty]$ with $\Lambda \in L^1 (I \times X, \mu)$, where $\lip_{\rm a} (f) (x)$ denotes the asymptotic Lipschitz constant of $f$ at $x$; see \eqref{eq:def_lipfx}.
\\
In this setting, the metric analogue of \ref{itm:intro_C} reads as follows:
\begin{enumerate}[label=(\Alph*$_{\mathrm{V}}$), font=\normalfont,itemsep=0.25em,start=3]
	\item\label{itm:intro_C_prime} there exists a derivation $V:\mathcal A\to L^1(I\times X, \mu )$ in the sense of \eqref{eq:derivation_ST}-\eqref{eq:Leibniz_ST_intro} with mass bound \eqref{eq:mass_bound_ST_intro} for some Borel function $\Lambda$ satisfying the $p$-integrability $\int_I \int_X |\Lambda (t,x)|^p \d \mu_t \d t < + \infty$, and the pair $( (\mu_t), V)$ solves the continuity equation, formally written as
    \begin{equation}\label{eq:CE_ST_intro}
        \partial_t \mu_t + \Div (V \mu_t ) =0 \qquad \textrm{in }  I\times X,
    \end{equation}
    meaning that, for every
    $\xi  \in C^1_c(I)$ and $f \in \mathcal{A} \subseteq \Lipb(X)$, 
    \begin{equation}\label{eq:metric_CE}
        \int_I\int_X \frac{\d}{\d t}\xi(t) f(x) \d\mu_t(x) \d t + \int_I\int_X \xi(t) (V f)(t,x) \d\mu_t(x) \d t =0.
    \end{equation}
\end{enumerate}
In particular, \cite[Theorem 4.1]{Stepanov-Trevisan2017} proves that for every $(\mu_t)\in AC^p(I;\P_p(X))$ on a complete metric space $X$, the Eulerian description \ref{itm:intro_C_prime} holds with $\mathcal A=\Lipb(X)$. Their result also gives a Lagrangian representation as in \ref{itm:intro_B}, with additional information encoded through the characteristic ODEs in observables.

When $X=\R^n$, starting from a velocity field $\vv$ and defining $(Vf)(t,x)\coloneqq \vv(t,x)\cdot\nabla f(x)$ on $C_c^1(\R^n)$, the weak formulation \eqref{eq:metric_CE} reduces to the classical case \eqref{eq:CE_classical_distributional_solution} for test functions of product form. Note that, in the above, $V_t$ need not be defined pointwise in time, just as in the classical CE, where the vector field is relevant only up to negligible sets with respect to $\mu$.

Let us finally mention that other formulations of the CE on metric measure spaces were developed by Ambrosio--Trevisan \cite{AmbrosioTrevisan2014} and by Gigli--Han \cite{GigliHan2015}. These approaches rely on a fixed reference measure on the space; in particular, the latter assumes bounded compression of $(\mu_t)$ with respect to this reference measure.

\subsubsection*{Characterization of BV Wasserstein curves ($p=1$): main result} The situation changes drastically in the case $p=1$.  For the Lagrangian formulation \ref{itm:intro_B}, it is not always possible to represent $1$-absolutely continuous curves in $1$-Wasserstein spaces by path measures on continuous paths $C(I;X)$. This was addressed in the previous work of the authors \cite{AbediLiSchultz2024} by relaxing the path space, namely, by taking a larger path space of c\`adl\`ag curves $D(I;X)$, and by extending Lisini's superposition principle to the case $p=1$. This then allows one to characterize not only $1$-absolutely continuous curves but also curves of bounded variation, shortly BV-curves.

A relaxation is also needed on the Eulerian side \ref{itm:intro_C}. This has been recently developed by Almi--Rossi--Savar\'{e} \cite{AlmiRossiSavare2025} on $\R^n$. 
Observe that in the distributional formulation of classical continuity equation \eqref{eq:CE_classical_distributional_solution}, if we introduce the nonnegative measure $\mu$ on space-time $I\times \R^n$ by $\mu(\mathrm{d} t,\mathrm{d} x)\coloneqq \mu_t(\mathrm{d} x) \mathrm{d} t$, and similarly the vector-valued measure  $\nnu$ on space-time  $I\times \R^n$ by $\boldsymbol{\nu}(\mathrm{d}t,\mathrm{d}x)=\vv(t,x) \mu_t(\mathrm{d} x) \d t$, then clearly $\nnu \ll \mu$, and the Radon--Nikodym derivative $\frac{\d \nnu }{\d \mu} (t,x)  =\vv (t,x)$ is precisely the space-time vector field. The Eulerian characterization of $1$-absolutely continuous curves, and more generally BV-curves, in 1-Wasserstein spaces, requires relaxing the absolute continuity condition $\nnu \ll \mu$.
Instead, one has to allow $\nnu$ to be a general measure, possibly with a singular part with respect to $\mu$ (see \cite[Example 7.1]{AlmiRossiSavare2025} and Example~\ref{ex:jump_between_two_points} in this paper). This leads to the following formulation of CE, studied by \cite{AlmiRossiSavare2025}, where the vector-valued measure $\nnu$ is called a \emph{flux measure}.

    \begin{definition}[CE via flux measures]\label{def:ce}
		A pair $(\mu,\nnu)\in \M^+(I\times \R^n)\times \M(I\times \R^n;\R^n)$ is called a (distributional) solution to the continuity equation,
		\begin{equation}\label{eq:CE}\tag{$\mathrm{CE}_{\nu}$}
			\partial_t \mu+ \Div (\nnu) =0 \qquad \textrm{in }  I\times \R^n,
			\end{equation} 
     if for every $\varphi\in C^1_c(I\times \R^n)$, we have 
    \begin{equation}
    	\int_{I \times \R^n}\partial_t\varphi(t,x)\d \mu(t,x)+	\int_{I\times\R^n} \nabla  \varphi (t,x) \cdot \d \nnu(t,x)= 0. 
    \end{equation}
	\end{definition}
    In the above, $\M^+(I\times\R^n)$ and $\M(I\times\R^n;\R^n)$ denote, respectively, the spaces of nonnegative finite Borel measures and $\R^n$-valued Borel measures with finite total variation on $I\times\R^n$.

    One of the main contributions of this paper is to introduce a metric extension of the CE above, which in particular is used to characterize BV-Wasserstein curves. In analogy with \cite{Stepanov-Trevisan2017}, we replace the flux by a suitable derivation, and, in analogy with the relaxation in \cite{AlmiRossiSavare2025}, we allow the reference measure of this derivation to be an arbitrary measure $\lambda$, not necessarily $\mu$. More precisely, given $\lambda\in\M^+(I\times X)$, we define a \emph{$\lambda$-derivation} $V$ to be a linear map
	\begin{equation}\label{eq:lambda_derivation_intro_1}
		V:\Lipb(X)\to L^\infty(I\times X,\lambda)
	\end{equation}
	such that it satisfies the \emph{Leibniz rule}, i.e., for every $f,g\in\Lipb(X)$,
	\begin{equation}\label{eq:lambda_derivation_intro_2}
		V(fg)=f\,Vg+g\,Vf
		\qquad \lambda\text{-a.e. on } I\times X.
	\end{equation}
    In addition, we say it satisfies the \emph{pointwise mass bound} if for every $f\in \Lipb(X)$, 
    \begin{equation}\label{eq:massbound_intro}
    	|Vf(t,x)|\leq  \lip_{\rm a} (f)(x),\quad  \text{$\lambda$-a.e. on $I\times X$}.
    \end{equation}

    In comparison with the $L^1$-valued derivations satisfying \eqref{eq:mass_bound_ST_intro} discussed earlier, having $L^\infty$-valued derivations satisfying \eqref{eq:massbound_intro} is just a normalization choice, not a restriction\footnote{Indeed, for any derivation $V:\Lipb(X) \to L^1(I\times X,\mu)$ satisfying \eqref{eq:mass_bound_ST_intro} with bound $\Lambda\in L^1(I\times X,\mu)$, one can define a new derivation $\widetilde{V}:\Lipb(X) \to L^\infty(I\times X,\lambda )$ with reference measure $\lambda \coloneqq \Lambda\mu \in\M^+(I\times X)$ satisfying \eqref{eq:massbound_intro} by setting $\widetilde V f\coloneqq (Vf)/\Lambda$ on $\{\Lambda>0\}$ and 0 on $\{\Lambda=0\}$. 
    Then $(Vf)\mu = (\widetilde V f)\lambda$ for every $f\in\Lipb(X)$ and thus the action remains unchanged on the level of CE.}.  Also, imposing \eqref{eq:massbound_intro} with constant $1$ is not a restriction. This condition is in fact equivalent to the norm boundedness of $V$; see Section~\ref{sec:contiDerivation}.
    Following Weaver \cite{Weaver00-JFA,Weaver_book}, we adopt the $L^\infty$-framework, which has the additional advantage of carrying a natural weak$^*$-topology.

    When $X=\R^n$, starting from a flux measure $\nnu \in \M(I\times\R^n;\R^n)$ and writing its polar decomposition $\nnu=\boldsymbol w |\nnu|$, where $|\boldsymbol w|=1$ $|\nnu|$-a.e., one may take $\lambda\coloneqq|\nnu|$ and define the map $(Vf)(t,x)\coloneqq \boldsymbol w(t,x)\cdot\nabla f(x)$ initially for $f\in C_c^1(\R^n)$. This choice gives $Vf\in L^\infty(I\times\R^n,\lambda)$ and $|Vf| \leq |\nabla f|_*$ for $|\nnu|$-a.e. $(t,x)$. This map can then be extended to a derivation on the whole $\Lipb(\R^n)$; see \cref{prop:measuretoderivation}.  The normalization choice above thus corresponds to the choice that the size of the flux is carried by the measure $\lambda$, while the derivation $V$ records its normalized action on test functions. Using the derivation above, we formulate a CE in the metric setting:
    \begin{definition}[CE via derivations]\label{def:CE_mu_lambda_V_intro}
    	We say that the triple $(\mu,\lambda,V)$, consisting of $\mu , \lambda \in \M^+ (I \times X)$ and a $\lambda$-derivation $V\colon \Lipb(X)\to L^\infty(I\times X,\lambda)$ in the sense \eqref{eq:lambda_derivation_intro_1}-\eqref{eq:lambda_derivation_intro_2}, is a (distributional) solution to the continuity equation, formally written as
        \begin{equation}\label{eq:CE_derivation_intro}\tag{$\mathrm{CE}_{V}$} 
            \partial_t \mu + \Div (V \lambda) =0 \qquad \textrm{in }  I\times X,
        \end{equation}
        if for every $\xi  \in C^1_c(I)$ and $f\in \Lipb(X)$, we have
        \begin{equation}\label{eq:CE_derivation_weak_formula_intro}
        \int_{I \times X} \frac{\d}{\d t}\xi(t) f(x) \d \mu(t,x) + \int_{I \times X} \xi(t) (Vf) (t,x) \d \lambda (t,x) = 0.
    \end{equation}
    \end{definition} 

    In the equation above, the pair $(V,\lambda)$ appears only through the measure $(Vf)\lambda$. Hence the choice of $(V,\lambda)$ has some freedom: for instance, if $(\mu,\lambda,V)$ is a solution to the CE, then $(\mu,c \lambda, V /c)$ is also a solution for any positive constant $c$. This suggests encoding the action in a single object, directly at the level of measures, which motivates introducing the following notion. We define a \emph{measure-valued derivation} $\mathcal{D}$ to be a linear map
    \begin{equation}\label{eq:MeasuredD_intro}
    \mathcal{D}\colon \Lipb(X) \to \M(I \times X; \R)
    \end{equation}
    such that it satisfies the \emph{Leibniz rule}, i.e.,  for every $f,g\in \Lipb(X)$,
    \begin{equation}\label{eq:LeibnitzD_intro}
    		\mathcal{D}(fg) = f \mathcal{D}(g) + g \mathcal{D}(f).
    \end{equation}
    In addition, we say it has \emph{finite mass} if for all $f\in\Lipb(X)$,
    \begin{equation}\label{eq:finiteD_intro}
        |\mathcal{D}(f)|\leq \lip_{\rm a} (f)\, \lambda \,,
    \end{equation}
    for some (fixed) measure $\lambda \in\M^+(I\times X)$. 
   The unique minimal measure satisfying \eqref{eq:finiteD_intro} is called the \emph{mass measure} of $\cD$ and is denoted by $|\cD|\in\M^+(I\times X)$; see~\cref{lemma:finitemassD}.
    \\
    Given a $(V,\lambda)$, one can always associate with it a measure-valued derivation $\mathcal D$ by 
    \begin{equation}\label{eq:measured-Derivation_intro}
        \mathcal D f \coloneqq (Vf)  \lambda,
        \qquad  f\in\Lipb(X).
    \end{equation}
    Conversely, every derivation $\mathcal D$ with finite mass admits such a representation. Generally, we say that $(V,\lambda)$ is a \emph{representation} of $\mathcal D$ if \eqref{eq:measured-Derivation_intro} holds. This representation can be regarded as an analogue of the polar decomposition of vector measures on Euclidean spaces. 
    \\
    We now formulate a CE using measure-valued derivations as above, where the action is encoded in a single object. This formulation can be regarded as a direct metric extension of Definition~\ref{def:ce}.

    \begin{definition}[CE via measure-valued derivations]\label{def:CE_mu_D_intro}
        We say that a pair $(\mu,\mathcal{D})$, consisting of $\mu\in \M^+ (I \times X)$ and a measure-valued  derivation $\mathcal{D}\colon \Lipb(X) \to \M(I \times X; \R)$ in the sense of \eqref{eq:MeasuredD_intro}-\eqref{eq:LeibnitzD_intro}, is a (distributional) solution to the continuity equation, formally written as 
        \begin{equation}\label{eq:CE_mD_intro}\tag{$\mathrm{CE}_{\mathcal{D}}$} 
            \partial_t \mu + \Div (\mathcal{D}) =0 \qquad \textrm{in }  I\times X,
        \end{equation}
        if for every $\xi  \in C^1_c(I)$ and $f\in \Lipb(X)$,
    \begin{equation}\label{eq:CE_Mderivation_intro} 
        \int_{I \times X} \frac{\d}{\d t} \xi(t) f(x) \d \mu(t,x) + \int_{I \times X} \xi(t) \d\mathcal{D}f (t,x)= 0.
    \end{equation}
    \end{definition}
    Whenever $(V,\lambda)$ is a representation of $\cD$ in the sense of \eqref{eq:measured-Derivation_intro}, the triple $(\mu,\lambda,V)$ solves \eqref{eq:CE_derivation_intro} if and only if the pair $(\mu,\mathcal D)$ solves \eqref{eq:CE_mD_intro}. With this formulation of CE in the metric setting, we provide a summary of the characterization of BV-Wasserstein curves.

    \begin{theorem}[Characterization of BV-Wasserstein curves]\label{thm:characterization_BV_W1_curves}
        Let $(\X,d)$ be a complete separable metric space, and $I \coloneqq (0,T) \subset \R$. 
        Let $(\mu_t)_{t\in I} \subset \P(\X)$ be a Borel family, and define $\mu \in \M^+(I \times X)$ via $\mu(\mathrm{d}t,\mathrm{d}x) \coloneqq \mu_t (\mathrm{d}x) \d t $. Then the following are equivalent:
        \begin{enumerate}[label=(\Alph*), font=\normalfont,itemsep=0.25em]
        \item\label{itm:BV_characterization_A}  $(\mu_t)\in BV\big(I;(\P(\X),W_1)\big)$;
        \item\label{itm:BV_characterization_B}  there exists $\pi \in \P(D(I;X))$ concentrated on $BV(I;\X)$ such that $\int |\D\gamma| (I)\d\pi(\gamma)< + \infty$ and $(e_t)_{\#} \pi  = \mu_t$ for a.e. $t \in I$ (where $e_t: \gamma \in D(I;X) \mapsto \gamma_t \in X$ is the evaluation map);
        \end{enumerate}
        if, in addition, $(\X,d)$ is a geodesic space, these are also equivalent to:
        \begin{enumerate}[label=(\Alph*$_{\mathcal{D}}$), font=\normalfont,itemsep=0.25em,start=3]
        \item\label{itm:BV_characterization_C1} there exists a finite-mass derivation $\mathcal{D}\colon \Lipb(X) \to \M(I \times X; \R)$ in the sense of \eqref{eq:MeasuredD_intro}-\eqref{eq:finiteD_intro} such that $(\mu,\mathcal{D})$ solves the continuity equation \eqref{eq:CE_mD_intro};
        \end{enumerate}
        and if $\X = \R^n$ endowed with the distance induced by a norm $|\,{\cdot}\,|$, these are also equivalent to:
        \begin{enumerate}[label=(\Alph*$_{\nu}$), font=\normalfont,itemsep=0.25em,start=3]
        \item\label{itm:BV_characterization_C2}  there exists $\nnu\in \M(I\times \R^n;\R^n)$ such that $(\mu,\nnu)$ solves the continuity equation \eqref{eq:CE}.
        \end{enumerate}
        Furthermore, under the corresponding assumptions above on $(X,d)$, the variation measure $|\D\mu|$ of the BV-curve $(\mu_t)$ is characterized by:
        \begin{equation}\label{eq:thm_Dmu}
        |\D \mu| \, = \min_{\pi \text{ as in \normalfont \ref{itm:BV_characterization_B}}} \int |\D \gamma | \d \pi (\gamma) = \min_{\mathcal{D} \text{ as in \normalfont \ref{itm:BV_characterization_C1}}} {\Pr}^I_{\#} |\mathcal{D}| = \min_{\nnu \text{ as in \normalfont \ref{itm:BV_characterization_C2}}} {\Pr}^I_{\#} |\nnu|,
        \end{equation}
        where the minimum is taken with respect to the usual order on measures, $|\cD|$ stands for the mass measure of $\cD$, and $\Pr^I:I \times X\to I$ is the projection onto the time variable.
    \end{theorem}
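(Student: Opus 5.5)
The equivalence \ref{itm:BV_characterization_A}$\Leftrightarrow$\ref{itm:BV_characterization_B}, together with the first equality in \eqref{eq:thm_Dmu}, is essentially the superposition principle of our previous work \cite{AbediLiSchultz2024}. We will recall it and, if needed, extend it from $1$-absolutely continuous curves to Borel families defined only for a.e.\ $t$. To do so, we pass to the good (essential-variation) representative of $(\mu_t)$ and work with càdlàg paths.

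The strategy for the Eulerian side is a cycle: \ref{itm:BV_characterization_B}$\Rightarrow$\ref{itm:BV_characterization_C1}$\Rightarrow$\ref{itm:BV_characterization_A}. The same pattern, with flux measures, will handle $\R^n$.

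\textbf{Step 1: from \ref{itm:BV_characterization_C1} to \ref{itm:BV_characterization_A}, with an inequality.} Fix $f$ 1-Lipschitz and bounded, and $\xi\in C^1_c(I)$. By \eqref{eq:CE_Mderivation_intro} and the finite-mass bound \eqref{eq:finiteD_intro} with $\lambda=|\cD|$,
\[
\Bigl|\int_I \xi'(t)\int_X f\,\d\mu_t\,\d t\Bigr|\le \int_{I\times X}|\xi|\,\d|\cD| = \int_I |\xi|\,\d\,{\Pr}^I_{\#}|\cD| .
\]
Hence the distributional derivative of $t\mapsto\int f\d\mu_t$ is a measure bounded by ${\Pr}^I_\#|\cD|$, uniformly in $f$. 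Taking the supremum over $f$ through Kantorovich--Rubinstein duality gives $|\D\mu|\le {\Pr}^I_\#|\cD|$. The one technical point is the countable-supremum argument, which uses separability and a countable dense family of $1$-Lipschitz functions. The identical computation for $\nnu$, with $|\nabla f|_*\le1$, gives \ref{itm:BV_characterization_C2}$\Rightarrow$\ref{itm:BV_characterization_A} and $|\D\mu|\le{\Pr}^I_\#|\nnu|$. Note that test functions of product form suffice in this direction.

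\textbf{Step 2: from \ref{itm:BV_characterization_B} to \ref{itm:BV_characterization_C1}, with the reverse inequality.} Take an optimal $\pi$, so that $\int|\D\gamma|\,\d\pi=|\D\mu|$. For each BV path $\gamma$, define a measure-valued derivation $\cD_\gamma$ on $I\times X$ by splitting $\D(f\circ\gamma)$ into two parts.
\begin{itemize}
\item On the diffuse part, set $\cD_\gamma f := (f\circ\gamma)'\,\d t$-type terms pushed to $(t,\gamma_t)$ via the chain rule.
\item At each jump time $t$, join $\gamma_{t^-}$ and $\gamma_t$ by a geodesic $\sigma$, chosen measurably, for instance with a measurable selection of geodesics using completeness and separability. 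Then set $\cD_\gamma f:=\int_0^1 \frac{\d}{\d s}f(\sigma_s)\,\d s$ spread along $\{t\}\times\sigma$, i.e.\ as $\delta_t\otimes\sigma_\#(\text{derivative density})$.
\end{itemize}
The geodesic is exactly what makes Leibniz hold: jumps are replaced by genuine curves, so each piece is a derivation along a curve. It also gives the mass bound $|\cD_\gamma f|\le \lip_{\rm a}(f)\,\lambda_\gamma$ with ${\Pr}^I_\#\lambda_\gamma=|\D\gamma|$, since $\sigma$ has length $d(\gamma_{t^-},\gamma_t)$. We will verify $\int\xi'f(\gamma_t)\,\d t+\int\xi\,\d\cD_\gamma f=0$ pathwise, and then define $\cD:=\int\cD_\gamma\,\d\pi(\gamma)$. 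This requires measurability in $\gamma$ and checking that Leibniz survives integration, which holds since each $\cD_\gamma$ is linear and Leibniz. We then obtain ${\Pr}^I_\#|\cD|\le\int|\D\gamma|\,\d\pi=|\D\mu|$.

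On $\R^n$, the same construction with segments gives $\nnu_\gamma$ directly as a vector measure: the diffuse part is $\dot\gamma$, and the jump part is $\delta_t\otimes(\gamma_t-\gamma_{t^-})\mathcal H^1\llcorner[\gamma_{t^-},\gamma_t]/|\gamma_t-\gamma_{t^-}|$. Using the general $C^1_c(I\times\R^n)$ test functions is harmless there, since the chain rule along the path handles time dependence. Combined with Step 1, this yields the minima in \eqref{eq:thm_Dmu}.

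\textbf{Main obstacle.} The hardest part is the measurability and jump-handling in Step 2 for the diffuse part of a general BV metric-valued curve. In particular, Cantor parts must be given a derivation-valued meaning. I would first reparametrize each $\gamma$ by arc length: map $\gamma$ to a $1$-Lipschitz curve $\tilde\gamma$ with the jumps filled in by geodesics. The derivation $f\mapsto (f\circ\tilde\gamma)'$ then lives on $\L^1$ in the arc-length variable, and is pushed forward through the inverse time change. This treats the jump, Cantor and absolutely continuous parts uniformly, and reduces Leibniz and the mass bound to the Lipschitz case.
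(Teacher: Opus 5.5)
Your Eulerian steps are sound and follow the paper. Step~1 is the a priori estimate of Corollary~\ref{cor:derivation_CE}: Kantorovich--Rubinstein duality over a countable dense family, then Theorem~\ref{thm:BVequiv}. Step~2 is the pathwise-then-superposed construction of Theorems~\ref{thm:lifttoflux} and~\ref{thm:lifttoderivation}. Your arc-length reparametrization with geodesic-filled jumps is a legitimate alternative way to treat Cantor and jump parts uniformly. Proving \ref{itm:BV_characterization_C2}$\Rightarrow$\ref{itm:BV_characterization_A} directly with product test functions is simpler than the paper's detour through Theorem~\ref{thm:nu_to_V}, once you note that the Kantorovich--Rubinstein supremum can be restricted to $1$-Lipschitz $C^1_c$ functions by mollification and cut-off. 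Two small corrections. Any rectifiable curve filling a jump already gives the Leibniz rule; the geodesic is what makes the mass sharp. Also, the geodesic selection is in general only Souslin (universally) measurable, so the superposition must use completed measures, as in Lemma~\ref{lemma:measurabilityGeo}.

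The genuine gap is \ref{itm:BV_characterization_A}$\Rightarrow$\ref{itm:BV_characterization_B} with an optimal lift. Your whole argument rests on it: you need it to reach \ref{itm:BV_characterization_C1} and \ref{itm:BV_characterization_C2} from \ref{itm:BV_characterization_A}, and to get the equalities in \eqref{eq:thm_Dmu}. The superposition principle of \cite{AbediLiSchultz2024} already covers BV curves, not just $1$-absolutely continuous ones, but only in $(\P_1(X),W_1)$. Here $\mu_t\in\P(X)$ need not have a finite first moment, and $W_1$ is an extended distance; think of a Cauchy distribution translated at $t=1/2$. So the cited result does not apply, and passing to a good representative in $t$ does nothing about this.

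What is missing is the extension to the extended space, which the paper does in four stages.
\begin{enumerate}
\item Truncate, $d_n\coloneqq d\wedge n$, so that $\P(X)=\P_1(X,d_n)$ and $\essVar_n(\mu;I)\le\essVar(\mu;I)$. The old theorem then gives lifts $\pi_n$ with $\int\essVar_n(\gamma;I)\,\d\pi_n(\gamma)=\essVar_n(\mu;I)$.
\item A single truncation does not suffice, because the BV class and the variation change with $n$, so one must let $n\to\infty$. Rather than seeking compactness in $D(I;X)$, the paper pushes $\pi_n$ forward to $X^{\widetilde I}$ for a countable dense $\widetilde I\subset I$. There tightness is automatic, since all marginals are the fixed $\mu_t$.
\item A narrow limit $\widetilde\pi$ satisfies $\int\Var(\cdot;\widetilde I)\,\d\widetilde\pi\le\essVar(\mu;I)$. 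This follows from lower semicontinuity of $\Var_n$ under pointwise convergence together with $\Var_n\uparrow\Var$.
\item C\`adl\`ag paths are reconstructed measurably by right limits (Lemma~\ref{lemma:cadlag_extension_dense_BV}). Proposition~\ref{prop:lift_to_mut} then shows the resulting lift is optimal.
\end{enumerate}
You also need completeness of the extended space $(\P(X),W_1)$ (Proposition~\ref{prop:wassersteinspacecomplete}), both to obtain the c\`adl\`ag representative of $(\mu_t)$ and for \ref{itm:BV_characterization_B}$\Rightarrow$\ref{itm:BV_characterization_A} in this setting.
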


    In the above, $BV(I;Y)$ denotes the set of curves in $L^0(I;Y)$ with bounded essential variation in a general extended metric space $(Y,d_Y)$, allowing $d_Y=\infty$, and $|\D\gamma|$ denotes the variation measure of such a curve $\gamma$ (see \cref{thm:BVequiv} for a set of equivalent definitions of BV-curves in the extended setting, generalizing our previous result in the metric setting \cite[Theorem~2.17]{AbediLiSchultz2024}). In \eqref{eq:thm_Dmu}, we call the minimizers \emph{optimal}, to distinguish the terminology from the notion of \emph{minimal solutions} to CEs discussed later.
    
    The theorem above is proved in this paper through the implications described below. These implications are also shown in Fig.~\ref{fig:diagram}, together with some additional results. 
    \begin{itemize}
    	\item[\textbf{-}] \ref{itm:BV_characterization_A}$\Leftrightarrow$\ref{itm:BV_characterization_B} (Section~\ref{subsec:lift_to_mu}). This is based on our previous work \cite{AbediLiSchultz2024}, where the result was established for BV-curves in the metric space $(\P_1(X),W_1)$. Here we extend the result to BV-curves in the extended metric space $(\P(X),W_1)$, using a truncation argument.
    	
    	\item[\textbf{-}] \ref{itm:BV_characterization_B}$\Rightarrow$\ref{itm:BV_characterization_C2} and \ref{itm:BV_characterization_B}$\Rightarrow$\ref{itm:BV_characterization_C1} (Sections~\ref{subsec:lifts_to_flux measures} and~\ref{subsec:lifts_to_derivation}). This is the key construction in the proof. Starting from a path measure $\pi$ concentrated on BV-curves, we construct a flux measure and a measure-valued derivation solving the corresponding CEs, thereby rigorously verifying the expected hierarchy between these descriptions. This step involves several measurability subtleties, which we address in Appendix~\ref{App:Measurability}.
    	
    	\item[\textbf{-}] \ref{itm:BV_characterization_C1}$\Rightarrow$\ref{itm:BV_characterization_A} and \ref{itm:BV_characterization_C2}$\Rightarrow$\ref{itm:BV_characterization_A} (Sections~\ref{subsec:CE_derivations} and~\ref{sec:consistencyCE}).
        These follow from a priori estimates derived from the corresponding CEs.
        We first prove the first implication. We then establish the relation between the two formulations of the CE. The second implication then follows as a corollary.
    \end{itemize}

When $(\mu_t) \subset \P_1(\R^n)$, the proof above provides an alternative proof of \cite[Theorem~3.4]{AlmiRossiSavare2025}, where \ref{itm:BV_characterization_A}$\Leftrightarrow$\ref{itm:BV_characterization_C2} is shown directly. The aforementioned paper also establishes a probabilistic representation corresponding to solutions of the CE, yielding \ref{itm:BV_characterization_C2}$\Rightarrow$\ref{itm:BV_characterization_B}. Indeed, in \cite[Theorem~6.5 and Corollary~6.6]{AlmiRossiSavare2025}, starting from a solution $(\mu,\nnu)$ to the CE, a probability measure $\widehat{\eta}$ on augmented BV-curves is constructed, whose pushforward through the so-called BV skeleton map yields a probability measure $\pi$ on right-continuous BV-curves with the desired marginals, i.e., \ref{itm:BV_characterization_B}. This augmented representation, however, is finer, since it also records the trajectories at jump times induced by the flux measure and thereby retains its full information.


\begin{figure}
\centering

\begin{tikzpicture}[
	>=Latex,
	outer/.style={
		draw=blue!30!black,
		fill=blue!10,
		rounded corners=5pt,
		minimum width=48mm,
		minimum height=30mm
	},
	minset/.style={
		draw=blue!40!black,
		fill=blue!20,
		rounded corners=3pt,
		minimum width=36mm,
		minimum height=15mm
	},
	optset/.style={
		draw=blue!50!black,
		fill=blue!30,
		dashed,
		rounded corners=3pt,
		minimum width=30mm,
		minimum height=7mm
	},
	curvebox/.style={
		draw=gray!80,
		fill=gray!10,
		rounded corners=3pt,
		minimum width=46mm,
		minimum height=10mm
	},
	thmlab/.style={
		font=\scriptsize,
		fill=white,
		inner sep=4pt,
		align=center
	},
	toplab/.style={
		font=\scriptsize,
		text=gray
	}
	]
	
	\coordinate (LiftC) at (0,-4);
	\coordinate (CEDC)  at (-5.2,0);
	\coordinate (CEC)   at ( 5.2,0);
	\coordinate (MuC)   at (0,-9.0);
	
	\node[outer] (lift) at (LiftC) {};
	\node[outer] (ce)   at (CEC)   {};
	\node[outer] (ced)  at (CEDC)  {};
	
	\node[align=center] at ([yshift=5mm]lift.center)
	{\footnotesize\textbf{Probabilistic representations} \\
    
		$\pi \in \P\big(\BV(I;X)\big)$\\
		$(e_t)_{\#}\pi=\mu_t$ for a.e.\ $t\in I$};
	
	\node[align=center] at ([yshift=7mm]ce.center)
	{\footnotesize\textbf{CE-solutions}\\
		$(\mu,\nnu)$ solves \eqref{eq:CE}};
	
	\node[align=center] at ([yshift=7mm]ced.center)
	{\footnotesize\textbf{CE-solutions}\\
		$(\mu,\mathcal{D})$ solves \eqref{eq:CE_mD_intro}};
	
	
	\node[optset] (liftopt) at ([yshift=-8mm]lift.center) {};
	\node[font=\scriptsize, align=center] at (liftopt.center)
	{optimal lifts};
	
	\node[minset] (cemin) at ([yshift=-6mm]ce.center) {};
	\node[font=\scriptsize, align=center] at ([yshift=-3mm]cemin.north) {minimal solutions};
	
	\node[optset] (ceopt) at ([yshift=-2.5mm]cemin.center) {};
	\node[font=\scriptsize] at (ceopt.center) {optimal};
	
	\node[minset] (cedmin) at ([yshift=-6mm]ced.center) {};
	\node[font=\scriptsize, align=center] at ([yshift=-3mm]cedmin.north) {minimal solutions};
	
	\node[optset] (cedopt) at ([yshift=-2.5mm]cedmin.center) {};
	\node[font=\scriptsize] at (cedopt.center) {optimal};
	
	\node[curvebox] (mucurve) at (MuC) {};
	\node[align=center] at (mucurve.center)
	{$(\mu_t)\in BV\big(I;(\P(\X),W_1)\big)$};
	
	\node[toplab, above=0mm of lift, align=center] {$(X,d)$\\complete and separable};
	\node[toplab, above=0mm of ce, align=center] {$X = \R^n$ \\ with a norm $|\,{\cdot}\,|$};
	\node[toplab, above=0mm of ced, align=center] {$(X,d)$ (geodesic)\\complete and separable};
	
	
	\draw[->, thick]
	([yshift=-3mm]lift.west) -| ([xshift=13mm]ced.south);
	
	\draw[->, thick]
	(liftopt.west) -| ([xshift=8mm]cedopt.south)
	node[pos=0.33, above] {\scriptsize Theorem \ref{thm:lifttoderivation}};
	
	
	\draw[->, thick]
	([yshift=-3mm]lift.east) -| ([xshift=-13mm]ce.south);
	
	\draw[->, thick]
	(liftopt.east) -| ([xshift=-8mm]ceopt.south)
	node[pos=0.33, above] {\scriptsize Theorem \ref{thm:lifttoflux}};

	\draw[<->, thick]
	([yshift=5mm]ced.east) -- ([yshift=5mm]ce.west)
	node[midway, above] {\scriptsize Theorem \ref{thm:nu_to_V}};
	
	\draw[<->, thick]
	([yshift=4mm]cedmin.east) -- ([yshift=4mm]cemin.west)
	node[midway, yshift=2mm] {\scriptsize Theorem \ref{thm:correspondence_minimal_solutions}};
	
	
	
	\draw[->, thick]
	([xshift=-10mm]lift.south) -- ([xshift=-10mm]mucurve.north);
	
	\draw[->, thick]
	([xshift=10mm]mucurve.north) -- ([xshift=10mm]liftopt.south);
	
	\path
	([xshift=-10mm]lift.south) -- ([xshift=-10mm]mucurve.north)
	node[pos=0.65, thmlab] {Proposition \ref{prop:lift_to_mut} \,\,\,\,\,\,};
	
	\path
	([xshift=10mm]lift.south) -- ([xshift=10mm]mucurve.north)
	node[pos=0.65, thmlab] {\,\,\,\,\,\, Theorem \ref{thm:optimal_lift}};
	
	
	\coordinate (cedA) at (ced.south);
	\coordinate (cedC) at (mucurve.west);
	\coordinate (cedB) at (cedA |- cedC);
	
	\draw[thick] (cedA) -- (cedB)
	node[midway, left] {\scriptsize Corollary \ref{cor:derivation_CE}};
	\draw[->, thick] (cedB) -- (cedC);
	
	\coordinate (ceA) at (ce.south);
	\coordinate (ceC) at (mucurve.east);
	\coordinate (ceB) at (ceA |- ceC);
	
	\draw[thick] (ceA) -- (ceB)
	node[midway, right] {\scriptsize Corollary \ref{cor:BVflux}};
	\draw[->, thick] (ceB) -- (ceC);
	
\end{tikzpicture}
\captionsetup{font=footnotesize}
\caption{Main objects in the paper and the discussed implications between them, which, in particular, proves the characterization of BV-Wasserstein curves in Theorem~\ref{thm:characterization_BV_W1_curves}.}
\label{fig:diagram}

\end{figure}

\subsubsection*{Further results on continuity equation via measure-valued derivations}
We now highlight some further results concerning the continuity equation. On $\R^n$, \cite{AlmiRossiSavare2025} introduced a notion of minimal solutions for \eqref{eq:CE}, for which they established a superposition principle. In this paper, we introduce an analogous notion of minimal solutions for \eqref{eq:CE_mD_intro}. We show that, on $\R^n$, it is consistent with the notion of minimality for flux measures.
\begin{itemize}
    \item \textbf{Definition~\ref{def:minimalsolution_M}} (Minimal solutions to \eqref{eq:CE_mD_intro})\textbf{.}
	We first introduce a partial order $\prec$ on finite-mass measure-valued derivations, with which we call $\cD_1$ a subderivation of $\cD$ if $\cD_1\prec\cD$. Roughly speaking, $\cD_1\prec\cD$ means that the mass of $\cD$ splits without cancellation into the mass of $\cD_1$ and the mass of the remainder $\cD-\cD_1$. 
    A finite-mass derivation is then called \emph{minimal} if it has no nontrivial divergence-free subderivation; see Definition~\ref{def:minimalderivation}. This leads to Definition~\ref{def:minimalsolution_M}: a solution $(\mu,\cD)$ to \eqref{eq:CE_D} is called a \emph{minimal solution} if the singular part $\cD^\perp$ in the Lebesgue decomposition of $\cD$ with respect to $\mu$ is minimal. Here, the decomposition $\cD=\cD^a+\cD^\perp$ means that, for every $f\in\Lipb(X)$, the measure $\cD f$ is decomposed into its absolutely continuous and singular parts with respect to $\mu$.
    \item \textbf{\cref{thm:consistency}} (Consistency I, minimality)\textbf{.}
    On $\R^n$, the notion of minimality for measure-valued derivations is consistent with the notion of minimality for flux measures. More precisely, if a flux measure $\nnu$ induces the measure-valued derivation $\cD_{\nnu}$, then submeasures of $\nnu$ correspond to subderivations of $\cD_{\nnu}$; in particular, $\nnu$ is minimal if and only if $\cD_{\nnu}$ is minimal.
    \item \textbf{\cref{thm:nu_to_V}} (Consistency II, solutions to CEs)\textbf{.}
	The two formulations of the continuity equation are equivalent on $\R^n$. Every solution $(\mu,\nnu)$ to the flux formulation induces a unique (weak$^*$-type) continuous measure-valued derivation $\cD_{\nnu}$ such that $(\mu,\cD_{\nnu})$ solves \eqref{eq:CE_D}; conversely, every finite-mass derivation solution $(\mu,\cD)$ comes from a  flux $\nnu$ solving \eqref{eq:CE}.
    \item \textbf{\cref{thm:correspondence_minimal_solutions} }(Consistency III, minimal solutions to CEs)\textbf{.}
	The preceding correspondence on $\R^n$ also preserves minimality: a pair $(\mu,\cD)$ is a minimal solution to \eqref{eq:CE_D} if and only if $\cD$ is induced by some flux measure $\nnu$ such that $(\mu,\nnu)$ is a minimal solution to \eqref{eq:CE}.
\end{itemize}
To illustrate the notion of minimality, we next discuss a simple example.

\subsubsection*{An illustrative example}

As discussed in Sections~\ref{subsec:lifts_to_flux measures}
and~\ref{subsec:lifts_to_derivation}, our construction of fluxes and derivations from a path measure $\pi$ is first performed pathwise and then superposed. Here we present two examples. The first example illustrates this construction for a single curve, which may be viewed as the basic building block of the general construction, and also clarifies the notions of minimality and optimality. The second example illustrates the superposition step.

\begin{example}[Jump between two points]\label{ex:jump_between_two_points}
	Let $y,z\in \R^n$ with $y\neq z$, and consider the curve that stays in $y$ and jumps at $z$ at a fixed time $\alpha\in(0,1)$, i.e.,
	\begin{equation}\label{eq:jump_curve}
		 \gamma_t \coloneqq y \mathds{1}_{(0,\alpha)}(t) + z \mathds{1}_{[\alpha,1)}(t), \qquad t\in I\coloneqq (0,1),
	\end{equation}
	and set $\mu_t \coloneqq \delta_{\gamma_t}$.
	Then $(\mu_t)$ is a right-continuous BV-curve in $(\P(\R^n),W_1)$ with
	\begin{equation}
		|\D\mu| = |z-y|\delta_\alpha .
	\end{equation}
	We define $\mu \in \M^+(I \times \R^n)$ via $\mu(\mathrm{d}t,\mathrm{d}x) \coloneqq \mu_t (\mathrm{d}x) \d t $. Then for every $\varphi\in C_c^1(I\times\R^n)$, we have
	\begin{align}
		\int_{I\times\R^n}
		\partial_t\varphi(t,x)\d\mu(t,x)
		=
		\int_0^\alpha \partial_t\varphi(t,y)\d t
		+
		\int_\alpha^1 \partial_t\varphi(t,z)\d t
		=
		\varphi(\alpha,y)-\varphi(\alpha,z).
	\end{align}
    
    \smallskip
	\noindent
	$\bullet$ \textit{Measure-valued fluxes/derivations satisfying CE.} A vector-valued measure
	$\nnu$ satisfies
	\eqref{eq:CE} if 
	\begin{equation}\label{eq:jump_example_flux_cond}
		\int_{I\times\R^n}
		\nabla\varphi(t,x)\cdot\d\nnu(t,x)
		=
		\varphi(\alpha,z)-\varphi(\alpha,y),
		\qquad
		\forall\varphi\in C_c^1(I\times\R^n).
	\end{equation}
	Obviously, this equation does not determine $\nnu$ uniquely; it only determines its spatial divergence. Similarly, a measure-valued derivation $\cD$ satisfies \eqref{eq:CE_mD_intro} if 
	\begin{equation}\label{eq:jump_example_derivation_cond}
		\int_{I\times\R^n}\xi(t)\d\cD f(t,x)
		=
		\xi(\alpha)\bigl(f(z)-f(y)\bigr),
		\qquad
		\forall \xi\in C_c^1(I),\quad
		\forall f\in\Lipb(\R^n).
	\end{equation}
    
    \smallskip
	\noindent
	$\bullet$ \textit{A family of minimal fluxes.}
	Let $\sigma\in C^1([0,1];\R^n)$ be any curve with end points	$\sigma(0)=y$ and $\sigma(1)=z$, and assume that it is injective and is parametrized
	with constant speed, i.e., $|\dot\sigma(r)|=\ell(\sigma),$ for all $r\in[0,1]$, where $\ell(\sigma)$ denotes the length of $\sigma$ (as will be seen, the construction is invariant under increasing reparametrizations). We can write 
		\begin{align}
		\varphi(\alpha,z)-\varphi(\alpha,y)
		=
		\int_0^1
		\nabla\varphi(\alpha,\sigma(r))
		\cdot\dot\sigma(r)\d r
		=
		\int_{\sigma([0,1])}
		\nabla\varphi(\alpha,x)
		\cdot\frac{\dot\sigma\circ\sigma^{-1}(x)}{\ell(\sigma)}\d\Ha^1(x),
	\end{align}
	 where $\mathcal{H}^1$ is the 1-dimensional Hausdorff measure.
     Hence
	\begin{equation}\label{eq:flux_sigma_jump_example}
		\nnu^\sigma\coloneqq \delta_\alpha \otimes \left(
		\frac{\dot\sigma \circ \sigma^{-1}}{\ell (\sigma)} \,\Ha^1|_{\sigma([0,1])}
		\right)
	\end{equation}
	satisfies \eqref{eq:jump_example_flux_cond}. 
    Since $\sigma$ is
	injective, $\nnu^\sigma$ contains no
	non-trivial divergence-free sub-measure and thus is minimal in the sense of
	Definition~\ref{def:minimalmeasure}.
    For the flux above, we have the estimate
    \begin{equation}\label{eq:mass_flux_sigma_jump_example}
		\pr^I_\#|\nnu^\sigma|
		=
		\ell(\sigma)\delta_\alpha
		\geq
		|z-y|\delta_\alpha
		=
		|\D\mu|.
	\end{equation}
	In particular, if we take constant-speed geodesic $\sigma^*$ connecting $y,z$, the corresponding flux is
	\begin{equation}\label{eq:optimal_flux_jump_example}
		\nnu^{\sigma*}
		\coloneqq
		\delta_\alpha
		\otimes
		\left(
		\frac{z-y}{|z-y|}
		\,\Ha^1|_{[y,z]}
		\right),
	\end{equation}
	where $[y,z]$ denotes the straight line connecting $y,z$. In this case, we have
	\begin{equation}
		\pr^I_\#|\nnu^{\sigma*}|
		=
		|z-y|\delta_\alpha
		=
		|\D\mu|.
	\end{equation}
    This flux realizes the variation measure of the BV-curve $(\mu_t)$. 
	As shown in \cref{cor:BVflux}, any other flux solving the CE gives an upper bound.
	Thus $\nnu^{\sigma*}$ is an optimal flux.

    \smallskip
	\noindent
	$\bullet$ \textit{A family of minimal derivations.}
    Similarly, the same curve $\sigma$ induces a derivation.
	Indeed, for $f\in\Lipb(\R^n)$, the function $f\circ\sigma$ is Lipschitz
	and hence differentiable $\mathcal L^1$-a.e. We can write
    	\begin{align}
		\xi(\alpha)\bigl(f(z)-f(y)\bigr)
		=
		\xi(\alpha)\int_0^1 (f\circ\sigma)'(r)\d r
		=
		\xi(\alpha)
		\int_{\sigma([0,1])}
		\frac{(f\circ\sigma)'\circ\sigma^{-1}(x)}
		{\ell(\sigma)}
		\d\Ha^1(x).
	\end{align}
	Accordingly,
	\begin{equation}\label{eq:derivation_sigma_jump_example}
		\cD^\sigma f
		\coloneqq
		\delta_\alpha\otimes
		\left(
		\frac{(f\circ\sigma)'\circ\sigma^{-1}}
		{\ell(\sigma)}
		\,\Ha^1|_{\sigma([0,1])}
		\right),
		\qquad
		f\in\Lipb(\R^n),
	\end{equation}
	satisfies \eqref{eq:jump_example_derivation_cond}. One can check that $\cD$ is linear and satisfies the Leibniz rule. Observe that
	\begin{equation}\label{eq:flux_derivation_relation_jump_example}
		\cD^\sigma f
		=
		\nabla f\cdot\nnu^\sigma,
		\qquad
		f\in C^1(\R^n).
	\end{equation}
    For a.e.\ $r\in(0,1)$, we have $
		|(f\circ\sigma)'(r)|\leq 
		\lip(f)(\sigma(r))\,\ell(\sigma), $
	and consequently one can obtain the mass measure $|\cD^\sigma| = \delta_\alpha\otimes \Ha^1|_{\sigma([0,1])}$.
	In particular,
	\begin{equation}\label{eq:mass_derivation_sigma_jump_example}
		\pr^I_\#|\cD^\sigma|
		=
		\ell(\sigma)\delta_\alpha
		\geq
		|z-y|\delta_\alpha
		=
		|\D\mu|.
	\end{equation}
    Similarly, by taking the constant-speed geodesic $\sigma^*$, one obtains a derivation
    \begin{equation}\label{eq:optimal_derivation_jump_example}
	\cD^{\sigma*} f
	\coloneqq
	\delta_\alpha\otimes
	\left(
	\frac{(f\circ\sigma^*)'\circ(\sigma^*)^{-1}}{|z-y|}
	\,\Ha^1|_{[y,z]}
	\right),
	\qquad
	f\in\Lipb(\R^n),
    \end{equation}
    whose mass measure realizes the variation measure of the BV-curve $(\mu_t)$. As shown in \cref{cor:derivation_CE}, any other derivation solving the CE provides an upper bound.
    
    \smallskip
	\noindent
	$\bullet$ \textit{A family of non-minimal fluxes/derivationes.}	As discussed above, if $\sigma$ is not a
	geodesic, i.e., $\ell(\sigma)>|z-y|$, then $\nnu^\sigma$ is minimal but
	not optimal. On the other hand, let $\theta:[0,1]\to\R^n$ be a non-trivial simple
	closed $C^1$ curve, parametrized with constant speed, based at some
	$x_0\in\sigma([0,1])$, and whose image is otherwise disjoint from
	$\sigma([0,1])$. Define $\nnu^{\mathrm{loop}}$ as in \eqref{eq:flux_sigma_jump_example} with $\sigma$ replaced by $\theta$. 
	Since $\theta(0)=\theta(1)$,
	\begin{equation}
		\int_{I\times\R^n}\nabla\varphi(t,x)\cdot
		\d\nnu^{\mathrm{loop}}(t,x)
		=0,
		\qquad
		\forall\varphi\in C_c^1(I\times\R^n),
	\end{equation}
	so $\nnu^{\mathrm{loop}}$ is divergence-free. Therefore $ \nnu^{\sigma,\theta} \coloneqq \nnu^\sigma+\nnu^{\mathrm{loop}}$
	still solves the same CE. Since
	$\nnu^{\mathrm{loop}}\prec\nnu^{\sigma,\theta}$ and
	$\nnu^{\mathrm{loop}}\neq0$, the flux $\nnu^{\sigma,\theta}$ is not
	minimal in the sense of
	Definition~\ref{def:minimalmeasure}.
    \\
    Similarly, the loop $\theta$ yields a non-trivial divergence-free derivation $\cD^{\mathrm{loop}}$ defined as in \eqref{eq:derivation_sigma_jump_example} with $\sigma$ replaced by $\theta$. Hence $\cD^{\sigma,\theta}\coloneqq
	\cD^\sigma+\cD^{\mathrm{loop}}$ still solves the same CE. Since the images of $\sigma$ and $\theta$ are disjoint except at one point, their mass measures are mutually singular, and $\cD^{\mathrm{loop}}\prec\cD^{\sigma,\theta}$. Thus $\cD^{\sigma,\theta}$ contains a non-trivial spatial cycle as a subderivation and is not minimal in the sense of \cref{def:minimalderivation}.

    \begin{figure}[t]
    \centering
    \begin{tikzpicture}[>=Latex]
    
    \begin{scope}[xshift=0cm]
    	\draw (0,0) rectangle (3.8,2.8);
    	
    	\fill (0.5,0.5) circle (1.5pt);
    	\fill (3.2,2.2) circle (1.5pt);
    	\node at (0.3,0.25) {$y$};
    	\node at (3.45,2.45) {$z$};
    	
    	\draw[dashed, thick] (0.5,0.5) -- (3.2,2.2);
    	
    	\node[align=center] at (1.9,-0.6) {\footnotesize optimal (hence) minimal\\ \footnotesize flux $\nnu^{\sigma*}$/derivation $\cD^{\sigma*}$};
    \end{scope}
    
    \begin{scope}[xshift=5.4cm]
    	\draw (0,0) rectangle (3.8,2.8);
    	
    	\fill (0.5,0.5) circle (1.5pt);
    	\fill (3.2,2.2) circle (1.5pt);
    	\node at (0.3,0.25) {$y$};
    	\node at (3.45,2.45) {$z$};
    	
    	\draw[dashed, thick]
    	(0.5,0.5)
    	.. controls (1.2,0.7) and (1.2,1.5) ..
    	(1.8,1.7)
    	.. controls (2.4,1.9) and (2.6,1.6) ..
    	(3.2,2.2);
    	
    	\node at (1.65,1.15) {$\sigma$};
    	\node[align=center] at (1.9,-0.6) {\footnotesize non-optimal but minimal\\ \footnotesize flux $\nnu^\sigma$/derivation $\cD^\sigma$};
    \end{scope}
    
    \begin{scope}[xshift=10.8cm]
    	\draw (0,0) rectangle (3.8,2.8);
    	
    	\fill (0.5,0.5) circle (1.5pt);
    	\fill (3.2,2.2) circle (1.5pt);
    	\node at (0.3,0.25) {$y$};
    	\node at (3.45,2.45) {$z$};
    	
    	\draw[dashed, thick]
    	(0.5,0.5)
    	.. controls (1.2,0.7) and (1.2,1.5) ..
    	(1.8,1.7)
    	.. controls (2.4,1.9) and (2.6,1.6) ..
    	(3.2,2.2);
    	
    	\node at (1.55,1.05) {$\sigma$};
    	
    	\draw[dashed, thick] (1.8,2.0) circle[radius=0.3];
    
        \node at (1.25,2) {$\theta$};
    	\node[align=center] at (1.9,-0.6) {\footnotesize non-minimal (hence) non-optimal\\ \footnotesize flux $\nnu^{\sigma,\theta}$/derivation $\cD^{\sigma,\theta}$};
    \end{scope}
    \end{tikzpicture}
    \captionsetup{font=footnotesize}
    \caption{Spatial supports of the fluxes $\nnu$ and the mass measures $|\cD|$ of the derivations $\cD$ discussed in Example~\ref{ex:jump_between_two_points} at the time slice $t=\alpha$ where the jump occurs.}
    \label{fig:three_fluxes}
    \end{figure}

    \smallskip
	\noindent
    This example thus illustrates the three cases: (1) optimal (hence) minimal, (2) minimal but non-optimal, and (3) non-minimal (hence) non-optimal flux measures, as shown in Fig.~\ref{fig:three_fluxes}.
\end{example}

\begin{example}[Teleportation between two points]\label{ex:teleportation_flux}
	Let $y,z\in\R^n$ with $y\neq z$, and consider
	\begin{equation}
		\mu_t\coloneqq(1-t)\delta_y+t\delta_z,
		\qquad
		t\in I\coloneqq(0,1).
	\end{equation}
	Then $(\mu_t)$ is an absolutely continuous curve in $(\P(\R^n),W_1)$ with
	\begin{equation}
		|\D\mu|
		=
		|z-y| \, \mathcal{L}^1|_I.
	\end{equation}
	For each $\alpha\in I$, denote by $t \mapsto \gamma_t^{(\alpha)}$ the curve defined in \eqref{eq:jump_curve}, and consider the path measure
	\begin{equation}
		\pi^*
		\coloneqq
		(\gamma^{(\cdot)})_\#
		\mathcal L^1|_I .
	\end{equation}
	Then $(e_t)_\#\pi^*=\mu_t$ for every $t\in I$, and moreover $\pi^*$ is an optimal lift of $(\mu_t)$; as discussed in \cite[Example 1.1]{AbediLiSchultz2024}.
    Let $\sigma^*$ denote again the constant-speed geodesic connecting $y,z$.
    By superposing the optimal fluxes
    \eqref{eq:optimal_flux_jump_example} associated with the individual curves $\gamma^{(\alpha)}$, here denoted by $\nnu_{\alpha}^{\sigma^*}$, we obtain $\nnu^{\sigma^*}\coloneqq\int_{(0,1)}\nnu_{\alpha}^{\sigma^*}\d\alpha$, which is given by,
    \begin{equation}\label{eq:optimal_flux_teleportation}
    	\nnu^{\sigma^*}
    	=
    	\mathcal L^1\llcorner I
    	\otimes
    	\left(
    	\frac{z-y}{|z-y|}
    	\,\Ha^1\llcorner[y,z]
    	\right),
    \end{equation}
    as also derived in \cite[Example~7.1]{AlmiRossiSavare2025}.
    This flux is optimal, since
    $\pr^I_\#|\nnu^{\sigma^*}|=|\D\mu|$. Similarly, superposing the corresponding derivations from
    \cref{ex:jump_between_two_points}, we obtain the derivation
    \begin{equation}
    	\cD^{\sigma^*} f
    	\coloneqq
    	\mathcal L^1\llcorner I
    	\otimes
    	\left(
    	\frac{(f\circ\sigma^*)'\circ(\sigma^*)^{-1}}{|z-y|}
    	\,\Ha^1\llcorner[y,z]
    	\right),
    	\qquad
    	f\in\Lipb(\R^n),
    \end{equation}
    whose mass measure satisfies
    $\pr^I_\#|\cD^{\sigma^*}|=|\D\mu|$.
    Thus both $\nnu^{\sigma^*}$ and $\cD^{\sigma^*}$ realize the variation
    measure of the Wasserstein curve $(\mu_t)$.
\end{example}

\subsubsection*{Organization of the paper} The rest of the paper is structured as follows. In \textbf{Section~\ref{sec:preliminaries}}, we collect basic notions and notation; in particular, we give several definitions of BV-curves in extended metric spaces and prove their equivalence. In \textbf{Section~\ref{sec:derivations}}, we introduce measure-valued derivations, study their properties, define a notion of minimality, formulate the continuity equation, and establish its consistency with the formulation via fluxes on $\R^n$. 
In \textbf{Section~\ref{sec:characterization}}, we establish probabilistic representations and construct from them flux measures and measure-valued derivations, yielding the main characterization of BV-Wasserstein curves.
Finally, in \textbf{Appendix}, we provide a proof of the completeness of the extended 1-Wasserstein space together with the separability of each of its metric components, as well as measurability results needed in this paper.

\subsubsection*{Acknowledgments} E.A. acknowledges the partial funding by Deutsche Forschungsgemeinschaft (DFG) – Project-ID 318763901 – SFB 1294.

\section{Preliminaries}\label{sec:preliminaries}
\subsection{Basic notions and notations}\label{subsec:prel_basic_notions}
\subsubsection*{(Vector-valued) Measures}
We denote by $\M(\R^m;\R^n)$ the space of Borel $\R^n$-valued measures $\boldsymbol{\mu}$ with finite total variation $\|\boldsymbol{\mu}\|_{{\rm TV}}\coloneqq|\boldsymbol{\mu}|(\R^m)<+\infty$, where for every $B\in \mathcal{B}(\R^m)$ (the latter denotes the Borel $\sigma$-algebra of $\R^m$), 
\begin{equation}
	|\boldsymbol{\mu}|(B)\coloneqq \sup\left\{ \sum^{\infty}_{i=0} |\boldsymbol{\mu}(B_i)|: B_i \in\mathcal{B}(\R^m), \text{\ $B_i$ pairwise disjoint, }  B=\bigcup_{i=0}^\infty B_i  \right\},
\end{equation}
where $|\,{\cdot}\,|$ is a norm on $\R^n$ (not necessarily the Euclidean norm). 
Although $|\boldsymbol{\mu}|$ and
$\|\boldsymbol{\mu}\|_{{\rm TV}}$ depend on the chosen norm on $\R^n$, the space
$\M(\R^m;\R^n)$ does not, since all norms on $\R^n$ are equivalent. We use the standard dot product convention $\boldsymbol{x}\cdot \boldsymbol{y}\coloneqq \sum^n_{i=1}x_iy_i$ for $\boldsymbol{x},\boldsymbol{y}\in \R^n$, and denote by $|\,{\cdot}\,|_*$ the dual norm of $|\,{\cdot}\,|$, namely $
|\boldsymbol{y}|_*
\coloneqq
\sup \{
\boldsymbol{x}\cdot\boldsymbol{y}:
|\boldsymbol{x}|\leq 1
\}$.
Writing $\boldsymbol{\mu}=(\mu_1,\ldots,\mu_n)$ as a vector consisting of $n$ finite signed Borel measures on $\R^m$, we define the integral of every bounded Borel function $\boldsymbol{f}=(f_1,\cdots,f_n) : \R^m\to\R^n$ with respect to $\boldsymbol{\mu}$ as follows
\begin{equation}
	\int_{\R^m} \boldsymbol{f}\cdot \d \boldsymbol{\mu}\coloneqq \sum^n_{i=1}\int_{\R^m} f_i(x)\d\mu_i(x).
\end{equation}
We will also use the dual characterization of variation: for any open set $O\subset \R^m$, we have
\begin{equation}\label{eq:vardual}
    |\boldsymbol{\mu}|(O)=\sup\left\{\int_{\R^m}\boldsymbol{f}\cdot \d\boldsymbol{\mu}: \,\, \boldsymbol{f}\in C_c(\R^m;\R^n), \, \spt(\boldsymbol{f})\subset O, \, \sup_{x\in O}|\boldsymbol{f}(x)|_{*}\leq 1  \right\},
\end{equation}
where $\spt(\boldsymbol{f})$ is the support of $\boldsymbol{f}$.
Unless otherwise specified, a ``finite measure" refers to a non-negative finite Borel measure, and we denote the set of all such measures by $\M^+(\R^m)$. This convention also applies when working on a general metric space.

\subsubsection*{Metric spaces} Let $(X,d)$ be a metric space. 
We denote by $\mathcal{B}(X)$ the Borel $\sigma$-algebra of $X$.
For any $x\in X$ and $r>0$, we denote by $B(x,r)$ the open $d$-ball centered at $x$ with radius $r$.
We denote the set of all Lipschitz functions on $(X,d)$ by $\Lip(X)$, the set of bounded Lipschitz functions by $\Lipb(X)$, and the set of $1$-Lipschitz functions by $\Lip_1(X)\coloneqq\{f\in\Lip(X): \, \Lip(f)\leq 1\}$, where the following notation is used. 
For any $f\in\Lip(X)$, we denote its \emph{global Lipschitz constant} by
\begin{equation}
\Lip(f)\coloneqq
\sup_{y\neq z}\frac{|f(y)-f(z)|}{d(y,z)},
\end{equation}
and define its \emph{asymptotic Lipschitz constant} pointwise for $x\in X$ by
\begin{equation}\label{eq:def_lipfx}
	\lip_{\rm a}(f)(x)\coloneqq \lim_{r\to 0^+ }\sup_{\substack{y\neq z \\ y,z\in B(x,r)}} \frac{|f(y)-f(z)|}{d(y,z)}.
\end{equation}
Notice when $X=\R^n$ and it is endowed with the distance induced by a norm $|\,{\cdot}\,|$, then for every $f\in C^1(\R^n)$ and $x\in\R^n$, we have $\lip_{\rm a}(f) (x)=|\nabla f(x)|_{*}$, where $|\,{\cdot}\,|_*$ is the dual norm.

\subsubsection*{Probability measures and 1-Wasserstein distance}
Let $(X,d)$ be a complete separable metric space.
We denote by $\P(X)$ the set of all Borel probability measures on $X$, and by $\P_p(X)$ its subspace of measures with finite $p$-th moment, where $p \geq 1$. The \emph{extended} $1$-Wasserstein metric between any $\textup{m}_0,\textup{m}_1\in\P(X)$ is defined as 
\begin{equation}
	W_1(\textup{m}_0,\textup{m}_1)\coloneqq \inf \left\{ \int_{X \times X} d(x,y)\d\sigma(x,y) : \,\, \sigma \in \mathrm{Cpl} (\textup{m}_0,\textup{m}_1)  \right\} \in [0,+\infty],
\end{equation}
where $\mathrm{Cpl} (\textup{m}_0,\textup{m}_1) \subset \P (X \times X)$ denotes the set of couplings of $\textup{m}_0$ and $\textup{m}_1$, i.e., the set of measures on the product space whose marginals are $\textup{m}_0$ and $\textup{m}_1$. We denote by $\mathrm{OptCpl}(\textup{m}_0,\textup{m}_1)$ the set of optimal couplings with respect to $W_1$-distance, which can be possibly empty in this extended setting. 
We will also use the dual characterization of the $1$-Wasserstein distance, namely, the Kantorovich--Rubinstein formula 
\begin{equation}\label{eq:Kduality}
	W_1(\textup{m}_0,\textup{m}_1)=\sup\left\{\int_X f\d\textup{m}_0-\int_X f\d\textup{m}_1: \,\, f\in \Lipb(X), \, \Lip(f)\leq 1\right\},
\end{equation}
for all $\textup{m}_0,\textup{m}_1\in\P(X)$, see e.g. \cite[Theorem 1.14]{Villani_topics}. We discuss further properties of the extended metric space $(\P(\X),W_1)$ in Appendix \ref{App:Ext_Wass_Space}. 

\subsubsection*{Spaces of curves}
Let $(X,d)$ be a metric space. 
Unless otherwise specified, in this paper, we denote by $I$ the open interval $(0,T)$ for some $T\in(0,\infty)$.
We say that a curve $\gamma\colon I\to X$ is càdlàg if it is right-continuous with left-limits.
The set $D(I;X)$ stands for the space of all càdlàg curves, endowed with the Skorokhod topology; see \cite{Billingsley} or \cite[Section 2.3]{AbediLiSchultz2024}.

Moreover, we denote by $L^0(I;X)$ the set of Borel curves, after identifying $\mathcal L^1$-a.e. equal curves, and denote by $L^1(I;X)$ the space of all $\gamma\in L^0(I;X)$ such that $\int_I d(\gamma_t,\bar{x})\d t<\infty$ for some (and thus any) $\bar{x}\in X$.

Finally, we say that $(\mu_t)_{t \in I} \subset \P (X)$ is a Borel family if $t \mapsto\mu_t (B)$ is a Borel map for any Borel set $B \subset \X$. 
In this case, the formula
\begin{equation}\label{eq:def_mu_from_mut}
    \int_{I \times X} f(t,x) \d \mu (t,x) \coloneqq \int_{I} \left( \int_\X f(t,x) \d \mu_t (x) \right) \d t, \quad \forall f \in \mathcal{B}_{\mathrm{b}} (X;\R),
\end{equation}
defines a unique measure $\mu \in \M^+ (I\times X)$ which will be shortly denoted by $\mu(\mathrm{d} t,\mathrm{d} x)\coloneqq \mu_t(\mathrm{d} x) \mathrm{d} t$. In the above, $\mathcal{B}_{\mathrm{b}}(\X;\R)$ denotes the set of bounded Borel functions on $X$.

\subsection{BV-curves in \texorpdfstring{$\mathbb{R}^n$}{Rn}} Let $I \coloneqq (0,T) \subset \R$, and let $\R^n$ be endowed with the distance induced by a norm $|\,{\cdot}\,|$.

\begin{definition}[BV-curves in $\mathbb{R}^n$]\label{def_BV_curves_Euclidean}
    A curve $ \gamma \in L^1(I;\R^n)$ is called a BV-curve if its distributional derivative is given by a finite $\R^n$-valued measure, i.e., there exists a measure $\boldsymbol{\D\gamma} \in \M (I;\R^n)$ such that
\begin{equation}\label{eq:distributionalD}
	\int_I \frac{\d}{\d t}\xi(t)\cdot \gamma(t)\,\d t
	+ \int_I \xi(t)\cdot \d\boldsymbol{\D\gamma}(t) = 0
\end{equation}
holds for every $\xi\in C^1_c(I;\R^n)$. The space of BV-curves is denoted by $ BV (I;\R^n)$.
\end{definition}

The distributional derivative $\boldsymbol{\D\gamma}$ can be decomposed into the continuous part $\boldsymbol{\D\gamma}^{\mathrm{c}}$ (which itself contains absolutely continuous and  continuous singular parts) and the pure jump part $\boldsymbol{\D\gamma}^{\mathrm{j}}$:
\begin{equation}\label{eq:decomposition}
	\boldsymbol{\D\gamma} = 	\boldsymbol{\D\gamma}^{\mathrm{c}}+	\boldsymbol{\D\gamma}^{\mathrm{j}}.
\end{equation}
The jump part $\boldsymbol{\D\gamma}^{\mathrm{j}}$ can be written as
\begin{equation}
    \boldsymbol{\D\gamma}^{\mathrm{j}}=\sum_{t\in J_\gamma}( \gamma_{t+}-\gamma_{t-})\delta_t,
\end{equation}
where $J_\gamma \subset I$ denotes the set of jump points of $\gamma$, which for BV-curves is at most countable.

\begin{lemma}[Chain rule for distributional derivatives]\label{lemma:chainrule}
	Let $\varphi\in C_c^1( I\times \R^n; \R)$ and let $\gamma\in BV(I;\R^n)$ be  c\`adl\`ag.
	Denote $\varphi_\gamma(t)\coloneqq \varphi(t,\gamma_t)$ and $\tilde \gamma(t)\coloneqq (t,\gamma_t)$ for all $t\in I$.
	Then $\varphi_\gamma$ is a c\`adl\`ag BV-curve in $\R$, and its distributional derivative is given by 
	\begin{equation}
		\mathrm{D} \varphi_\gamma=\partial_t \varphi \circ \tilde\gamma \, \L^1 \llcorner I +\nabla \varphi \circ \tilde\gamma \cdot \boldsymbol{\mathrm{D} \gamma}^{\mathrm{c}}+\sum_{t\in J_\gamma}[\varphi(t,\gamma_{t+})-\varphi(t,\gamma_{t-})]\delta_t.
	\end{equation}
\end{lemma}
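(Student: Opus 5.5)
We reduce to the classical chain rule for BV functions of one variable, applied in the space-time variable. The curve $\tilde\gamma(t)=(t,\gamma_t)$ is a c\`adl\`ag BV-curve in $\R^{1+n}$ with
\begin{equation}
\boldsymbol{\D\tilde\gamma}=\bigl(\L^1\llcorner I,\ \boldsymbol{\D\gamma}\bigr),\qquad \boldsymbol{\D\tilde\gamma}^{\mathrm c}=\bigl(\L^1\llcorner I,\ \boldsymbol{\D\gamma}^{\mathrm c}\bigr),\qquad J_{\tilde\gamma}=J_\gamma,
\end{equation}
and $\varphi_\gamma=\varphi\circ\tilde\gamma$ with $\varphi\in C^1_c$, hence globally Lipschitz. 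First, $\varphi_\gamma$ is c\`adl\`ag because $\varphi$ is continuous and $\tilde\gamma$ is c\`adl\`ag. It is BV because its pointwise variation on any partition is bounded by $\Lip(\varphi)$ times that of $\tilde\gamma$, which is finite since a c\`adl\`ag BV-curve has pointwise variation equal to $|\boldsymbol{\D\gamma}|(I)$. It remains to identify $\D\varphi_\gamma$.

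\emph{Smooth case.} If $\gamma$ is $C^1$, the formula is the ordinary chain rule and has no jump term. For general $\gamma$ we approximate by mollification: set $\gamma^\varepsilon\coloneqq\gamma*\rho_\varepsilon$ on compact subintervals containing $\spt\varphi$ in time. Then $\gamma^\varepsilon\to\gamma$ at every continuity point, $\boldsymbol{\D\gamma^\varepsilon}\rightharpoonup\boldsymbol{\D\gamma}$ weakly$^*$, and $|\boldsymbol{\D\gamma^\varepsilon}|\rightharpoonup|\boldsymbol{\D\gamma}|$. This route is delicate at the jump points, so instead I would follow the standard Vol'pert-type argument, splitting $\boldsymbol{\D\gamma}$ into its diffuse and jump parts.

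\emph{Diffuse part.} Fix $\delta>0$ and let $J_\delta\subset J_\gamma$ be the finite set of jumps with $|\gamma_t-\gamma_{t-}|\geq\delta$. On an open interval $(a,b)$ free of $J_\delta$, take $s<t$ in $(a,b)$ and write
\begin{equation}
\varphi_\gamma(t)-\varphi_\gamma(s)=\int_s^t\partial_t\varphi(r,\gamma_r)\,\d r+\bigl[\varphi(s,\gamma_t)-\varphi(s,\gamma_s)\bigr]+\mathrm{err},
\end{equation}
where the error is $o(t-s)$ uniformly, by the uniform continuity of $\nabla\varphi$. By a first-order Taylor expansion,
\begin{equation}
\varphi(s,\gamma_t)-\varphi(s,\gamma_s)=\nabla\varphi(s,\gamma_s)\cdot(\gamma_t-\gamma_s)+\omega\bigl(|\gamma_t-\gamma_s|\bigr)\,|\gamma_t-\gamma_s|,
\end{equation}
with $\omega$ the modulus of continuity of $\nabla\varphi$. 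Summing over fine partitions of $(a,b)$, and noting that $\gamma_t-\gamma_s=\boldsymbol{\D\gamma}((s,t])$ for c\`adl\`ag representatives, the Riemann--Stieltjes sums converge to
\begin{equation}
\int\nabla\varphi\circ\tilde\gamma_{-}\cdot \d\boldsymbol{\D\gamma}.
\end{equation}
The remainder terms are bounded by $\omega(\sup|\gamma_t-\gamma_s|)\,|\boldsymbol{\D\gamma}|((a,b))$. For fine meshes, $\sup|\gamma_t-\gamma_s|\lesssim\delta$ plus oscillation from the diffuse part, which tends to zero. On the diffuse part, $\gamma_-=\gamma$ $|\boldsymbol{\D\gamma}^{\mathrm c}|$-a.e., since $\{\gamma\neq\gamma_-\}=J_\gamma$ is countable and atoms are carried only by the jump part. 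This yields $\nabla\varphi\circ\tilde\gamma\cdot\boldsymbol{\D\gamma}^{\mathrm c}$ plus a contribution from the small jumps in $J_\gamma\setminus J_\delta$. Each small jump contributes $\nabla\varphi(t,\gamma_{t-})\cdot(\gamma_t-\gamma_{t-})$, which differs from $\varphi(t,\gamma_t)-\varphi(t,\gamma_{t-})$ by at most $\omega(\delta)\,|\gamma_t-\gamma_{t-}|$.

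\emph{Large jumps and conclusion.} At each of the finitely many points of $J_\delta$, $\varphi_\gamma$ has an atom of size exactly $\varphi(t,\gamma_t)-\varphi(t,\gamma_{t-})$, where $\gamma_t=\gamma_{t+}$ by right-continuity. Combining, the identity holds up to an error measure of total mass at most $\omega(\delta)\,|\boldsymbol{\D\gamma}|(I)$. Letting $\delta\to0$ proves the formula; the jump series converges absolutely since $|\varphi(t,\gamma_{t+})-\varphi(t,\gamma_{t-})|\leq\Lip(\varphi)\,|\gamma_{t+}-\gamma_{t-}|$.

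\emph{Main obstacle.} The hard part is making the Riemann--Stieltjes convergence rigorous against a measure with a Cantor part, uniformly away from the large jumps. I would first handle it by restricting to intervals where the oscillation of $\gamma$ is less than $2\delta$; such intervals exist and cover $I$ outside $J_\delta$ by finitely many pieces, thanks to the c\`adl\`ag property.
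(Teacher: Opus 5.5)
Your argument is correct. After the first step, though, it follows a different route from the paper.

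\textbf{Where you match the paper, and where you diverge.} Your reduction to $\tilde\gamma(t)=(t,\gamma_t)$ in $\R^{1+n}$, with $J_{\tilde\gamma}=J_\gamma$ and $\boldsymbol{\D\tilde\gamma}^{\mathrm c}=(\L^1\llcorner I,\boldsymbol{\D\gamma}^{\mathrm c})$, is exactly the paper's setup. The paper then simply applies Vol'pert's chain rule (as recalled by Ambrosio--Dal Maso) to $\varphi\circ\tilde\gamma$. The only remaining observation is that the approximate limit of a c\`adl\`ag curve coincides with the curve off its jump set. Hence the general jump term $\varphi(t+,\gamma_{t+})-\varphi(t-,\gamma_{t-})$ collapses to $\varphi(t,\gamma_{t+})-\varphi(t,\gamma_{t-})$.

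You instead re-derive the one-dimensional chain rule by hand. In effect you prove the pathwise It\^o formula for c\`adl\`ag paths of finite variation. Your ingredients are:
\begin{itemize}
\item left-point Riemann--Stieltjes sums and a first-order Taylor expansion;
\item the finitely many jumps of size at least $\delta$, handled exactly as atoms of $\D\varphi_\gamma$;
\item the remaining small jumps, handled to first order with error $\omega(\delta)|\gamma_t-\gamma_{t-}|$.
\end{itemize}
The paper's route is a two-line appeal to a general multidimensional theorem. Yours is self-contained and elementary: it exploits the order structure of $I$ and the fact that, for c\`adl\`ag curves, approximate limits are just one-sided limits. The cost is some bookkeeping.

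\textbf{Points to tighten.}
\begin{itemize}
\item The obstacle you flag is not one. The sum $\sum_i\nabla\varphi(t_i,\gamma_{t_i})\cdot\boldsymbol{\D\gamma}((t_i,t_{i+1}])$ is the integral against $\boldsymbol{\D\gamma}$ of a bounded step function that converges pointwise to $\nabla\varphi(r,\gamma_{r-})$ as the mesh tends to $0$. Dominated convergence therefore treats the absolutely continuous, Cantor and jump parts simultaneously.
\item The error in replacing $\varphi(t,\gamma_t)-\varphi(s,\gamma_t)$ by $\int_s^t\partial_t\varphi(r,\gamma_r)\d r$ is governed by the modulus of continuity of $\partial_t\varphi$ (not $\nabla\varphi$), evaluated at the oscillation of $\gamma$ on $[s,t]$. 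So it is not $o(t-s)$ uniformly. It only becomes small after summation over intervals avoiding $J_\delta$.
\item The c\`adl\`ag fact you actually need for both remainders is the following. On the time support of $\varphi$, for every $\epsilon>0$ there is $h>0$ such that $|\gamma_v-\gamma_u|\leq\delta+\epsilon$ whenever $0<v-u<h$ and $(u,v]\cap J_\delta=\emptyset$. This follows from a short compactness argument using right-continuity and left limits.
\item Make the conclusion explicit. You have an error bound $C\,\omega(2\delta)\,(|\boldsymbol{\D\gamma}|+\L^1)((s,t])$ on every half-open interval $(s,t]$ avoiding $J_\delta$, and exact agreement at the atoms in $J_\delta$. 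Together these bound the total variation of the fixed signed measure $\D\varphi_\gamma-(\text{right-hand side})$ by $C\,\omega(2\delta)\,(|\boldsymbol{\D\gamma}|(I)+T)$. Letting $\delta\to0$ gives the identity.
\item The mollification paragraph can be dropped.
\end{itemize}
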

\begin{proof}
	The function $\varphi_\gamma \coloneqq \varphi \circ \tilde\gamma$ belongs to $BV (I;\R)$ since it is the composition of a global Lipschitz function and a BV-curve $\tilde\gamma$. It is also  c\`adl\`ag since it is the composition of a continuous function and a c\`adl\`ag curve $\tilde\gamma$. By applying Vol'pert's chain rule (see e.g. \cite[Eq.\,(0.1) and (0.2)]{Ambrosio-DalMaso90} and the original works \cite{Volpert1967,VolpertHudjaev1985}), using the fact that approximate limit (as defined in \cite[Eq.\,(1.4)]{Ambrosio-DalMaso90}) of a c\`adl\`ag curve $\tilde\gamma$ coincides with itself on $I \setminus J_{\tilde\gamma}$, and the obvious equality of jump sets $J_{\tilde \gamma}=J_\gamma$, we have 
	\begin{align}
		\mathrm{D} \varphi_\gamma
		& = \big( \partial_t \varphi, \nabla \varphi \big) \circ \tilde\gamma \cdot \big( \L^1 \llcorner I,  \boldsymbol{\mathrm{D} \gamma}^{\mathrm{c}}  \big) +\sum_{t\in J_{\tilde\gamma}}[\varphi(t+,\gamma_{t+}) -\varphi(t-,\gamma_{t-})]\delta_t \\
		& = \partial_t \varphi \circ \tilde\gamma \, \L^1 \llcorner I
  + \nabla \varphi \circ \tilde\gamma \cdot \boldsymbol{\mathrm{D} \gamma}^{\mathrm{c}}
  + \sum_{t\in J_\gamma}[\varphi(t,\gamma_{t+})-\varphi(t,\gamma_{t-})]\delta_t.
  \tag*{\qedhere}
  \end{align}
\end{proof}

\subsection{BV-curves in metric spaces} Let $(X,d)$ be a metric space, and $ I \subset \R$ be arbitrary. Given $\gamma: I \to X$, we recall that the \emph{pointwise variation} of $\gamma$ on any $A \subset I$ is defined as
    \begin{equation}\label{def:pointwise_variation}
        \Var (\gamma ; A)\coloneqq\sup\left\{\sum_{i=0}^k d(\gamma_{t_i},\gamma_{t_{i + 1}}) \big| \, \{ t_i\}_{ 0 \leq i \leq k + 1}  \subset A \text{ with } t_0 <\cdots < t_{k + 1} \right\},
    \end{equation}
    and we write $\Var (\gamma ) \coloneqq  \Var (\gamma; I ).$

Since pointwise variation is sensitive to modifications of the function’s value at single points, one considers equivalence classes
and defines essential variation that disregards such negligible changes. 

Now let  $I \coloneqq (0,T) \subset \R$. Given $\gamma: I \to X$, the essential variation of $\gamma$ on any $A \subset I$ is defined as \begin{equation}\label{eq:def_essVar}
        \essVar (\gamma ; A) \coloneqq \inf \big\{ \Var (\tilde\gamma ; A) \big| \gamma = \tilde\gamma \text{ a.e. on } A \big\},
    \end{equation}
    and we write $ \essVar (\gamma ) \coloneqq \essVar (\gamma ; I)$. 

\begin{definition}[BV-curves on metric spaces]\label{def:BV_metric} 
    We define the space of BV-curves as follows
    \begin{equation}\label{eq:def_BV_metric}
       BV (I;\X) \coloneqq \big\{ \gamma \in L^1(I;X): \essVar (\gamma) < +  \infty \big\}.
    \end{equation}
    Furthermore, we denote by $\BV(I;X)$ the set of all càdlàg curves of bounded essential variation.
\end{definition}

For any $\gamma\in BV(I;X)$, we define the {total variation measure} of $\gamma$, denoted by $|\D \gamma|$, as the Borel measure\footnote{The existence of such measure can be seen by using \cite[Lemma 2.5]{AbediLiSchultz2024} on the metric completion of $X$.} on $I$ such that for all $(a,b)\subset I$, 
\begin{equation}\label{eq:goodrepresent}
    |\D \gamma|((a,b))=\essVar(\gamma; (a,b)).
\end{equation}
Note that the notions in the above definitions are consistent with those defined in \cref{def_BV_curves_Euclidean}.
Indeed, by \cite[Theorem 3.28]{Ambrosio-Fusco-Pallara2000} (the proof also applies to $\R^n$ with general norms), any $\gamma\in BV(I;\R^n)$ admits a c\`adl\`ag representative $\gamma^+$ such that for every interval $(a,b)\subseteq I$,
\begin{equation}
	\Var(\gamma^+;(a,b)) = |\boldsymbol{\D\gamma}|((a,b)).
\end{equation}
For c\`adl\`ag curve $\gamma^+$ it is known that $\Var(\gamma^+;(a,b))=\essVar(\gamma^+;(a,b))$; see e.g. \cite[Lemma 2.5]{AbediLiSchultz2024}.
Hence $|\D \gamma| = |\boldsymbol{\D\gamma}|$ as measures.
In other words, the total variation measure of a BV-curve on $\R^n$ coincides with the variation of its distributional derivative, and thus we will not distinguish between them throughout this paper.

\subsection{BV-curves in extended metric spaces}\label{subsec:BV_extended}
Let $(X,d)$ be an extended metric space.
Given $x\in X$, denote by $L^1_x(I;X)$ the set of equivalence classes of Borel curves $\gamma$ with
\begin{align}
    \int_I d(\gamma_t,x)\d t< + \infty,
\end{align}
where the equivalence is defined by equality $\L^1$-almost everywhere.
Moreover, write
\begin{align}
    L^1(I;X)=\bigcup_{x\in X}L^1_x(I;X).
\end{align}
We equip $L^1_x(I;X)$ and $L^1(I;X)$ with the obvious (extended) $L^1$-distances.
Denote also by $L^0(I;X)$ the set of Borel curves, again up to identifying almost everywhere equal curves.

\begin{definition}[BV-curves on extended metric spaces]\label{def:BV_extended}
    Let $(X,d)$ be an extended metric space.
    The space of BV-curves on $X$ is denoted by
      \begin{equation}\label{eq:def_BV_metric_extended}
       BV (I;\X) \coloneqq \big\{ \gamma \in L^0(I;X): \essVar (\gamma) < +  \infty \big\}.
    \end{equation}
\end{definition}
Note that this definition is well-posed. 
Firstly, when $(X,d)$ is a metric space, \eqref{eq:def_BV_metric_extended} gives the same space as the one defined in \cref{def:BV_metric}.
Secondly, it includes all curves with finite pointwise variation, which is a consequence of the following lemma.

\begin{lemma}
 Let $(X,d)$ be an extended metric space.
 Let $\gamma\colon I\to (X,d)$ be a curve with $\Var(\gamma)<+\infty$. 
 Then $\gamma$ is Borel measurable.   
\end{lemma}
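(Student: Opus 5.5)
The idea is to show that $\gamma$ takes values, with only countably many exceptions, in a separable metric subspace, and that on this subspace $\gamma$ is a regulated function of one real variable. Regulated functions have at most countably many discontinuities, and this makes Borel measurability elementary.

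\emph{Step 1: reduction to a metric component.} Since $\Var(\gamma)<+\infty$, every pair $s,t\in I$ satisfies $d(\gamma_s,\gamma_t)\le\Var(\gamma)<+\infty$. Hence the image $\gamma(I)$ lies in a single metric component $X_{x}\coloneqq\{y\in X: d(y,x)<\infty\}$, with $x=\gamma_{t_0}$ for some fixed $t_0\in I$. On $X_x$ the distance $d$ is an honest metric. Since $X_x$ is open and closed in $X$ (it is a union of balls, and so is its complement), it suffices to prove that $\gamma\colon I\to X_x$ is Borel.

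\emph{Step 2: one-sided limits and countable discontinuities.} Embed $X_x$ isometrically into its completion $\bar X_x$. For monotone sequences $t_k\uparrow t$, the finiteness of the variation gives $\sum_k d(\gamma_{t_k},\gamma_{t_{k+1}})<\infty$, so $(\gamma_{t_k})$ is Cauchy. Therefore one-sided limits $\gamma_{t\pm}$ exist in $\bar X_x$ at every point.

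Next, define the variation function $V(t)\coloneqq\Var(\gamma;(0,t])$. It is monotone and bounded, so it has countably many discontinuities. At every point of continuity $t$ of $V$ we have
\[
d(\gamma_s,\gamma_t)\le |V(s)-V(t)|,
\]
so $\gamma$ is continuous at $t$. Thus the discontinuity set $N$ of $\gamma$ is countable. (Alternatively, for each $\varepsilon>0$ there are at most $\Var(\gamma)/\varepsilon$ points where the oscillation exceeds $\varepsilon$.)

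\emph{Step 3: measurability.} Let $U\subset X_x$ be open. Then
\[
\gamma^{-1}(U)=\bigl(\gamma^{-1}(U)\cap (I\setminus N)\bigr)\cup\bigl(\gamma^{-1}(U)\cap N\bigr).
\]
The second set is countable, hence Borel. For the first, every $t\in(I\setminus N)\cap\gamma^{-1}(U)$ is a continuity point, so there is an open interval $J_t\ni t$ with $\gamma(J_t)\subset U$. Consequently the first set equals $(I\setminus N)\cap\bigcup_t J_t$, which is Borel because $N$ is countable. Hence $\gamma^{-1}(U)$ is Borel for every open $U$.

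If the paper's notion of Borel curve requires separably valued maps, as is usual for $L^0$, separability is also available. The set $\gamma(I\setminus N)$ is contained in the closure of $\gamma(\Q\cap I\setminus N)$ by continuity, and $\gamma(N)$ is countable. So the image of $\gamma$ is separable.

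The main obstacle, albeit a mild one, is Step 1 together with the care needed in the extended setting. One must ensure that the topology and Borel $\sigma$-algebra on $X$ restrict correctly to the component $X_x$, and that one-sided limits are taken in the completion when $X$ itself is not complete. The estimate via $V$ in Step 2 avoids needing those limits at all, so I would rely on it.
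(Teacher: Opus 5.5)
Your proof is correct, but it takes a different route from the paper after the common first step (reduction to a single metric component).

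\textbf{The paper's route.} It scalarizes. In a metric space the Borel $\sigma$-algebra is generated by Lipschitz functions, since every closed $F$ is the zero set of $d(\cdot,F)$. So it suffices that $f\circ\gamma$ is Borel for each Lipschitz $f$. Each $f\circ\gamma$ is a real function of bounded pointwise variation, which is classically Borel (for instance via the Jordan decomposition into monotone functions).

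\textbf{Your route.} You stay in the target space. The estimate $d(\gamma_s,\gamma_t)\le|V(s)-V(t)|$ shows that $\gamma$ is continuous outside the countable discontinuity set of the monotone function $V$. Your Step 3 then shows that any map from an interval into a topological space that is continuous off a countable set is Borel.

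\textbf{Comparison.} The paper's proof is shorter because it outsources the one-dimensional fact. Yours is self-contained and essentially reproves that fact for metric-valued curves. It also yields more: a countable discontinuity set, and hence separability of the image. With your approach Step 1 is actually unnecessary. The $V$-estimate holds directly in the extended metric space, and Step 3 uses only the topology of $X$.

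\textbf{A slip in the optional separability remark.} The set $(\Q\cap I)\setminus N$ can be empty, because a curve of bounded variation may jump at every rational. For example, take $\sum_n 2^{-n}\mathds{1}_{[q_n,T)}$ for an enumeration $(q_n)$ of $\Q\cap I$. Instead use the countable set $\gamma(\Q\cap I)\cup\gamma(N)$. Its closure contains $\gamma(I)$: continuity at $t\notin N$ gives $\gamma_{q_k}\to\gamma_t$ for any rationals $q_k\to t$, whether or not the $q_k$ lie in $N$. This remark is not needed for the lemma as stated.
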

\begin{proof}
    Since the pointwise variation is finite, and all metric components are open sets, we may assume that $(X,d)$ is a metric space.
    Since the $\sigma$-algebra generated by all Lipschitz functions coincides with the Borel $\sigma$-algebra of $X$, it suffices to prove that $f\circ \gamma$ is Borel measurable for all Lipschitz functions $f$.
    But this is a known fact, as $f\circ \gamma$ is a real valued function of bounded pointwise variation.
\end{proof}

Let us introduce an auxiliary notion of variation.
This is motivated by the characterisation in \cref{thm:BVequiv}.
\begin{definition}
    Let $X$ be an extended metric space.
    Let $\gamma\colon I\to X$ be a Borel map.
    Define
    \begin{align}
        \essVar^*(\gamma)\coloneqq \inf \{\Var(\tilde\gamma): \tilde\gamma\colon \tilde I\to X,\ \L^1(I\setminus \tilde I)=0,\ \tilde\gamma_t=\gamma_t \forall\ t\in \tilde I\}.
    \end{align}
\end{definition}

\begin{remark}
    Since the pointwise variation of a curve is a monotone set function, the modified essential variation is a well-defined function on $L^0(I;X)$.
\end{remark}

\begin{lemma} \label{lma:L0toL1}
Let $(X,d)$ be an extended metric space, and let $\gamma\in L^0(I;X)$ be such that $\essVar^*(\gamma)<\infty$.
Then $\gamma\in L^1(I;X)$.
In particular, $\gamma\in L^1(I;X)$ for every BV-curve $\gamma\in L^0(I;X)$.
\end{lemma}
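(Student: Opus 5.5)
Since $\essVar^*(\gamma)<\infty$, the definition of $\essVar^*$ provides a full-measure set $\tilde I\subset I$ and a curve $\tilde\gamma\colon\tilde I\to X$ with $\tilde\gamma=\gamma$ on $\tilde I$ and $\Var(\tilde\gamma)\le \essVar^*(\gamma)+1<\infty$. We may assume $\tilde I\neq\emptyset$: if $I$ has positive length, $\tilde I$ has full measure and is therefore nonempty. The plan is to fix a base point $\bar x\coloneqq\gamma_{t_0}$ for some $t_0\in\tilde I$ and to show that $\gamma\in L^1_{\bar x}(I;X)$.

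\textbf{Pointwise bound.} For every $t\in\tilde I$, the two-point partition $\{t_0,t\}$ (ordered appropriately) is admissible in the definition \eqref{def:pointwise_variation} of $\Var(\tilde\gamma;\tilde I)$. Hence
\begin{equation}
d(\gamma_t,\bar x)=d(\tilde\gamma_t,\tilde\gamma_{t_0})\le \Var(\tilde\gamma)<\infty .
\end{equation}
In particular, for all $t\in\tilde I$ the points $\gamma_t$ lie in the same metric component of $X$ as $\bar x$. Moreover, $t\mapsto d(\gamma_t,\bar x)$ is bounded on $\tilde I$ by a finite constant.

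\textbf{Integration.} Because $\gamma$ is Borel, $t\mapsto d(\gamma_t,\bar x)$ is a Borel function with values in $[0,\infty]$. This holds since $d(\cdot,\bar x)$ is continuous into $[0,\infty]$: it is finite and $1$-Lipschitz on the open component of $\bar x$ and identically $+\infty$ on its open complement. The function is bounded by $\Var(\tilde\gamma)$ on $\tilde I$, and $\L^1(I\setminus\tilde I)=0$. Therefore
\begin{equation}
\int_I d(\gamma_t,\bar x)\d t\le T\,\Var(\tilde\gamma)<\infty ,
\end{equation}
which gives $\gamma\in L^1_{\bar x}(I;X)\subset L^1(I;X)$.

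\textbf{The ``in particular'' statement.} This reduces to showing $\essVar^*(\gamma)\le\essVar(\gamma)$. Given any $\hat\gamma\colon I\to X$ with $\hat\gamma=\gamma$ a.e. and $\Var(\hat\gamma)<\infty$, set $\tilde I\coloneqq\{\hat\gamma=\gamma\}$. This set has full measure, and by monotonicity of the pointwise variation $\Var(\hat\gamma|_{\tilde I})\le\Var(\hat\gamma)$. Taking the infimum over such $\hat\gamma$ gives the inequality.

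The only delicate point is the measurability of $\{\hat\gamma=\gamma\}$ in the extended setting. This can be sidestepped by noting that an a.e.-equality set contains a Borel set of full measure. Otherwise the argument is routine.
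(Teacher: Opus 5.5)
Your proof is correct and follows the paper's argument: pick $t_0\in\tilde I$, bound $d(\gamma_t,\gamma_{t_0})$ by $\Var(\tilde\gamma;\tilde I)$ using a two-point partition, and integrate over the full-measure set $\tilde I$. Your write-up is somewhat more careful than the paper's. The paper's final estimate drops the factor $T$, and it leaves implicit both the Borel measurability of $t\mapsto d(\gamma_t,\bar x)$ and the reduction $\essVar^*\le\essVar$ needed for the ``in particular'' statement.
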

\begin{proof}
    Let $\gamma$ be a representative of the BV-curve with $\Var(\gamma;\tilde I)<\infty$ for some full-measure subset $\tilde I\subset I$.
    Let $x\coloneqq \gamma_{t_0}$ for some $t_0\in \tilde I$.
    Then $d(\gamma_t,x)=d(\gamma_t,\gamma_0)\le \Var(\gamma;\tilde I)$ for all $t\in \tilde I$.
    In particular, $\int_{I} d(\gamma_t,x)\d t=\int_{\tilde I} d(\gamma_t,x)\d t\le \Var(\gamma;\tilde I)<\infty$.
    Thus, $\gamma\in L_{x}^1$.
\end{proof}

\begin{lemma}\label{lma:esssepimage}
    Let $(X,d)$ be an extended metric space, and $\gamma\colon [0,1]\to X$ a Borel map.
    Then there exists a set $N\subset [0,1]$ with $\L^1(N)=0$ so that $\gamma([0,1]\setminus N)$ is separable.
\end{lemma}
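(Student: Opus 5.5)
We reduce to a metric space and then show that the image measure $\gamma_\#\L^1$ has separable support; this requires only countably many nonempty preimages of a discrete open family, and that count is where the difficulty lies. First, the metric components of $X$ (the classes of $x\sim y \iff d(x,y)<\infty$) are open and pairwise disjoint. Their preimages under $\gamma$ are therefore pairwise disjoint Borel subsets of $[0,1]$, and any union of them is again the preimage of an open set, hence Borel. So the first step is the same as the general step described below, applied to the discrete family of components. Once it is done, we may discard a null set and assume that $\gamma$ takes values in countably many components. It then suffices to treat each component separately, i.e. to assume that $(X,d)$ is a genuine metric space. Throughout we work with the finite Borel measure $\mathfrak m\coloneqq \gamma_\#(\L^1\llcorner[0,1])$ on $X$. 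We aim to show that $\mathfrak m$ is concentrated on a closed separable set $S$ and then take $N\coloneqq\gamma^{-1}(X\setminus S)$.

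The general step follows the Marczewski--Sikorski strategy. By Stone's theorem, every metric space has a $\sigma$-discrete base $\mathcal B=\bigcup_{k}\mathcal B_k$, where each $\mathcal B_k=\{U^k_i\}_{i\in J_k}$ is a discrete family of open sets. Fix $k$. Since $\mathcal B_k$ is discrete, for every $J\subset J_k$ the union $\bigcup_{i\in J}U^k_i$ is open, so $A_J\coloneqq\gamma^{-1}(\bigcup_{i\in J}U^k_i)$ is a Borel subset of $[0,1]$. Because the $U^k_i$ are pairwise disjoint, $\sum_i\L^1(A_{\{i\}})\le 1$, so only countably many $i$ have $\L^1(A_{\{i\}})>0$. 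The crucial claim is then the following: if $J'$ is the set of remaining indices, then $\L^1(A_{J'})=0$.

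Granting the claim, we define $N_k\coloneqq A_{J'_k}$ and $N\coloneqq\bigcup_k N_k$, which is a null set. For $t\notin N$ and every $k$, the point $\gamma_t$ lies in no member of $\mathcal B_k$ except possibly those in the countable set of indices with positive mass. Take any open ball around $\gamma_t$. It contains a basic set $U\in\mathcal B$ with $\gamma_t\in U$, and this $U$ must be one of the countably many "heavy" ones. Hence every point of $\gamma([0,1]\setminus N)$ belongs to the closure of the union of the countably many heavy basic sets. In fact, the countable subfamily of heavy basic sets is a base for the subspace topology of $\gamma([0,1]\setminus N)$. So this subspace is second countable, hence separable.

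The main obstacle is the crucial claim. The set function $J\mapsto \L^1(A_J)$ is a countably additive measure on the full power set of $J'$, and it vanishes on singletons. Ruling out such measures is exactly the measurable-cardinal issue. I would first try to exploit the special structure of the domain. The map $J\mapsto A_J$ is injective on the set of indices with $A_{\{i\}}\neq\emptyset$, and its values are Borel sets, of which there are only $2^{\aleph_0}$. This bounds the cardinality of the relevant index set by $\mathfrak c$. I would then invoke the Banach--Kuratowski/Ulam theorem that no nontrivial $\sigma$-additive diffuse measure lives on the power set of a set of cardinality at most $\mathfrak c$ whose values come from Borel preimages. If that route proves delicate, I would instead use the regularity of $\L^1$: approximate $A_{J'}$ from inside by compact sets $K$. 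On such a $K$, Lusin's theorem together with the fact that $\gamma$ restricted to $K$ is Borel should yield a further compact set $K'\subset K$ of almost full measure on which $\gamma$ is continuous. The image $\gamma(K')$ is then compact, so it meets only finitely many members of the discrete family, each of which is $\L^1$-null along $\gamma$. This forces $\L^1(K')=0$ and hence $\L^1(A_{J'})=0$. In this alternative, the remaining point to check is that Lusin's theorem applies to Borel maps into possibly nonseparable metric targets.
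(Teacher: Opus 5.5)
Your overall structure is sound. Once $\L^1(A_{J'_k})=0$ is known for every $k$, the heavy basic sets do form a countable base of $\gamma([0,1]\setminus N)$. Your component step is just the same claim for the discrete family of metric components; the paper avoids it by replacing $d$ with $d\wedge 1$, which has the same open sets. The gap is that the crucial claim is the entire content of the lemma, and neither of your routes proves it.

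Your first route invokes a theorem that does not hold in ZFC. Banach--Kuratowski excludes a diffuse $\sigma$-additive measure on the power set of a set of size $\mathfrak c$ only under CH. Ulam's theorem excludes such measures only on sets of cardinality below the least weakly inaccessible cardinal. By Solovay's random-real construction it is consistent, relative to a measurable cardinal, that some $\kappa\le\mathfrak c$ carries such a measure, even with $2^\kappa=\mathfrak c$. So neither $|J'|\le\mathfrak c$ nor the sharper bound $2^{|J'|}\le\mathfrak c$ that your injectivity remark gives yields a contradiction. The qualifier ``whose values come from Borel preimages'' is exactly the information that must be exploited, and no Ulam-type cardinal argument does that.

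Your second (Lusin) route is circular. If $\gamma$ is continuous on a compact $K'$, then $\gamma(K')$ is compact, hence separable. So a Lusin theorem for Borel maps into a nonseparable metric space would prove the lemma at once, by exhausting $[0,1]$ up to a null set with such $K'$; no $\sigma$-discrete base would be needed. The standard proof of Lusin's theorem uses a countable base of the target, and for nonseparable targets Lusin measurability is equivalent to being essentially separably valued. The ``remaining point to check'' you defer is therefore the lemma itself.

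Closing the gap in ZFC needs a genuinely deep input. The paper passes to $d\wedge 1$ and quotes \cite[451Q]{Fremlin} for the Radon measure $\L^1$. That result directly provides a closed separable $X_0$ with $\L^1(\gamma^{-1}(X\setminus X_0))=0$. An alternative is the Fremlin--Todor\v{c}evi\'c theorem: every partition of $[0,1]$ into Lebesgue-null sets has a subfamily with non-measurable union. Transported to $A_{J'}$ by a measure isomorphism, it forbids $\L^1(A_{J'})>0$ for your family $\{A_i\}_{i\in J'}$, all of whose unions are Borel. Under an extra axiom such as CH, or the nonexistence of a real-valued measurable cardinal $\le\mathfrak c$, your first route does go through, but then the lemma is proved only under that axiom.
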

\begin{proof}
Define $\tilde d(x,y)\coloneqq d(x,y)\wedge 1$.
Then $(X,\tilde d)$ is a metric space homeomorphic to $(X,d)$.
By \cite[451Q]{Fremlin}, since the Lebesgue measure is a Radon measure, there exists a closed separable set $X_0\subset X$ such that $\L^1(\gamma^{-1}(X\setminus X_0))=0$.
Thus, $N=\gamma^{-1}(X\setminus X_0)$ is the desired set.
\end{proof}

\begin{lemma}\label{lma:modessvar} Let $(X,d)$ be an extended metric space.
    \begin{enumerate}
        \item If $(X,d)$ is complete, then $\essVar=\essVar^*$.
        \item In general, $\essVar^*_{X}(\gamma)=\essVar_{ \bar X}(\gamma)$, where $\bar X$ is the completion of $X$.
    \end{enumerate}
\end{lemma}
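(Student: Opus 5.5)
The plan is to prove the two inequalities $\essVar^*\le\essVar$ and $\essVar\le\essVar^*$ separately. Only the second one needs completeness. Part (2) will then follow from part (1) applied in the completion $\bar X$.

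\emph{Step 1: $\essVar^*(\gamma)\le\essVar(\gamma)$ in any extended metric space.} Let $\tilde\gamma\colon I\to X$ agree with $\gamma$ $\L^1$-a.e. Put $\tilde I\coloneqq\{t\in I:\tilde\gamma_t=\gamma_t\}$, which has full measure. The restriction $\gamma|_{\tilde I}=\tilde\gamma|_{\tilde I}$ is admissible in the definition of $\essVar^*$. Since pointwise variation is monotone in the index set, $\Var(\gamma|_{\tilde I})\le\Var(\tilde\gamma;I)$. Taking the infimum over $\tilde\gamma$ gives the claim.

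\emph{Step 2: $\essVar(\gamma)\le\essVar^*(\gamma)$ when $X$ is complete.} This is the main step. Fix a full-measure set $\tilde I\subset I$ with $V\coloneqq\Var(\gamma;\tilde I)<\infty$; if no such set exists there is nothing to prove. All values $\gamma_t$, $t\in\tilde I$, lie at mutual distance at most $V$, so they lie in a single metric component, which is a complete metric space.

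Since $\tilde I$ has full measure, it is dense in $I$, so every $t\in I$ is a limit of points $s\downarrow t$ with $s\in\tilde I$. Finite variation forces $(\gamma_s)$ to be Cauchy as $s\downarrow t$, $s\in\tilde I$. Otherwise we could pick infinitely many disjoint pairs of points in $\tilde I$, decreasing to $t$, each pair at distance at least some $\varepsilon>0$, and this would make $\Var(\gamma;\tilde I)$ infinite. By completeness the limit exists.

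Define $\hat\gamma_t\coloneqq\gamma_t$ for $t\in\tilde I$ and $\hat\gamma_t\coloneqq\lim_{s\downarrow t,\,s\in\tilde I}\gamma_s$ for $t\notin\tilde I$. Then $\hat\gamma=\gamma$ a.e. To bound its variation, take any partition $t_0<\dots<t_{k+1}$ of $I$. For each $t_i\notin\tilde I$, choose $s_i\in\tilde I\cap(t_i,t_{i+1})$ (using $(t_{k+1},T)$ for the last point) so close to $t_i$ that $d(\gamma_{s_i},\hat\gamma_{t_i})<\varepsilon/(2k+2)$; for $t_i\in\tilde I$ keep $s_i=t_i$. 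The points $s_i$ are still increasing, so the triangle inequality gives
\begin{equation}
\sum_{i=0}^k d(\hat\gamma_{t_i},\hat\gamma_{t_{i+1}})\le \sum_{i=0}^k d(\gamma_{s_i},\gamma_{s_{i+1}})+\varepsilon\le V+\varepsilon .
\end{equation}
Hence $\Var(\hat\gamma;I)\le V$. Taking the infimum over $\tilde I$ yields the inequality, and together with Step 1 this proves (1).

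\emph{Step 3: part (2).} $\gamma$ is also a Borel map into $\bar X$. The quantity $\essVar^*(\gamma)$ only involves the values of $\gamma$ itself on full-measure sets $\tilde I$, and these lie in $X$, where the distance of $\bar X$ restricts to $d$. Therefore $\essVar^*_X(\gamma)=\essVar^*_{\bar X}(\gamma)$. Applying (1) in the complete space $\bar X$ gives $\essVar^*_{\bar X}(\gamma)=\essVar_{\bar X}(\gamma)$, which proves (2).
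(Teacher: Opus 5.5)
Your proof is correct and follows essentially the same route as the paper. Both argue the trivial inequality $\essVar^*\le\essVar$, then the reverse inequality by extending $\gamma|_{\tilde I}$ to all of $I$ without increasing variation (this is where completeness enters), and obtain (2) by passing to $\bar X$. The differences are minor. You prove the extension explicitly via right limits, where the paper cites Remark~2.6 of its earlier work (compare Lemma~\ref{lemma:cadlag_extension_dense_BV}). You also note directly that $\essVar^*_X(\gamma)=\essVar^*_{\bar X}(\gamma)$, which shortens the paper's two-inequality argument for (2).
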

\begin{proof}
    In general, $\essVar^*\le \essVar$.
    Let us show that $\essVar^*\ge \essVar$ when $X$ is complete.
    Let $\gamma\colon I\to X$ be a Borel curve, and let $\tilde\gamma\colon \tilde I\to X$, where $\L^1(\tilde I)=1$ and $\tilde\gamma_t=\gamma_t$ for all $t\in \tilde I$.
    Assume also that $\Var(\tilde\gamma;\tilde I)<\infty$.
    Then by \cite[Remark 2.6]{AbediLiSchultz2024}, since $X$ is complete, we can extend $\tilde \gamma$ to the whole interval $I$ so that $\Var(\tilde\gamma;I)=\Var(\tilde\gamma;\tilde I)$.
    In particular,
    \begin{align}
        \Var(\tilde\gamma;\tilde I)=\Var(\tilde\gamma;I)\ge \essVar(\gamma).
    \end{align}
    By taking infimum over all such curves $\tilde \gamma$, we get that
    \begin{align}
        \essVar^*(\gamma)\ge \essVar(\gamma).
    \end{align}
    Let us now show, in the case of the general metric space, $\essVar^*(\gamma)=\essVar_{ \bar X}(\gamma)$.
    By definition and by the first claim, we have that
    \begin{align}
        \essVar^*_{ X}(\gamma)\ge \essVar^*_{{ \bar X}}(\gamma)=\essVar_{ \bar X}(\gamma).
    \end{align}
    On the other hand, if $\hat \gamma\colon I\to \bar X$ is such that $\gamma=\hat \gamma$ almost everywhere, then there exists $\hat I\subset I$ with $\L^1(\hat I)=1$ such that $\hat\gamma_t=\gamma_t\in X$ for all $t\in \hat I$ and $\Var_{ \bar X}(\hat\gamma)\ge\Var_{ X}(\gamma\lvert_{\hat I})$.
    Hence,
    \begin{equation}
        \essVar^*_X(\gamma)\le \inf_{\hat\gamma}\Var_X(\gamma\lvert_{\hat I})\le \inf_{\hat\gamma}\Var_{\bar X}(\hat\gamma)=\essVar_{\bar X}(\gamma). 
        \tag*{\qedhere}
    \end{equation}
\end{proof}

 \begin{theorem}[Equivalent definitions of BV-curves in extended metric spaces]\label{thm:BVequiv}
     Let $(X,d)$ be an extended metric space, and $I \coloneqq (0,T) \subset \R$. Let $\gamma\in L^0(I;X)$.
     Then the following are equivalent:
     \begin{enumerate}[label=(\arabic*), font=\normalfont]
         \item $\essVar^*(\gamma)<+\infty$;
         \item $\sup_{0<h<1}\int_{[0,1-h]} \frac{d(\gamma_{t+h},\gamma_t)}{h}\d t<+\infty$;
         \item there exists a finite Borel measure $\mu\in \M^+(I)$ such that for all $\varphi\in \Lip_1(X,d)$, the function $\varphi\circ \gamma$ is a BV-function and
         \begin{align}
             |D(\varphi\circ\gamma)|\le \mu;
         \end{align}
         \item there exists a finite Borel measure $\mu\in \M^+(I)$ such that
         \begin{align}
             d(\gamma_s,\gamma_t)\le \mu([s,t])
         \end{align}
         for $\L^2$-almost every $(s,t)\in I^2$;
     \end{enumerate}
     if, in addition $(X,d)$ is complete, these are also equivalent to: 
     \begin{enumerate}[label=(\arabic*), font=\normalfont,start=5]
     \item $\gamma$ is a BV-curve (in the sense of \Cref{def:BV_extended});
     \end{enumerate}
     and $\gamma$ admits a càdlàg representative $\tilde\gamma$ such that $\essVar (\gamma;(a,b)) =\Var\,(\tilde\gamma;(a,b))$ for $\forall (a,b) \subseteq I$.
 \end{theorem}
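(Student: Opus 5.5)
The plan is to prove the cycle (1)$\Rightarrow$(2)$\Rightarrow$(3)$\Rightarrow$(4)$\Rightarrow$(1), and then handle (5) and the càdlàg representative using Lemma~\ref{lma:modessvar} together with the metric-space result \cite[Theorem~2.17]{AbediLiSchultz2024}. A preliminary reduction is used throughout. By Lemma~\ref{lma:esssepimage}, after discarding a null set of times, $\gamma$ takes values in a separable set. Whenever any of (1)--(4) holds, $d(\gamma_s,\gamma_t)<\infty$ for a.e.\ pair $(s,t)$: in (1) this is immediate, in (2) and (4) it follows from finiteness of the integrals or bounds, and in (3) one tests with $\varphi=d(\cdot,x)\wedge n$. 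Consequently, for a.e.\ $t$, $\gamma_t$ lies in a single metric component $X_0$ of $X$. Components are open, closed and at infinite distance from each other. Hence all four quantities are unchanged if we replace $X$ by $X_0$, a genuine (separable after discarding a null set) metric space, or by its completion $\bar X_0$. On $\bar X_0$ the equivalences (1)--(4) reduce to \cite[Theorem~2.17]{AbediLiSchultz2024}, since $\essVar^*_{X_0}=\essVar_{\bar X_0}$ by Lemma~\ref{lma:modessvar}(2).

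If one prefers a self-contained argument, the implications go as follows. For (1)$\Rightarrow$(2), take $\tilde\gamma$ on a full-measure set $\tilde I$ with finite variation, and use $\int_0^{1-h} d(\gamma_{t+h},\gamma_t)\,\d t\le h\Var(\tilde\gamma)$, obtained by summing over the shifted grids $t+kh$ and applying Fubini. For (2)$\Rightarrow$(3), for $\varphi\in\Lip_1$ the difference quotients of $\varphi\circ\gamma$ are bounded in $L^1$ uniformly in $h$, so $\varphi\circ\gamma\in BV$. The measure $\mu$ is then a weak$^*$ limit point of $h^{-1}d(\gamma_{t+h},\gamma_t)\,\d t$. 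The bound $|D(\varphi\circ\gamma)|\le\mu$ follows by lower semicontinuity on open intervals, and the limit must be chosen independently of $\varphi$; one subsequence serves for all $\varphi$. For (3)$\Rightarrow$(4), choose a countable dense set $\{x_k\}$ in the essential image and set $\varphi_k=d(\cdot,x_k)$, truncated suitably. For good representatives one has $|\varphi_k(\gamma_s)-\varphi_k(\gamma_t)|\le\mu([s,t])$ at Lebesgue points, and $d(\gamma_s,\gamma_t)=\sup_k|\varphi_k(\gamma_s)-\varphi_k(\gamma_t)|$. For (4)$\Rightarrow$(1), define $\tilde I$ as the set of $s$ for which the bound holds for a.e.\ $t$. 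By a Fubini argument together with right-continuity of $t\mapsto\mu([s,t])$ after a further density refinement, the bound then holds for all $s<t$ in a full-measure set. Summing over partitions gives $\Var(\gamma|_{\tilde I})\le\mu(I)$.

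For the completeness part: if $X$ is complete then $\essVar=\essVar^*$ by Lemma~\ref{lma:modessvar}(1), so (5)$\Leftrightarrow$(1). The càdlàg representative is produced in $\bar X_0=X_0$, which is closed in $X$, via \cite[Lemma~2.5 and Remark~2.6]{AbediLiSchultz2024}. One extends $\gamma|_{\tilde I}$ by right limits, which exist by completeness and finite variation. The identity $\essVar(\gamma;(a,b))=\Var(\tilde\gamma;(a,b))$ then follows because the right-limit extension does not increase the variation on open intervals.

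The main obstacle is the step (4)$\Rightarrow$(1): upgrading an $\L^2$-a.e.\ inequality to a pointwise variation bound on a single full-measure set of times. I would handle it by first proving that the bound holds for all pairs in $\tilde I\times\tilde I$ with $s<t$. This uses density points combined with the triangle inequality through intermediate good times $r$, for which $d(\gamma_s,\gamma_t)\le d(\gamma_s,\gamma_r)+d(\gamma_r,\gamma_t)$, with $r$ chosen close to $s$ or to $t$.
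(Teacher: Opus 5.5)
Your main route is the paper's proof. You localize to a single metric component, make it separable with \cref{lma:esssepimage}, and pass to its completion, transferring (1) via \cref{lma:modessvar}(2) and (3) via McShane. You then invoke \cite[Theorem~2.17]{AbediLiSchultz2024}, and your self-contained sketch of the implications is an optional extra.

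The one step the paper makes explicit and you skip is establishing $\gamma\in L^1(I;X)$ before applying the cited theorem; a.e.\ finiteness of $d(\gamma_s,\gamma_t)$ alone does not give this. It does follow from \cref{lma:L0toL1} for (1), from $\iint d(\gamma_s,\gamma_t)\,\d s\,\d t<\infty$ for (2) and (4), and from your own (3)$\Rightarrow$(4) argument with the countable family $d(x_i,\cdot)\wedge N$ for (3).
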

 
\begin{remark}[Equivalence of variation measures of BV-curves in extended metric spaces]\label{remark:variationmeasure}
    Each of the conditions $(1)-(4)$ induces a minimal measure with respect to it.
         Moreover, these minimal measures coincide.
         Namely, for a BV-curve $\gamma$, there exists a measure $|\D\gamma|$ satisfying conditions $(a)-(d)$ below such that if $\mu$ is another measure satisfying \emph{any} of them, then $|\D\gamma|\le \mu$.
         \begin{enumerate}[label=(\alph*)]
             \item $\essVar^*(\gamma;(a,b))\le \mu((a,b))$ for all $a,b\in I$;
             \item $\sup_{0<h<b-a}\int_a^{b-h}d(\gamma_{t+h},\gamma_t)/h\d t\le \mu((a,b))$ for all $a,b\in I$;
             \item $|\D(\varphi\circ \gamma)|\le\mu$ on $I$ for all $\varphi\in \Lip_1(X,d)$;
             \item $d(\gamma_s,\gamma_t)\le \mu([s,t])$ for $\L^2$-almost every $(s,t)\in I$.
         \end{enumerate}
 \end{remark}
 
 \begin{proof}[Proof of \Cref{thm:BVequiv}]
 Let $\gamma\in L^0(I;X)$ be such that it satisfies any of the conditions $(1)-(4)$.
 We show first that $\gamma\in L^1(I;X)$.\medskip
\\ 
 The case (1) is the content of \cref{lma:L0toL1}.
    Both conditions (2) and (4) imply that
    \[\iint_{[0,1]^2}d(\gamma_s,\gamma_t)\d s\d t<\infty, \]
    from which we deduce that $\gamma\in L^1_{\gamma_t}$ for almost every $t\in [0,1]$.
\\
    Assume now that the condition $(3)$ is satisfied.
    Writing $\phi_{x,N}\coloneqq d(x,\cdot)\wedge N$ for $x\in X$ and $N\in \N$, we have $\phi_{x,N}\in \Lip_1(X)$.
    In particular, $\phi_{x,N}\circ \gamma$ is a BV-function, and (due to the equivalence of (3) and (4) on $\R$) 
    \[|\phi_{x,N}(\gamma_s)-\phi_{x,N}(\gamma_t)|\le \mu([s,t])\]
    for $\L^2$-almost every $(s,t)$.
    On the other hand, for any $x,y\in X$, we have
    \begin{align}
        d(x,y)=\sup_{z\in X,N\in\N}|\phi_{z,N}(x)-\phi_{z,N}(y)|.
    \end{align}
    Hence, by the essential separability of the image of $\gamma$ (namely \cref{lma:esssepimage}), there exists $\{x_i\}\subset{i\in\N}\in X$ so that 
    \begin{align}
        d(\gamma_s,\gamma_t)=\sup_{i,N\in\N}|\phi_{x_i,N}(\gamma_s)-\phi_{x_i,N}(\gamma_t)|
    \end{align}
    for almost every $s,t\in [0,1]$.
    Thus,
    \begin{align}
        d(\gamma_s,\gamma_t)\le \mu([s,t])
    \end{align}
    for almost every $s,t\in [0,1]$, and by the previous argument we conclude that $\gamma\in L^1_{\gamma_t}$ for almost every $t\in [0,1]$.\medskip
\\    
 Now, due to the $L^1$-regularity of $\gamma$, we may assume that $\gamma\in L^1(I;(X_0,d))$, where $X_0$ is a metric component of $X$.
By \cref{lma:esssepimage} we may further assume without loss of generality that $ X_0$ is separable.
 Finally, we consider $\gamma$ as a curve in $L^1(I; \overline{X_0})$, where $\overline{X_0}$ is the completion of $X_0$.
 \\
 Observe that the curve $\gamma$ satisfies any condition of $(1)-(4)$ if and only if it satisfies the same condition viewed as a map to $\overline{X_0}$.
 It is clear that the items $(2)$ and $(4)$ do not differ on $X$, $X_0$ or $\overline{X_0}$.  
 For the condition (1) by \cref{lma:modessvar} $\essVar_{\bar{X_0}}(\gamma)=\essVar^*_{X_0}(\gamma)=\essVar^*_X(\gamma)$.
 For $(3)$, observe that for any $\varphi\in \Lip_1(\overline{X_0})$, there exists $\psi\in \Lip_1(X,d)$ with $\varphi\lvert_{X_0}=\psi\lvert_{X_0}$ due to McShane's extension theorem, and vice versa.
 \\
 Hence, applying \cite[Theorem 2.17]{AbediLiSchultz2024} we conclude that $\gamma$ satisfies all $(1)-(4)$ with respect to $\overline{X_0}$, which means it also satisfies all $(1)-(4)$ with respect to $X$.
 \end{proof}

\begin{example}
    Let $\gamma \colon I \to X\coloneqq\Q$ be a function equivalent to a Cantor function, that is, $\gamma(t)=\mathcal{C}(t)$ for almost every $t\in I$, where $\mathcal{C}\colon I\to \R$ is the standard Cantor function.
    Such a function exists, since the Cantor set $C$ has zero measure and $\mathcal{C}(I\setminus C)\subset \Q$.
    We claim that the essential variation of $\gamma$ is infinite.
    Denote by $C_0\subset C$ the set of points that are not endpoints of the removed intervals in the construction of the Cantor set.
    If $\gamma$ is continuous at some $x\in C_0$, then $\gamma(x)=\mathcal{C}(x)$ as they agree almost everywhere.
    Note that $\mathcal C$ is injective in $C_0$ and the image of $\gamma$ is countable. 
    Then the set of all $x$ in $C_0$ where $\gamma(x)=\mathcal{C}(x)$ is at most countable, which means that $\gamma$ is discontinuous at infinite many points.
    Thus $\Var(\gamma)=\infty$, and since $\gamma$ was an arbitrary representative, $\essVar(\gamma)=\infty$.
    On the other hand, $\essVar^*(\mathcal C\lvert_{I\setminus C})\leq\Var (\mathcal C)<\infty$. 
    Therefore we see that in general $\essVar\neq \essVar^*$ nor do the corresponding BV-classes agree.
\end{example}

\section{Derivations and Continuity Equation}\label{sec:derivations}
Throughout this section, $(X,d)$ stands for a complete separable metric space.
\begin{definition}\label{def:derivation}
	Let $\lambda\in \M^+(I\times X)$.
	We say that a linear map $$V\colon \Lipb(X)\to L^\infty(I\times X, \lambda)$$ is a \emph{$\lambda$-derivation} if it satisfies the Leibniz rule, i.e., for all $f,g\in \Lipb(X)$
	\begin{equation}
		V(fg)=fV(g)+gV(f),\quad \lambda\text{-a.e. on $I\times X$}.
		\end{equation}
    \end{definition}

\subsection{Continuous derivations}\label{sec:contiDerivation}
	The space $\Lipb(X)$, equipped with the norm 
    \[
\|f\|_{\Lipb}\coloneqq \|f\|_{\infty} + \Lip(f)
    \]
 is not only a Banach space but also the dual of a Banach space, namely the Arens--Eells space; see \cite[Chapter 3]{Weaver_book}.
On norm-bounded subsets of $\Lipb(X)$, the weak$^*$ topology coincides with the topology of pointwise convergence.
Moreover, for any Borel measure $\lambda$ on $I\times X$, $L^\infty(\lambda)$ is the dual of $L^1(\lambda)$. 
Thus we may impose continuity conditions for $V$ in terms of the norm and weak$^*$ topologies as follows.

We say a $\lambda$-derivation $V$ is
\begin{itemize}
	\item \emph{continuous} if it is a bounded operator with respect to the norms $\|\cdot\|_{\Lipb}$ and $\|\cdot\|_{L^\infty(\lambda)}$, and we denote by $\|V \|_{{\rm op}}$ its operator norm;
	\item \emph{weak$^*$-continuous}\footnote{Strictly speaking, this is sequentially weak$^*$-continuous.
    However, since the predual of $\Lipb(X)$ is separable, a sequentially weak$^*$-continuous operator is also weak$^*$-continuous.
    This follows from the Banach--Dieudonn\'e--Krein--Smulian Theorem and the fact that the weak$^*$ topology on norm-bounded sets is metrizable when the predual is separable; see \cite[Theorem 3.28, Theorem 3.33]{Brezis_textbookFA}.} if for any weak$^*$-convergent sequence $f_n\overset{*}{\rightharpoonup }f$ in $\Lipb(X)$, the sequence $Vf_n$ converges to $Vf$ under the weak$^*$ topology of $L^\infty(\lambda)$.
	\end{itemize}
Specifically, as every weak$^*$-convergent sequence is norm-bounded by the Banach--Steinhaus theorem, a $\lambda$-derivation $V$ is weak$^*$-continuous if for any norm-bounded sequence $(f_n)\subset \Lipb(X)$ which converges pointwise to $f\in \Lipb(X)$, it holds that 
\begin{align}
    \int_{I\times X} g \cdot V(f_n)\d\lambda \overset{n\to\infty}{\longrightarrow} \int_{I\times X} g \cdot V(f)\d\lambda,\quad \forall g\in L^1(\lambda).
\end{align}
Given a $\lambda$-derivation $V$, we say $\theta\in L^0(\lambda)$ is a \emph{pointwise mass bound} of $V$ if 
for any $f\in \Lipb(X)$, 
\begin{equation}\label{eq:massbound}
	|Vf(t,x)|\leq \theta(t,x) \, \lip_{\rm a} (f) (x),\quad  \text{$\lambda$-a.e. on $I\times X$},
\end{equation}
where $\lip_{\rm a} (f) (x)$ denotes the asymptotic Lipschitz constant of $f$ at $x$; see \eqref{eq:def_lipfx}.

\begin{proposition}\label{prop:derivation}
    Let $V\colon \Lipb(X)\to L^\infty(\lambda)$ be a $\lambda$-derivation for $\lambda \in \M^+ (I \times X)$.
    Then
   \begin{enumerate}[label=(\arabic*), font=\normalfont]
    \item\label{item:weaklocality} (weak locality) for any $f\in \Lipb(X)$ which is constant on an open $U\subset X$, $Vf=0$ $\lambda$-a.e. on $I\times U$;
    \item\label{item:selfimprovement_massbound} (self-improvement) if there exists $\theta\in L^0(\lambda)$ such that 
    \begin{equation}\label{ineq:massbound_apriori}
    |Vf(t,x)|\leq \theta (t,x) \, \|f\|_{\Lipb},\quad \text{$\lambda$-a.e. on $I\times X$},
    \end{equation}
    then $\theta$ is a pointwise mass bound of $V$ in the sense of \eqref{eq:massbound}.
   \end{enumerate}
   If $V$ is continuous, then
   \begin{enumerate}[label=(\arabic*), font=\normalfont,start=3]
       \item\label{item:optimalmassbound} 
      there exists a unique $\theta_{V,\lambda}\in L^\infty(\lambda)$, called the \emph{optimal mass bound}, such that for all pointwise mass bounds $\theta$ of $V$, $\theta_{V,\lambda}\leq \theta$ $\lambda$-a.e.
   \end{enumerate}
    If $V$ is weak$^*$-continuous, then
    \begin{enumerate}[label=(\arabic*), font=\normalfont,start=4]
       \item\label{item:weakstartocontinuous} $V$ is continuous;
       \item\label{item:strong_locality} (strong locality) for any $f\in \Lipb(X)$, $Vf=0$ $\lambda$-a.e. on $I\times \{f=0\}$.
   \end{enumerate}
\end{proposition}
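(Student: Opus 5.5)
We prove the five items in the order stated, using only linearity, the Leibniz rule, and the continuity assumptions.

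\emph{Weak locality \ref{item:weaklocality}.} Since $V$ is linear, the Leibniz rule with $f=g=1$ gives $V1=2V1$, so $V$ kills constants. Hence we may assume $f=0$ on $U$. Fix $x_0\in U$ and $r>0$ with $B(x_0,2r)\subset U$, and let $\psi\coloneqq \min\{1,\,d(\cdot,X\setminus B(x_0,2r))/r\}\in\Lipb(X)$. Then $\psi f=0$ on all of $X$, so
\[
0=V(\psi f)=\psi\,Vf+f\,V\psi .
\]
On $I\times B(x_0,r)$ we have $\psi=1$ and $f=0$, so $Vf=0$ $\lambda$-a.e. there. Since $X$ is separable, $U$ is a countable union of such balls, which gives the claim.

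\emph{Self-improvement \ref{item:selfimprovement_massbound}.} Fix $f$ and a countable dense set $\{x_i\}\subset X$. For a small ball $B_i=B(x_i,r)$, set $L\coloneqq\Lip(f|_{B(x_i,2r)})$ and let $g$ be a McShane extension of $f|_{B(x_i,2r)}$ with $\Lip(g)\le L$. Truncate $g$ to a bounded function and subtract $f(x_i)$, so that $\|g\|_\infty\lesssim Lr$. Then $f-g-f(x_i)$ vanishes on the open ball $B(x_i,2r)$, so by \ref{item:weaklocality} $Vf=Vg$ on $I\times B_i$. Therefore $|Vf|\le\theta(L+Cr L)$ on $I\times B_i$. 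Take a countable intersection over the $i$ and over rationals $r\to0$. Every $x$ lies in balls $B(x_i,r)$ with $x_i\to x$, and $\sup\{\Lip(f|_{B(x_i,2r)})\}\to\lip_{\rm a}(f)(x)$ because $B(x_i,2r)\subset B(x,3r)$. This yields $|Vf|\le\theta\lip_{\rm a}(f)$ $\lambda$-a.e. A subtlety is that the null sets depend on $f$, but that is allowed by \eqref{eq:massbound}.

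\emph{Optimal mass bound \ref{item:optimalmassbound}.} Continuity gives $|Vf|\le\|V\|_{\rm op}\|f\|_{\Lipb}$, so $\theta\equiv\|V\|_{\rm op}$ satisfies \eqref{ineq:massbound_apriori}, and by \ref{item:selfimprovement_massbound} a bounded pointwise mass bound exists. Set
\[
\theta_{V,\lambda}\coloneqq \operatorname{ess\,sup}_{f}\,|Vf|/\lip_{\rm a}(f),
\]
the $\lambda$-essential supremum of the family (with $0/0\coloneqq0$). This is realized as a countable supremum over some $f_k$ and is bounded by $\|V\|_{\rm op}$. Any pointwise mass bound $\theta$ dominates each $|Vf_k|/\lip_{\rm a}(f_k)$ $\lambda$-a.e., hence dominates $\theta_{V,\lambda}$. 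Conversely, $\theta_{V,\lambda}$ is itself a mass bound, because the essential supremum dominates each member of the family a.e. Uniqueness is a.e.\ uniqueness of the minimal element.

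\emph{Continuity from weak$^*$-continuity \ref{item:weakstartocontinuous}.} Norm-convergent sequences converge weak$^*$, so $V$ has closed graph from norm to weak$^*$. Since the weak$^*$ topology is Hausdorff, the graph is also closed for norm to norm, and the closed graph theorem gives boundedness.

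\emph{Strong locality \ref{item:strong_locality}.} Let $\phi_n(s)\coloneqq \operatorname{sign}(s)\max\{|s|-1/n,0\}$ and $f_n\coloneqq\phi_n\circ f$. Then $f_n\to f$ pointwise with $\|f_n\|_{\Lipb}\le\|f\|_{\Lipb}$, so $Vf_n\overset{*}{\rightharpoonup} Vf$. Each $f_n$ vanishes on the open set $\{|f|<1/n\}\supset\{f=0\}$, so $Vf_n=0$ on $I\times\{f=0\}$ by \ref{item:weaklocality}. Testing against $g\,\mathds 1_{I\times\{f=0\}}$ for $g\in L^1(\lambda)$ shows $Vf=0$ there.

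The main obstacle is the countable-covering and measurability bookkeeping in \ref{item:selfimprovement_massbound}, namely bounding $\|g\|_\infty$ by $O(r)$ after localization so that the error term vanishes as $r\to0$.
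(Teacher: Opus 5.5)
Your proof is correct and follows the paper's route. Weak locality comes from a cutoff and the Leibniz rule, and self-improvement from a localized McShane extension combined with weak locality; you control $\|g\|_\infty$ by $2Lr$ on small balls, where the paper uses basis sets on which $f$ has small oscillation. Strong locality is the thresholding argument from Weaver that the paper only cites.

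The only deviations are harmless. In item (4) you use the closed graph theorem instead of Banach--Steinhaus. In item (3) you build $\theta_{V,\lambda}$ as the essential supremum of the ratios $|Vf|/\lip_{\rm a}(f)$ rather than as the essential infimum of all mass bounds; this gives an explicit formula. It needs mass bounds to be nonnegative where the ratio is $0/0$, a convention the paper's essential-infimum argument also relies on implicitly.
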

\begin{proof}
   Item \ref{item:weaklocality} is included in \cref{lemma:weaklocality}, following the definition of the notion of measure-valued derivation.
   We show now \ref{item:selfimprovement_massbound}.
Fix $f \in \Lipb(X)$ and $\varepsilon>0$.
Since $X$ is separable and $f$ is continuous, there exists a countable basis of $(U_n)_{n \in \N}$ of $X$ such that for each $n\in \N$, the oscillation of $f$ in $U_n$ is less than $\varepsilon$ i.e.
\[
M_n-m_n<\varepsilon,\quad  m_n\coloneqq \inf_{U_n}f, \ M_n\coloneqq \sup_{U_n}f.
\]
Then for each $n$, consider
\[
\bar f_n(x)\coloneqq \inf_{y\in U_n} (f(y)+L_nd(x,y))\wedge M_n\vee m_n,\quad L_n\coloneqq \Lip(f;U_n).
\]
Note that $\Lip(\bar f_n)\leq L_n$ and $\bar f_n=f$ on $U_n$.
Define $g_n \coloneqq \bar f_n - m_n$ on $X$. Then $g_n =  f - m_n$ on $U_n$, and $\|g_n\|_{\Lipb} <\varepsilon+\Lip(f;U_n)$.
Since $f-g_n = m_n$ is constant on $U_n$, weak locality gives $Vf = V g_n$ $\lambda$-a.e. on $I \times U_n$, which together with assumption yields
\begin{equation}
	|Vf(t,x)|=|V g_n(t,x)|\leq\theta(t,x)\|g_n\|_{\Lipb} < \theta(t,x)(\varepsilon+\Lip (f;U_n))
\end{equation}
for $\lambda$-a.e. $(t,x)$ on $I \times U_n$.
\\
Therefore, as $(U_n)_n$ forms a countable basis, 
\begin{equation}
	|Vf(t,x)|\leq \inf_n\{\theta(t,x)\cdot (\varepsilon+\Lip (f;U_n)):x\in U_n\}=\theta(t,x)\cdot (\varepsilon+\lip_{\rm a} f(x)).
\end{equation}
Taking $\varepsilon$ to $0$ establishes the pointwise mass bound.
\\
As for Item \ref{item:optimalmassbound}, when $V$ is norm bounded, the constant function $\|V\|_{\rm op}$ satisfies \eqref{ineq:massbound_apriori} and hence, by Item \ref{item:selfimprovement_massbound}, is a pointwise mass bound of $V$.
Thus, the set of pointwise mass bounds is nonempty, and since this set is stable under pointwise minima, by the standard essential-infimum argument, there exists a unique (up to $\lambda$-a.e. equality) optimal pointwise mass bound $\theta_{V,\lambda}$.
\\
The implication \ref{item:weakstartocontinuous} of norm boundedness from weak$^*$-continuity is standard due to the Banach--Steinhaus theorem.
The strong locality \ref{item:strong_locality} follows by the same argument in \cite[Lemma 10.34]{Weaver_book};  cf. \cite[Lemma 27]{Weaver00-JFA}.
\end{proof}

\subsubsection*{Measure-valued Derivations} 

\begin{definition}\label{def:measure_valued_derivation}
We say that a linear map
\begin{equation}\label{eq:MeasuredD}
\mathcal{D}\colon \Lipb(X) \to \M(I \times X; \R)
\end{equation}
is a \emph{measure-valued derivation} if it satisfies the Leibniz rule, i.e., for all $f,g\in \Lipb(X)$
\begin{equation}\label{eq:LeibnitzD}
		\mathcal{D}(fg) = f \mathcal{D}(g) + g \mathcal{D}(f).
	\end{equation}
\end{definition}

We say that a measure-valued derivation $\mathcal{D}$ is
\begin{itemize}
	\item \emph{continuous} if it is continuous with respect to the norm $\|\cdot\|_{\Lipb}$ on $\Lipb(X)$ and the total variation norm $\|\cdot\|_{\mathrm{TV}}$ on $\M(I \times X; \R)$;
	\item  \emph{weak$^*$-type continuous} if, whenever $f_n \wsconver f$ in $\Lipb(X)$, it holds that 
	\begin{equation}
		\int_{I \times X} g  \d\mathcal{D}(f_n) \overset{n\to\infty}{\longrightarrow} \int_{I \times X} g  \d\mathcal{D}(f), \quad \forall g \in C_b(I\times X).
	\end{equation}
	\end{itemize}

Here, the term ``weak$^*$-type'' is motivated by the fact that the topology of convergence of measures on $\M(I\times X;\R)$ against functions in $C_b(I\times X)$ coincides with the weak$^*$ topology on $\M(I\times X;\R)$ when $X$ is compact, via the Riesz representation theorem.
Nonetheless, for any element in $\M(I\times X;\R)$, regarded as an element in the dual of $C_b(I\times X)$, its norm coincides with the total variation norm (see e.g. \cite[lemma 2.1.11]{Bogachev_book18}).
It follows by the Banach--Steinhaus theorem that any weak$^*$-type continuous measure-valued derivation $\mathcal{D}$ is continuous.

Given a $\lambda$-derivation $V$, we associate with it a measure-valued derivation $\mathcal{D}$ by setting
	\begin{equation}\label{eq:measured-Derivation}
	\mathcal{D}f \coloneqq Vf\cdot \lambda,\quad  \forall f\in\Lipb(X).
\end{equation}
In this way, $\mathcal{D}$ is continuous if $V$ is continuous, and $\mathcal{D}$ is weak$^*$-type continuous if $V$ is weak$^*$-continuous.
Generally, we say a pair $(V,\lambda)$ is a representation of $\mathcal{D}$ if \eqref{eq:measured-Derivation} holds.
However, given $\mathcal{D}$, the existence of a reference measure $\lambda$ is not clear.
To this aim, we consider an important class of derivations, in the spirit of Di Marino \cite{Dimarino2014}.
\begin{definition}\label{def:massD}
    We say that a measure-valued derivation $\mathcal{D}$ has \emph{finite mass} if there exists a finite measure $\lambda\in \M^+(I\times X)$ such that
    \begin{equation}\label{eq:finiteD}
        |\mathcal{D}(f)|\leq  \lip_{\rm a} (f) \, \lambda  ,\quad \forall f\in \Lipb(X).
    \end{equation}
    We call $\lambda_*$ the \emph{mass measure} of $\mathcal{D}$ if it is the minimal measure satisfying \eqref{eq:finiteD}, and we call a pair $(V,\lambda_*)$ a \emph{minimal representation} of $\cD$ if $V$ is a $\lambda_*$-derivation satisfying \eqref{eq:measured-Derivation}.
\end{definition}

It is obvious that any finite-mass measure-valued derivation is continuous.
Moreover, by duality (see Lemma~\ref{lemma:finitemassD}\,\ref{item:massM2} below) and Proposition~\ref{prop:derivation}\,\ref{item:selfimprovement_massbound}, a measure-valued derivation $\cD$ has finite mass if and only if there exist $C>0$ and $\lambda\in \M^+(I\times X)$ such that 
\[
|\mathcal{D}(f)|\leq C \|f\|_{\Lipb} \lambda,\quad f\in \Lipb(X).
\]
\begin{lemma}\label{lemma:finitemassD}
    Let $\mathcal{D}$ be a finite-mass measure-valued derivation. 
    Then 
    \begin{enumerate}[label=(\arabic*), font=\normalfont]
        \item\label{item:massM1} the mass measure $\lambda_*$ of $\cD$ uniquely exists;
         \item\label{item:massM2} there exists a continuous $\lambda_*$-derivation $V$ satisfying \eqref{eq:measured-Derivation}, which is weak$^*$-continuous if $\cD$ is weak$^*$-type continuous;
        \item\label{item:massM1.5} $\lambda_*$ is the supremum measure of the family $\{|\cD f|, f\in \Lipb(X),\Lip(f)\leq 1\}$ in the sense of \cite{Ambrosio1990MetricSV} i.e. the minimal measure among all $\lambda$ satisfying
        \begin{equation}\label{eq:min_family}
        |\cD (f)|\leq \lambda,\quad \forall f\in \Lipb(X), \, \Lip(f)\leq 1;
        \end{equation}
        \item\label{item:massM3} $(V,\lambda)$ is a minimal representation of $\cD$ (in particular $\lambda$ is the mass measure of $\cD$) if and only if $(V,\lambda)$ is a representation of $\cD$ and the optimal mass bound $\theta_{V,\lambda}=1$ $\lambda$-a.e. on $I\times X$.
    \end{enumerate}
\end{lemma}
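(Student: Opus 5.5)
I would organize the proof around item \ref{item:massM1.5}, since this produces a concrete candidate for the mass measure. Consider the family $\mathcal F\coloneqq\{|\cD f| : f\in\Lipb(X),\ \Lip(f)\le 1\}$. It is dominated by any $\lambda$ satisfying \eqref{eq:finiteD}, because $\lip_{\rm a}(f)\le \Lip(f)\le 1$. Hence it has a least upper bound $\hat\lambda\in\M^+(I\times X)$ in the lattice of finite measures, given explicitly by
\[
\hat\lambda(B)=\sup\Big\{\sum_i |\cD f_i|(B_i): (B_i)\ \text{Borel partition of } B,\ \Lip(f_i)\le1\Big\},
\]
as in \cite{Ambrosio1990MetricSV}. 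By construction $\hat\lambda\le\lambda$ for every $\lambda$ satisfying \eqref{eq:finiteD}, and $\hat\lambda$ is minimal among measures satisfying \eqref{eq:min_family}. So the key claim is that $\hat\lambda$ itself satisfies \eqref{eq:finiteD}. This gives \ref{item:massM1} (existence, with uniqueness automatic from minimality) and \ref{item:massM1.5} at the same time.

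To show $|\cD f|\le \lip_{\rm a}(f)\,\hat\lambda$, I would first pass to a $\lambda$-derivation. Fix any $\lambda$ satisfying \eqref{eq:finiteD}. Then $\cD f\ll\lambda$, so $V_\lambda f\coloneqq \d\cD f/\d\lambda$ is well defined, with $|V_\lambda f|\le \lip_{\rm a}(f)\le \|f\|_{\Lipb}$. Linearity and the Leibniz rule pass to densities, so $V_\lambda$ is a continuous $\lambda$-derivation. Since $\hat\lambda\le\lambda$, we may write $\hat\lambda=\rho\lambda$ with $0\le\rho\le1$. For $\Lip(f)\le 1$, the bound $|\cD f|\le\hat\lambda$ means $|V_\lambda f|\le\rho$ $\lambda$-a.e. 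By homogeneity, and after splitting off constants (which $V$ kills by the Leibniz rule applied to $1\cdot 1$), this gives $|V_\lambda f|\le \rho\,\Lip(f)\le\rho\|f\|_{\Lipb}$ for all $f$. Proposition~\ref{prop:derivation}\,\ref{item:selfimprovement_massbound} then upgrades this to $|V_\lambda f|\le \rho\,\lip_{\rm a}(f)$, that is, $|\cD f|\le\lip_{\rm a}(f)\hat\lambda$. This is the step I expect to be the crux. The self-improvement must be applied with $\rho$ as the mass bound, and one has to check that the localization argument there uses only the Lipschitz constants of the $g_n$ and not their sup norm; otherwise the extra $\varepsilon$ is harmless anyway.

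For \ref{item:massM2}, take $V\coloneqq \d\cD f/\d\lambda_*$. It is a $\lambda_*$-derivation with $|Vf|\le\lip_{\rm a}(f)$, so it is continuous. For weak$^*$-continuity, suppose $f_n\wsconver f$. Then $\int g\,Vf_n\,\d\lambda_*\to\int g\,Vf\,\d\lambda_*$ for all $g\in C_b(I\times X)$. The sequence $(Vf_n)$ is bounded in $L^\infty(\lambda_*)$, and $C_b$ is dense in $L^1(\lambda_*)$ because $\lambda_*$ is a finite Borel measure on a metric space. A density argument therefore gives convergence against every $g\in L^1(\lambda_*)$.

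For \ref{item:massM3}: if $(V,\lambda)$ is a representation and $\theta_{V,\lambda}=1$, then $\lambda$ satisfies \eqref{eq:finiteD}, so $\lambda_*\le\lambda$. Write $\lambda_*=\rho\lambda$. Then $|Vf|\lambda=|\cD f|\le\lip_{\rm a}(f)\rho\lambda$, so $\rho$ is a pointwise mass bound, and optimality gives $1\le\rho$. Hence $\lambda=\lambda_*$. Conversely, for a minimal representation, set $A\coloneqq\{\theta_{V,\lambda_*}<1\}$ and consider $\lambda'\coloneqq\theta_{V,\lambda_*}\lambda_*$. This measure satisfies \eqref{eq:finiteD} and $\lambda'\le\lambda_*$, so minimality forces $\lambda'=\lambda_*$, i.e.\ $\lambda_*(A)=0$. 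Combined with $\theta_{V,\lambda_*}\le 1$ (the constant $1$ is a mass bound), this yields $\theta_{V,\lambda_*}=1$.
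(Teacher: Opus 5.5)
Your proof is correct, and for items \ref{item:massM1} and \ref{item:massM1.5} it takes a different and more economical route than the paper.

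The paper first proves \ref{item:massM1} by a direct method. It takes a minimizing sequence in the set $\Lambda_\cD$ of measures satisfying \eqref{eq:finiteD}, and makes it decreasing using that $\Lambda_\cD$ is closed under $\wedge$. It then extracts a narrow limit via Prokhorov and passes to the limit in \eqref{eq:finiteD} on compact sets, using the upper semicontinuity of $\lip_{\rm a} f$ and inner regularity. Uniqueness comes from $\lambda_*\wedge\lambda\in\Lambda_\cD$.

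Only afterwards, in \ref{item:massM1.5}, does the paper show that the supremum measure lies in $\Lambda_\cD$, by the same self-improvement step you use. The paper builds the derivation directly with respect to the supremum measure with constant bound $1$. You instead work relative to an arbitrary $\lambda\in\Lambda_\cD$, with $\rho=\d\hat\lambda/\d\lambda$ as the mass bound; the two are equivalent.

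You observe that this step, with any element of $\Lambda_\cD$ as the dominating measure, already makes $\hat\lambda$ the least element of $\Lambda_\cD$. So existence, uniqueness and the supremum characterization come out together, the compactness argument becomes superfluous, and you get an explicit formula for $\lambda_*$.

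What the paper's direct method buys is independence from Proposition~\ref{prop:derivation}\,\ref{item:selfimprovement_massbound}. It uses only the lattice structure of $\Lambda_\cD$ and the upper semicontinuity of the weight. It would therefore go through with $\lip_{\rm a} f$ replaced by any bounded upper semicontinuous weight, whereas your route relies on the localization argument that upgrades $\Lip(f)$ to $\lip_{\rm a} f$.

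Your treatment of \ref{item:massM2} coincides with the paper's. In \ref{item:massM3}, your direct comparisons via $\lambda_*=\rho\lambda$ and $\lambda'=\theta_{V,\lambda_*}\lambda_*$ are the same idea as the paper's contradiction argument. The paper writes out only one direction and says the other is similar, whereas you give both explicitly.
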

\begin{remark}\label{remark:minimalrepresentation}
We can define an equivalence relation on pairs $(V,\lambda)$ by
$(V,\lambda)\sim(\tilde V,\tilde\lambda)$ whenever
\begin{equation}\label{eq:Lambda} 
V f \cdot \lambda = \tilde{V}f \cdot \tilde{\lambda},\quad \forall f \in \Lipb(X).
\end{equation}
By \cref{prop:derivation}\,\ref{item:optimalmassbound} and \cref{lemma:finitemassD}\,\ref{item:massM2}, finite-mass measure-valued derivations $\cD$ are in one-to-one correspondence with equivalence classes $[(V,\lambda)]$ of continuous derivations.
Among all representatives of a given equivalence class, a minimal representation by definition is one whose reference measure is minimal subject to the pointwise mass bound \eqref{eq:finiteD}. 
This minimality motivates the terminology.
\end{remark}
\begin{proof}[Proof of \cref{lemma:finitemassD}]
    \underline{Proof of \ref{item:massM1}}. This is based on the direct method in the calculus of variation.
    Denote by $\Lambda_{\cD}$ the set of finite measures on $I\times X$ satisfying \eqref{eq:finiteD}.
    By assumption, $\Lambda_\cD$ is not empty.
    Let $(\lambda_n)_{n\in \N}$ be a minimizing sequence in $\Lambda_{\cD}$ achieving
    \[
 \lim_{n\to \infty} \lambda_n(I\times X) =\inf_{\lambda\in \Lambda_\cD} \lambda (I\times X).
    \]
    Notice that if $\lambda_1,\lambda_2\in \Lambda_\cD$, then the minimum measure $\lambda_1\wedge\lambda_2\in \Lambda_\cD$.
    Here the minimum measure is defined by
    \begin{equation}
        \lambda_1\wedge \lambda_2\coloneqq \min \{\rho_1,\rho_2\} (\lambda_1+\lambda_2)
    \end{equation}
    where $\rho_i (\lambda_1+\lambda_2)=\lambda_i$ for $i=1,2$.
    \\
    Therefore, without loss of generality, we may assume $(\lambda_n)$ is monotone decreasing by replacing it with $\lambda_1 \wedge \cdots \wedge \lambda_n$.
	In particular, the family $(\lambda_n)$ is tight.  
	Hence, by Prokhorov’s theorem, up to passing to a subsequence, there exists $\lambda_* \in \M^+(I\times X)$ such that $\lambda_n \rightharpoonup \lambda_*$ weakly.
	\\
	For any $f \in \Lipb(X)$ and compact $K \subset I\times X$, the weak convergence $\lambda_n \rightharpoonup \lambda_*$ together with the upper semi-continuity of $\lip_{\rm a} f$ on $X$ yields
	\begin{align}
		\int \mathds{1}_K \d|\mathcal{D}(f)|
		&\leq \liminf_{n\to\infty} \int \mathds{1}_K \lip_{\rm a} f   \d\lambda_n\\
		&\leq \limsup_{n\to\infty} \int\mathds{1}_K \lip_{\rm a} f   \d\lambda_n
		\leq \int\mathds{1}_K \lip_{\rm a} f   \d\lambda_*.
	\end{align}
	That is, $|\mathcal{D}(f)|\leq\lip_{\rm a} f \cdot \lambda_*$ on compact sets.  
	By inner regularity of finite measures on Polish spaces, the inequality holds for all Borel sets.  
	In particular, $\lambda_* \in  \Lambda_{\mathcal{D}}$.  
	The weak convergence also ensures that $\lambda_n(I\times X) \to \lambda_*(I\times X)$.
    Finally, for any $\lambda \in  \Lambda_{\mathcal{D}}$, we have $\lambda_* \wedge \lambda \in  \Lambda_{\mathcal{D}}$.  
	Since $\lambda_*$ has the minimal total mass, $\lambda_*\wedge \lambda= \lambda_*$.
    This shows the uniqueness of the minimizer.\smallskip
    \\
    \underline{Proof of \ref{item:massM2}}. For any $f\in \Lipb(X)$, the mass bound \eqref{eq:finiteD} ensures that the map
    \[
    L^1(\lambda_*)\ni g\mapsto \int_{I\times X} g\d\cD (f)
    \]
    is a bounded linear functional. 
   By the duality $(L^1(\lambda_*))^* = L^\infty(\lambda_*)$, for each $f$, there exists a unique element in $ L^\infty(\lambda_*)$ denoted by $v_f$ such that $\int g \d \cD (f) = \int g v_f \d \lambda_*$ for all $g \in L^1(\lambda_*)$. 
   We then define a map $V: \Lipb(X) \to L^\infty (\lambda_*)$ by setting $Vf \coloneqq v_f$. 
   The map $V$ is linear and satisfies the Leibniz rule, which follows directly from that of $\cD$. 
   Thus, $V$ is a $\lambda_*$-derivation. It satisfies \eqref{eq:measured-Derivation} by the integral equality above, and admits a pointwise mass bound by $1$. 
    In particular, $V$ is norm-bounded with $\|V\|_{{\rm op}} \le 1$.
	\smallskip
    \\
    Now assume that $\mathcal{D}$ is weak$^*$-type continuous.
    We want to show that $V$ is weak$^*$-continuous.
    Let $f_n \wsconver f$ in $\Lipb(X)$ and let $g \in L^1(\lambda_*)$.  
	By the density of $C_b(I \times X)$ in $L^1(\lambda_*)$, take a sequence $(g_m)_{m}$ in $C_b(I \times X)$ converging to $g$ in the $L^1(\lambda_*)$ norm.  
	For each fixed $m$, the weak$^*$-type continuity of $\mathcal{D}$ gives
	\[
	\int g_m\d\mathcal{D}(f_n - f) \to 0
	\quad \text{as } n \to \infty.
	\]
	By the Banach--Steinhaus theorem, as a weak$^*$-convergent sequence, $(f_n)$ is bounded under the $\|\cdot\|_{\Lipb}$ norm.
	The continuity of $V$ yields that $(Vf_n)_n$ is bounded in $L^\infty(\lambda_*)$.
    Thus
	\begin{align}
		\left| \int (Vf - Vf_n) g\d\lambda_* \right|
		&\le \left| \int (Vf - Vf_n) g_m\d\lambda_* \right|
		+ \left| \int (Vf - Vf_n)(g - g_m)\d\lambda_* \right| \\
		&\le \left| \int g_m\d\mathcal{D}(f - f_n) \right|
		+ \|g - g_m\|_{L^1(\lambda_*)}\sup_n\|V(f_n) - V(f)\|_{L^\infty(\lambda_*)}.
	\end{align}
	Choosing first sufficiently large $m$ and then taking $n$ to $\infty$ we conclude $Vf_n \wsconver Vf$ in $L^\infty(\lambda_*)$.\smallskip
    \\
    \underline{Proof of \ref{item:massM1.5}.}
    Denote by $\lambda_\triangle$ the supremum measure of the family $\{|\cD f|, f\in \Lipb(X),\Lip(f)\leq 1\}$.
    By definition $\lambda_\triangle\leq \lambda_*$.
    To show the reverse direction,
     first note that by the definition of $\lambda_\triangle$, we have
    \[
    |\mathcal{D}(f)|\leq \Lip(f)\lambda_\triangle \quad \forall f\in \Lipb(X).
    \]
    Now let $V$ be the $\lambda_\triangle$-derivation, given by the duality argument similar to that in \ref{item:massM2}, which satisfies $D(f) = Vf \lambda_\triangle$ and
    \[
    |Vf|\leq \Lip(f)\leq \|f\|_{\Lipb},\quad \forall f\in \Lipb(X).
    \]
     Then \cref{prop:derivation}\,\ref{item:selfimprovement_massbound} yields that $|Vf|\leq \lip_{\rm a}f$.
     This means that $\lambda_\triangle$ satisfies \eqref{eq:finiteD}, which by definition implies that $\lambda_*\leq \lambda_\triangle$.\smallskip
     \\
    \underline{Proof of \ref{item:massM3}}.
   Let $(V,\lambda)$ be a representation of $\cD$ with the optimal mass bound $\theta_{V,\lambda}=1$ $\lambda$-a.e. To show that $(V,\lambda)$ is a minimal representation of $\cD$, we prove by contradiction. 
   Assume there exists $\lambda_*$ satisfying \eqref{eq:finiteD} such that $\lambda_*(\tilde A)<\lambda(\tilde A)$ for some Borel set $\tilde A$.
    We may assume $\lambda_*\leq \lambda$ as otherwise we consider $\lambda_*\wedge \lambda$.
    Denote by $\rho_*$ the Borel function that $\lambda_* = \rho_*\lambda$.
    Then $0\leq \rho_* \leq 1$ $\lambda$-a.e. and there exist $\delta>0$ and a Borel set $ A$ of positive $\lambda$-measure such that $\rho_*\leq 1-\delta$ on $A$.
    \\
    Take $\rho\coloneqq \mathds{1}_{I\times X\setminus A}+(1-\delta)\mathds{1}_{A}$.
    Then $\rho_* \leq \rho $ $\lambda$-a.e. and for any $f\in \Lipb(X)$
    \begin{equation}
        |Vf\cdot \lambda|=|\cD(f)|\leq \lip_{\rm a} (f)\, \lambda_* = \lip_{\rm a} (f)\, \rho_* \lambda \leq \lip_{\rm a} (f)\, \rho\lambda,
    \end{equation}
    which implies that $|V f|\leq \rho\, \lip_{\rm a} (f) $ $\lambda$-a.e.
    This contradicts the assumption that the optimal mass $\theta_{V,\lambda}=1$ $\lambda$-a.e. 
    The reverse direction can be argued similarly.
\end{proof}

\subsubsection*{Weak locality and extension of derivations} 
\begin{lemma}[Weak locality]\label{lemma:weaklocality}
Let $\cD$ be a measure-valued derivation.
Then for any $f\in \Lipb(X)$ which is constant on an open $U\subset X$, $\cD(f)=0$ on $I\times U$.
\end{lemma}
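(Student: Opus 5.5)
The plan is to use only linearity and the Leibniz rule of $\cD$, together with the fact that $\cD$ vanishes on constants. For the constants: the Leibniz rule applied to $f=g=\mathds 1$ gives $\cD(\mathds 1)=\cD(\mathds 1\cdot\mathds 1)=2\cD(\mathds 1)$, hence $\cD(\mathds 1)=0$. By linearity $\cD(c)=0$ for every constant $c$. Consequently, if $f\equiv c$ on $U$, we may replace $f$ by $f-c$, which is constant zero on $U$ and satisfies $\cD(f-c)=\cD(f)$. So it suffices to treat $f\in\Lipb(X)$ with $f=0$ on $U$.

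Since $X$ is separable, $U$ is Lindelöf, and a measure vanishes on $I\times U$ as soon as it vanishes on $I\times B$ for each ball $B$ of a countable cover of $U$. It therefore suffices to show $|\cD(f)|(I\times B)=0$ for every ball $B=B(x_0,r)$ with $B(x_0,2r)\subset U$.

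For such a ball, take the cutoff $\chi(x)\coloneqq \bigl(1-d(x,x_0)/r\bigr)^+$ rescaled, or more conveniently $\chi\coloneqq \min\{1,(2-d(\cdot,x_0)/r)^+\}$. Then $\chi\in\Lipb(X)$, $\chi=1$ on $B$, and $\chi=0$ outside $B(x_0,2r)\subset U$. In particular $\chi f\equiv 0$ on $X$, because $f=0$ wherever $\chi\neq 0$. Linearity gives $\cD(0)=0$, and the Leibniz rule then yields
\[
0=\cD(\chi f)=\chi\,\cD(f)+f\,\cD(\chi).
\]
Restricting this identity of measures to $I\times B$, where $\chi=1$ and $f=0$, gives $\cD(f)\llcorner(I\times B)=0$. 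Taking the countable union over the cover shows $\cD(f)=0$ on $I\times U$.

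The only point needing care is the interpretation of $f\,\cD(g)$: it is the measure with density $f$ (a bounded Borel function of $x$, viewed on $I\times X$) with respect to $\cD(g)$. Restriction of such products to Borel sets is therefore legitimate, and on $I\times B$ we have $\chi\,\cD(f)=\cD(f)$ and $f\,\cD(\chi)=0$. I do not expect a genuine obstacle. The mildest subtlety is arranging the support of $\chi$ inside $U$, which the doubled-ball construction handles, and this uses no completeness; separability is only used for the countable cover. In fact one can even avoid separability, because the same argument with a cutoff adapted to each point shows $|\cD f|$ vanishes on every $I\times B$ with $2B\subset U$, and finite Borel measures on separable spaces are determined by such sets.
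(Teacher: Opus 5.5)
Your proof is correct and is essentially the paper's argument: you reduce to $f=0$ on $U$ via $\cD(\mathds{1})=0$, then apply the Leibniz rule to $\chi f\equiv 0$ for a Lipschitz cutoff $\chi$ that equals $1$ on a piece of $U$ and vanishes outside $U$. The only difference is the exhaustion step. The paper takes an arbitrary compact $C\subset I\times U$, uses a cutoff equal to $1$ on its projection to $X$, and concludes by inner regularity of $|\cD(f)|$ on the Polish space $I\times X$; your countable cover by balls $B$ with $2B\subset U$ needs only separability, although your closing aside about dispensing with separability is circular as written and plays no role.
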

\begin{proof}
    By the Leibniz rule, $\cD(\mathds{1})=0$.
    Thus by linearity, it suffices to consider $f=0$ on $U$.
    For any compact set $C\subset I\times U$, denote $K\coloneqq {\rm Pr}^X(C)\subset U$ which is compact on $X$.
    There exists $\eta\in \Lipb(X)$ such that $\eta\equiv 1$ on $K$ and $\spt(\eta)\subset U$.
    By the Leibniz rule, with $f\cdot \eta\equiv 0$ on $X$ and $f=0$ on $U$,
    \[
0=\cD(f\eta)=\eta \cD(f)+f\cD(\eta)=\eta \cD(f), \text{  on $I\times U$}.
    \]
    Thus $\cD(f)=0$ on $I\times K\supset C$.
    By the inner regularity of $\cD(f)$, $\cD(f)=0$ on $I\times U$.
\end{proof}

Thanks to the weak locality, for every $f\in\Lip(X)$ and every bounded $A\subset I\times X$, we can set
\begin{equation}\label{eq:extension}
\cD(f)(A)\coloneqq \cD(f\wedge L\vee -L )(A)
\end{equation}
where $L$ is chosen that $L>\sup_{{\rm Pr}^X(A)}|f|$.
\cref{lemma:weaklocality} ensures that the quantity above does not depend on the choices of $L$.
Notice that $\cD(f)$ need not define a signed measure on $I\times X$, as its value on an arbitrary unbounded set $A$ may not be well defined.

\begin{proposition}[Extension, and chain rule]\label{prop:extension}
   Let $\cD$ be a continuous measure-valued derivation. 
   Then $\cD$ can be extended to a linear map on whole $\Lip(X)$,
  \begin{equation}
  \cD\colon\Lip(X)\to \M(I\times X;\R).
  \end{equation}
  satisfying the Leibniz rule.
  Moreover:
  \begin{enumerate}[label=(\arabic*), font=\normalfont]
      \item\label{item:extension1} (chain rule) for every $f_1,\dots,f_n\in\Lip(X)$ and every $\Phi\in C^1(\R^n)\cap \Lip(\R^n)$, the following holds 
	\begin{equation}\label{eq:chainrule_D}
\cD(\Phi(f_1,\dots,f_n))=\sum_{i=1}^n\partial_i\Phi(f_1,\dots,f_n)\cD(f_i);
	\end{equation}
    \item\label{item:extension2} there exists $C>0$ such that $\|\cD(f)\|_{\rm TV}\leq C\,\Lip(f)$;
    \item\label{item:extension3} if $\mathcal{D}$ has finite mass measure $\lambda$, then the pointwise mass bound \eqref{eq:finiteD} holds for all $f\in \Lip(X)$.
  \end{enumerate}
\end{proposition}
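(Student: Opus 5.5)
The plan is to first establish the key quantitative estimate, Item~\ref{item:extension2}, for \emph{bounded} functions, with a constant independent of $\|f\|_\infty$. The extension to $\Lip(X)$ via \eqref{eq:extension} then automatically produces finite signed measures, and the remaining items follow by truncation and approximation. By continuity, $\|\cD(g)\|_{\rm TV}\le \|\cD\|_{\rm op}(\|g\|_\infty+\Lip(g))$ for $g\in\Lipb(X)$. This bound degenerates as $\|g\|_\infty\to\infty$, so the sup-norm has to be removed.

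\emph{Step 1 (a sup-norm-free bound).} Let $z\colon\R\to[0,1]$ be the $1$-periodic tent function $z(s)\coloneqq \operatorname{dist}(s,2\mathbb Z)\wedge 1$ (or any $1$-Lipschitz zigzag of period $2$ with slopes $\pm1$), and let $f\in\Lipb(X)$ with $\Lip(f)\le 1$. Then $z\circ f$ and $z\circ(f+\tfrac12)$ belong to $\Lipb(X)$ with $\|\cdot\|_{\Lipb}\le 2$. On each open strip $U_k\coloneqq\{k<f<k+1\}$, the function $f\mp z\circ f$ is constant, with the sign depending on the parity of $k$. So \cref{lemma:weaklocality} gives $\cD f=\pm\cD(z\circ f)$ on $I\times U_k$. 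The complement of $\bigcup_k U_k$ is $\{f\in\mathbb Z\}$, which is contained in the union of the open strips of $f+\tfrac12$. Hence
\[
|\cD f|\le |\cD(z\circ f)|+|\cD(z\circ(f+\tfrac12))|,\qquad \|\cD f\|_{\rm TV}\le 4\|\cD\|_{\rm op}\,\Lip(f).
\]
Scaling then gives Item~\ref{item:extension2} on $\Lipb(X)$ with $C\coloneqq4\|\cD\|_{\rm op}$.

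\emph{Step 2 (extension).} For $f\in\Lip(X)$, set $f_L\coloneqq f\wedge L\vee-L$. By \cref{lemma:weaklocality}, $\cD(f_L)$ and $\cD(f_{L'})$ agree on $I\times\{|f|<L\wedge L'\}$. Therefore \eqref{eq:extension} defines a set function that is consistent on the increasing open sets $I\times\{|f|<L\}$, which exhaust $I\times X$. Since $\Lip(f_L)\le\Lip(f)$, Step~1 gives $|\cD(f_L)|(I\times X)\le C\Lip(f)$ uniformly in $L$. Consequently the limit $\cD(f)(A)\coloneqq\lim_L\cD(f_L)(A\cap\{|f|<L\})$ exists for every Borel $A$, is countably additive, and has total variation at most $C\Lip(f)$; I would take this as the extension. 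Linearity and the Leibniz rule (whenever $fg\in\Lip(X)$) transfer from $\Lipb(X)$, because on $I\times\{|f|,|g|<L\}$ all quantities coincide with those of truncations. Item~\ref{item:extension3} follows the same way: on $I\times\{|f|<L\}$ we have $\cD f=\cD f_L$ and $\lip_{\rm a}(f_L)=\lip_{\rm a}(f)$ on this open set, and we let $L\to\infty$.

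\emph{Step 3 (chain rule).} For polynomials $\Phi$ and bounded $f_i$, \eqref{eq:chainrule_D} follows by induction from linearity, the Leibniz rule, and $\cD(\mathds 1)=0$. For general $\Phi\in C^1\cap\Lip$ and $f_i\in\Lipb(X)$ with values in a compact box $Q$, choose polynomials $P_m$ with $P_m\to\Phi$ in $C^1(Q)$. Then $\|P_m(f)-\Phi(f)\|_\infty\to0$ and $\Lip(P_m(f)-\Phi(f))\le\sup_Q|\nabla(P_m-\Phi)|\sum_i\Lip f_i\to0$, so continuity gives convergence of the left side in TV. The right side converges in TV as well, since $\partial_iP_m(f)\to\partial_i\Phi(f)$ uniformly. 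Unbounded $f_i$ are handled by restricting to $I\times\{\max_i|f_i|<L\}$, where both sides coincide with those for the truncations by weak locality.

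I expect the main obstacle to be Step~1, namely obtaining TV bounds that are uniform under truncation. Without it, the set function in \eqref{eq:extension} need not be a finite measure. The tent-function localisation combined with weak locality is the device I would rely on to get this uniformity.
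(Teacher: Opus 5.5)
Your proof is correct, but it reaches the key estimate by a different route from the paper.

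The paper's route is chain rule first. It proves the chain rule for bounded $f_i$: polynomials by induction, then $C^1$ functions by polynomial approximation and norm continuity. It then handles unbounded $f$ through the identity $\cD f=\cos(f)\,\cD(\sin f)-\sin(f)\,\cD(\cos f)$, which holds on bounded sets by weak locality. Since $\sin f,\cos f\in\Lipb(X)$, this one formula does two jobs. It shows that the set function \eqref{eq:extension} is a finite signed measure. Applied to $f/\Lip(f)$, it also gives $\|\cD f\|_{\rm TV}\le C\Lip(f)$ with $C=2\sup\{\|\cD g\|_{\rm TV}:\|g\|_\infty\le 1,\ \Lip(g)\le 1\}$.

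Your route is estimate first. You derive the sup-norm-free bound before, and independently of, the chain rule, using only weak locality. On the open strips where $z$ is affine you have $\cD f=\pm\cD(z\circ f)$, and the level sets $\{f\in\mathbb Z\}$ are covered by the half-shifted copy. You then build the extension as a TV-limit of truncations and prove the chain rule last.

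What each buys: your argument is more elementary, since no approximation enters the key estimate, and it makes explicit why the extension is countably additive on all of $I\times X$. The paper's argument is shorter once the chain rule is in hand, and it gives a closed global formula for $\cD f$. Both yield constants at most $4\|\cD\|_{\rm op}$.

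Two small points. First, $z$ is $2$-periodic, not $1$-periodic. Second, the ``consequently'' in Step~2 needs one line of justification. Set $m(L)\coloneqq|\cD(f_L)|(I\times\{|f|<L\})$; it is nondecreasing in $L$ and bounded by $C\Lip(f)$. The total variation of the difference between $\cD(f_{L'})\llcorner(I\times\{|f|<L'\})$ and $\cD(f_L)\llcorner(I\times\{|f|<L\})$ is exactly $m(L')-m(L)$, so the truncated measures are Cauchy in total variation.
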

\begin{proof}
  We first show the chain rule for $f_1,\cdots, f_n\in\Lipb(X)$.
    For simplicity, we prove only for $n=1$ as the multivariable case is completely analogous.
\\
    If $\Phi$ is affine, the conclusion follows from the linearity of $\cD$ and $\mathcal D(\mathds{1})=0$ (by the Leibniz rule). The general polynomial case then follows by induction using the Leibniz rule.
    \\
    Consider now $\Phi\in C^1(\R)$. 
    Let $f\in \Lipb(X)$ and let $L>\|f\|_\infty$.
    Take a sequence of polynomials $(p_n)_n$ on $\R$ converging to $\Phi$ in $C^1((-L,L))$.
    Then by continuity
    \[
    \cD(\Phi(f))=\lim_{n\to\infty} \cD(p_n(f))=\lim_{n\to\infty} p_n'(f)\cD(f)=\Phi'(f)\cD(f).
    \]
    By \cref{lemma:weaklocality} and the extension \eqref{eq:extension}, for general $f_1,\cdots, f_n\in\Lip(X)$ the identity \eqref{eq:chainrule_D} still holds when evaluating bounded sets in $I\times X$. 
    \\
     Now for $f\in \Lip(X)$, by the chain rule 
      \begin{align}\label{eq:chainrule_sincos}
        \cD(f)= (\cos^2(f)+\sin^2(f))\cD(f)= \cos(f) \cD(\sin(f))-\sin(f)\cD(\cos(f))
    \end{align} 
    holds over bounded sets.
    Since $\cD(\sin(f))$ and $\cD(\cos(f))$ are finite signed measures, $\cD(f)\in \M(I\times X;\R)$.
    Therefore, the identity \eqref{eq:chainrule_D} can be understood between finite measures.    
    The pointwise mass bound follows from the weak locality of $\cD$.
    \\
    It remains to show \ref{item:extension2}.
    If $\Lip(f)=0$, then $f$ is constant and the estimate follows from the weak locality.
    Assume that $L\coloneqq \Lip(f)>0$.
    Denote 
    \[
    C\coloneqq 2\sup \{\|\cD(f)\|_{\rm TV}: \|f\|_{\infty}\leq 1,\Lip(f)\leq 1\}
    \]
    which is finite due to the continuity of $\cD$.
    Applying \eqref{eq:chainrule_sincos} to $g\coloneqq f/L$ gives 
    \begin{align}
        \|\cD(g)\|_{\rm TV}&\leq \|\cD(\sin(g))\|_{\rm TV}+\|\cD(\cos g)\|_{\rm TV}\leq C.
    \end{align}
    The estimate \ref{item:extension2} for $f$ follows by scaling.
\end{proof}

\subsection{Continuity Equation (CE) via derivations}\label{subsec:CE_derivations} 
\begin{definition}[CE via derivations]\label{def:CE_mu_lambda_V}  
	Let $\mu \in \M^+ (I \times X)$, let $V\colon \Lipb(X)\to L^\infty(\lambda)$ be a $\lambda$-derivation for $\lambda \in \M^+ (I \times X)$ in the sense of \cref{def:derivation}, and let $\mathcal{D}$ be a measure-valued derivation in the sense of \cref{def:measure_valued_derivation}.
	We say that 
    \begin{itemize}
        \item the triple $(\mu,\lambda,V)$ solves the continuity equation, formally written as
\begin{equation}\label{eq:CE_derivation}\tag{$\mathrm{CE}_{V}$} 
        \partial_t \mu + \Div (V \lambda ) =0  \qquad \textrm{in }  I\times X,
    \end{equation}
    if for every $f\in \Lipb(X)$ and $\xi  \in C^1_c(I)$,
    \begin{equation}\label{eq:CE_derivation_weak_form}
    \int_{I \times X} \frac{\d}{\d t} \xi(t) f(x) \d \mu(t,x) + \int_{I \times X} \xi(t) Vf (t,x) \d \lambda (t,x) = 0.
\end{equation}
   \item the pair $(\mu,\mathcal{D})$ solves the continuity equation, formally written as
   \begin{equation}\label{eq:CE_D}\tag{$\mathrm{CE}_{\mathcal{D}}$}
       \partial_t \mu  + \Div (\mathcal{D}) = 0  \qquad \textrm{in }  I\times X,
   \end{equation}
   if for every $f\in \Lipb(X)$ and $\xi  \in C^1_c(I)$,
\begin{equation}\label{eq:CE_Mderivation}
        \int_{I \times X} \frac{\d}{\d t}\xi(t) f(x) \d \mu(t,x) + \int_{I \times X} \xi(t) \d\mathcal{D}f (t,x)= 0.
    \end{equation}
    \end{itemize}
     In particular, if $(\mu,\mathcal{D})$ solves the \eqref{eq:CE_D}, then any $(V,\lambda)$ which is a representation of $\cD$, the triple $(\mu, \lambda,V)$ solves \eqref{eq:CE_derivation}. Throughout this section, we exclude the trivial case $\mu=0$ and assume $\|\mu\|_{\mathrm{TV}}>0$.
\end{definition}

\begin{theorem}\label{thm:derivation_CE}
	Let $(\mu,\lambda,V)$ be a solution to \eqref{eq:CE_derivation} in the sense of Definition \ref{def:CE_mu_lambda_V}.
	Then
	\begin{enumerate}[label=(\arabic*), font=\normalfont]
		\item\label{item:CE_derivation2} $\pr^I_\# \mu=c \mathcal{L}^1\llcorner I$, where $c=T^{-1}\|\mu\|_{\mathrm{TV}}$, and there exists a Borel family $(\mu_t)_{t \in I} \subset \P (X)$  (uniquely determined for a.e. $t\in I$) such that $\mu(\mathrm{d} t,\mathrm{d} x)\coloneqq c \mu_t(\mathrm{d} x) \mathrm{d} t$;
        \item\label{item:CE_derivation3} for any $f\in \Lipb(X)$, $t\mapsto \int f\d\mu_t$ is in $BV (I;\R)$, whose distributional derivative satisfies
		\begin{equation}\label{eq:CE_ST}
			\D\left( \int_{X} f(x) \d \mu_{(\cdot)}(x)\right)=  c^{-1} \, \pr^I_{\#}(Vf\cdot \lambda);
		\end{equation}
		\end{enumerate}
        if additionally $V$ is a continuous $\lambda$-derivation, then
        \begin{enumerate}[label=(\arabic*), font=\normalfont,start=3]
    	\item\label{item:CE_derivation1} $V$ is weak$^*$-continuous;
		\item\label{item:CE_derivation4}  $(\mu_t)\in BV(I;(\P(X),W_1))$ with
        \begin{equation}\label{ineq:Varineq_derivation}
            |\D \mu| \leq  c^{-1} \, \pr^I_\# (\theta_{V,\lambda}\cdot \lambda),
        \end{equation}
    where $\theta_{V,\lambda}$ is the minimal pointwise mass bound given by Proposition \ref{prop:derivation}\,\ref{item:optimalmassbound}.
	\end{enumerate}
    \end{theorem}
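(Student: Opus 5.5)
We will derive all four items from the weak formulation \eqref{eq:CE_derivation_weak_form}, tested against well-chosen $f$ and $\xi$.

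\emph{Item \ref{item:CE_derivation2}.} By the Leibniz rule, $V(\mathds 1)=V(\mathds 1\cdot\mathds 1)=2V(\mathds 1)$, so $V(\mathds 1)=0$. Testing with $f=\mathds 1$ then gives $\int_I \xi'\,\d(\pr^I_\#\mu)=0$ for every $\xi\in C^1_c(I)$. Hence the finite measure $\pr^I_\#\mu$ has vanishing distributional derivative, so it is a constant multiple of $\L^1\llcorner I$. The constant is $c=T^{-1}\|\mu\|_{\rm TV}>0$. The disintegration theorem on the Polish space $I\times X$ then yields a Borel family of probability measures $(\mu_t)$, unique for a.e.\ $t$, with $\mu=c\,\mu_t\,\d t$.

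\emph{Item \ref{item:CE_derivation3}.} Fix $f\in\Lipb(X)$ and set $u(t)\coloneqq\int f\,\d\mu_t$; this is bounded and Borel. Substituting into \eqref{eq:CE_derivation_weak_form} gives
\[
c\int_I \xi' u\,\d t+\int_I\xi\,\d\,\pr^I_\#(Vf\cdot\lambda)=0 .
\]
The measure $\pr^I_\#(Vf\cdot\lambda)$ is finite because $Vf\in L^\infty(\lambda)$ and $\lambda$ is finite. Therefore $u\in BV(I;\R)$, and \eqref{eq:CE_ST} holds.

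\emph{Item \ref{item:CE_derivation1}.} Assume $V$ is continuous. Let $(f_n)$ be bounded in $\Lipb(X)$ with $f_n\to f$ pointwise. Then $(Vf_n)$ is bounded in $L^\infty(\lambda)$, so it suffices to show that every weak$^*$ cluster point $w$ equals $Vf$. For $h\in\Lipb(X)$ and $\xi\in C^1_c(I)$, the Leibniz rule gives $V(hf_n)=hVf_n+f_nVh$. Inserting $hf_n$ into the CE yields
\[
\int \xi\, h\, Vf_n\,\d\lambda=-\int \xi' h f_n\,\d\mu-\int \xi f_n Vh\,\d\lambda .
\]
By dominated convergence ($f_n$ uniformly bounded, $\mu,\lambda$ finite), the right-hand side converges to the same expression with $f$ in place of $f_n$. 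Applying the CE identity to $hf$, that limit equals $\int\xi h\,Vf\,\d\lambda$. Thus $w$ and $Vf$ agree when tested against all products $\xi\otimes h$. The linear span of such products is an algebra that separates points and contains functions $\equiv1$ on compacts. By Stone--Weierstrass on compacts, inner regularity, and density of $C_b$ in $L^1(\lambda)$, this span is dense in $L^1(\lambda)$. Together with the uniform $L^\infty$ bound, this gives $w=Vf$. Hence the whole sequence converges weak$^*$ to $Vf$.

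\emph{Item \ref{item:CE_derivation4}.} Set $m\coloneqq c^{-1}\pr^I_\#(\theta_{V,\lambda}\lambda)$; this is finite since $\theta_{V,\lambda}\in L^\infty(\lambda)$. For $f\in\Lipb(X)$ with $\Lip f\le1$, \eqref{eq:CE_ST} and the pointwise mass bound $|Vf|\le\theta_{V,\lambda}\lip_{\rm a}f\le\theta_{V,\lambda}$ give $|\D u_f|\le m$. On $\R$ this yields $|u_f(s)-u_f(t)|\le m([s,t])$ for $\L^2$-a.e.\ $(s,t)$, with the null set a priori depending on $f$.

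To pass to $W_1$ via \eqref{eq:Kduality}, we fix a countable family $\mathcal F$ of bounded $1$-Lipschitz functions. Take a dense sequence $(x_i)$ in $X$ and let $\mathcal F$ consist of truncations at rational levels of finite minima $\min_j\bigl(q_j+d(x_{i_j},\cdot)\bigr)$ with $q_j\in\Q$. Every bounded $1$-Lipschitz $f$ is a pointwise limit of a uniformly bounded sequence in $\mathcal F$ (McShane extension from finite subsets of $(x_i)$). Outside a single $\L^2$-null set, the estimate then holds for all $f\in\mathcal F$ simultaneously, and by dominated convergence for all bounded $1$-Lipschitz $f$. Taking the supremum gives $W_1(\mu_s,\mu_t)\le m([s,t])$ for a.e.\ $(s,t)$.

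The extended space $(\P(X),W_1)$ is complete (Appendix). Hence \cref{thm:BVequiv} (4)$\Leftrightarrow$(5) together with \cref{remark:variationmeasure} give $(\mu_t)\in BV(I;(\P(X),W_1))$ and $|\D\mu|\le m$, which is \eqref{ineq:Varineq_derivation}.

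I expect the main obstacles to be the density argument in item \ref{item:CE_derivation1} and the choice of the countable family $\mathcal F$ in item \ref{item:CE_derivation4}, which is needed to control the $f$-dependent null sets.
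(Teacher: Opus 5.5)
Your proposal is correct and follows the paper's proof essentially step by step. The steps match: $V\mathds{1}=0$ plus disintegration for (1), and a direct reading of the weak formulation for (2). For (3), you test the equation with $hf_n$, use the Leibniz rule, and use density of the span of products $\xi\otimes h$ in $L^1(\lambda)$. For (4), you use a countable family of bounded $1$-Lipschitz functions, Kantorovich--Rubinstein duality, and Theorem~\ref{thm:BVequiv}.

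The differences are cosmetic. You phrase (3) via weak$^*$ cluster points instead of the paper's $\varepsilon$-splitting. In (4) you build the countable family explicitly as rational truncated McShane extensions, where the paper just invokes weak$^*$-separability. One small point: your Stone--Weierstrass sketch for the density in (3) needs an extra truncation step, or a monotone-class argument, to control the approximant off the compact set. Composing with a polynomial vanishing at $0$ works; the paper leaves this density claim implicit as well.
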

    \begin{proof}
    \underline{Proof of \ref{item:CE_derivation2}.}
    By the Leibniz rule of $V$, we have $V\mathds{1}_X=0$ $\lambda$-a.e.
    Taking $f=\mathds{1}_X$ in \eqref{eq:CE_derivation_weak_form} yields for any $\xi\in C^1_c(I)$ that
    \begin{equation}
    	\int_{I\times X} \xi'(t)\d\mu(t,x)=\int_I \xi'(t)\d (\pr^I_\#\mu)(t)=0.
    \end{equation}
    As all divergence-free finite measures on $I$ are proportional to the Lebesgue measure (see, for a proof, e.g., \cite[Theorem 3.1.4]{Hormander_I}), we have  $\pr^I_\#\mu=c\mathcal{L}^1\llcorner (0,T)$ for some $c\geq 0$. In particular, we obtain $\|\mu \|_{\mathrm{TV}} = \mu (I \times X) = c T $. 
    \\
    As $c>0$, by the disintegration theorem, there exists a Borel family $(\mu_t)_{t \in I} \subset \P (X)$ (uniquely determined for a.e. $t \in I$) such that $\mu(\mathrm{d} t,\mathrm{d} x)\coloneqq c \mu_t(\mathrm{d} x) \mathrm{d} t$.
    \smallskip
    \\
    \underline{Proof of \ref{item:CE_derivation3}.}
    With the curve representation of $\mu$ shown above, \eqref{eq:CE_derivation_weak_form} can be written as
    
    \begin{align}
    	c\int_I \xi'(t) \left(\int_X  f(x)\d\mu_t(x) \right) \d t+ \int_I \xi(t)\d \pr^I_\# (Vf\cdot \lambda)(t) = 0.
    \end{align}
    As $|Vf\cdot \lambda|$ is a finite measure, this gives by Definition \eqref{eq:distributionalD}, that $t\mapsto \int f\d\mu_t$ is BV with the distributional derivative given by \eqref{eq:CE_ST}.\smallskip
    \\
    \underline{Proof of \ref{item:CE_derivation1}.}
	  By the Banach--Steinhaus theorem, any weak$^*$-convergent sequence in $\Lipb(X)$ is bounded.  
	Hence it suffices to show that for any sequence $(f_n) \subset \Lipb(X)$ converging pointwise to $f$, with $\|f_n\|_\infty, \Lip(f_n) \leq L$ for some $L > 0$ for all $n$, we have
	\begin{equation}\label{eq:30/10-2}
		\int g\, V f_n \, d\lambda \to \int g\, V f \, d\lambda, 
		\qquad \forall g \in L^1(\lambda).
	\end{equation}
	For any $\varphi \in \Lipb(X)$ and $\xi \in C^1_c(I)$, set $g = \xi \varphi$.  
	Using the Leibniz rule and \eqref{eq:CE_derivation_weak_form}, we have 
	\begin{align}
		\int \xi\varphi Vf_n\d\lambda&=	\int\xi V(\varphi f_n)\d\lambda-\int \xi f_nV\varphi\d\lambda\\
		&=\int -\xi'(t) (\varphi f_n)(x)\d\mu(t,x)-\int \xi f_n V\varphi\d\lambda\\
		&\overset{n\to \infty}{\rightarrow}\int -\xi'(t) (\varphi f)(x)\d\mu(t,x)-\int \xi f V\varphi\d\lambda=\int \xi \varphi Vf\d\lambda
	\end{align}
	where the convergence follows from the pointwise convergence $f_n \to f$ and the dominated convergence theorem.
\\
    By linearity, the claim holds for $g\in \mathcal{F}$, where $\mathcal{F}$ is the set of all finite linear combinations of functions $\xi \varphi$ with $\xi\in C^1_c(I)$ and $\varphi \in \Lipb(X)$.
    Observe that the set $\mathcal{F}$ is dense in $L^1(\lambda)$.
	Take an arbitrary $g \in L^1(\lambda)$ and a sequence $(g_m) \subset \mathcal{F}$ with $g_m \to g$ in $L^1(\lambda)$.  
	Then for each $m \in \N$,
	\begin{align}
		\left| \int g (V f_n - V f) \d\lambda \right|
		&\leq \left| \int g_m (V f_n - V f) \d\lambda \right|
		+ \int |g - g_m|\, |V f_n - V f| \d\lambda \\
		&\leq \left| \int g_m (V f_n - V f) \d\lambda \right|
		+ \|g - g_m\|_{L^1(\lambda)}\sup_n\|V(f_n) - V(f)\|_{L^\infty(\lambda)}.
	\end{align}
    Letting first $n \to \infty$ and then $m\to \infty$,  we conclude that $Vf_n \wsconver Vf$ in $L^\infty(\lambda)$.\smallskip
    \\
    \underline{Proof of \ref{item:CE_derivation4}.}
    Denote by $\mathrm{Lip}_{\mathrm{b},1}(X)$ the set of all bounded $1$-Lipschitz functions on $X$.
    By \cref{item:CE_derivation3}, for any $f\in \mathrm{Lip}_{\mathrm{b},1}(X)$, $t\mapsto \int f\d\mu_t$ is a BV-curve.
    Thus, by \cref{thm:BVequiv}, there exists a Borel set $I_f\subset I^2$ of full measure such that for all $(s,t)\in I_f$,
    \begin{align}
    	\left|\int f(x)\d\mu_t(x)-\int f(x)\d\mu_s(x)\right|
    	& \leq c^{-1} \left| \pr^I_\# ( Vf \cdot \lambda) \right|([s,t]) \leq c^{-1} (\pr^I_\# (\theta_{V,\lambda}\lambda))([s,t]),\label{eq:30/10-1}
    \end{align}
    where in the last equality, we used the pointwise mass bound, which follows from \cref{prop:derivation} \ref{item:optimalmassbound}.
    Since $\mathrm{Lip}_{\mathrm{b},1}(X)$ is separable in $\Lipb(X)$ under the weak$^*$-topology, we can take a countable dense family $(f_n)$ and for each $n\in\N$, there is $I_n\subset I^2$ of full measure such that \eqref{eq:30/10-1} holds for $f_n$ and all $(s, t)\in I_n$.
    In particular, denoting $\tilde I\coloneqq \cap_n I_n$ we have for all $(s,t)\in\tilde I$, \eqref{eq:30/10-1} holds for all $f_n$, and by the dominated convergence it holds also for all $f\in \mathrm{Lip}_{\mathrm{b},1}(X)$.
    \\
    Now, fix any $(s,t)\in\tilde I$.
    Taking supremum over $f \in \mathrm{Lip}_{\mathrm{b},1}(X)$ in \eqref{eq:30/10-1} and using the Kantorovich--Rubinstein duality \eqref{eq:Kduality}, we conclude that 
    \begin{equation}
    	W_1(\mu_s,\mu_t)\leq c^{-1} (\pr^I_\# (\theta_{V,\lambda}\lambda))([s,t]),\quad \forall (s,t)\in\tilde I.
    \end{equation}
    Again by \cref{thm:BVequiv} and \cref{remark:variationmeasure}, $(\mu_t)$ is BV with respect to the $W_1$-distance, whose variation satiates \eqref{ineq:Varineq_derivation}.
    \end{proof}
    Repeating the proof of \cref{thm:derivation_CE}, we obtain the following analogous result for continuity equations via measure-valued derivations.
    \begin{corollary}\label{cor:derivation_CE}
    Let $(\mu,\cD)$ be a solution to \eqref{eq:CE_D} in the sense of Definition \ref{def:CE_mu_lambda_V}.
	Then
    \begin{enumerate}[label=(\arabic*), font=\normalfont]
        \item $\pr^I_\# \mu=c \mathcal{L}^1\llcorner I$, where $c=T^{-1}\|\mu\|_{\mathrm{TV}}$, and there exists a Borel family $(\mu_t)_{t \in I} \subset \P (X)$  (uniquely determined for a.e. $t\in I$) such that $\mu(\mathrm{d} t,\mathrm{d} x)\coloneqq c\mu_t(\mathrm{d} x) \mathrm{d} t$;
        \item for any $f\in \Lipb(X)$, $t\mapsto \int f\d\mu_t$ is in $BV (I;\R)$, whose distributional derivative satisfies
		\begin{equation}\label{eq:Derivative_alongf}
			\D\left( \int_{X} f(x) \d \mu_{(\cdot)}(x)\right)=c^{-1}\, \pr^I_{\#}(\cD(f));
		\end{equation}
    \end{enumerate}
    if additionally $\cD$ has finite mass, then
	\begin{enumerate}[label=(\arabic*), font=\normalfont,start=3]
    \item $\cD$ is weak$^*$-type continuous;
    \item\label{item:Varineq_Mderivation} $(\mu_t)\in BV(I;(\P(X),W_1))$ with
    \begin{equation}\label{ineq:Varineq_D}
        |\D \mu| \leq  c^{-1}\, \pr^I_\# |\cD|,
    \end{equation}
    where $|\cD| \in 
    \M^{+}(I \times X)$ is the mass measure of $\cD$ given by \cref{lemma:finitemassD}\,\ref{item:massM1}.
    \end{enumerate}
\end{corollary}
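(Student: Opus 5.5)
The natural route is to reduce the corollary to \cref{thm:derivation_CE} through a representation of $\cD$. Items (1) and (2) need no representation, though, so I will prove them directly by repeating the argument of \cref{thm:derivation_CE}\,\ref{item:CE_derivation2}--\ref{item:CE_derivation3}. First, the Leibniz rule gives $\cD(\mathds{1}_X)=0$. Testing \eqref{eq:CE_Mderivation} with $f=\mathds{1}_X$ then shows that $\pr^I_\#\mu$ is a finite measure on $I$ with vanishing distributional derivative. Hence $\pr^I_\#\mu=c\L^1\llcorner I$, and the disintegration theorem yields the Borel family $(\mu_t)$, using $\mu\neq 0$ so that $c>0$. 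Second, for general $f\in\Lipb(X)$, rewriting \eqref{eq:CE_Mderivation} via the disintegration gives
\begin{equation}
c\int_I \xi'(t)\Big(\int_X f\d\mu_t\Big)\d t+\int_I\xi\d\,\pr^I_\#(\cD f)=0 .
\end{equation}
Since $\cD f$ is a finite signed measure, this is exactly \eqref{eq:Derivative_alongf} in the sense of \eqref{eq:distributionalD}.

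For items (3) and (4), assume $\cD$ has finite mass. By \cref{lemma:finitemassD}\,\ref{item:massM2} and \ref{item:massM3}, there is a minimal representation $(V,|\cD|)$: $V$ is a continuous $|\cD|$-derivation with $\cD f=Vf\cdot|\cD|$ and optimal mass bound $\theta_{V,|\cD|}=1$ $|\cD|$-a.e. Because $(\mu,\cD)$ solves \eqref{eq:CE_D}, the triple $(\mu,|\cD|,V)$ solves \eqref{eq:CE_derivation}, as noted after \cref{def:CE_mu_lambda_V}. Then \cref{thm:derivation_CE}\,\ref{item:CE_derivation4} gives $(\mu_t)\in BV(I;(\P(X),W_1))$ with
\begin{equation}
|\D\mu|\le c^{-1}\pr^I_\#(\theta_{V,|\cD|}|\cD|)=c^{-1}\pr^I_\#|\cD|,
\end{equation}
which is \eqref{ineq:Varineq_D}.

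For (3), \cref{thm:derivation_CE}\,\ref{item:CE_derivation1} shows that $V$ is weak$^*$-continuous. It remains to pass this to $\cD$. Let $f_n\wsconver f$ and $g\in C_b(I\times X)$. Since $|\cD|$ is finite, $g\in L^1(|\cD|)$, so
\begin{equation}
\int g\d\cD f_n=\int g\,Vf_n\d|\cD|\to\int g\,Vf\d|\cD|=\int g\d\cD f .
\end{equation}

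The only step needing care is the identification of the mass bound in (4). One has to check that the optimal mass bound of the minimal representation is $1$, which is exactly \cref{lemma:finitemassD}\,\ref{item:massM3}, so that the weight $\theta_{V,|\cD|}$ disappears. If one prefers to avoid the representation here, item (4) can also be argued directly. For $f\in\mathrm{Lip}_{\mathrm b,1}(X)$, item (2) gives $|\D(\int f\d\mu_{(\cdot)})|\le c^{-1}\pr^I_\#|\cD f|\le c^{-1}\pr^I_\#|\cD|$, because $\lip_{\rm a}f\le 1$. One then concludes exactly as in the proof of \cref{thm:derivation_CE}\,\ref{item:CE_derivation4}, using a countable weak$^*$-dense family, the Kantorovich--Rubinstein duality \eqref{eq:Kduality}, and \cref{thm:BVequiv} together with \cref{remark:variationmeasure}.
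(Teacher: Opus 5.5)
Your proposal is correct and is essentially the paper's argument. The paper simply states that the corollary follows by repeating the proof of \cref{thm:derivation_CE}, and your reduction through the minimal representation $(V,|\cD|)$, together with the transfer of weak$^*$-continuity from $V$ to $\cD$ via $C_b(I\times X)\subset L^1(|\cD|)$, is exactly the rigorous form of that remark. For \eqref{ineq:Varineq_D} you only need $\theta_{V,|\cD|}\le 1$, which is immediate from $|Vf|\,|\cD| = |\cD f|\le \lip_{\rm a}(f)\,|\cD|$, so the full equivalence in \cref{lemma:finitemassD}\,\ref{item:massM3} is not required.
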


\subsection{Minimal derivations and Minimal solutions to CE}\label{sec:minimalderiavtion}
Throughout this section, all derivations are assumed to be defined with respect to the same underlying Polish metric space $X$ and interval $I$.
\begin{definition}[Subderivation]\label{def:subderivation}
Let $\mathcal D$ and $\mathcal D_1$ be finite-mass measure-valued derivations.
Denote by $\lambda,\lambda_1,\lambda_2$ the mass measures of $\mathcal D$, $\mathcal D_1$ and $\mathcal D-\mathcal D_1$, respectively. 
    We say that $\mathcal D_1\prec \mathcal D$ if
    \begin{align}\label{ineq:subderivation1}
        \lambda_1(I\times X)+\lambda_2(I\times X)\le \lambda(I\times X).
    \end{align}
     We call $\mathcal D_1$ a \emph{subderivation} of $\mathcal D$ if $\mathcal D_1\prec \mathcal D$.
\end{definition}

\begin{remark}\label{remark:subderivation}
We clarify the above terminology with the following observations.
 \begin{enumerate}[label=(\arabic*), font=\normalfont]
     \item\label{item:rmksubderivation1} Note that the binary relation $\prec$ indeed defines a partial order.
    To see that it is antisymmetric, observe that if $(V_2,\lambda_2)$ is a minimal representation of $\mathcal D-\mathcal D_1$, then $(-V_2,\lambda_2)$ is a minimal representation of $\mathcal D_1-\mathcal D$.
    Thus, $\mathcal D\prec \mathcal D_1$ and $\mathcal D_1\prec \mathcal D$ imply that $\lambda_2(I\times X)=0$.
    This is possible only when $\mathcal D=\mathcal D_1$.
    \item\label{item:rmksubderivation2} The definition remains equivalent if condition \eqref{ineq:subderivation1} is replaced by the following seemingly more restrictive one:
    \begin{equation}\label{ineq:subderivation2}
        \lambda_1 + \lambda_2 = \lambda.
    \end{equation}
   Indeed, from the pointwise mass bound for $\cD_1 $ and $\cD_2\coloneqq\cD-\cD_1$,
   \[
   |\cD(f)|=|\cD_1(f)+\cD_2(f)|\leq \lip_{\rm a}f(\lambda_1+\lambda_2),\quad \forall f\in\Lipb(X).
   \]
   Thus by definition, the mass measure of $\cD$ satisfies $\lambda\leq \lambda_1+\lambda_2$. 
   So the inequality \eqref{ineq:subderivation1} forces the identity $\lambda_1 + \lambda_2 = \lambda$.
\end{enumerate}
\end{remark}
    
\begin{definition}[Minimal derivation]\label{def:minimalderivation}
We say a measure-valued derivation $\mathcal{D}$ is a \emph{spatial cycle} (or \emph{divergence free}), if
    \begin{align}
        \int_{I\times X} \xi(t)\d\mathcal{D}(f)=0,\quad\forall f\in\Lipb(X), \, \xi \in \mathcal{B}_{\mathrm{b}}(I)
    \end{align}    
We call a finite-mass derivation $\mathcal{D}$ a \emph{minimal derivation} (or \emph{acyclic}) if it does not have any nontrivial spatial cycle as a subderivation.
Moreover, we say a $\lambda$-derivation $V$ is minimal if its associated measure-valued derivation is minimal.
\end{definition}

One can check that a finite-mass derivation $\mathcal{D}$ is a minimal derivation if and only if it is a minimal element of the following set with respect to the partial order $\prec$ in \cref{def:subderivation}
	\[
	\left\{ \tilde{\mathcal{D}} \text{ finite-mass derivation}: \mathcal{D}-\tilde{\mathcal{D}} \text{ is divergence free} \right\}.
	\]
The terminology of minimal derivations is therefore completely analogous to that of minimal measures introduced in \cite{AlmiRossiSavare2025}; see also \cref{def:minimalmeasure}. 
Moreover, in the same spirit, one can define a minimal solution $(\mu,\mathcal D)$ to \eqref{eq:CE_D} by asking $\mathcal D$ to have a minimal singular part with respect to $\mu$.

Let $\mu\in \M^+(I\times X)$ be a reference measure. 
For any $\nnu\in \M(I\times X;\R)$ we consider its Lebesgue decomposition $\nnu=\nnu^a+\nnu^\perp$ where $\nnu\ll \mu$ and $\nnu\perp \mu$.
Analogously, we can define the Lebesgue decomposition of a measure-valued derivation $\mathcal{D}$ with respect to $\mu$ by setting
\begin{equation}
\mathcal{D}=\mathcal{D}^a+\mathcal{D}^\perp,\quad 	\mathcal{D}^a(f)\coloneqq \mathcal{D}(f)^a, \quad \mathcal{D}^\perp(f)\coloneqq\mathcal{D}(f)^\perp
\end{equation}
so that $\mathcal{D}(f)^a$ and $\mathcal{D}(f)^\perp$ denote the Lebesgue decomposition of the signed measure $\mathcal{D}(f)$ with respect to $\mu$.
It is straightforward to check that both $\mathcal{D}^a$ and $\mathcal{D}^\perp$ are themselves measure-valued derivations, and they have finite mass if $\cD$ has finite mass.

\begin{definition}[Minimal solutions to \eqref{eq:CE_D}]\label{def:minimalsolution_M}
We call a solution $(\mu,\mathcal{D})$ to \eqref{eq:CE_D} a \emph{minimal solution} if $\mathcal{D}^\perp$ is a minimal derivation in the sense of \cref{def:minimalderivation}, where $\mathcal{D}=\mathcal{D}^a+\mathcal{D}^\perp$ is the Lebesgue decomposition with respect to $\mu$.
\end{definition}

We conclude this subsection with the following observation. Recall from \cref{cor:derivation_CE} that, whenever $(\mu,\mathcal{D})$ solves \eqref{eq:CE_D}, the mass measure $\lambda$ of $\mathcal D$ satisfies $|\D\mu|(I)\leq c^{-1} \lambda(I\times X)$. The following shows that whenever this lower bound is attained, i.e., $\mathcal D$ is optimal, the corresponding solution is necessarily minimal.
This criterion will be particularly useful in
Section \ref{subsec:lifts_to_derivation}.

\begin{proposition}\label{prop:Opt->Min}
Let $\mu\in \M^+(I\times X)$ and let $\mathcal{D}$ be a finite-mass measure-valued derivation such that $(\mu,\mathcal{D})$ solves the continuity equation \eqref{eq:CE_D}.
Assume
\begin{align}\label{eq:optimal_condition}
    |\D\mu|(I)=c^{-1}\lambda(I\times X).
\end{align}
where $c=T^{-1} \| \mu\|_{\mathrm{TV}}$ and $\lambda$ is the mass measure of $\cD$ in the sense of \cref{def:massD}.
    Then $(\mu,\mathcal{D})$ is a minimal solution.
\end{proposition}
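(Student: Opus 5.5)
We argue by contradiction. Suppose $(\mu,\cD)$ is not minimal. Then $\cD^\perp$ admits a nontrivial spatial cycle $\cD_1\prec\cD^\perp$. Set $\cD_2\coloneqq \cD^\perp-\cD_1$ and consider $\tilde\cD\coloneqq \cD-\cD_1=\cD^a+\cD_2$.

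\textbf{Step 1: $\tilde\cD$ is still a solution.} Since $\cD_1$ is divergence free, testing against $\xi\in C^1_c(I)\subset\mathcal B_b(I)$ gives $\int\xi\,\d\cD_1(f)=0$. Hence $(\mu,\tilde\cD)$ solves \eqref{eq:CE_D}, and $\tilde\cD$ has finite mass. By \cref{cor:derivation_CE}\,\ref{item:Varineq_Mderivation} applied to $\tilde\cD$,
\[
|\D\mu|(I)\le c^{-1}\tilde\lambda(I\times X),
\]
where $\tilde\lambda$ is the mass measure of $\tilde\cD$.

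\textbf{Step 2: mass bookkeeping.} Denote by $\lambda^a,\lambda^\perp,\lambda_1,\lambda_2$ the mass measures of $\cD^a,\cD^\perp,\cD_1,\cD_2$.

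First we claim $\lambda=\lambda^a+\lambda^\perp$, with $\lambda^a\ll\mu$ and $\lambda^\perp\perp\mu$. To see this, take a $\mu$-null Borel set $N$ carrying all the singular parts. Restricting the mass bound $|\cD f|\le\lip_{\rm a}f\,\lambda$ to $N$ and to its complement shows that $\lambda\llcorner N^c$ bounds $\cD^a$ and $\lambda\llcorner N$ bounds $\cD^\perp$. Conversely, $\lambda^a\llcorner N^c+\lambda^\perp\llcorner N$ bounds $\cD$. Minimality of mass measures then gives the identity. One subtlety is that $N$ must be chosen uniformly in $f$. We take $N$ to be a $\mu$-null set carrying $\lambda^\perp$, the singular part of $\lambda$ with respect to $\mu$; since $|\cD f|\ll\lambda$, this single set works for every $f$.

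Next, $\lambda^\perp=\lambda_1+\lambda_2$ by \cref{remark:subderivation}\,\ref{item:rmksubderivation2}. Finally, $\tilde\lambda\le\lambda^a+\lambda_2$ by subadditivity of mass bounds, exactly as in \cref{remark:subderivation}\,\ref{item:rmksubderivation2}.

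\textbf{Step 3: conclusion.} Combining the hypothesis \eqref{eq:optimal_condition} with Steps 1 and 2,
\[
c^{-1}\lambda(I\times X)=|\D\mu|(I)\le c^{-1}\tilde\lambda(I\times X)\le c^{-1}\bigl(\lambda^a+\lambda_2\bigr)(I\times X)=c^{-1}\bigl(\lambda(I\times X)-\lambda_1(I\times X)\bigr).
\]
This forces $\lambda_1(I\times X)=0$. Then $|\cD_1 f|\le \lip_{\rm a}f\,\lambda_1=0$ for every $f$, so $\cD_1=0$, contradicting the nontriviality of the cycle.

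The main obstacle is the additivity $\lambda=\lambda^a+\lambda^\perp$ in Step 2, which requires the uniform-in-$f$ choice of the null set $N$ described there. Everything else is direct bookkeeping with \cref{cor:derivation_CE} and \cref{remark:subderivation}.
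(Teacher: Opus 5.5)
Your proof is correct and follows the paper's argument: remove the divergence-free part of $\cD^\perp$, note that the remaining derivation $\cD^a+\cD_2$ still solves the continuity equation, apply the lower bound of \cref{cor:derivation_CE}, and use the additivity of mass measures under $\prec$ to force the cycle's mass to vanish. Your Step 2 also justifies two facts the paper's proof uses without comment: the identity $\lambda=\lambda^a+\lambda^\perp$ for the mass measures of $\cD^a$ and $\cD^\perp$, and the bound on the mass measure of $\cD^a+\cD_2$.
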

\begin{proof}
Let $\tilde{\mathcal{D}}^\perp$ be a sub-derivation of $\mathcal{D}^\perp$ such that $\mathcal{D}^\perp - \tilde{\mathcal{D}}^\perp$ is divergence-free in the sense of \cref{def:minimalderivation}.
Denote by $\tilde \lambda^\perp$ the mass measure of $\tilde{\cD}^\perp$.
By \cref{remark:subderivation} \ref{item:rmksubderivation2}, $\tilde\lambda^\perp\leq \lambda^\perp$.
It is readily checked that $(\mu, \mathcal{D}^a + \tilde{\mathcal{D}}^\perp)$ also solves the continuity equation.
By \cref{cor:derivation_CE} and under the assumption \eqref{eq:optimal_condition}
    \begin{equation}
          \pr^I_{\#}(\lambda)=\pr^I_{\#}(\lambda^a+\lambda^\perp)=c\cdot|\D \mu|\leq \pr^I_{\#}(\lambda^a+\tilde\lambda^\perp).
    \end{equation}
    This forces $\lambda^\perp = \tilde\lambda$, which in turn implies $\mathcal{D}^\perp = \tilde{\mathcal{D}}^\perp$. 
    This proves that $\mathcal{D}^\perp$ is a minimal derivation, thereby concluding that $(\mu, \mathcal{D})$ is a minimal solution.
\end{proof}

\subsection{Derivations on \texorpdfstring{$\mathbb{R}^n$}{Rn}}\label{sec:derivationR^n}
In this subsection, we investigate the relationship between measure-valued derivations on $\R^n$ and $\R^n$-valued measures on $I\times\R^n$ via the notion of directional derivation.
These questions are of independent interest and will also play a crucial role in establishing the consistency between different notions of CE in \cref{sec:consistencyCE}.
\begin{definition}[Directional derivation]\label{def:directionalderivation}
    Let $\nnu \in \M(I\times\mathbb{R}^n;\R^n)$.
    We say a measure-valued derivation $\mathcal{D}$ is a \emph{$\nnu$-directional derivation} if $\mathcal{D}(f)=\nabla f\cdot\nnu$ for every $f\in C^1_c(\R^n)$.
\end{definition}
\begin{theorem}\label{thm:DtoV}
      Every continuous measure-valued derivation $\mathcal{D}$ is a $\nnu$-directional derivation for some $\nnu\in \M(I\times \R^n;\R^n)$.
      If in addition $\mathcal{D}$ is weak$^*$-type continuous, then
      \begin{enumerate}[label=(\arabic*), font=\normalfont]
          \item $\cD$ has finite mass;
          \item there exists a unique $\nnu\in \M(I\times \R^n;\R^n)$ such that $\cD$ is a $\nnu$-directional derivation, and the mass measure of $\cD$ equals $|\nnu|$.
      \end{enumerate}
\end{theorem}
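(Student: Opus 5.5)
We define the flux directly through the coordinate functions and then verify the required properties in turn. Extend $\cD$ to all of $\Lip(\R^n)$ by \cref{prop:extension}. Let $\pi_i(x)\coloneqq x_i$ be the coordinate functions, which are Lipschitz, and set
\[
\nnu\coloneqq(\cD(\pi_1),\dots,\cD(\pi_n))\in\M(I\times\R^n;\R^n).
\]
Each component is finite by \cref{prop:extension}\,\ref{item:extension2}. For $f\in C^1_c(\R^n)$, apply the chain rule \eqref{eq:chainrule_D} with $\Phi=f$, which is $C^1$ and Lipschitz, and $f_i=\pi_i$. This gives $\cD(f)=\sum_i\partial_i f\,\cD(\pi_i)=\nabla f\cdot\nnu$, so $\cD$ is $\nnu$-directional. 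This part needs only continuity of $\cD$.

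Assume now that $\cD$ is weak$^*$-type continuous. We first show that $|\cD(f)|\le\lip_{\rm a}(f)\,|\nnu|$ for every $f\in\Lipb(\R^n)$, which gives finite mass.

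\emph{Smooth case.} For $f\in C^1$ with bounded derivative (in particular $C^1_c$), the bound follows from the same chain rule computation. Writing $\nnu=\boldsymbol w|\nnu|$, we get $|\nabla f\cdot\boldsymbol w|\le|\nabla f|_*|\boldsymbol w|=|\nabla f|_*=\lip_{\rm a}f$.

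\emph{General case.} For $f\in\Lipb(\R^n)$, mollify to $f_\varepsilon=f*\rho_\varepsilon$. These are uniformly bounded in $\Lipb$ and converge pointwise, hence weak$^*$, to $f$. By weak$^*$-type continuity, $\cD(f_\varepsilon)\to\cD(f)$ against $C_b$. Moreover
\[
|\cD(f_\varepsilon)|\le|\nabla f_\varepsilon|_*\,|\nnu|,\qquad |\nabla f_\varepsilon|_*(x)\le\sup_{B(x,\varepsilon)}\text{local Lipschitz constant}.
\]
For compact $K$ and open $U\supset K$, lower semicontinuity of total variation under this convergence gives
\[
|\cD f|(U)\le\liminf_\varepsilon\int_U|\nabla f_\varepsilon|_*\,\d|\nnu|.
\]
Using $\limsup_\varepsilon|\nabla f_\varepsilon|_*(x)\le\lip_{\rm a}f(x)$ together with reverse Fatou (the integrands are uniformly bounded), we obtain $|\cD f|(U)\le\int_U\lip_{\rm a}f\,\d|\nnu|$ on open sets. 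Outer regularity then extends this to Borel sets. This is the main technical step: the semicontinuity on open sets has to be done carefully, testing against $g\in C_b$ with $|g|\le 1$ and support in $U$ (or using the dual formula for variation). Hence $\cD$ has finite mass and $\lambda_*\le|\nnu|$.

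For the reverse inequality $|\nnu|\le\lambda_*$, use the dual characterization \eqref{eq:vardual}. Since $\lambda_*$ is a mass bound, $|\nabla f\cdot\nnu|=|\cD f|\le|\nabla f|_*\,\lambda_*$ for $f\in C^1_c$. Writing $\nnu=\boldsymbol\rho\,\lambda_*+\nnu^s$, I would test with $f$ linear near a point, localized by a cutoff, whose gradient is a fixed vector $\boldsymbol e$ with $|\boldsymbol e|_*\le1$. Weak locality (\cref{lemma:weaklocality}) handles the cutoff region, and this yields $|\boldsymbol e\cdot\nnu|\le\lambda_*$ as measures on open sets. Ranging over a countable dense set of $\boldsymbol e$ in the dual unit ball, we get $\nnu^s=0$ and $\sup_{\boldsymbol e}\boldsymbol e\cdot\boldsymbol\rho=|\boldsymbol\rho|\le1$ $\lambda_*$-a.e. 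Therefore $|\nnu|\le\lambda_*$, and so $|\nnu|=\lambda_*$.

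Uniqueness: if $\nabla f\cdot\nnu=\nabla f\cdot\nnu'$ for all $f\in C^1_c$, take $f$ equal to $\pi_i$ on large balls. Weak locality then gives equality of the $i$-th components on $I\times B_R$ for every $R$, hence $\nnu=\nnu'$. (Without weak$^*$ continuity, uniqueness is not claimed, since the values on $C^1_c$ need not determine $\cD$; it is nevertheless immediate here.)
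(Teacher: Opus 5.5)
Your proof is correct. Its skeleton matches the paper's: $\nu_i=\cD(x_i)$ via \cref{prop:extension}, the chain rule on $C^1$, approximation plus weak$^*$-type continuity, identification of the mass measure, then uniqueness. The two central steps, however, are carried out differently.

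\textbf{Passing from $C^1$ to $\Lipb(\R^n)$.} The paper only tracks the global constant. It takes $C^1$ approximants with $\Lip(f_n)\le\Lip(f)$ (the Stepanov--Trevisan lemma) and obtains $|\cD f|\le\Lip(f)\,|\nnu|$. It then represents $\cD$ by a $|\nnu|$-derivation via duality and invokes the self-improvement \cref{prop:derivation}\,\ref{item:selfimprovement_massbound}, a localization argument, to upgrade $\Lip(f)$ to $\lip_{\rm a} f$. You instead carry the local constant through the limit: $\limsup_\varepsilon|\nabla f_\varepsilon|_*\le\lip_{\rm a} f$ pointwise, lower semicontinuity of $|\cdot|(U)$ on open sets under convergence against $C_b$, reverse Fatou, and outer regularity. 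Your route is self-contained and elementary but relies on the linear structure of $\R^n$ through mollification. The paper's route reuses metric-space machinery it needs elsewhere anyway.

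\textbf{Identifying the mass measure.} For $\lambda_*=|\nnu|$ the paper cites \cref{lemma:finitemassD}\,\ref{item:massM3} (optimal mass bound equal to $1$) in a single sentence. Your test with localized linear functions $\chi_R\,(\boldsymbol e\cdot x)$ and the Lebesgue decomposition $\nnu=\boldsymbol\rho\lambda_*+\nnu^s$ makes explicit exactly the verification hidden in that sentence.

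\textbf{Uniqueness.} Your argument is more direct than the paper's detour through uniqueness of the mass measure. It shows that $\nnu$ is determined by $\cD$ on $C^1_c$ for every continuous $\cD$; weak locality is not even needed, since $\nabla f=\boldsymbol e_i$ on $B_R$ already gives $\nu_i=\nu_i'$ on $I\times B_R$. Your parenthetical caveat is therefore slightly off target. What can fail without weak$^*$-type continuity is that $\nnu$ determines $\cD$, not that $\cD$ determines $\nnu$.
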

\begin{proof}
    For each $i=1,\cdots, n$ we denote by $x_i$ the coordinate function $(x_1,\cdots,x_n)\mapsto x_i$ on $\R^n$.
   By \cref{prop:extension}, $ \cD(x_i)\in \M(I\times \R^n;\R)$.
Denote $\nu_i\coloneqq \cD(x_i)$ and $\nnu\coloneqq (\nu_1,\cdots,\nu_n)\in\M(I\times\R^n;\R^n)$.
Again due to the chain rule \cref{prop:extension} \ref{item:extension1} we have for every $f\in C^1(\R^n)$
\begin{align}
\cD(f)=\cD(f(x_1,\cdots,x_n))=\sum_{i=1}^n\partial_{x_i}f\cD(x_i)=\nabla f\cdot \nnu.
\end{align}
In particular, $\cD$ is a $\nnu$-directional derivation.
Moreover, denoting $\lambda\coloneqq |\nnu|$, it follows that
\begin{equation}\label{ineq:proof_mass}
    |\cD(f)|\leq |\nabla f|_*\cdot |\nnu|=\lip_{\rm a}f\cdot \lambda\leq \Lip(f)\lambda,\quad \forall f\in C^1(\R^n).
\end{equation}\smallskip
Now assume that $\cD$ is weak$^*$-type continuous.
For each $f\in \Lipb(\R^n)$, by \cite[Lemma A.3]{Stepanov-Trevisan2017}, there exists a sequence $(f_n)_n\subset C^1(\R^n)\cap \Lipb(\R^n)$ 
\[
f_n\wsconver f,\quad \Lip(f_n)\leq \Lip(f),\ \forall n.
\]
By assumption, $\cD(f_n)\wsconver \cD(f)$ and thus passing to the limit in \eqref{ineq:proof_mass} yields $|\cD(f)|\leq \Lip(f)\lambda$.
This means, by duality, that there exists a continuous $\lambda$-derivation $V$ such that $(V,\lambda)$ is representation of $\cD$.
By \cref{prop:derivation} \ref{item:selfimprovement_massbound}, one gets the improvement
\[
|\cD(f)|\leq \lip_{\rm a}f\cdot \lambda,\quad \forall f\in \Lipb(X).
\]
As $\cD$ has finite mass, the fact that $|\nnu|$ is the mass measure of $\cD$ follows from \cref{lemma:finitemassD} \ref{item:massM3} using the very definition of the $\nnu$-directional derivation.
\\
Finally, the uniqueness of $\nnu$ follows from the uniqueness of the mass measure.
Indeed, if $\cD$ is a direction derivation with respect to both $\nnu_1,\nnu_2\in \M(I\times \R^n;\R^n)$.
The proceeding discussion demands that $|\nnu_1|=|\nnu_2|$.
Then to satisfy 
\[
\cD(f)=\nabla f\cdot \nnu_1=\nabla f\cdot \nnu_2,\quad \forall f\in C^1_c(\R^n),
\]
forces $\nnu_1=\nnu_2$.
\end{proof}

\cref{thm:DtoV} says that a weak$^*$-type continuous measure-valued derivation can canonically induce a ``flux measure".
Conversely, although we expect for every $\nnu\in\M(I\times \R^n;\R^n)$ there are (not necessarily uniquely) continuous $\nnu$-direction derivations, such derivations can not in general be weak$^*$-type continuous.
Indeed, for any Radon measure $\lambda$ that is concentrated on
a purely 1-unrectifiable set of Hausdorﬀ dimension (at most) one (e.g. Cantor sets), the only weak$^*$-continuous derivation from $\Lipb(\R^n)$ to $L^\infty(\R^n;\lambda)$ is $0$; see \cite[Lemma 3.7]{Gong12}.
It is therefore natural to ask for a characterization of all $\nnu\in \M(I\times \R^n;\R)$ that induce a weak$^*$-type
continuous derivation.
Nevertheless, we in the following give a sufficient condition, which is enough for the purpose of this work.

Let $\mu_1,\mu_2\in\M(I\times\R^n;\R)$.
We say that a vector-valued measure $\nnu\in\M(I\times\R^n;\R^n)$ \emph{has divergence} $\Div \nnu= \mu_1-\partial_t\mu_2$, if for all $\varphi\in C^1_c(I\times \R^n)$
\begin{equation}\label{eq:divergence}
		\int_{I\times \R^n} \nabla \varphi\cdot \d\nnu=\int_{I\times \R^n} \varphi\d\mu_1+\int_{I\times \R^n} \partial_t\varphi\d\mu_2.
\end{equation}

\begin{proposition}\label{prop:measuretoderivation}
	Assume that $\nnu\in\M(I\times\R^n;\R^n)$ has divergence $\Div \nnu= \mu_1-\partial_t\mu_2$. 
    Then the map $C^1_c(\R^n)\ni f\mapsto \nabla f\cdot \nnu$ can be uniquely extended to a weak$^*$-type continuous derivation.
\end{proposition}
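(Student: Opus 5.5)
The idea is to obtain a formula for $\nabla f\cdot\nnu$, tested against space-time test functions, that involves only the values of $f$ and not its gradient. Such a formula is stable under bounded pointwise convergence, which is exactly what weak$^*$ convergence in $\Lipb(\R^n)$ amounts to.

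\textbf{Step 1: a gradient-free formula.} Fix $f\in C^1(\R^n)\cap\Lipb(\R^n)$ and $g\in C^1_c(I\times\R^n)$. Then $gf\in C^1_c(I\times\R^n)$, so the Leibniz rule for gradients and the divergence identity \eqref{eq:divergence} applied to $\varphi=gf$ give
\begin{equation}
\int g\,\nabla f\cdot \d\nnu=\int gf\d\mu_1+\int \partial_t g\, f\d\mu_2-\int f\,\nabla g\cdot\d\nnu \eqqcolon R(f;g).
\end{equation}
The right-hand side $R(f;g)$ makes sense for every bounded Borel $f$. Moreover, if $f_k\to f$ pointwise with $\sup_k\|f_k\|_\infty<\infty$, then $R(f_k;g)\to R(f;g)$ by dominated convergence, since $\mu_1,\mu_2,\nnu$ are finite measures.

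\textbf{Step 2: definition and mass bound.} Let $f\in\Lipb(\R^n)$. By \cite[Lemma A.3]{Stepanov-Trevisan2017} (or by mollification) there are $f_k\in C^1\cap\Lipb(\R^n)$ with $f_k\wsconver f$ and $\Lip(f_k)\le\Lip(f)$. The measures $\nabla f_k\cdot\nnu$ satisfy $|\nabla f_k\cdot\nnu|\le \Lip(f)\,|\nnu|$. They are therefore uniformly bounded and uniformly tight, being dominated by a single finite measure. By Prokhorov, a subsequence converges narrowly to some $\cD(f)\in\M(I\times\R^n;\R)$ with $|\cD(f)|\le\Lip(f)|\nnu|$.

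Testing against $g\in C^1_c$ and using Step 1 gives $\int g\d\cD(f)=R(f;g)$. Since $C^1_c(I\times\R^n)$ determines a finite measure, the limit does not depend on the subsequence or on the approximating sequence. For $f\in C^1_c$ it coincides with $\nabla f\cdot\nnu$. Linearity of $\cD$ follows from linearity of $R(\cdot\,;g)$.

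\textbf{Step 3: Leibniz rule.} For $f,h\in C^1\cap\Lipb(\R^n)$ the Leibniz rule is the classical product rule for $\nabla(fh)\cdot\nnu$. For general $f,h\in\Lipb(\R^n)$, take approximations $f_k,h_k$ as in Step 2; then $f_kh_k\wsconver fh$ with bounded Lipschitz constants. Test against $g\in C^1_c$:
\begin{equation}
\int g\d\cD(f_kh_k)=\int gf_k\d\cD(h_k)+\int gh_k\d\cD(f_k).
\end{equation}
The left-hand side converges to $\int g\d\cD(fh)$ by Step 1.

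On the right-hand side, write $\int gf_k\d\cD(h_k)=\int g f\d\cD(h_k)+\int g(f_k-f)\d\cD(h_k)$. The error term is bounded by $\Lip(h)\int|g||f_k-f|\d|\nnu|\to0$ by dominated convergence. For the first term, $\cD(h_k)\to\cD(h)$ narrowly and $gf\in C_b$, so it converges to $\int gf\d\cD(h)$. The same argument handles the other term. This step, where only pointwise convergence of $f_k$ is available and we must rely on domination by $|\nnu|$, is the delicate point.

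\textbf{Step 4: weak$^*$-type continuity and uniqueness.} Let $f_k\wsconver f$ in $\Lipb(\R^n)$. By Banach--Steinhaus, $\sup_k\Lip(f_k)\le L<\infty$, so $|\cD(f_k)|\le L|\nnu|$ for all $k$. For $g\in C^1_c$ we have $\int g\d\cD(f_k)=R(f_k;g)\to R(f;g)=\int g\d\cD(f)$ by Step 1.

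To pass to an arbitrary $g\in C_b(I\times\R^n)$, use the uniform domination by $|\nnu|$. First choose a compact set $K$ with $|\nnu|((I\times\R^n)\setminus K)<\varepsilon$. Then choose $\tilde g\in C^1_c$ with $|\tilde g|\le\|g\|_\infty$ and $|g-\tilde g|<\varepsilon$ on $K$. The difference $\int (g-\tilde g)\d\cD(f_k)$ is then bounded by $L\varepsilon\,(\|\nnu\|_{\mathrm{TV}}+2\|g\|_\infty)$, uniformly in $k$, and similarly for $f$.

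Uniqueness follows directly. Any weak$^*$-type continuous extension of $f\mapsto\nabla f\cdot\nnu$ must agree with $\cD$ on $C^1_c(\R^n)$. By weak locality and the chain rule (\cref{prop:extension}), together with a cutoff argument, this extends to agreement on $C^1\cap\Lipb(\R^n)$, and hence on all of $\Lipb(\R^n)$ by the weak$^*$ density of these functions from Step 2.
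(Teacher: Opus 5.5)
Your proof is correct and follows essentially the paper's route: the gradient-free identity from the divergence formula with $\varphi=gf$, the limit along bounded pointwise approximations, and the extension to general test functions via the uniform domination $|\nabla f_k\cdot\nnu|\le L|\nnu|$. The differences are minor. The paper shows directly that $\int g\,\nabla f_n\cdot\d\nnu$ is Cauchy for every $g\in L^1(|\nnu|)$, so $\cD(f)$ comes out as a density in $L^\infty(|\nnu|)$. You instead extract the limit by Prokhorov compactness and identify it through $C^1_c$ test functions, and you spell out the Leibniz rule and weak$^*$-type continuity that the paper calls clear.
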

\begin{proof}
	Let $f \in \Lipb(\R^n)$, and let $(f_n)_n$ be a sequence of $C^1_c$-functions converging pointwise to $f$ with $\|f_n\|_{\Lipb} \leq L$ for some $L > 0$ and all $n \in \N$.
    By assumption, there exist $\mu_1,\mu_2\in \M(I\times \R^n;\R)$ satisfying the identity \eqref{eq:divergence} for all $\varphi\in C^1_c(I\times \R^n)$
	\\
	We claim for any $g \in L^1(|\nnu|)$ that the following sequence is Cauchy
	\[
	\int g(t,x) \nabla f_n(x) \cdot \d\nnu(t,x).
	\]
	Assume first that $g \in C^1_c(I \times \R^n)$. Plugging $\varphi=g\cdot f_n$ into \eqref{eq:divergence} obtains
	\begin{align}
		\int g(t,x) \nabla f_n(x) \cdot \d\nnu(t,x)
		&= \int \nabla (g f_n)(t,x) \cdot \d\nnu(t,x) - \int (f_n \nabla g)(t,x) \cdot \d\nnu(t,x) \\
		&=\int f_ng\d\mu_1+ \int f_n \partial_t g \d\mu_2 - \int f_n \nabla g \cdot \d\nnu.
	\end{align}
	By the dominated convergence theorem and the pointwise convergence of $f_n$, the integrals above converge as $n \to \infty$.
	\\
	Now for general $g \in L^1(|\nnu|)$, consider a sequence $(g_m) \subset C^1_c(I \times \R^n)$ which converges to $g$ in $L^1(|\nnu|)$. For any $\varepsilon > 0$, there exists $m_0 \in \N$ such that
	\begin{equation}\label{ineq:itisCauchy1}
		\int |(g_{m_0} - g) \nabla f_n|_{*} \d|\nnu| \leq \sup_n \Lip(f_n) \|g_{m_0} - g\|_{L^1(|\nnu|)} \leq \frac{\varepsilon}{2}.
	\end{equation}
	Since for any $m_0\in\N$, $\int g_{m_0}(t,x) \nabla f_n(x) \cdot \d\nnu(t,x)$ is a Cauchy sequence in $n$, there exists $n_0 \in \N$ such that for all $n_1, n_2 > n_0$,
	\begin{equation}\label{ineq:itisCauchy2}
		\int g_{m_0}(t,x) (\nabla f_{n_1} - \nabla f_{n_2})(x) \cdot \d\nnu(t,x)  \leq \frac{\varepsilon}{2}.
	\end{equation}
	Combining \eqref{ineq:itisCauchy1} and \eqref{ineq:itisCauchy2} shows the claim, which means the limit
	\[
	\lim_{n \to \infty} \int g(t,x) \nabla f_n(x) \cdot \d\nnu(t,x)
	\]
	exists. 
	One can check that this limit is independent of the choice of the sequence $(f_n)$.
	Therefore, for $f\in\Lipb(X)$ one may define $\cD(f)$ by
	\[
	\int g(t,x) \d\cD(f)(t,x) \coloneqq \lim_{n \to \infty} \int g(t,x) \nabla f_n(x) \cdot \d\nnu(t,x), \quad \forall g \in L^1(|\nnu|),
	\]
    which by construction is weak$^*$-type continuous.
	The linearity and the Leibniz rule of $\cD$ are clear.
    The uniqueness follows from \cref{thm:DtoV}.
\end{proof}

For $\nnu_1,\nnu_2\in \M(I\times \R^n;\R^n)$, we write $\Div \nnu_1=\Div \nnu_2$, if $\nnu_1-\nnu_2$ is divergence-free, that is
\begin{align}\label{eq:divergencefree}
   \int_{I\times \R^n} \nabla \varphi\cdot \d\nnu_1=\int_{I\times \R^n} \nabla \varphi\cdot \d\nnu_2, \quad \forall\varphi\in C^1_c(I\times \R^n).
\end{align}

 \begin{definition}[Minimal measures] \label{def:minimalmeasure}
    Given $\nnu_1,\nnu_2\in \M(I\times \R^n;\R^n)$, we say that $\nnu_2$ is a \emph{sub-measure} of $\nnu_1$ and write shortly $\nnu_2\prec \nnu_1$ if there exists a Borel function $\rho\colon I\times \R^n\to [0,1]$ such that $\nnu_2=\rho\cdot\nnu_1$.
    \\
    Moreover, we say that $\nnu_1$ is \emph{minimal} if it does not have any non-trivial divergence-free sub-measure i.e. for any $\nnu_2\prec \nnu_1$ with $\Div \nnu_2=0$, we have that $\nnu_2=0$.
    Equivalently, for any $\nnu_2\prec \nnu_1$ with $\Div \nnu_2=\Div \nnu_1$, we have that $\nnu_2=\nnu_1$.
    \end{definition}
    
\begin{remark}\label{remark:derivationfrommeasure}
	The proof of \cref{prop:measuretoderivation} implies also the following: if $\mathcal{D}_{\nnu}$ is weak$^*$-type continuous $\nnu$-directional derivation, then for any $\xxi\in\M(I\times \R^n;\R^n)$ with $\Div \xxi=\Div \nnu$, there exists a weak$^*$-type continuous $\xxi$-directional derivation.
\end{remark}

\begin{theorem}[Consistency I, minimality]\label{thm:consistency}
	 Let $\mathcal{D}_{\nnu}$ be a weak$^*$-type continuous $\nnu$-directional derivation for $\nnu\in\M(I\times \R^n;\R^n)$. 
	 Then
	\begin{enumerate}[label=(\arabic*),font=\normalfont]
		\item for any weak$^*$-type continuous derivation $\mathcal{D}_1$, $\mathcal{D}_1\prec \mathcal{D}_{\nnu}$ if and only if $\mathcal{D}_1$ is a $\nnu_1$-directional derivation for a submeasure $\nnu_1\prec\nnu$;
		\item $\mathcal{D}_{\nnu}$ is a minimal derivation if and only if $\nnu$ is a minimal measure.
	\end{enumerate}
\end{theorem}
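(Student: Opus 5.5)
The plan for (1) is to reduce both sides to statements about $\R^n$-valued measures, using \cref{thm:DtoV}. Since $\mathcal{D}_{\nnu}$ and $\mathcal D_1$ are weak$^*$-type continuous, so is $\mathcal D_2\coloneqq \mathcal D_{\nnu}-\mathcal D_1$. \cref{thm:DtoV} then gives unique measures $\nnu_1,\nnu_2$ with the following properties: $\mathcal D_i$ is a $\nnu_i$-directional derivation, its mass measure is $|\nnu_i|$, and $\nnu_1+\nnu_2=\nnu$ by uniqueness. By \cref{remark:subderivation}\,\ref{item:rmksubderivation2}, $\mathcal D_1\prec\mathcal D_{\nnu}$ is therefore equivalent to
\[
|\nnu_1|+|\nnu-\nnu_1| = |\nnu| .
\]
For the ``if'' direction, assume $\nnu_1=\rho\nnu$ with $0\le\rho\le1$. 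Then $|\nnu_1|=\rho|\nnu|$ and $|\nnu-\nnu_1|=(1-\rho)|\nnu|$, so the identity holds. For the ``only if'' direction, write $\nnu_i=\boldsymbol u_i|\nnu|$; this is possible because $|\nnu_i|\le|\nnu|$ by \cref{remark:subderivation}. Then $|\boldsymbol u_1|+|\boldsymbol u_2|=|\boldsymbol u_1+\boldsymbol u_2|=1$ holds $|\nnu|$-a.e.

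The main obstacle is to deduce from this that $\boldsymbol u_1=\rho\,(\boldsymbol u_1+\boldsymbol u_2)$ with $\rho\in[0,1]$. This requires that equality in the triangle inequality forces parallel vectors, i.e.\ that $|\,{\cdot}\,|$ be strictly convex, as for the Euclidean norm. For non-strictly convex norms, such as $\ell^1$, there is a counterexample ($\boldsymbol u_1=e_1$, $\boldsymbol u_2=e_2$). I would therefore carry out this step for the Euclidean (or any strictly convex) norm, taking $\rho\coloneqq|\boldsymbol u_1|$. For general norms I would state the needed strict convexity as an additional hypothesis, or else replace ``submeasure'' by the splitting condition $|\nnu_1|+|\nnu-\nnu_1|=|\nnu|$.

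For (2), I would first show that every finite-mass spatial cycle $\mathcal D_1$ is automatically weak$^*$-type continuous. The argument repeats the proof of \cref{thm:derivation_CE}\,\ref{item:CE_derivation1} with $\mu=0$. Choose a representation $(V_1,\lambda_1)$. Let $f_n\to f$ pointwise with bounded $\Lipb$-norms. By the Leibniz rule and the cycle property,
\[
\int\xi\varphi\,V_1f_n\d\lambda_1=-\int\xi f_nV_1\varphi\d\lambda_1 .
\]
This converges by dominated convergence, and density together with the $L^\infty$ bound concludes. Hence (1) applies to cycles. It then remains to show that $\mathcal D_1$ is a spatial cycle if and only if $\nnu_1$ is divergence-free.

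For the forward implication, test with $\varphi=\xi(t)f(x)$, where $\xi\in C^1_c(I)$ and $f\in C^1_c(\R^n)$. Linear combinations of such products are $C^1$-dense in $C^1_c(I\times\R^n)$, which yields $\Div\nnu_1=0$. For the converse, $\Div\nnu_1=0$ gives $\int\xi\,\nabla f\cdot\d\nnu_1=0$ for $\xi\in C^1_c(I)$. This extends to bounded Borel $\xi$ by $L^1(\pr^I_\#|\nnu_1|)$-density, and then to $f\in\Lipb(\R^n)$ by the approximation of \cite[Lemma A.3]{Stepanov-Trevisan2017} combined with weak$^*$-type continuity.

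Conversely, a divergence-free submeasure $\nnu_1\prec\nnu$ induces, by \cref{prop:measuretoderivation} (with $\mu_1=\mu_2=0$), a weak$^*$-type continuous cycle $\mathcal D_1$, and $\mathcal D_1\prec\mathcal D_{\nnu}$ by (1). Combining these correspondences, nontrivial cyclic subderivations of $\mathcal D_{\nnu}$ correspond exactly to nontrivial divergence-free submeasures of $\nnu$. This gives (2).
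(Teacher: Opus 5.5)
Your proposal is correct and follows the paper's route. In (1) you reduce, via \cref{thm:DtoV} and \cref{remark:subderivation}, to the splitting identity $|\nnu_1|+|\nnu-\nnu_1|=|\nnu|$. In (2) you pass between spatial cycles and divergence-free fluxes with the same product test functions and Nachbin density.

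Your objection is also right, and it points to a real gap in the paper. The paper's proof simply asserts ``it follows that $\nnu_1\prec\nnu$'', but the paper allows any norm on $\R^n$, so your $\ell^1$ example is a genuine counterexample to the ``only if'' part of (1). Concretely, take $\lambda=\mathcal L^1\llcorner I\otimes\mathcal L^2\llcorner[0,1]^2$, $\nnu=\tfrac12(e_1+e_2)\lambda$ and $\nnu_1=\tfrac12 e_1\lambda$. Both divergences are finite measures, so \cref{prop:measuretoderivation} makes both directional derivations weak$^*$-type continuous. In the $\ell^1$ norm, $|\nnu_1|+|\nnu-\nnu_1|=|\nnu|$, yet $\nnu_1$ is not of the form $\rho\nnu$. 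The paper's proof of the ``if'' part of (2) invokes (1) and so inherits this gap.

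Your preliminary step, that every finite-mass spatial cycle is weak$^*$-type continuous (the argument of \cref{thm:derivation_CE} with $\mu=0$), also fills a hole. The paper merely assumes that the cyclic subderivation to which it applies (1) is weak$^*$-type continuous; you prove it.
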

\begin{proof}
	We divide the proof into two parts.\smallskip
	\\
    (1) Assume first that $\mathcal{D}_1\prec \mathcal{D}_{\nnu}$.
	By \cref{thm:DtoV}, we may assume that $\mathcal{D}_1$ and $\mathcal{D}_2\coloneqq\mathcal{D}_{\nnu}-\mathcal{D}_1$ are $\nnu_1$- and $\nnu_2$-directional derivation for $\nnu_1,\nnu_2\in\M(I\times \R^n;\R^n)$, respectively, where $\nnu_2=\nnu-\nnu_1$; and $|\nnu|,|\nnu_1|$ and $|\nnu_2|$ are the mass measures of the corresponding derivations.
	Since $\mathcal{D}_1\prec \mathcal{D}_{\nnu}$, we have that $|\nnu_1|+|\nnu_2|\leq |\nnu|$ (see \cref{remark:subderivation}).
    It follows that $\nnu_1\prec \nnu$.
	\\
	Now assume that $\mathcal{D}_1$ is a $\nnu_1$-directional derivation with $\nnu_1\prec\nnu$.
	Denote by $\nnu_2\coloneqq \nnu-\nnu_1$.
    Then $|\nnu_1|+|\nnu_2|\le |\nnu|$ due to the fact that $\nnu_1\prec \nnu$.
    Moreover, $|\nnu_2|$ is the mass measure of the $\nnu_2$-directional derivation $\cD_2\coloneqq \cD_{\nnu}-\cD_1$.
    Hence $\mathcal{D}_1\prec \cD_{\nnu}$.\medskip
	\\
    (2) For the consistency between two notions of minimality, we show first the ``only if" part. 
    Assume that $\mathcal{D}_{\nnu}$ is a minimal derivation.
    Let $\xxi\in\M(I\times\R^n;\R^n)$ be such that $\xxi\prec\nnu$ with $\Div \xxi=\Div\nnu$.
    By \cite[Lemma 2.2]{AlmiRossiSavare2025}, $|\xxi|\leq |\nnu|$ and $\xxi=\rrho |\xxi|$ where $\rrho$ is the Radon--Nikodym density $\nnu=\rrho|\nnu|$.
    By  \cref{remark:derivationfrommeasure} there exists a weak$^*$-type continuous derivation $\mathcal{D}_{\xxi}$ which is a $\xxi$-directional derivation.
    \\
    We need to show that $\nnu=\xxi$.
   For any $f\in C_c^1(\R^n)$ and $g\in C^1_c(I)$
 \begin{align}
        \int g(t)\d (\mathcal{D}_{\nnu}-\mathcal{D}_{\xxi})(f)&=\int g(t)\nabla f(x)\cdot\d\nnu-\int g(t)\nabla f(x)\cdot\d \xxi=0
    \end{align}
    where the second equality is due to the assumption $\Div \xxi=\Div \nnu$.
    By the weak$^*$-continuity of $\mathcal{D}_{\nnu}-\mathcal{D}_{\xxi}$, the identity
    \begin{align}
     \int g(t)\d (\mathcal{D}_{\nnu}-\mathcal{D}_{\xxi})(f)=0
    \end{align}
    can be extended to all $f\in \Lipb(\R^n)$ and
    $g \in \mathcal{B}_{\mathrm{b}}(I)$.
    In particular, $\mathcal{D}_{\nnu}-\mathcal{D}_{\xxi}$ is a spatial cycle.
    Since $\mathcal{D}_{\nnu}$ is a minimal derivation, we conclude that $\mathcal{D}_{\nnu}-\mathcal{D}_{\xxi}=0$ as a derivation.
    As $\mathcal{D}_{\nnu}-\mathcal{D}_{\xxi}$ is a $(\nnu-\xxi)$-directional derivation, we conclude $\xxi=\nnu$
\\
    Now for the ``if" part, assume that $\nnu$ is a minimal measure.
    We want to show that $\mathcal{D}_{\nnu}$ is a minimal derivation.
    Assume that $\mathcal{D}_1$ is a weak$^*$-type continuous subderivation of $\mathcal{D}_{\nnu}$.
    By the first part of the theorem, there exists $\nnu_1\prec \nnu$ such that $\mathcal{D}_1$ is a $\nnu_1$-directional derivation.
    If $\mathcal{D}_{\nnu}-\mathcal{D}_1$ is a spatial cycle, then for any $f\in C^1_c(\R^n)$ and $g \in \mathcal{B}_{\mathrm{b}}(I)$ 
    \begin{align}
        \int_{I\times X} g(t)\d(\mathcal{D}_{\nnu}-\mathcal{D}_1)(f)=\int g(t)\nabla f\cdot \d (\nnu-\nnu_1)=0
    \end{align}
   Recalling by Nachbin's theorem (a Stone--Weierstrass type theorem for the space $C^1(I\times \R^n)$, see e.g. \cite[Theorem 3.3.1]{Bucur-Paltineanu20}) that any $\varphi\in C^1_c(I\times \R^n)$ can be approximated by a sequence of finite linear combinations of products of $C^1_c(I)$ and $C^1_c(\R^n)$ functions with respect to $C^1(U;\R)$-norm for a bounded open $U$ containing the support of $\varphi$.
   Therefore, $\Div (\nnu-\nnu_1)=0$ and the minimality of $\nnu$ demands that $\nnu_1=\nnu$.
   This implies $\mathcal{D}_{\nnu}-\mathcal{D}_1=0$.
\end{proof}

\subsection{Consistency between CE via flux and CE via derivation}\label{sec:consistencyCE}
Recall from \cref{def:ce} that, when the underlying space is $\R^n$, \cite{AlmiRossiSavare2025} introduced a notion of continuity equation in which the flux is given by a vector-valued measure, together with a corresponding notion of minimal solution; see \cref{def:ce_minimal_solution} below. 
This raises the natural question of how this formulation is related to the derivation-based approach. 
Utilizing the properties of directional derivations developed in \cref{sec:derivationR^n}, we establish a correspondence between (minimal) solutions to these two types of continuity equations.

\begin{theorem}[Consistency II, CE]\label{thm:nu_to_V}
Let $\mu\in\M^+(I\times \R^n)$. 
\begin{enumerate}[label=(\arabic*),font=\normalfont]
    \item\label{item:THM_CE1} Let $(\mu,\nnu)$ be a solution to the continuity equation in the sense of \cref{def:ce}.
    Then there exists a unique weak$^*$-type continuous measure-valued derivation $\mathcal{D}_{\nnu}$, which is a $\nnu$-directional derivation, and the pair $(\mu,\mathcal{D}_{\nnu})$ solves the continuity equation \eqref{eq:CE_D}.
    \item\label{item:THM_CE2} Conversely, let $(\mu,\cD)$ be a solution to the continuity equation \eqref{eq:CE_D} for a finite-mass derivation $\cD$.
    Then $\cD$ is a $\nnu$-directional derivation for $\nnu\in\M(I\times \R^n;\R^n)$ such that $(\mu,\nnu)$ solves the continuity equation \eqref{eq:CE}.
\end{enumerate}
\end{theorem}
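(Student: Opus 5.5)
For part (1), the plan is to combine \cref{prop:measuretoderivation} with the density of product test functions. If $(\mu,\nnu)$ solves \eqref{eq:CE}, then $\nnu$ has divergence $\Div\nnu=0-\partial_t\mu$ in the sense of \eqref{eq:divergence}, with $\mu_1=0$ and $\mu_2=\mu$ (up to the sign convention in \eqref{eq:divergence}). \cref{prop:measuretoderivation} then gives a unique weak$^*$-type continuous derivation $\cD_{\nnu}$ extending $f\mapsto\nabla f\cdot\nnu$ from $C^1_c(\R^n)$, so $\cD_{\nnu}$ is a $\nnu$-directional derivation. Uniqueness among weak$^*$-type continuous $\nnu$-directional derivations is also covered by \cref{thm:DtoV}, since such a derivation is determined on the weak$^*$-dense class of $C^1$ functions.

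To check \eqref{eq:CE_Mderivation}, I first take $f\in C^1_c(\R^n)$ and $\xi\in C^1_c(I)$. Then $\varphi=\xi f\in C^1_c(I\times\R^n)$ and $\nabla\varphi=\xi\nabla f$, so \eqref{eq:CE} gives the identity directly. For general $f\in\Lipb(\R^n)$, I approximate by a sequence $f_k\in C^1_c(\R^n)$ converging pointwise with bounded $\Lipb$ norms. To get such a sequence, use the $C^1\cap\Lipb$ approximation from \cite[Lemma A.3]{Stepanov-Trevisan2017} and multiply by cutoffs $\chi_R$ with $\Lip(\chi_R)\le 1/R$. This keeps the $\Lipb$ norms bounded and preserves pointwise convergence, so $f_k\wsconver f$. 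The first term of \eqref{eq:CE_Mderivation} passes to the limit by dominated convergence, since $\mu$ is finite and $\xi'$ is bounded. The second term passes to the limit by weak$^*$-type continuity tested against $g=\xi\in C_b(I\times\R^n)$.

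For part (2), let $\cD$ have finite mass. By \cref{cor:derivation_CE}, $\cD$ is weak$^*$-type continuous. \cref{thm:DtoV} then yields a unique $\nnu\in\M(I\times\R^n;\R^n)$ such that $\cD$ is $\nnu$-directional, with $\nnu_i=\cD(x_i)$ and $|\nnu|=|\cD|$. The CE for products $\varphi=\xi(t)f(x)$ with $f\in C^1_c$ is now immediate, because $\int\xi\,\nabla f\cdot\d\nnu=\int\xi\,\d\cD f$. The identity is linear in $\varphi$, so it holds on finite linear combinations of such products.

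The main obstacle is extending from product test functions to general $\varphi\in C^1_c(I\times\R^n)$. I would use Nachbin's theorem, as in the proof of \cref{thm:consistency}. Fix a bounded open $U$ containing $\spt\varphi$. Approximate $\varphi$ in $C^1(U)$ by sums $\varphi_k$ of products, multiplied by a fixed product cutoff $\eta(t)\zeta(x)$ equal to $1$ near $\spt\varphi$, so that the $\varphi_k$ are still product combinations supported in a fixed compact set. Then $\partial_t\varphi_k\to\partial_t\varphi$ and $\nabla\varphi_k\to\nabla\varphi$ uniformly. Since $\mu$ and $\nnu$ are finite measures, both integrals in \eqref{eq:CE} converge, and \eqref{eq:CE} follows for $\varphi$.
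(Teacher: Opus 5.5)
Your proposal is correct and follows essentially the same route as the paper. For (1) you use \cref{prop:measuretoderivation}, test the flux equation with $\varphi=\xi f$, and extend to all of $\Lipb(\R^n)$ by weak$^*$-type continuity; for (2) you use \cref{cor:derivation_CE} and \cref{thm:DtoV} to obtain $\nnu$, then a Nachbin density argument to pass from product test functions to general $\varphi\in C^1_c(I\times\R^n)$. The extra details you supply (the cutoff construction of a bounded $C^1_c$ approximating sequence in (1), and the product cutoff in the Nachbin step) make explicit what the paper leaves implicit.
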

\begin{proof}
  As $(\mu,\nnu)$ solves the continuity equation \eqref{eq:CE}, $\nnu$ has divergence in the sense of \eqref{eq:divergence}.
  Thus by \cref{prop:measuretoderivation}, there exists a unique weak$^*$-type continuous $\nnu$-directional derivation $\mathcal{D}_{\nnu}$. 
For every $f\in C^1_c(\R^n)$ and $\xi\in C^1_c(I)$, taking $\varphi=\xi\cdot f$ in \eqref{eq:CE} with $f\in C^1_c(\R^n)$ gives
  \begin{align}
  	\int \xi(t) \d \cD(f)(t,x)&=\int \nabla(\xi(t) f(x))\cdot\d\nnu(t,x)=-\int \xi'(t)f(x)\d\mu(t,x).
  	\end{align}
  The above identity can be extended to all $f\in \Lipb(X)$ by the weak$^*$-type continuity of $\cD$, which verifies the continuity equation \eqref{eq:CE_Mderivation}.\medskip
  \\
  For the reverse direction, by \cref{cor:derivation_CE} $\cD$ is weak$^*$-type continuous, and by \cref{thm:DtoV} there exists $\nnu\in\M(I\times\R^n;\R^n)$ such that $\mathcal{D}$ is a $\nnu$-directional derivation. 
  \\
   Thus, for each $\xi\in C^1_c(I)$ and $f\in C^1_c(\R^n)$, 
  \begin{equation}\label{eq:10/5-1}
  	-\int \xi(t)\nabla f(x)\cdot\d \nnu(t,x)=-\int \xi\d \cD(f)=  \int \xi'(t)f(x)\d\mu(t,x).
  \end{equation}
By the same Nachbin-type argument used in \cref{thm:consistency}, the identity \eqref{eq:10/5-1} can be extended to all $\varphi\in C^1_c(I\times \R^n)$ such that
\[
-\int_{I\times\R^n} \nabla \varphi\cdot \d\nnu=\int_{I\times\R^n} \partial_t\varphi \d\mu
\] 
which means that $(\mu,\nnu)$ solves the continuity equation \eqref{eq:CE}.
\end{proof}

 \begin{definition}[Minimal solutions to \eqref{eq:CE}]\label{def:ce_minimal_solution}
	We call a solution $(\mu,\nnu) \in \M^+(I\times \R^n)\times \M(I\times \R^n;\R^n) $ to \eqref{eq:CE}  a \emph{minimal solution} if $\nnu^\perp$ is a minimal measure in the sense of \cref{def:minimalmeasure}, where
	$\nnu=\nnu^a+\nnu^\perp$ is the Lebesgue decomposition of $\nnu$ with respect to $\mu$.
\end{definition}

\begin{theorem}[Consistency III, minimal solution]\label{thm:correspondence_minimal_solutions}
	Let $\mu\in \M^+(I\times \R^n)$ and let $\mathcal{D}$ be a finite-mass measure-valued derivation.
	Then $(\mu,\mathcal{D})$ is a minimal solution to the continuity equation \eqref{eq:CE_D} if and only if $\mathcal{D}$ is a $\nnu$-directional derivation for some $\nnu\in \M(I\times \R^n;\R^n)$ such that $(\mu,\nnu)$ is a minimal solution to the continuity equation \eqref{eq:CE}.
\end{theorem}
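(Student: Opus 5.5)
The plan is to reduce everything to \cref{thm:nu_to_V} and \cref{thm:consistency}, after checking that the Lebesgue decomposition with respect to $\mu$ commutes with the passage $\cD\leftrightarrow\nnu$. We first deal with the solution property. If $(\mu,\cD)$ solves \eqref{eq:CE_D} with $\cD$ of finite mass, then \cref{thm:nu_to_V}\,\ref{item:THM_CE2} shows that $\cD$ is a $\nnu$-directional derivation and that $(\mu,\nnu)$ solves \eqref{eq:CE}. Moreover, \cref{cor:derivation_CE} gives that $\cD$ is weak$^*$-type continuous, and \cref{thm:DtoV} gives that $\nnu$ is unique with $|\cD|=|\nnu|$. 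Conversely, if $\cD$ is a $\nnu$-directional derivation with $(\mu,\nnu)$ solving \eqref{eq:CE}, we have to identify $\cD$ with the weak$^*$-type continuous $\cD_{\nnu}$ of \cref{thm:nu_to_V}\,\ref{item:THM_CE1}. Both derivations act as $\nabla f\cdot\nnu$ on $C^1_c(\R^n)$. Their difference $\cD-\cD_\nnu$ has finite mass and vanishes on $C^1_c$. Since $(\mu,\cD_\nnu)$ solves the CE, the difference is divergence-free on $C^1_c$; extending this to all of $\Lipb$ is exactly the place where weak$^*$-type continuity of $\cD$ is needed. I would therefore read the statement as saying ``$\cD$ is the (weak$^*$-type continuous) $\nnu$-directional derivation'', noting that \cref{cor:derivation_CE}(3) supplies this automatically once $(\mu,\cD)$ is a solution. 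In both directions we are then in the setting $\cD=\cD_\nnu$ weak$^*$-type continuous and $(\mu,\nnu)$ a solution.

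Next we decompose. Write $\nnu=\nnu^a+\nnu^\perp$ with respect to $\mu$, and let $S$ be a Borel set with $\mu(S)=0$ and $\nnu^\perp$ concentrated on $S$ (taking the union with a null set carrying $|\cD|^\perp$ if necessary). Then for every $f$ we have $\cD^\perp(f)=\cD(f)\llcorner S$, because every $\cD(f)\ll|\cD|=|\nnu|$. For $f\in C^1_c$ this gives $\cD^\perp(f)=\nabla f\cdot(\nnu\llcorner S)=\nabla f\cdot\nnu^\perp$, so $\cD^\perp$ is a $\nnu^\perp$-directional derivation, and likewise $\cD^a$ is a $\nnu^a$-directional derivation. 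Restriction to a fixed Borel set preserves weak$^*$-type continuity: this follows from the representation $\cD f=Vf\,|\nnu|$ with $V$ weak$^*$-continuous (\cref{lemma:finitemassD}\,\ref{item:massM2}), since $\cD^\perp f=(Vf\,\mathds 1_S)|\nnu|$ and $L^1$ test functions may be multiplied by $\mathds 1_S$. Hence $\cD^\perp$ is a weak$^*$-type continuous $\nnu^\perp$-directional derivation.

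Finally we apply \cref{thm:consistency}(2) to $\cD^\perp$ and $\nnu^\perp$: $\cD^\perp$ is a minimal derivation if and only if $\nnu^\perp$ is a minimal measure. By \cref{def:minimalsolution_M} and \cref{def:ce_minimal_solution}, this is exactly the claimed equivalence of minimal solutions. \cref{thm:consistency} only requires weak$^*$-type continuity of the derivation whose minimality is tested, and we have just established this for $\cD^\perp$.

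The main obstacle is the step of the second paragraph, namely showing that $\cD^\perp$ inherits weak$^*$-type continuity and is exactly the $\nnu^\perp$-directional derivation, so that \cref{thm:consistency} applies. The restriction argument via the $L^\infty$ representation is the cleanest route, and it relies on the fact that the test class $C_b$ in the weak$^*$-type definition can be upgraded to $L^1(|\nnu|)$. This upgrade follows from the proof of \cref{lemma:finitemassD}\,\ref{item:massM2}.
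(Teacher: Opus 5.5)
Your proposal is correct and follows the paper's proof essentially step by step. You reduce to \cref{thm:nu_to_V}, you get weak$^*$-type continuity of $\cD^\perp$ by restricting the minimal representation $(V,|\nnu|)$ to a $\mu$-null Borel set $S$ carrying $\nnu^\perp$, and you then apply \cref{thm:consistency}(2) to the pair $(\cD^\perp,\nnu^\perp)$. Your caveat about the converse is justified: the paper also silently takes $\cD$ to be the unique weak$^*$-type continuous $\nnu$-directional derivation from \cref{prop:measuretoderivation}, and without that reading the literal converse can fail, because a finite-mass derivation vanishing on $C^1_c(\R^n)$ need not vanish on $\Lipb(\R^n)$.
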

\begin{proof}
	If $(\mu,\mathcal{D})$ is a solution to the continuity equation \eqref{eq:CE_D}, then by \cref{thm:nu_to_V}, $\mathcal{D}$ is a weak$^*$-type continuous $\nnu$-directional derivation for $\nnu\in \M(I\times \R^n;\R^n)$ such that $(\mu,\nnu)$ solves the continuity equation \eqref{eq:CE}.
    With the Lebesgue decomposition $\nnu=\nnu^a+\nnu^\perp$, it is clear that $\mathcal{D}^\perp $ is a $\nnu^\perp$-directional derivation.
    Actually, $\cD^\perp$ is also weak$^*$-type continuous.
    \\
    Indeed, let $A,S\subset I\times \R^n$ be disjoint Borel sets such that $\nnu^a$ and $\nnu^\perp$ are concentrated on $A$ and $S$, respectively.
     Denote by $(V,\lambda)$ the minimal representation of $\mathcal{D}$.
     By \cref{thm:DtoV}, $\lambda=|\nnu|$. 
     Furthermore, 
     \begin{equation}
     \lambda=\lambda^a+\lambda^\perp,\quad 	 \lambda^a=|\nnu|^a=|\nnu|\mathds{1}_A,\quad \lambda^\perp=|\nnu|^\perp=|\nnu|\mathds{1}_S.
     	\end{equation}
   For any $f\in\Lipb(\R^n)$, define 
   \begin{equation}
   	V^a(f)\coloneqq V(f)\mathds{1}_{A},\quad V^\perp(f)\coloneqq V(f)\mathds{1}_{S}.
   \end{equation}
   Note that $V^a$ and $V^\perp$ are weak$^*$-continuous $\lambda$-derivations.  
   As $(V,\lambda)$ is representation of $\mathcal{D}$, we have for any $f\in\Lipb(\R^n)$ that
   \begin{align}
   	\mathcal{D}^\perp(f)=(\mathcal{D}(f))^\perp=(Vf\cdot\lambda)^\perp=Vf\cdot \lambda^\perp=V^\perp(f)\lambda.
   \end{align}
In other words, $(V^\perp,\lambda)$ is representation of $\mathcal{D}^\perp$ and hence $\mathcal{D}^\perp$ is weak$^*$-type continuous.
\\
If $(\mu,\mathcal{D})$ is a minimal solution, then by definition  $\mathcal{D}^\perp$ is a minimal derivation.
It follows from \cref{thm:consistency} that $\nnu^\perp$ is a minimal measure and thus $(\mu,\nnu)$ is a minimal solution.
Conversely, if $(\mu,\nnu)$ solves \eqref{eq:CE}, then by \cref{prop:measuretoderivation} there exists a unique weak$^*$-type continuous measure-valued derivation $\mathcal{D}$, which is a $\nnu$-directional derivation, and $(\mu,\mathcal{D})$ solves the continuity equation \eqref{eq:CE_D}.
Therefore, the required implication follows by repeating the proceeding argument.
\end{proof}

As a direct consequence of \cref{thm:derivation_CE} and \cref{thm:nu_to_V}, we recover part (2) of \cite[Theorem~3.4]{AlmiRossiSavare2025} in the second item of the corollary below.
Moreover, as a consequence of \cref{prop:Opt->Min} and \cref{thm:correspondence_minimal_solutions}, we conclude the third item. It is worth mentioning that, thanks to the results established in \cref{subsec:BV_extended}, we do not require the $\P_1$-assumption imposed in \cite{AlmiRossiSavare2025}

\begin{corollary}\label{cor:BVflux}
Let $(\mu,\nnu)$ be a solution to the continuity equation \eqref{eq:CE}.
	Then 
	\begin{enumerate}[label=(\arabic*), font=\normalfont]
		\item $\pr^I_\# \mu=c\cdot\mathcal{L}^1\llcorner I$, where $c=T^{-1}\|\mu\|_{\mathrm{TV}}$, and there exists a Borel family $(\mu_t)_{t \in I} \subset \P (X)$  (uniquely determined for a.e. $t\in I$) such that $\mu(\mathrm{d} t,\mathrm{d} x)\coloneqq c \mu_t(\mathrm{d} x) \mathrm{d} t$;
	   \item $(\mu_t)\in BV (I;(\P(\R^n),W_1))$ with
	   \begin{equation}\label{ineq:optnu}
	   |\D \mu| \leq c^{-1}\, {\Pr}^I_{\#} |\nnu|.
	   	\end{equation}
        \item\label{item:CE_minimal} $(\mu,\nnu)$ is a minimal solution to \eqref{eq:CE} if the equality in \eqref{ineq:optnu} is attained.
	\end{enumerate}
\end{corollary}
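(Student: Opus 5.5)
The plan is to transfer everything to the derivation side via \cref{thm:nu_to_V}, apply \cref{cor:derivation_CE} there, and then transfer back using the identification of the mass measure from \cref{thm:DtoV}.

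\textbf{Step 1: pass to a derivation.} Given a solution $(\mu,\nnu)$ of \eqref{eq:CE}, \cref{thm:nu_to_V}\,\ref{item:THM_CE1} yields a unique weak$^*$-type continuous $\nnu$-directional derivation $\cD_{\nnu}$ such that $(\mu,\cD_{\nnu})$ solves \eqref{eq:CE_D}. Since $\cD_{\nnu}$ is weak$^*$-type continuous, \cref{thm:DtoV} shows that it has finite mass and that its mass measure is exactly $|\nnu|$.

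\textbf{Step 2: items (1) and (2).} We apply \cref{cor:derivation_CE} to $(\mu,\cD_{\nnu})$.
\begin{itemize}
\item Its first item gives the disintegration $\mu = c\,\mu_t\,\d t$ with $c=T^{-1}\|\mu\|_{\mathrm{TV}}$, which is item (1).
\item Its item \ref{item:Varineq_Mderivation} gives $(\mu_t)\in BV(I;(\P(\R^n),W_1))$ with $|\D\mu|\le c^{-1}\pr^I_\#|\cD_{\nnu}| = c^{-1}\pr^I_\#|\nnu|$, which is \eqref{ineq:optnu}.
\end{itemize}
No $\P_1$ assumption is needed here, because \cref{thm:derivation_CE} already works in the extended space $(\P(X),W_1)$ through \cref{thm:BVequiv}.

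\textbf{Step 3: item (3).} Suppose equality holds in \eqref{ineq:optnu}. Evaluating both measures on $I$ gives $|\D\mu|(I)=c^{-1}|\nnu|(I\times\R^n)=c^{-1}\lambda(I\times\R^n)$, where $\lambda=|\cD_{\nnu}|$ is the mass measure. This is precisely hypothesis \eqref{eq:optimal_condition}, so \cref{prop:Opt->Min} shows that $(\mu,\cD_{\nnu})$ is a minimal solution of \eqref{eq:CE_D}. By \cref{thm:correspondence_minimal_solutions}, $\cD_{\nnu}$ is then a $\tilde\nnu$-directional derivation for some $\tilde\nnu$ with $(\mu,\tilde\nnu)$ a minimal solution of \eqref{eq:CE}.

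\textbf{Main point of care.} We must ensure $\tilde\nnu=\nnu$. This follows from the uniqueness clause in \cref{thm:DtoV}: $\cD_{\nnu}$ is weak$^*$-type continuous, so the measure $\nnu$ for which it is directional is unique. Hence $\nnu^\perp$ is minimal, which proves item (3).

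The degenerate case $\mu=0$ is excluded, as in \cref{def:CE_mu_lambda_V}.
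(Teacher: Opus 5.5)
Your proposal is correct and follows the paper's own route: items (1)--(2) come from \cref{thm:nu_to_V} combined with \cref{cor:derivation_CE}, with the mass measure identified as $|\nnu|$ via \cref{thm:DtoV}, and item (3) comes from \cref{prop:Opt->Min} together with \cref{thm:correspondence_minimal_solutions}. The paper leaves the identification $\tilde\nnu=\nnu$ implicit, and your use of the uniqueness clause in \cref{thm:DtoV} is the right way to close that step.
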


\section{Characterization of BV-Wasserstein curves}\label{sec:characterization}

\subsection{From BV-Wasserstein curves to path measures: metric setting}\label{subsec:lift_to_mu}

The goal of this section is to represent BV-curves $(\mu_t)$ in the extended metric space $(\P(\X),W_1)$ by probability measures concentrated on BV-curves, extending our previous result \cite{AbediLiSchultz2024} for $(\P_1(\X),W_1)$.
This extension is useful for applications, where the measures $\mu_t$ under consideration need not have finite first moment. While the result is expected and is not the main result of the present work, its proof requires some technical care: unlike in the absolutely continuous setting, a single truncation is not sufficient for BV-curves, and a limiting procedure is needed. Readers interested in the subsequent results may skip the proofs in this section and use only the statements.

\subsubsection{From path measures to BV-Wasserstein curves}

Throughout Section~\ref{subsec:lift_to_mu}, $(X, d)$ is a complete separable metric space, and $I \coloneqq (0,T) \subset \mathbb{R}$ is an open time interval\footnote{If the open interval is replaced by a closed one, the statements in this section remain valid. In particular, this can be achieved for Theorem \ref{thm:optimal_lift} by considering the constant extension of the curve $(\mu_t)$ to a slightly larger open interval.}. 
We recall that $D(I;\X)$ denotes the space of c\`adl\`ag paths equipped with the Skorokhod topology.

\begin{definition}[Lift]\label{def:lift}
    Given $(\mu_t) \coloneqq (\mu_t)_{t \in I} \subset \P(\X)$, we call $\pi \in \P(D(I;\X))$ a \emph{lift} of $(\mu_t)$ if 
\begin{equation}
 (e_t)_{\#} \pi  = \mu_t \quad \text{ for all } t \in I,
\end{equation}
where $e_t: \gamma \in D(I;X) \mapsto \gamma_t \in X$ is the evaluation map.
\end{definition}

\begin{proposition}\label{prop:lift_to_mut}
    Let $(\X,d)$ be a complete separable metric space, and $I \coloneqq (0,T) \subset \mathbb{R}$. Let $\pi \in \P(D(I;\X))$ be concentrated on $\mathcal{BV}(I;\X) \subset D(I;\X)$ such that
    \begin{equation}\label{eq:Dmu_inequality}
        \int  | \D \gamma | (I) \d \pi (\gamma) < + \infty .
    \end{equation}
    Then the curve $t\mapsto\mu_t \coloneqq (e_t)_\# \pi$ is in $\mathcal{BV}(I;(\P(\X),W_1))$, and moreover,
    \begin{equation}\label{eq:easyineq}
        |\D \mu | \leq \int |\D \gamma| \d\pi (\gamma).
    \end{equation}
\end{proposition}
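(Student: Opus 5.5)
The plan is to compare $\mu_s$ and $\mu_t$ directly through the coupling induced by $\pi$, and then use \cref{thm:BVequiv}(4) together with \cref{remark:variationmeasure}(d) to identify the variation measure. Set $\eta\coloneqq\int|\D\gamma|\d\pi(\gamma)$, defined by $\eta(B)\coloneqq\int|\D\gamma|(B)\d\pi(\gamma)$ for Borel $B\subset I$.

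\textbf{Step 1 (measurability and finiteness of $\eta$).} First I would check that $\gamma\mapsto|\D\gamma|(B)$ is Borel on $D(I;X)$. For open intervals $B=(a,b)$ this follows from \cref{thm:BVequiv}: for càdlàg $\gamma$ we have $|\D\gamma|((a,b))=\Var(\gamma;(a,b))$, and this equals the supremum of sums $\sum_i d(\gamma_{t_i},\gamma_{t_{i+1}})$ over rational partition points, by right-continuity. Each such sum is Borel because the evaluations $e_t$ are Borel on $D(I;X)$. A monotone class argument then extends measurability to all Borel $B$. Since $|\D\gamma|(B)\le|\D\gamma|(I)$, assumption \eqref{eq:Dmu_inequality} shows that $\eta$ is a finite Borel measure, with $\sigma$-additivity coming from monotone convergence.

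\textbf{Step 2 (pointwise Wasserstein estimate).} For $s<t$ in $I$, the plan is to use $(e_s,e_t)_\#\pi\in\mathrm{Cpl}(\mu_s,\mu_t)$. For every càdlàg BV-curve we have $d(\gamma_s,\gamma_t)\le\Var(\gamma;[s,t])=|\D\gamma|([s,t])$. Integrating against $\pi$ gives
\[
W_1(\mu_s,\mu_t)\le\int d(\gamma_s,\gamma_t)\d\pi(\gamma)\le\eta([s,t]).
\]
This holds for \emph{every} pair $s<t$, so in particular for $\L^2$-a.e. $(s,t)\in I^2$. In the extended metric space $(\P(X),W_1)$ it also shows that all $\mu_t$ lie at finite distance from one another, hence in one metric component.

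\textbf{Step 3 (conclusion).} Borel measurability of $t\mapsto\mu_t$ follows because each $e_t$ is measurable and $t\mapsto\gamma_t$ is right-continuous. Moreover, $t\mapsto\mu_t$ is itself right-continuous in $W_1$: by Step 2, $W_1(\mu_t,\mu_{t+h})\le\eta([t,t+h])\to\eta(\{t\})$, and I would refine this using $d(\gamma_t,\gamma_{t+h})\le|\D\gamma|((t,t+h])$, which tends to $0$ with dominated convergence. Also $\Var(\mu;I)\le\eta(I)<\infty$. So by \cref{thm:BVequiv}, applied with $(P(X),W_1)$ complete by the Appendix, $(\mu_t)$ is a càdlàg BV-curve. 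Condition (d) of \cref{remark:variationmeasure}, satisfied by $\eta$, then yields $|\D\mu|\le\eta$, which is \eqref{eq:easyineq}.

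The main obstacle I expect is Step 1, namely the Borel measurability of $\gamma\mapsto|\D\gamma|(B)$ on the Skorokhod space. The rest is a direct coupling estimate.
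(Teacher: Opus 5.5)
Your proposal is correct and follows essentially the paper's route: the coupling $(e_s,e_t)_\#\pi$ gives $W_1(\mu_s,\mu_t)\le\eta((s,t])$, and dominated convergence gives $W_1$-right-continuity. Completeness of $(\P(X),W_1)$ then gives left limits (the paper via an explicit Cauchy argument, you via finite pointwise variation and \cref{thm:BVequiv}), and \cref{thm:BVequiv} with \cref{remark:variationmeasure} yields $|\D\mu|\le\eta$, while your Step~1 supplies the measurability of $\gamma\mapsto|\D\gamma|(B)$ that the paper leaves implicit.

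Two small touch-ups. First, $\Var(\gamma;[s,t])$ is only $\le|\D\gamma|([s,t])$, since the latter also charges the jump at $s$; the inequality is all you use. Second, derive the $W_1$-Borel measurability of $t\mapsto\mu_t$ from its $W_1$-right-continuity, because measurability of the $e_t$ alone only gives narrow measurability.
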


\begin{proof}
    The proof goes along the same steps as in \cite[Theorem 3.1]{AbediLiSchultz2024}, with the extension to the extended metric space $(\P(X), W_1)$.
    For $\pi$-a.e.\ $\gamma$ and all $s<t$ in $I$, we have $ d(\gamma_s,\gamma_t)\le |\D\gamma|((s,t])$
	(see e.g. \cite[Lemma 2.5 and Remark 2.20]{AbediLiSchultz2024}).
	Thus, for all $s<t$ in $I$, we have 
	\begin{align}
		W_1(\mu_s,\mu_t)
		&\leq \int_{D(I;X)} d(\gamma_s,\gamma_t)\,\d\pi(\gamma) \\
		&\leq \int_{D(I;X)} |\D\gamma|((s,t])\,\d\pi(\gamma) = \left(\int_{D(I;X)} |\D\gamma| \,\d\pi(\gamma)\right) ((s,t]). \label{eq:proof_lift_to_mu}
	\end{align}
    We show that $t \mapsto \mu_t$ is right-continuous in $(\P(X),W_1)$. Fix $t\in I$ and let $(t_n)_{n \in \N}\subset I$ be such that $t_n \downarrow t$. By the right continuity of curves and the dominated convergence theorem, we have 
	\begin{equation}
	   W_1(\mu_t,\mu_{t_n})\leq  \textstyle\int_{D(I;X)} d(\gamma_t,\gamma_{t_n})\,\d\pi(\gamma)  \rightarrow 0
	\qquad\text{as } n \to \infty.
	\end{equation}
    Next, we show that $t \mapsto \mu_t$ has left limits. Fix $t\in I$ and let $(t_n)_{n \in \N}\subset I$ be such that $t_n\uparrow t$. Then for $n<m$, we have by \eqref{eq:proof_lift_to_mu} that
	\begin{equation}
	   W_1(\mu_{t_n},\mu_{t_m}) \leq  ( \textstyle\int |\D\gamma| \,\d\pi ) ((t_n,t_m]) \leq  (\int |\D\gamma| \,\d\pi ) ((t_n,t)).
	\end{equation}
	Since $\int |\D\gamma| \,\d\pi$ is a finite non-negative measure on $I$, we have $(\int |\D\gamma| \,\d\pi)((t_n,t))\to 0$ as $n\to\infty$, and therefore $(\mu_{t_n})_n$ is a Cauchy sequence in $(\P(X),W_1)$. By completeness of $(\P(X),W_1)$ as an extended metric space (\cref{prop:wassersteinspacecomplete}), there exists $\bar\mu_t\in \P(X)$ such that $
	W_1(\mu_{t_n},\bar\mu_t)\to 0$.
	So we have shown $(\mu_t)\in D(I;(\P(X),W_1))$. In particular, it is Borel measurable, and therefore $(\mu_t) \in L^0 (I;(\P(X),W_1))$. Again, since $(P(X),W_1)$ is complete as an extended metric space, we can apply \Cref{thm:BVequiv} and with \eqref{eq:proof_lift_to_mu} conclude that $(\mu_t) \in BV (I;(\P(X),W_1))$. 
    Furthermore, by \Cref{remark:variationmeasure}, we have \eqref{eq:easyineq}. This completes the proof. 
\end{proof}

\subsubsection{From BV-Wasserstein curves to optimal lifts}\label{subsec:optimal_lift}
\begin{theorem}\label{thm:optimal_lift}
    Let $(X,d)$ be a complete separable metric space, and $I \coloneqq (0,T) \subset \mathbb{R}$. Let $(\mu_t)\in \mathcal{BV}(I;(\P(\X),W_1))$.
    Then there exists a probability measure $\pi \in 
    \P(D(I;\X))$ such that
    \begin{enumerate}[label=(\roman*), font=\normalfont]
        \item $\pi$ is concentrated on $\mathcal{BV}(I;\X)\subset D(I;\X)$;
        \item $(e_t)_\#\pi = \mu_t$ for all $t\in I$;
        \item\label{itm:totalvarian_overview} $\pi$ realizes the variation measure $|\D\mu|$ in the sense that
        \begin{equation}\label{eq:optimal_pi_identity}
        |\D\mu|= \int |\D\gamma|\d\pi(\gamma).
        \end{equation}
    \end{enumerate}    
\end{theorem}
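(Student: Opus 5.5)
The plan is to reduce to the result of \cite{AbediLiSchultz2024} for $(\P_1(X),W_1)$ by a truncation in mass, and then pass to a limit in the truncation parameter. First I would record that, since $\essVar$ is finite, all $\mu_t$ lie in one metric component of $(\P(X),W_1)$. That is, $W_1(\mu_s,\mu_t)\le |\D\mu|(I)<\infty$ for all $s,t$, although no $\mu_t$ need have a finite first moment. For $W_1$-finite pairs, optimal couplings exist: the cost $d$ is lower semicontinuous, the coupling set is narrowly compact, and the infimum is finite.

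Fix $\bar x\in X$ and a base time $t_0\in I$. For each finite grid $P_n=\{t_0<\dots\}$ (dyadic, refining, containing $t_0$, and eventually dense), glue optimal couplings of consecutive marginals. This gives a measure $\Sigma_n$ on $X^{P_n}$ with marginals $\mu_{t_i}$ and $\int\sum_i d(x_i,x_{i+1})\,\d\Sigma_n\le |\D\mu|(I)$. Now condition $\Sigma_n$ on the event $\{x_{t_0}\in B(\bar x,R)\}$. Along each chain, $d(x_t,\bar x)\le R+\sum_i d(x_i,x_{i+1})$, and this bound is $\Sigma_n$-integrable. Hence the conditioned marginals lie in $\P_1(X)$, which is exactly where the truncation makes the earlier theory applicable.

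I would then either apply \cite{AbediLiSchultz2024} directly to the truncated pieces, or, more simply, push each $\Sigma_n$ forward to piecewise-constant càdlàg paths. This gives $\pi_n\in\P(D(I;X))$ with $\int|\D\gamma|(I)\,\d\pi_n\le|\D\mu|(I)$. More locally, $\int|\D\gamma|(U)\,\d\pi_n$ is controlled by $|\D\mu|$ on slightly enlarged open sets $U$.

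Next I would prove tightness of $(\pi_n)$ in the Skorokhod topology, using the compactness criterion for BV paths from \cite{AbediLiSchultz2024}. That criterion needs two inputs. The first is a uniform bound on the expected variation, which we have. The second is tightness of the one-time marginals $\{\mu_t\}_t$. This holds because a càdlàg curve in $(\P(X),W_1)$ has relatively compact image, $W_1$-convergence implies narrow convergence, and Prokhorov's theorem then applies. The truncation in $R$ enters here: I would handle each bounded-initial-position piece separately and let $R\to\infty$ by a diagonal argument, since the total masses are controlled. Take a limit point $\pi$. Then $(e_t)_\#\pi=\mu_t$ holds first for $t$ in the dense grid outside the (countable) fixed jump times of $\pi$. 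It extends to all $t\in I$ by right-continuity of both $\gamma\mapsto\gamma_t$ along the path and $t\mapsto\mu_t$ in $W_1$.

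Finally, lower semicontinuity of $\gamma\mapsto|\D\gamma|(U)$ for open $U$ in the Skorokhod topology gives $\int|\D\gamma|\,\d\pi\le|\D\mu|$ on open intervals, and hence as measures. Combined with \cref{prop:lift_to_mut}, this yields the equality \eqref{eq:optimal_pi_identity}, and also shows that $\pi$ is concentrated on $\mathcal{BV}(I;X)$.

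I expect the main obstacle to be the limiting step. In the Skorokhod topology, the identification of marginals at all times, and the passage from the inequality on intervals to the measure-level inequality, are delicate near jump times of $(\mu_t)$. If this step fails, I would first try to make sure the grids contain all atoms of $|\D\mu|$ above a threshold.
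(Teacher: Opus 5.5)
\textbf{The tightness step fails.} The Skorokhod topology on $D(I;X)$ used in the paper is the $J_1$ topology: $\gamma^n\to\gamma$ if there are time changes $\lambda^n$ with $\gamma^n\circ\lambda^n\to\gamma$ uniformly. In this topology, a uniform bound on $\int|\D\gamma|(I)\,\d\pi_n$ together with tightness of the one-time marginals does \emph{not} give tightness.

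Here is a simple counterexample. Fix $x\neq y$, and let $\gamma^n_t\coloneqq y$ on $[s,s+1/n)$ and $\gamma^n_t\coloneqq x$ otherwise. Then $|\D\gamma^n|(I)=2d(x,y)$ and all values lie in $\{x,y\}$. Any $J_1$-limit would have to be the constant $x$, yet $\sup_t d(\gamma^n_{\lambda(t)},x)=d(x,y)$ for every time change $\lambda$. So $(\delta_{\gamma^n})$ has no convergent subsequence. $J_1$-compactness requires that large jumps do not coalesce, and no variation bound controls this.

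This is not just a missing justification for your particular $\pi_n$; your recipe can actually produce the bad behaviour. Take $X=\R$, $I=(0,1)$, and $\mu_t\coloneqq(1-t)\,\L^1\llcorner[0,1]+\L^1\llcorner[10,10+t]$. Since $F_{\mu_t}\le F_{\mu_s}$ for $s<t$, every coupling that moves mass only to the right is $W_1$-optimal. So ``glue optimal couplings'' permits the following choice at each grid step $t_i\to t_{i+1}=t_i+\Delta$: send the fresh mass from $[0,1]$ into $[10,10+\Delta]$, and translate by $t_i$ the mass currently in $[10,10+\Delta]$ onto $[10+t_i,10+t_{i+1}]$. The mass in $[10,10+\Delta]$ is exactly the mass that arrived one step earlier. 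After gluing, about half of the $\pi_n$-paths have two jumps of size at least $1/2$ exactly one grid step apart. Hence $(\pi_n)$ has no Skorokhod-convergent subsequence, even though both of your inputs hold.

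\textbf{What is needed instead, and what the paper does.} You need compactness in a weaker topology, followed by a c\`adl\`ag reconstruction. The paper restricts to a countable dense $\widetilde I\subset I$ and works in $X^{\widetilde I}$ with the product topology.
\begin{itemize}
\item Tightness there follows from tightness of the one-time marginals alone.
\item The bound on $\int\Var(\widetilde\gamma;\widetilde I)\,\d\widetilde\pi$ passes to the narrow limit by \cref{lemma:lsc_pointwise_Var}.
\item The right-limit map $R$ of \cref{lemma:cadlag_extension_dense_BV} returns c\`adl\`ag paths without increasing variation.
\item $(e_t)_\#\pi=\mu_t$ for every $t$ then follows from right-continuity of $t\mapsto\mu_t$.
\end{itemize}
This also avoids a second flaw in your limit step. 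The countable set $\bigcup_nP_n$ may lie entirely inside the set of fixed discontinuities of $\pi$, so the grid times outside the fixed jump times need not be dense.

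Your glued piecewise-constant $\pi_n$ do fit this scheme with $\widetilde I\coloneqq\bigcup_nP_n$. Their one-time marginals stay in the tight set $\{\mu_s\}_{s\in I}$ and equal $\mu_t$ for $t\in\widetilde I$ once $n$ is large. This would give a self-contained substitute for the paper's Step~0, which instead obtains exact lifts from \cite[Theorem~3.3]{AbediLiSchultz2024} with the truncated distance $d\wedge n$.

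Finally, the conditioning on $\{x_{t_0}\in B(\bar x,R)\}$ and the diagonal argument in $R$ do no work. Neither the piecewise-constant construction nor the marginal tightness needs finite first moments.
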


\begin{proof}
     To prove this theorem, we use our previous results in \cite[Theorem 3.3]{AbediLiSchultz2024}, which was established for curves in $(\P_1(X),W_1)$. 
     To extend it to the extended metric space $(\P(X),W_1)$, we use a truncation argument: for each $n \in \N$, we truncate the distance by $n$, construct lifts for the truncated cases, and then pass to the limit $n \to \infty$. 
     In the absolutely continuous case proven in \cite[Corollary 1]{Lisini2007}, a single truncation is enough, since the class of absolutely continuous curves remains unchanged under truncation of the distance. In contrast, the class of BV-curves changes, and therefore one needs to work with the full sequence and a limiting argument. For the latter, we draw on ideas from the proofs of \cite[Theorem 4.6 and Proposition 5.5]{AmbrosioErbarSavare2016}.
     
    \smallskip
    \noindent
    \textbf{Step 0} (Family $\{\pi_n\}_{n \in \N} \subset \P (D(I;X))$). For any $n \in \N$, we define the truncated distance 
    \begin{equation}
    	d_n (x,y) \coloneqq d(x,y) \wedge n, \qquad \forall x,y \in X.
    \end{equation}
    We denote the bounded metric space $(X,d_n)$ by $X_n$, and note that $d_n$ induces the same topology as $d$. We have $D(I;X_n) = D (I;X)$ and $BV (I;X_n) \supseteq BV (I;X) $.
    We denote by $\Var_n(u)$ and $ \essVar_n(u)$ the pointwise variation and  essential variation of a BV-curve $u$, respectively, computed with respect to the truncated distance $d_n$.
    \\
    We have $\P(X) = \P(X_n) = \P_1(X_n)$.
    Let $W_{1,n}$ be the 1-Wasserstein distance associated with $d_n$, and observe that
    \begin{equation}
    	W_{1,n} (\textup{m}_1,\textup{m}_2) \leq W_1(\textup{m}_1,\textup{m}_2) \wedge n, \qquad \forall \textup{m}_1,\textup{m}_2 \in \P(X). 
    \end{equation}
    We use analogous notation for the variations induced by $W_{1,n}$. 
    So we have
    \begin{equation}\label{eq:proof_unif_bound_essVar}
    	\essVar_n (\mu; I) \leq \essVar (\mu;I) < + \infty,
    \end{equation} 
    where the right-hand side denotes the essential variation of $(\mu_t)$ in the extended metric space $(\P(X),W_{1})$.
    Therefore, $(\mu_t) \in BV (I;\P_1(X_n))$. Moreover, since convergence in $W_1$ implies convergence in $W_{1,n}$, we have $(\mu_t) \in D (I;\P_1(X_n))$. Thus, we have $(\mu_t) \in  \BV (I;\P_1(X_n)).$ 
    Applying \cite[Theorem 3.3]{AbediLiSchultz2024}, we obtain $\pi_n \in \P (D(I;X_n))$ such that
     \begin{enumerate}[label=(\Roman*), font=\normalfont]
    	\item $\pi_n$ is concentrated on $\mathcal{BV}(I;X_n) \subset D(I;\X)$;
    	\item $(e_t)_\#\pi_n = \mu_t$ for all $t\in I$;
    	\item $\pi_n$ satisfies
        \begin{equation}\label{eq:proof:opt_pi_n}
            \int_{D(I;X)}  \essVar_n (\gamma;I) \d\pi_n(\gamma) = \essVar_n (\mu;I).
        \end{equation}
    \end{enumerate}
    Note that we can regard $\pi_n$ as an element of $\P(D(I;X))$, because $D(I;X_n)=D(I;X)$ as mentioned above and the fact that corresponding Skorokhod Borel $\sigma$-algebras coincide. 
    
    \smallskip
    \noindent
    \textbf{Step 1} (Construction of $\widetilde{\pi} \in \P \big(X^{\widetilde{I}}\big)$). Let $\widetilde{I} \subset I$ be a countable dense subset, and consider the restriction map 
    $$
    e_{\widetilde{I}} :D(I;X) \to X^{\widetilde{I}}, \qquad e_{\widetilde{I}}(\gamma) \coloneqq \gamma\big|_{\widetilde{I}},
    $$
    from $D(I;X)$ equipped with the Skorokhod topology to $X^{\widetilde{I}}$ equipped with the product topology.
    Since $(X,d)$ is a complete separable metric space and $\widetilde I$ is countable, the product space $X^{\widetilde I}$ equipped with the product topology is again Polish. 
    The restriction map $e_{\widetilde{I}} :D(I;X) \to X^{\widetilde{I}}$ is Borel measurable, because the evaluation maps $e_t:D(I;X)\to X$ are Borel measurable (see e.g. \cite[Proposition 2.15]{AbediLiSchultz2024}). 
    For each $n \in \N$, we define 
    \begin{equation}\label{eq:proof_def_tilde_pi}
    	\widetilde{\pi}_n \coloneqq (e_{\widetilde{I}})_{\#} \pi_n \in \P \big(X^{\widetilde{I}}\big). 
    \end{equation}
    We want to show that $\{\widetilde{\pi}_n\}_{n} \subset \P \big(X^{\widetilde{I}}\big)$ is tight.
    Note that for each fixed $ t \in \widetilde{I}$, we have 
    \begin{equation}\label{eq:proof_lift_et_tilde_pi}
    	(e_t)_{\#} \widetilde{\pi}_n = (e_t)_{\#} \pi_n = \mu_t,
    \end{equation}
    thus $\{(e_t)_{\#} \widetilde{\pi}_n \}_{n} \subset \P(X)$ is obviously tight. So by enumerating the points of $\widetilde{I}$ as $\widetilde{I} = \{t_m\}_{m \in \N}$, we have for any $l \in \N$ and $m \in N$ that there exists a compact set $K_{m,l} \subset X$ such that
    $$
    \sup_{n} \, (e_{t_m})_{\#} \widetilde{\pi}_n \big(X \setminus K_{m,l}\big) \leq 2^{-m-l}.
    $$
    By taking $K_l \coloneqq \cap_{m=1}^\infty \{ \widetilde{\gamma} \in X^{\widetilde{I}} : \widetilde{\gamma}_{t_m} \in K_{m,l} \} $, which is $\prod_{m=1}^\infty K_{m,l}$ after identifying $X^{\widetilde I}$ with $\prod_{m=1}^\infty X$, we have that  $K_l \subset X^{\widetilde I} $ is compact in the product topology and
    $$
    \sup_{n} \, \widetilde{\pi}_n \big(X^{\widetilde{I}} \setminus K_{l}\big) \leq \sum_{m=1}^{\infty} 2^{-m-l} = 2^{-l}.
    $$
    Hence, $\{\widetilde{\pi}_n\}_{n} \subset \P \big(X^{\widetilde{I}}\big)$ is tight. Then, by Prokhorov theorem on the Polish space $X^{\widetilde{I}}$, there exists a subsequence $\{\widetilde{\pi}_{n_k}\}_{k \in \N}$ such that $\widetilde{\pi}_{n_k} \to \widetilde{\pi} $ narrowly as $k \to \infty$ to a limit
    point $\widetilde{\pi} \in \P \big(X^{\widetilde{I}}\big)$.

    \smallskip
    \noindent
    \textbf{Step 2} (Properties of $\widetilde{\pi} \in \P \big(X^{\widetilde{I}}\big)$).
    We show that
     \begin{enumerate}[label=(\Roman*), font=\normalfont]
    	\item $\widetilde{\pi}$ is concentrated on $pBV(\widetilde{I};\X) \coloneqq \{ \gamma \in X^{\widetilde{I}} : \Var(\gamma;\widetilde{I})< + \infty \}$;
    	\item $(e_t)_\#\widetilde{\pi} = \mu_t$ for all $t\in \widetilde{I}$;
        \item $\widetilde{\pi}$ satisfies
        \begin{equation}\label{eq:proof_Var_tilde_pi}
           \int_{X^{\widetilde{I}}} \Var( \widetilde{\gamma}; \widetilde{I}) \d \widetilde{\pi} (\widetilde{\gamma}) \leq \essVar (\mu;I). 
        \end{equation}
    \end{enumerate}
    We begin with (II). Let $t\in \widetilde{I}$. For any $\varphi \in C_b(X)$, we have
    \begin{equation}
        \int_{X^{\widetilde{I}}} \varphi (\widetilde{\gamma}_t) \d \widetilde{\pi} (\widetilde{\gamma}) = \lim_{k \to \infty} \int_{X^{\widetilde{I}}} \varphi (\widetilde{\gamma}_t) \d \widetilde{\pi}_{n_k} (\widetilde{\gamma}) = \int_X \varphi (x) \d \mu_t,
    \end{equation}
   which follows from $\widetilde{\pi}_{n_k} \to \widetilde{\pi} $ narrowly on $\P(X^{\widetilde{I}})$, the fact that $\varphi \circ e_t \in C_b (X^{\widetilde{I}})$ (recalling that the evaluation map $e_t$ is continuous with respect to the product topology), and the identity \eqref{eq:proof_lift_et_tilde_pi}.
    \\
    Next, we prove (I) and (III). We have
    \begin{align}
        \int_{X^{\widetilde{I}}} \Var( \widetilde{\gamma}; \widetilde{I}) \d \widetilde{\pi} (\widetilde{\gamma})
        & = \lim_{n \to \infty} \int_{X^{\widetilde{I}}} \Var_n( \widetilde{\gamma}; \widetilde{I}) \d \widetilde{\pi} (\widetilde{\gamma}) \\
        & \leq \lim_{n \to \infty} \left( \liminf_{k \to \infty} \int_{X^{\widetilde{I}}} \Var_n( \widetilde{\gamma}; \widetilde{I}) \d \widetilde{\pi}_{n_k} (\widetilde{\gamma}) \right)\\ 
        & \leq \liminf_{k\to\infty}\int_{X^{\widetilde I}}\Var_{n_k}(\widetilde\gamma;\widetilde I)\,d\widetilde\pi_{n_k}(\widetilde\gamma) \\
        & = \liminf_{k \to \infty} \int_{D(I;X)} \Var_{n_k}( e_{\widetilde{I}}(\gamma); \widetilde{I}) \d {\pi}_{n_k} ({\gamma}) \\
        & = \liminf_{k \to \infty} \int_{D(I;X)} \Var_{n_k}( \gamma; I) \d {\pi}_{n_k} ({\gamma}) \\
        & = \liminf_{k \to \infty} \essVar_{n_k}(\mu;I) \leq \essVar (\mu;I) < + \infty. 
    \end{align}
    The first step above follows from the pointwise identity $ \Var (\widetilde{\gamma}; \widetilde{I}) = \lim_{n \to \infty} \Var_n (\widetilde{\gamma}; \widetilde{I})$ for any $\widetilde{\gamma} \in X^{\widetilde{I}}$ and the monotone convergence theorem.
    The second step follows from the narrow convergence $\widetilde{\pi}_{n_k} \to \widetilde{\pi} $ on $\P(X^{\widetilde{I}})$, the lower semi-continuity of the pointwise variation with respect to convergence in the product topology, and \Cref{lemma:lsc_pointwise_Var} (noting that convergence in the product topology implies pointwise convergence).
    For the third step, note that for every fixed $n$, we have that $n \leq n_k$ for sufficiently large $k$, and thus $\Var_n \leq \Var_{n_k}$. The fourth line follows from the push-forward \eqref{eq:proof_def_tilde_pi}. In the fifth step, we applied \Cref{lemma:Var_dense_subset}. Finally, the last line follows from \eqref{eq:proof:opt_pi_n}, \eqref{eq:proof_unif_bound_essVar}, and the fact that for c\`adl\`ag curves on open intervals the essential variation coincides with the pointwise variation (see \cite[Lemma 2.5]{AbediLiSchultz2024}).
    
    \smallskip
    \noindent
    \textbf{Step 3} (Construction of $\pi\in\P(D(I;X))$).
    By the previous Step, the measure $\widetilde\pi$ is concentrated on curves of bounded pointwise variation from $\widetilde I \to X$. By \Cref{lemma:cadlag_extension_dense_BV}, every such curve has a unique c\`adl\`ag reconstruction obtained by taking right limits. 
    Accordingly, we define the reconstruction map
    \begin{align}\label{eq:proof_reconstruction_formula}
    	R : pBV(\widetilde I;X) \to  D(I;X), \,\, \\
    	(R\widetilde\gamma)_t
    	\coloneqq
    	\lim_{\widetilde t\downarrow t,\ \widetilde t\in \widetilde I}\widetilde\gamma_{\widetilde t},
    	\qquad t\in I,
    \end{align}
    and we arbitrarily extend $R$ to all of $X^{\widetilde I}$ by  setting
    $(R\widetilde\gamma)_t\coloneqq \bar x, \, t\in I,$
    whenever $\widetilde\gamma\notin pBV(\widetilde I;X)$, where $\bar x\in X$ is an arbitrary fixed point.
    \begin{itemize}[itemsep=0.3em, leftmargin=1.5em] 
    	\item[] \textit{Claim.}
    	The map
    	$R:X^{\widetilde I}\to D(I;X) $
    	is Borel measurable, where $X^{\widetilde I}$ is equipped with the product topology and $D(I;X)$ with the Skorokhod topology.
    	\item[] \textit{Proof of Claim.}
    	First, we note that by \Cref{lemma:lsc_pointwise_Var}, the map $\widetilde\gamma\mapsto \Var(\widetilde\gamma;\widetilde I)\in [0,\infty]$	is lower semi-continuous on $X^{\widetilde I}$ with respect to the product topology, hence Borel measurable. In particular,
    	\begin{equation}
    		pBV(\widetilde I;X) = \{\widetilde\gamma\in X^{\widetilde I}:\Var(\widetilde\gamma;\widetilde I)<+\infty\}
    	\end{equation}
    	is a Borel subset of $X^{\widetilde I}$.
    	Second, to show that $R:X^{\widetilde I}\to D(I;X)$ is Borel measurable, it is enough to prove that $e_t\circ R:X^{\widetilde I}\to X$ is Borel measurable for every $t\in I$, because the evaluation maps $\{e_t:D(I;X)\to X\}_{t\in I}$ generate the Borel $\sigma$-algebra of $D(I;X)$.
    	Fix $t\in I$, and choose a sequence $(\widetilde t_m)_{m\in\N}\subset \widetilde I$ such that $\widetilde t_m\downarrow t$. For each $m\in\N$, the map $ e_{\widetilde t_m}:X^{\widetilde I}\to X $ 	is continuous with respect to the product topology, hence Borel measurable. By the definition of $R$, we have the pointwise limit
    	\begin{equation}
    		e_t\circ R=\lim_{m\to\infty} e_{\widetilde{t}_m}
    		\quad\text{ on } \, pBV(\widetilde I;X).
    	\end{equation}
    	Thus the restriction of $e_t\circ R$ to $pBV(\widetilde I;X)$ is Borel measurable as a pointwise limit of measurable maps.  	
    	On $X^{\widetilde I}\setminus pBV(\widetilde I;X)$, the map $e_t\circ R$ is constant equal to $\bar x$. Since $pBV(\widetilde I;X) \subset X^{\widetilde I}$ is a Borel subset, it follows that $e_t\circ R$ is Borel measurable on all of $X^{\widetilde I}$. This proves the claim.
        \end{itemize}

    \noindent
    Therefore, we define
    \begin{equation}\label{eq:proof_reconstruction_pi}
    	\pi\coloneqq (R)_{\#}\widetilde\pi\in \P(D(I;X)).
    \end{equation}

    \smallskip
    \noindent
    \textbf{Step 4} (Properties of $ \pi \in  \P (D(I;X))$). In the final step, we show that
    \begin{enumerate}[label=(\Roman*), font=\normalfont]
    	\item $\pi$ is concentrated on $\mathcal{BV}(I;\X) \subset D(I;\X)$;
        \item $(e_t)_\#\pi = \mu_t$ for all $t\in I$;
        \item $\pi$ satisfies 
        \begin{equation}\label{eq:proof:opt_pi}
            \int_{D(I;X)}  \essVar (\gamma;I) \d\pi(\gamma) = \essVar (\mu;I).
        \end{equation}
    \end{enumerate}
   We begin with (II). Let $t \in I$ be arbitrary. For any $\varphi \in C_b (X)$, we have 
    \begin{align}
     	\int_{D(I;X)}\varphi (\gamma_t) \d \pi (\gamma) 
     	& = \int_{X^{\widetilde{I}}} \varphi \big( (R\widetilde{\gamma})_t \big) \d \widetilde{\pi} (\widetilde{\gamma}) \\
     	& = \lim_{\widetilde t\downarrow t,\ \widetilde t\in \widetilde I} \int_{X^{\widetilde{I}}}   \varphi \big( \widetilde\gamma_{\widetilde t} \big) \d \widetilde{\pi} (\widetilde{\gamma}) \\
     	& = \lim_{\widetilde t\downarrow t,\ \widetilde t\in \widetilde I} \int_{X}   \varphi ( x) \d \mu_{\widetilde t} \, (x) = \int_{X}   \varphi ( x) \d \mu_{t} (x).
    \end{align}
    The first equality directly follows from the push-forward \eqref{eq:proof_reconstruction_pi}.
    For the second equality, we use the continuity of $\varphi$, the definition of $R$, and the dominated convergence theorem.
    The third equality follows from Step 2-(II). Since $(\mu_t) \in D (I;(\P(X),W_1))$, the map $t \mapsto \mu_t$ is in particular narrowly c\`adl\`ag, which yields the final equality. 
    \\
    Next, we prove (I) and (III). We have
    \begin{align}
    	\int_{D(I;X)} \essVar (\gamma;I) \d \pi (\gamma)
    	& =
    	\int_{D(I;X)} \Var (\gamma;I) \d \pi (\gamma) \\
    	& = \int_{X^{\widetilde{I}}} \Var (R\widetilde{\gamma};I) \d \widetilde{\pi} (\widetilde{\gamma}) \\
    	& \leq  \int_{X^{\widetilde{I}}} \Var (\widetilde{\gamma};\widetilde{I}) \d \widetilde{\pi} (\widetilde{\gamma}) \leq \essVar (\mu;I) < + \infty,
    \end{align}
    where we used the push-forward \eqref{eq:proof_reconstruction_pi}, \Cref{lemma:cadlag_extension_dense_BV}, and \eqref{eq:proof_Var_tilde_pi}.
    On the other hand, since $\pi$ is a lift of $(\mu_t)$, \Cref{prop:lift_to_mut} gives the reverse inequality. Thus, we have the equality \eqref{eq:proof:opt_pi}.
    This implies, again taking into account the observation in \Cref{prop:lift_to_mut}, that the equality \eqref{eq:optimal_pi_identity} holds as measures.
\end{proof}

Following Proposition \ref{prop:lift_to_mut} and Theorem \ref{thm:optimal_lift}, we define the following.
\begin{definition}[Optimal lift]\label{def:optlift}
Given $(\mu_t)\in \mathcal{BV}(I;(\P(\X),W_1))$, we call $\pi \in \P( D(I;X))$ concentrated on $\mathcal{BV}(I;\X)$ an \emph{optimal} lift of $(\mu_t)$, if it is lift of $(\mu_t)$ (Definition \ref{def:lift}) and it satisfies the optimality condition
\begin{equation} \label{eq:def_optlift}
	|\D\mu| (I) = \int |\D\gamma| (I)\,\d \pi(\gamma).
\end{equation}
\end{definition}

\begin{remark}\label{rmk:optimal_lift_measure_identity}
Note that \eqref{eq:def_optlift} implies $ |\D\mu| = \int |\D\gamma|\,\d\pi(\gamma)$
as measures on $I$, since Proposition \ref{prop:lift_to_mut} gives the corresponding measure inequality and \eqref{eq:def_optlift} gives equality of total masses.
\end{remark}

\subsection{From BV path measures to flux measures: \texorpdfstring{$\mathbb{R}^n$}{Rn} setting}\label{subsec:lifts_to_flux measures}

    This section provides an explicit construction of a flux measure from a path measure.
    To see where the construction ansatz comes from, we begin with a computation, temporarily setting aside measurability and integrability concerns; these will be addressed later in the proof of the next theorem.
	
    Let $I = (0,T) \subset \R$, and consider $\R^n$ endowed with a norm $|\,{\cdot}\,|$. Let $\pi \in \mathcal{P}(D (I;\R^n))$ be a path measure concentrated on $\BV (I;\R^n) \subset D(I;\R^n)$ such that
    \begin{equation}\label{eq:integrability_for_heuristic_computation}
        \int |\boldsymbol{ \mathrm{D}\gamma}|(I) \d \pi (\gamma) < + \infty.
    \end{equation}
    Denote $\mu_t\coloneqq (e_t)_{\#}\pi$ for all $t \in I$ (notice that 
    $t \mapsto \mu_t$ is narrowly c\`adl\`ag e.g. by \cite[Proposition 2.15]{AbediLiSchultz2024}) and define $\mu \in \M^+(I \times \R^n)$ via $\mu(\mathrm{d} t,\mathrm{d} x)\coloneqq \mu_t(\mathrm{d} x) \mathrm{d} t $.  
	For any test function $\varphi\in C^1_c(I\times \R^n)$, we have
	\begin{multline}
	\int_{I\times \R^n} \partial_t \varphi(t,x)\d \mu (t,x) = \int_{I} \int_{\R^n} \partial_t \varphi(t,x)\d \mu_t (x) \d t 
    = \int_I\int \partial_t \varphi(t,\gamma_t)\d\pi(\gamma)\d t \\
	=\int \bigg( \D\varphi_\gamma(I)- \int_I \nabla \varphi(t,\gamma_t)\cdot \d \boldsymbol{\D\gamma}^{\mathrm{c}}(t )-\sum_{t\in J_\gamma} \bigl[ \varphi(t,\gamma_{t+})-\varphi(t,\gamma_{t-})\bigr] \bigg) \d\pi(\gamma),
	\end{multline}
	where we applied the $\BV$-chain rule in \cref{lemma:chainrule}. Note that $\D\varphi_\gamma(I) = 0$ as $\varphi$ has compact support in the time variable. Therefore, if there exists a flux measure $\nnu$ such that $(\mu,\nnu)$ solves the continuity equation in the sense of \cref{def:ce}, then necessarily
	\begin{align}
		& \int_{I\times \R^n}  \nabla  \varphi(t,x)\cdot \d \nnu(t,x) \\
		&= \int \bigg( \int_I \nabla \varphi(t,\gamma_t)\cdot \d \boldsymbol{\D\gamma}^{\mathrm{c}}(t) +\sum_{t\in J_\gamma} \bigl[\varphi(t,\gamma_{t+})-\varphi(t,\gamma_{t-}) \bigr] \bigg) \d\pi(\gamma) \\
        &= \int \bigg( \int_{I} \nabla \varphi(t,\gamma_t)\cdot  \d \boldsymbol{\D\gamma}^{\mathrm{c}}(t) + \sum_{t\in J_\gamma} \int_{[0,1]} \nabla\varphi\bigl(t,(1-a)\gamma_{t-}+a \gamma_{t+}\bigr)\cdot(\gamma_{t+}-\gamma_{t-})\,\d a \bigg) \d\pi(\gamma) \\
		&= \int \bigg( \int_{I} \nabla \varphi(t,\gamma_t)\cdot  \d \boldsymbol{\D\gamma}^{\mathrm{c}}(t) +\sum_{t\in J_\gamma} \int_{[\gamma_{t-},\gamma_{t+}]} \nabla\varphi(t,x)\cdot \frac{(\gamma_{t+}-\gamma_{t-})}{|\gamma_{t+}-\gamma_{t-}|}\d \mathcal{H}^1(x) \bigg) \d\pi(\gamma) \\
		&=\int \bigg( \int_{I} \nabla \varphi(t,\gamma_t)\cdot  \d \boldsymbol{\D\gamma}^{\mathrm{c}}(t) + \int_{I} \int_{[\gamma_{t-},\gamma_{t+}]} \nabla\varphi(t,x)\cdot \frac{(\gamma_{t+}-\gamma_{t-})}{|\gamma_{t+}-\gamma_{t-}|^2}\d \mathcal{H}^1(x) \d |\boldsymbol{\mathrm{D}\gamma}^{\mathrm{j}}|(t) \bigg) \d\pi(\gamma),
	\end{align}
    where $\mathcal{H}^1$ is the 1-dimensional Hausdorff measure on $\R^n$ associated with the norm $|\cdot|$.
    This computation guides us to construct a flux measure $\nnu$ from $\pi$ in the next theorem via the formula
    \begin{multline}\label{eq:def_nu_inducedby_pi}
			\int_{I \times \R^n} F(t,x)  \cdot \d\nnu (t,x) \coloneqq \int \bigg( \int_{I}F(t,\gamma_t) \cdot  \d \boldsymbol{ \mathrm{D}\gamma}^{\mathrm{c}}(t) \,  + \\
               \int_{I} \int_{ [\gamma_{t-},\gamma_{t+}]} F(t,x)\cdot \frac{(\gamma_{t+}-\gamma_{t-})}{|\gamma_{t+}-\gamma_{t-}|^2}\d \Ha^1( x) \d |\boldsymbol{\mathrm{D}\gamma}^{\mathrm{j}}|(t) \bigg) \d \pi (\gamma), \quad \forall F\in C_c(I\times \R^n; \R^n).
		\end{multline}
    The key point here is that the flux measure $\nnu$ associated with the path measure $\pi$ arises as the superposition of the flux measures associated with individual paths, denoted by $\nnu_\gamma$, namely,
    \begin{equation}\label{eq:nnu_superposition_nnu_gamma}
        \nnu = \int \nnu_\gamma \d \pi (\gamma),
    \end{equation}
    in the sense of measures. 
    As shown below, all integrals above with respect to $\pi$ are well defined, and the integrability condition \eqref{eq:integrability_for_heuristic_computation} guarantees that they are finite.

    \begin{theorem}[From $\pi$ to $(\mu, \nnu)$]\label{thm:lifttoflux}
        Let $\pi \in \mathcal{P}(D (I;\R^n))$ be concentrated on $\BV (I;\R^n) \subset D(I;\R^n)  $ such that 
        \begin{equation}
         \int |\boldsymbol{ \mathrm{D}\gamma}|(I) \d \pi (\gamma) < + \infty,
        \end{equation}
        and set $\mu_t\coloneqq(e_t)_\#\pi$ for all $t\in I$, and define $\mu \in \M^+(I \times \R^n)$ via $\mu(\mathrm{d} t,\mathrm{d} x)\coloneqq \mu_t(\mathrm{d} x) \mathrm{d} t $. Then formula \eqref{eq:def_nu_inducedby_pi} defines a unique measure $\nnu \in \M(I\times \R^n;\R^n)$ such that the pair $(\mu,\nnu)$ solves \eqref{eq:CE}, and
        \begin{equation}\label{eq:lift_to_flux_inequality}
              |\D \mu| \leq  \pr^I_\#|\nnu| \leq \int |\boldsymbol{ \mathrm{D}\gamma}| \d \pi (\gamma).
        \end{equation}
        If $\pi$ is an optimal lift of $(\mu_t)$, then
        \begin{equation}\label{eq:lift_to_flux_equality}
              |\D \mu| =  \pr^I_\#|\nnu|=\int |\boldsymbol{ \mathrm{D}\gamma}| \d \pi (\gamma),
        \end{equation}
       and, in particular, $(\mu,\nnu)$ is a minimal solution in the sense of Definition~\ref{def:ce_minimal_solution}.
    \end{theorem}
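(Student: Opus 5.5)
The plan is to define $\nnu$ pathwise and then superpose. For each $\gamma\in\BV(I;\R^n)$ I will set
\[
\nnu_\gamma \coloneqq (\mathrm{id},\gamma)_\#\boldsymbol{\D\gamma}^{\mathrm c} + \int_I \Big(\delta_t\otimes \tfrac{\gamma_{t+}-\gamma_{t-}}{|\gamma_{t+}-\gamma_{t-}|^2}\,\Ha^1\llcorner[\gamma_{t-},\gamma_{t+}]\Big)\d|\boldsymbol{\D\gamma}^{\mathrm j}|(t),
\]
where the first term is the push-forward of the continuous part to the graph $t\mapsto(t,\gamma_t)$. The second term is really the countable sum $\sum_{t\in J_\gamma}\delta_t\otimes\big(\tfrac{\gamma_{t+}-\gamma_{t-}}{|\gamma_{t+}-\gamma_{t-}|}\Ha^1\llcorner[\gamma_{t-},\gamma_{t+}]\big)$. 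On each segment the density has $|\cdot|$-norm $|\gamma_{t+}-\gamma_{t-}|^{-1}$, and $\Ha^1([\gamma_{t-},\gamma_{t+}])=|\gamma_{t+}-\gamma_{t-}|$ because $\Ha^1$ is taken with respect to the same norm. Hence
\[
\pr^I_\#|\nnu_\gamma|\le |\boldsymbol{\D\gamma}^{\mathrm c}|+|\boldsymbol{\D\gamma}^{\mathrm j}|=|\boldsymbol{\D\gamma}|.
\]
In fact equality holds, since the continuous part and the jump part are concentrated on disjoint time sets, $J_\gamma$ is countable, and $|\boldsymbol{\D\gamma}^{\mathrm c}|$ has no atoms. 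In particular $\|\nnu_\gamma\|_{\rm TV}\le|\boldsymbol{\D\gamma}|(I)$.

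I then define $\nnu\coloneqq\int\nnu_\gamma\d\pi(\gamma)$ by duality, via the right-hand side of \eqref{eq:def_nu_inducedby_pi} for $F\in C_c(I\times\R^n;\R^n)$. This functional is linear and bounded by $\|F\|_\infty\int|\boldsymbol{\D\gamma}|(I)\d\pi$ (sup of the dual norm), so the Riesz representation theorem yields a unique finite vector measure $\nnu$. Uniqueness is automatic because $C_c$ determines measures.

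\emph{Main obstacle: measurability.} The whole construction rests on showing that $\gamma\mapsto\int F\cdot\d\nnu_\gamma$ is Borel on $D(I;\R^n)$ (restricted to the Borel set $\BV$). I would approach this as follows.
\begin{itemize}
\item Write $\int F\cdot\d\nnu_\gamma=\int_I F(t,\gamma_t)\cdot\d\boldsymbol{\D\gamma}^{\mathrm c}+\sum_{t\in J_\gamma}\int_0^1F(t,(1-a)\gamma_{t-}+a\gamma_{t+})\cdot(\gamma_{t+}-\gamma_{t-})\d a$.
\item Handle the total integral through Riemann--Stieltjes sums over dyadic partitions. The maps $\gamma\mapsto\gamma_t$ are Borel, so the finite sums are Borel, and measurability passes to the pointwise limit.
\item Isolate the jumps by thresholding, i.e. keep jumps with $|\gamma_{t+}-\gamma_{t-}|>1/k$. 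These can be located by measurable selection from dyadic increments, which I would defer to the appendix.
\end{itemize}
Once measurability is settled, the bound $\pr^I_\#|\nnu|\le\int\pr^I_\#|\nnu_\gamma|\d\pi=\int|\boldsymbol{\D\gamma}|\d\pi$ follows by testing with $F$ of the form $\psi(t)\boldsymbol{g}(t,x)$, where $|\boldsymbol g|_*\le1$, and using \eqref{eq:vardual}.

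\emph{The continuity equation.} Fix $\varphi\in C^1_c(I\times\R^n)$. For each $\gamma$, \cref{lemma:chainrule} gives
\[
0=\D\varphi_\gamma(I)=\int_I\partial_t\varphi(t,\gamma_t)\d t+\int\nabla\varphi\cdot\d\nnu_\gamma,
\]
where the jump terms are rewritten as segment integrals by the fundamental theorem of calculus along $[\gamma_{t-},\gamma_{t+}]$. I then integrate in $\pi$ and apply Fubini; this is justified by boundedness of $\partial_t\varphi$ together with the integrability of $|\boldsymbol{\D\gamma}|(I)$. Together with $(e_t)_\#\pi=\mu_t$, this yields \eqref{eq:CE}.

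\emph{The two inequalities and the optimal case.} The lower bound $|\D\mu|\le\pr^I_\#|\nnu|$ is \cref{cor:BVflux}, noting that $\|\mu\|_{\rm TV}=T$ gives $c=1$. If $\pi$ is optimal, \cref{rmk:optimal_lift_measure_identity} gives $|\D\mu|=\int|\boldsymbol{\D\gamma}|\d\pi$, so the chain in \eqref{eq:lift_to_flux_inequality} collapses to \eqref{eq:lift_to_flux_equality}. Minimality then follows from \cref{cor:BVflux}\,\ref{item:CE_minimal}.
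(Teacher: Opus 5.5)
Your proof is correct and has the same architecture as the paper's. You superpose the pathwise fluxes via the Riesz representation theorem, bound $\pr^I_\#|\nnu|$ by testing with $F\in C_c((a,b)\times\R^n;\R^n)$ and using \eqref{eq:vardual}, derive the continuity equation from \cref{lemma:chainrule} with the jumps rewritten as segment integrals, and take the lower bound, the optimal case and minimality from \cref{cor:BVflux}.

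The only genuine difference is the measurability step. The paper proves Borel measurability of $\gamma\mapsto\int F\cdot\d\nnu_\gamma$ for every bounded Borel $F$. It combines the joint Borel measurability of $(t,\gamma)\mapsto(\gamma_{t-},\gamma_t)$ (\cref{lma:evaluationBorel}) with a Dynkin $\pi$--$\lambda$ reduction (\cref{lemma:Measurablity3}) to the maps $\gamma\mapsto\boldsymbol{\D\gamma}^{\mathrm c}(B)$ and $\gamma\mapsto|\boldsymbol{\D\gamma}^{\mathrm j}|(B)$. These maps are in turn handled by a layer-cake formula whose jump-counting functions are lower semicontinuous in the Skorokhod topology.

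Your Riemann--Stieltjes route is more hands-on. It only covers continuous $F$, which is all this theorem needs. The paper's Borel version is the one reused for the metric construction in \cref{lemma:measurabilityGeo}.

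Two points should be made explicit when you write your route out:
\begin{itemize}
\item Dyadic Riemann--Stieltjes sums do not converge to $\int F\cdot\d\nnu_\gamma$. Left-point sums converge to $\int_I F(t,\gamma_{t-})\cdot\d\boldsymbol{\D\gamma}(t)$, in which each jump contributes the point value $F(t,\gamma_{t-})\cdot(\gamma_{t+}-\gamma_{t-})$. You must subtract these terms and add back the segment averages $\int_0^1F(t,(1-a)\gamma_{t-}+a\gamma_{t+})\cdot(\gamma_{t+}-\gamma_{t-})\d a$. This is exactly where your measurable enumeration of the jumps enters, and the series converges absolutely because $\sum_{t\in J_\gamma}|\gamma_{t+}-\gamma_{t-}|\le|\boldsymbol{\D\gamma}|(I)$.
\item The Fubini step identifying $\iint_I\partial_t\varphi(t,\gamma_t)\d t\d\pi(\gamma)$ with $\int\partial_t\varphi\d\mu$ requires joint measurability of $(t,\gamma)\mapsto\gamma_t$. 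You should cite this rather than leave it implicit.
\end{itemize}
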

    
    \begin{proof}
         We split the proof into a couple of intermediate steps. 

        \smallskip
        \noindent
        \textbf{Step 1} (Measurability)\textbf{.}
        We first prove the Borel measurability of the map 
        \begin{align}\label{eq:proof_measurability}
            \gamma \mapsto \int_{I}F(t,\gamma_t) \cdot  \d \boldsymbol{ \mathrm{D}\gamma}^{\mathrm{c}}(t)
             + \int_{I} \int_{ [\gamma_{t-},\gamma_{t+}]} F(t,x)\cdot \frac{(\gamma_{t+}-\gamma_{t-})}{|\gamma_{t+}-\gamma_{t-}|^2}\d \Ha^1( x) \d |\boldsymbol{\mathrm{D}\gamma}^{\mathrm{j}}|(t)
        \end{align}
        from $\big(\BV(I;\R^n), d_{Sk}\big) \to  {\R}$ for every bounded Borel function $F$. Note that for every fixed $\gamma \in \BV(I;\R^n)$, the right-hand side is finite. Indeed, by the duality inequality, the absolute value of the right-hand side is bounded by
        \begin{equation}\label{ineq:23/03-1}
			\|F\|_{\infty,*}|\boldsymbol{ \mathrm{D}\gamma}^{\mathrm{c}}|(I) +\|F\|_{\infty,*}\int_{I} \int_{ [\gamma_{t-},\gamma_{t+}]}\frac{1}{|\gamma_{t+}-\gamma_{t-}|}\d \Ha^1( x) \d |\boldsymbol{ \mathrm{D}\gamma}^{\mathrm{j}}|(t)
            = 
            \|F\|_{\infty,*}|\boldsymbol{ \mathrm{D}\gamma}|(I) < + \infty,
		\end{equation}
        where $\|F\|_{\infty,*} \coloneqq \sup_{(t,x) \in I \times \R^n} |F(t,x)|_{*}$ and $|\,{\cdot}\,|_*$ is the dual norm of $|\,{\cdot}\,|$.

        \smallskip
        \noindent
        For the measurability of the continuous part in \eqref{eq:proof_measurability}, by \cref{lma:evaluationBorel}, the evaluation map $e\colon (t,\gamma)\mapsto \gamma_t$ from $I\times D(I;\R^n) \to \R^n$ is Borel measurable. 
        Thus by \cref{lemma:Measurablity3}, 
        \begin{align}
            \gamma\mapsto \int F(t,\gamma_t)\cdot\d \boldsymbol{\D\gamma}^{\mathrm{c}}(t)=\int F\circ (\pr^I,e)(t,\gamma)\cdot\d \boldsymbol{\D\gamma}^{\mathrm{c}}(t),
        \end{align}
         is Borel measurable on $\BV(I;\R^n)$.

        \smallskip
        \noindent
         For the measurability of the jump part in \eqref{eq:proof_measurability}, by the standard reduction argument (see the proof of \cref{lemma:Measurablity3}), it suffice to show the Borel measurability of 
        \begin{align}
            \gamma \mapsto \int_{I} \int_{ [\gamma_{t-},\gamma_{t+}]} \mathds{1}_{B_1}(t) \mathds{1}_{B_2}(x)V\cdot \frac{(\gamma_{t+}-\gamma_{t-})}{|\gamma_{t+}-\gamma_{t-}|^2}\d \Ha^1( x) \d |\boldsymbol{\mathrm{D}\gamma}^{\mathrm{j}}|(t)
        \end{align}
       for all $V\in \R^n$, open sets $B_1\subset I$ and $B_2\subset \R^n$.
       \\
        Define the function $H: I\times D(I;\R^n) \to \R$ by
        \begin{align}
            H(t,\gamma)\coloneqq \int_{ [\gamma_{t-},\gamma_{t+}]} \mathds{1}_{B_2}(x) V\cdot \frac{(\gamma_{t+}-\gamma_{t-})}{|\gamma_{t+}-\gamma_{t-}|^2}\d \Ha^1( x),\quad t\in J_\gamma,
        \end{align}
        and $H(t,\gamma)\coloneqq0$ if $t\notin J_\gamma$. 
        By \cref{lemma:Measurablity3}, it is enough to show that $H$ is a bounded Borel function.
        Observe that
        \begin{align}
        H(t,\gamma)=
         V\cdot (G_1\circ H_1\cdot g_2\circ H_2)(t,\gamma),
        \end{align}
        where 
        \begin{align}
        H_1(t,\gamma)\coloneqq\gamma_{t+}-\gamma_{t-},& \quad  G_1\colon \R^n\ni x\mapsto \mathds{1}_{\{x\neq 0\}} \frac{x}{|x|}\in \R^n\\
        H_2(t,\gamma)\coloneqq (\gamma_{t-},\gamma_{t+}),&\quad g_2 \colon\R^{2n}\ni(x,y)\mapsto \int_{[0,1]}\mathds{1}_{B_2}((1-s)x+sy)\d s \in \R.
        \end{align}
        The maps $H_1$ and $H_2$ are Borel measurable by Lemma~\ref{lma:evaluationBorel}. Moreover, $g_2$ is lower semi-continuous. Indeed, for every $s\in[0,1]$, the function $(x,y)\mapsto \mathds{1}_{B_2}((1-s)x+sy)$ is lower semi-continuous as $B_2$ is open.
        Hence, by Fatou's lemma, $g_2$ is lower semi-continuous.
        All in all, the map $H$ is Borel measurable.

        \smallskip
        \noindent
        \textbf{Step 2} (Existence and uniqueness of $\nnu$ defined by \eqref{eq:def_nu_inducedby_pi})\textbf{.}
       We define the functional $\Lambda$ on $ C_b(I\times\R^n;\R^n)$ by
        \begin{equation}
            \Lambda (F) \coloneqq  \int \int_{I}F(t,\gamma_t) \cdot  \d \boldsymbol{ \mathrm{D}\gamma}^{\mathrm{c}}(t) \d \pi (\gamma) \,  + \int \int_{I} \int_{ [\gamma_{t-},\gamma_{t+}]} \hspace{-8pt} F(t,x)\cdot \frac{(\gamma_{t+}-\gamma_{t-})}{|\gamma_{t+}-\gamma_{t-}|^2}\d \Ha^1( x) \d |\boldsymbol{\mathrm{D}\gamma}^{\mathrm{j}}|(t) \d \pi (\gamma).
        \end{equation}
        It is clear that $\Lambda$ is linear.
        Moreover, with the Borel measurability of \eqref{eq:proof_measurability} obtained from the previous step, integrating over $\pi$, and using inequality \eqref{ineq:23/03-1}, we get
        \begin{equation}
            |\Lambda(F)|\leq \|F\|_{\infty,*} \int |\boldsymbol{ \mathrm{D}\gamma}|(I) \d\pi(\gamma)< + \infty,
        \end{equation}
		i.e., $\Lambda$ is a bounded functional on $(C_b,\|\cdot\|_{\infty,*})$. 
        In particular, it is a linear bounded functional on $(C_c,\|\cdot\|_{\infty,*})$. 
        Thus, by the (vector-valued) Riesz--Markov--Kakutani representation theorem, there exists a unique $\R^n$-valued Radon measure $\boldsymbol{\nu}$ on $ I \times \R^n$ such that
        \begin{equation}
        \int_{I\times\R^n} F\cdot \d\nnu =\Lambda (F)
        \qquad \forall\,F\in C_c(I\times\R^n;\R^n).
        \end{equation}  
        
        \smallskip
        \noindent
        \textbf{Step 3} (Variation estimates)\textbf{.}
        Fix an open interval $(a,b)\subseteq I$. 
        Using exactly the same arguments used for \eqref{ineq:23/03-1}, we have for any $F \in C_c((a,b)\times \R^n;\R^n)$ that
		\begin{align}\label{eq:proof:22:30}
			\bigg| \int_{(a,b)\times \R^n}  & F   \cdot \d\nnu  \bigg| \leq 
			\|F\|_{\infty,*} \int |\boldsymbol{ \mathrm{D}\gamma}|((a,b)) \d \pi (\gamma). 
		\end{align}
        In particular, taking the supremum over all such $F$ with $ \|F\|_{\infty,*} \leq 1$, we have by \eqref{eq:vardual}
        \begin{equation}
            |\nnu|((a,b)\times \R^n)\leq \int |\boldsymbol{ \mathrm{D}\gamma}|((a,b)) \d \pi (\gamma).
        \end{equation}
        In particular, $\nnu \in \M(I\times \R^n;\R^n)$. As a result of above, $\pr^I_\#|\nnu| \le \int |\boldsymbol{ \mathrm{D}\gamma}| \d \pi $ holds as measures.
       \\
       Next, it follows exactly from the computation at the beginning of \cref{subsec:lifts_to_flux measures} that $(\mu,\nnu)$ solves \eqref{eq:CE}, and thus, by \cref{cor:BVflux}, we have  
    		$
    		|\D\mu|\leq	\pr^I_\#|\nnu|.
    		$
        This completes the proof of the inequality \eqref{eq:lift_to_flux_inequality}. 
        Finally, if one starts from the optimal lift $\pi$ in the sense of Definition~\ref{def:optlift}, then inequalities in \eqref{eq:lift_to_flux_inequality} have to be equalities. This then implies, by \cref{cor:BVflux}~\ref{item:CE_minimal}, that $(\mu,\nnu)$ is a minimal solution in the sense of Definition~\ref{def:ce_minimal_solution}.
    \end{proof}

\subsection{From BV path measures to derivations: geodesic metric setting}\label{subsec:lifts_to_derivation}

In this section, we adapt the construction of the previous section to the metric setting, following a similar superposition strategy: we first associate a reference measure and a derivation with each individual BV path and then superpose them with respect to the path measure. The assumption that $(X,d)$ is geodesic is used to connect the jump points by a geodesic, so that no additional variation is introduced in the mass measure $|\cD|$ of the final measure-valued derivation $\cD$. Without this assumption, even starting from an optimal lift of $(\mu_t)$, the sharp identity $
|\D\mu|={\pr}^I_{\#}|\cD| $
cannot in general be attained; recall Example~\ref{ex:jump_between_two_points}.

\begin{theorem}[From $\pi$ to $(\mu,\cD)$]\label{thm:lifttoderivation}
    Let $(X,d)$ be a complete, separable, and geodesic metric space.
    Let $\pi \in \mathcal{P}(D (I;\X))$ be concentrated on $\BV (I;\X) \subset D(I;\X)$ such that 
    \begin{equation}\label{ineq:finiteEnergy}
     \int | \D\gamma|(I) \d \pi (\gamma) < + \infty,
    \end{equation}
    and set $\mu_t\coloneqq(e_t)_\#\pi$ for all $t\in I$, and define $\mu \in \M^+(I \times X)$ via $\mu(\mathrm{d} t,\mathrm{d} x)\coloneqq \mu_t(\mathrm{d} x) \mathrm{d} t $.
    Then there exists a finite-mass measure-valued derivation $\mathcal{D}\colon \Lipb(X) \to \M(I \times X; \R)$ with the mass measure $|\cD| \in \M^+(I \times X)$ such that the pair $(\mu,\cD)$ solves \eqref{eq:CE_D}, and 
        \begin{equation}
        |\D \mu| \leq  {\pr}^I_{\#}|\cD| \leq \int |\D \gamma| \d \pi (\gamma).
        \end{equation}
    	If $\pi$ is an optimal lift of $(\mu_t)$, then
    	 \begin{equation}
    		|\D \mu| = {\pr}^I_{\#}|\cD| = \int |\D \gamma| \d \pi (\gamma),
    	\end{equation}
        and, in particular, $(\mu,\cD)$ is a minimal solution in the sense of \cref{def:minimalsolution_M}.
\end{theorem}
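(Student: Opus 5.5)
The plan is to follow the strategy of \cref{thm:lifttoflux}. We build a measure-valued derivation $\cD_\gamma$ for each single curve $\gamma\in\BV(I;X)$ and then superpose, setting $\cD f\coloneqq\int\cD_\gamma f\d\pi(\gamma)$. Fix $\gamma\in\BV(I;X)$ and $f\in\Lipb(X)$. Then $f\circ\gamma$ is a real c\`adl\`ag BV function, and we split $\D(f\circ\gamma)=\D(f\circ\gamma)^{\mathrm c}+\sum_{t\in J_\gamma}[f(\gamma_t)-f(\gamma_{t-})]\delta_t$.
\begin{itemize}
\item \emph{Continuous part.} Set $\cD^{\mathrm c}_\gamma f\coloneqq(\mathrm{id},\gamma)_\#\D(f\circ\gamma)^{\mathrm c}$.
\item \emph{Jump part.} For each $t\in J_\gamma$, use that $X$ is geodesic to choose a constant-speed geodesic $\sigma^{t}\colon[0,1]\to X$ from $\gamma_{t-}$ to $\gamma_t$. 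Set
\[
\cD^{\mathrm j}_\gamma f\coloneqq\sum_{t\in J_\gamma}\delta_t\otimes\sigma^t_\#\big((f\circ\sigma^t)'\,\mathcal L^1\llcorner[0,1]\big),
\]
exactly as in Example~\ref{ex:jump_between_two_points}.
\item \emph{Reference measure.} Set $\lambda_\gamma\coloneqq(\mathrm{id},\gamma)_\#|\D\gamma|^{\mathrm c}+\sum_{t\in J_\gamma}d(\gamma_{t-},\gamma_t)\,\delta_t\otimes\sigma^t_\#\mathcal L^1\llcorner[0,1]$. Its time projection is $\pr^I_\#\lambda_\gamma=|\D\gamma|$.
\end{itemize}

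\textbf{Per-curve properties.} Three things must be checked for fixed $\gamma$.
\begin{itemize}
\item \emph{Linearity and Leibniz.} Linearity is clear. On the continuous part, the Leibniz rule follows from Vol'pert's chain rule for real BV functions, which has no jump correction there: $\D((fg)\circ\gamma)^{\mathrm c}=f(\gamma)\D(g\circ\gamma)^{\mathrm c}+g(\gamma)\D(f\circ\gamma)^{\mathrm c}$. On the jump part it is the product rule for the Lipschitz functions $f\circ\sigma^t$, $g\circ\sigma^t$.
\item \emph{Pointwise mass bound.} We need $|\cD_\gamma f|\le\lip_{\rm a}(f)\,\lambda_\gamma$. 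On jumps this holds because $|(f\circ\sigma)'(r)|\le\lip_{\rm a}(f)(\sigma(r))\,d(\gamma_{t-},\gamma_t)$ for a.e.\ $r$. On the continuous part we need $|\D(f\circ\gamma)^{\mathrm c}|\le\lip_{\rm a}(f)(\gamma_t)\,|\D\gamma|^{\mathrm c}$. I would prove this by localization: fix $s\notin J_\gamma$ and $r>0$. For a small interval $U\ni s$ on which $\gamma$ stays in $B(\gamma_s,r)$, we get $|\D(f\circ\gamma)|(U)\le\Lip(f;B(\gamma_s,r))\,|\D\gamma|(U)$. Differentiating measures (Besicovitch on $I$) and letting $r\to0$ gives the bound at $|\D\gamma|^{\mathrm c}$-a.e.\ $s$.
\item \emph{Continuity equation along $\gamma$.} For $\xi\in C^1_c(I)$ we have $\int\xi'(t)f(\gamma_t)\d t=-\int\xi\d\D(f\circ\gamma)$, and $f(\gamma_t)-f(\gamma_{t-})=\int_0^1(f\circ\sigma^t)'\d r$. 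Hence $\int_I\xi'f(\gamma_t)\d t+\int\xi\d\cD_\gamma f=0$.
\end{itemize}

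\textbf{Superposition.} Integrating the per-curve identity in $\pi$ gives \eqref{eq:CE_D} for $\cD f\coloneqq\int\cD_\gamma f\d\pi$ and $\mu$; the first term integrates to the $\mu$-term because $\mu_t=(e_t)_\#\pi$. The Leibniz rule and linearity pass through the integral. The mass bound integrates to $|\cD f|\le\lip_{\rm a}(f)\,\lambda$ with $\lambda\coloneqq\int\lambda_\gamma\d\pi$, which is finite by \eqref{ineq:finiteEnergy}. So $\cD$ has finite mass and $|\cD|\le\lambda$, whence $\pr^I_\#|\cD|\le\pr^I_\#\lambda=\int|\D\gamma|\d\pi$. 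Here $\|\mu\|_{\mathrm{TV}}=T$, so $c=1$, and \cref{cor:derivation_CE}\,\ref{item:Varineq_Mderivation} gives $|\D\mu|\le\pr^I_\#|\cD|$. If $\pi$ is optimal, \cref{rmk:optimal_lift_measure_identity} squeezes both inequalities into equalities, and \cref{prop:Opt->Min} yields minimality.

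\textbf{Main obstacle: measurability.} We need, for bounded Borel $g$ on $I\times X$, that $\gamma\mapsto\int g\d\cD_\gamma f$ and $\gamma\mapsto\int g\d\lambda_\gamma$ are measurable. The continuous part is handled as in Step 1 of \cref{thm:lifttoflux}, via \cref{lma:evaluationBorel} and \cref{lemma:Measurablity3}. The jump part needs a measurable choice of geodesics $(x,y)\mapsto\sigma_{x,y}$.
\begin{itemize}
\item First attempt: the set $\{(x,y,\sigma):\sigma\text{ constant-speed geodesic from }x\text{ to }y\}$ is closed in $X\times X\times C([0,1];X)$ and has nonempty sections. Kuratowski--Ryll-Nardzewski requires compactness or $\sigma$-compactness of the sections, which is unavailable. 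Instead I would apply the von Neumann (Jankov) selection theorem to obtain a universally measurable selection. Composing with the Borel map $(t,\gamma)\mapsto(\gamma_{t-},\gamma_t)$ then yields $\pi$-measurable integrands, which suffices for integration against $\pi$.
\item Fallback: approximate the geodesic by $\varepsilon$-midpoint constructions that are chosen measurably from a countable dense set. This gives near-geodesic curves, and the resulting excess mass can be removed in a limit.
\end{itemize}
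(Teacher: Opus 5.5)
Your proposal is correct and follows essentially the same route as the paper. The paper also builds a per-curve measure $\lambda_\gamma=\tilde\gamma_\#|\D\gamma|^{\mathrm c}+\sum_{t\in J_\gamma}\delta_t\otimes\mathcal H^1|_{\Upsilon[\gamma_{t-},\gamma_{t+}]}$ (the same as yours), uses geodesic interpolation at jumps via a Souslin/universally measurable geodesic selection (Bogachev's Theorem 6.9.1, in effect the Jankov--von Neumann selection you invoke), superposes over $\pi$, and concludes with \cref{cor:derivation_CE} and \cref{prop:Opt->Min}. The differences are minor: the paper superposes at the level of $V$ through $L^1$--$L^\infty$ duality rather than superposing the measures $\cD_\gamma f$ directly, and it asserts the bound $|\D(f\circ\gamma)^{\mathrm c}|\le\lip_{\rm a}f\circ\gamma\,|\D\gamma|^{\mathrm c}$ without the localization argument you supply.
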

\begin{remark}
    The construction underlying the theorem is based on a pathwise construction of $(V,\lambda)$ followed by superposition. More precisely, when $\pi$ is concentrated on a single curve, i.e., $\pi=\delta_\gamma$ for some $\gamma\in \BV(I;\X)$, so that $\mu_t = \delta_{\gamma_t}$, then the reference measure $\lambda_\gamma$ is given by
    \begin{equation}\label{def:lambda_gamma}
        \lambda_\gamma\coloneqq \tilde\gamma_\#|\D \gamma|^{\mathrm{c}}+\sum_{t\in J_\gamma}\delta_t\otimes \mathcal{H}^1|_{\Upsilon[\gamma_{t-},\gamma_{t+}]},
    \end{equation}
    where $\tilde \gamma(t)\coloneqq (t,\gamma_t)$ and $\Upsilon[\gamma_{t-},\gamma_{t+}]\colon [0,1]\to X$ is a constant speed geodesic from $\gamma_{t-}$ to $\gamma_{t+}$ for those $t\in J_\gamma$; and the associated $\lambda_\gamma$-derivation $V_\gamma$ has the expression (similar to \eqref{eq:def_nu_inducedby_pi} in $\R^n$):
    \begin{multline}
     \int_{I\times \X} g(t,x)V_\gamma f(t,x)\d\lambda_\gamma (t,x) \coloneqq \int_I g(t,\gamma_t) \d \D(f\circ \gamma)^{\mathrm{c}}(t)+\\
    +\int_I\int_{[0,1]} g(t,\Upsilon[\gamma_{t-},\gamma_{t+}](a)) \frac{(f\circ \Upsilon[\gamma_{t-},\gamma_{t+}])'(a)}{d(\gamma_{t-},\gamma_{t+})}  \d a \d |\D\gamma|^{\mathrm{j}}(t)
    \end{multline}
    for every $f\in\Lipb(X)$ and every $g\in L^1(\lambda_\gamma)$.
    For a general path measure $\pi$, we then construct $\lambda$ by superposition $$\lambda \coloneqq \int \lambda_{\gamma} \d \pi (\gamma)$$
     and the corresponding derivation $V$ as detailed in the proof below. The measure-valued derivation appearing in the theorem is finally obtained by $ \cD(f)\coloneqq Vf \lambda$ for all $f\in\Lipb(X)$.
\end{remark}

\begin{proof}
    We divide the proof into 4 steps. 
    \smallskip
    \\
	\textbf{Step 1} (Derivation for the single curve case)\textbf{.} 
	Assume first that $\pi=\delta_\gamma $ for some $\gamma\in\BV(I;X)$.
	For each jump point $t\in J_\gamma$, we denote by $\Upsilon[\gamma_{t-},\gamma_{t+}]\colon [0,1]\to (X,d)$ a constant speed geodesic from $\gamma_{t-}$ to $\gamma_{t+}$.  
	Define $\lambda_\gamma\in \M^+(I\times X)$ as in \eqref{def:lambda_gamma}.
\\
Consider for any $g\in C_b(I\times X)$ the expression:
\begin{equation}\label{eq:V_singlecurve}
	\int_I g(t,\gamma_t) \d \D(f\circ \gamma)^{\mathrm{c}}(t)+\int_I\left(\int_{[0,1]} \frac{g(t,\Upsilon[\gamma_{t-},\gamma_{t+}](a))}{d(\gamma_{t-},\gamma_{t+})} (f\circ \Upsilon[\gamma_{t-},\gamma_{t+}])'(a) \d a \right)\d |\D\gamma|^{\mathrm{j}}(t).
\end{equation}
Since $f$ is Lipschitz, we have
    \begin{align}
    	|\D (f\circ \gamma)^{\mathrm{c}}|&\leq \lip_{\rm a} f\circ\gamma \cdot |\D \gamma|^{\mathrm{c}}\\
    	 |(f\circ \Upsilon[\gamma_{t-},\gamma_{t+}])'(a)|&\leq \lip_{\rm a} f(\Upsilon[\gamma_{t-},\gamma_{t+}](a))d(\gamma_{t-},\gamma_{t+}) ,\quad \text{$\mathcal{L}^1$-a.e. $a\in[0,1]$}.
    \end{align}
Then
\begin{align}
	|\eqref{eq:V_singlecurve}|\leq &\int_I |g(t,\gamma_t) | \lip_{\rm a} f(\gamma_t) \d |\D\gamma|^{\mathrm{c}}(t)+\int_I \int^1_0 |g(t,\cdot)\lip_{\rm a} f|\left(\Upsilon[\gamma_{t-},\gamma_{t+}](a)\right)\d a\d |\D \gamma|^{\mathrm{j}}(t) \\
	&\leq \Lip(f)\|g\|_{L^1(\lambda_\gamma)}\label{ineq:Vbound_singlecurve},
\end{align}
where we have used the identity
\begin{equation}\label{eq:L^1-norm-lambda_gamma}
		\|g\|_{L^1(\lambda_\gamma)}=\int_I |g\circ \tilde\gamma|\d |\D\gamma|^{\mathrm{c}}+ \int_I \int^1_0 |g(t,\Upsilon[\gamma_{t-},\gamma_{t+}](a))|\d a\d |\D\gamma|^{\mathrm{j}}(t).
	\end{equation} 
    Therefore, the expression \eqref{eq:V_singlecurve} extends to a bounded linear functional on $L^1(\lambda_\gamma)$ and by duality there exists a unique $L^\infty(\lambda_\gamma)$-function, denoted by $V_\gamma f$ such that
    \begin{equation}\label{eq:9/mar-1}
    		\int_{I\times \X} g(t,x)V_\gamma f(t,x)\d\lambda_\gamma (t,x)=\eqref{eq:V_singlecurve},\quad \forall g\in L^1(\lambda_\gamma).
    \end{equation}
	\textbf{Step 2} (Derivation for general path measures by superposition)\textbf{.}	
	Let $\pi \in \mathcal{P}(D (I;X))$.
    By \cite[Theorem 6.9.1]{Bogachev_book07}, there exists a Souslin measurable geodesic selection $\Upsilon\colon X^2\to \Geo(X)$ such that for every $x,y\in X$, $\Upsilon[x,y]$ is a constant speed geodesic on $X$ (we refer the reader to \cite{Bogachev_book07} for the relevant background).
\\    
For any $\gamma\in \spt(\pi)\cap\BV(I;X)$, let $\lambda_\gamma$ and $V_\gamma$ be the measure and the map explicitly constructed in the previous step.
Consider $\lambda \coloneqq \int \lambda_\gamma \d \pi(\gamma)$, which is a well-posed finite Borel measure on $I\times X$ by \cref{lemma:measurabilityGeo}.
Further from \cref{lemma:measurabilityGeo} that, the function
    \begin{equation}
        D(I;X)\ni\gamma\mapsto \int_{I\times \X} g(t,x)V_\gamma f(t,x)\d\lambda_\gamma (t,x)
    \end{equation}
    is $\pi$-measurable.
	Then integrating the inequality \eqref{ineq:Vbound_singlecurve} over $\pi$ obtains
	\begin{align}
			\left|\iint_{I\times \X} gV_\gamma f\d\lambda_\gamma \d \pi (\gamma)\right|&\leq\int \|(\lip_{\rm a} f\cdot g)\circ \tilde\gamma\|_{L^1(\lambda_\gamma)}\d\pi(\gamma)=\int |g(t,x)|\lip_{\rm a} f(x)\d\lambda(t,x)
	\end{align}
	By the same duality argument, we obtain a map $V: \Lipb (X) \to L^\infty(\lambda)$ such that
	\begin{equation}
	\int_{I\times \X} g(t,x)V  f(t,x)\d\lambda (t,x) =	\iint_{I\times \X} g(t,x)V_\gamma f(t,x)\d\lambda_\gamma (t,x) \d \pi (\gamma) .
	\end{equation}
	The Leibniz rule for $V$ follows from the Leibniz rule for classic/distribution derivatives, which is clear from the expression \eqref{eq:V_singlecurve}. 
    Thus, $V$ is a $\lambda$-derivation.
    \smallskip
    \\	
	\textbf{Step 3} (Verification of the CE)\textbf{.}
    For any $\xi\in C^1_c(I)$ and $\gamma\in \spt(\pi)\cap \BV(I;X)$, expanding the following identity
    \begin{equation}
    	\int_I \D( \xi \cdot f\circ\gamma) (t) \d t=0
    \end{equation}
    with the Leibniz rule and the chain rule 
    \begin{equation}
		\D (f\circ \gamma)=\D (f\circ \gamma)^{\mathrm{c}}+\frac{f(\gamma_{t+})-f(\gamma_{t-})}{d(\gamma_{t-},\gamma_{t+})}|\D \gamma|^{\mathrm{j}}(\d t)
	\end{equation}
gives that
\begin{align}
	-\int_I \xi'(t)f(\gamma_t)\d t&=\int_I \xi \d \D(f\circ \gamma)^{\mathrm{c}}+\int_I\xi(t) \frac{f(\gamma_{t+})-f(\gamma_{t-})}{d(\gamma_{t-},\gamma_{t+})}\d|\D \gamma|^{\mathrm{j}}(t)\\
	&=	\int_I \xi \d \D(f\circ \gamma)^{\mathrm{c}}+\int_I\int_{[0,1]} \frac{\xi(t)(f\circ \Upsilon[\gamma_{t-},\gamma_{t+}])'(a)}{d(\gamma_{t-},\gamma_{t+})} \d a \d |\D\gamma|^{\mathrm{j}}(t)\\
	&=	\int_{I\times \X} \xi(t)V_\gamma f(t,x)\d\lambda_\gamma (t,x).
\end{align}
Integrating the above over $\pi$ yields
\begin{align}
	0=&\iint \xi'(t)f(\gamma_t)\d t\d\pi(\gamma)+\iint_{I\times \X} \xi V_\gamma f\d\lambda_\gamma \d\pi(\gamma)\\
    =&\int_{I\times X}\xi'(t)f(x)\d \mu(t,x)+\int \xi Vf(t,x)\d\lambda(t,x).
	\end{align}
    Therefore, $(\mu,\lambda,V)$ solves \eqref{eq:CE_derivation}.
    \smallskip
    \\	
	\textbf{Step 4} (Variation estimates)\textbf{.}
    Define for each $f\in \Lipb(X)$, $\cD(f)\coloneqq Vf\cdot \lambda$.
    Then $\cD$ is a measure-valued derivation, and $(\mu,\cD)$ solves \eqref{eq:CE_Mderivation}.
    From Step 2, we know for any $f\in\Lipb(X)$
	\begin{align}
		&\left|\int_{I\times X}g(t,x)\d \cD(f)(t,x)\right| \leq\int |g(t,x)|\lip_{\rm a} f(x)\d\lambda(t,x),\quad \forall g\in L^1(\lambda),
	\end{align}
which shows that $|\cD(f)|\leq \lip_{\rm a} f\cdot\lambda$.
Then by definition of the mass measure, $|\cD|\leq \lambda$.
By \cref{cor:derivation_CE}~\ref{item:Varineq_Mderivation}, 
	\begin{align}
		|\D\mu|(I)\leq |\cD|(I\times X) \leq \|\lambda\|_{\mathrm{TV}}=\int \|\lambda_\gamma\|_{{\rm TV}} \d\pi(\gamma)=\int |\D \gamma|(I)\d\pi(\gamma).
	\end{align}
When $\pi$ is optimal, all inequalities must be equalities.
In particular, $|\cD|=\lambda$, $|\cD|(I\times X)=|\D\mu|(I)$, and by \cref{prop:Opt->Min} $(\mu,\cD)$ is a minimal solution.
\end{proof}

\subsubsection*{A Benamou--Brenier formula via CEs with derivations}
As an immediate application of Corollary~\ref{cor:derivation_CE} and Theorem~\ref{thm:lifttoderivation}, we obtain a characterization of the $1$-Wasserstein distance via CEs with derivations, analogous to the Benamou--Brenier formula \cite{Benamou--Brenier}; see also \cite{DNS2009}.

\begin{corollary}[A Benamou--Brenier formula via $\mathrm{CE_\cD}$]\label{thm:Benmou-BrenierGeod}
	Let $(X,d)$ be a complete separable geodesic space.
	For any $\textup{m}_0,\textup{m}_1\in \P(X)$, we have
	\begin{align}
		W_1(\textup{m}_0,\textup{m}_1)&=\inf \Big\{|\mathcal{D}|((0,1)\times X): \quad \big((\mu_t),\cD\big) \in \mathcal{CE}_\cD(\textup{m}_0\to\textup{m}_1) \Big\},
	\end{align}
    where $\mathcal{CE}_\cD (\textup{m}_0 \to \textup{m}_1)$ denotes the set of all pairs $\big((\mu_t),\cD\big)$, consisting of a Borel family $(\mu_t)_{t\in(0,1)} \subset \P(X)$ and a measure-valued derivation $\mathcal{D}\colon \Lipb(X) \to \M(I \times X; \R)$, such that
    \begin{itemize}
        \item[\textbf{-}] $(\mu,\mathcal{D})$ solves \eqref{eq:CE_D}, where $\mu(\mathrm{d}t,\mathrm{d}x)\coloneqq \mu_t (\mathrm{d}x) \d t $; 
        \item[\textbf{-}]  $W_1(\mu_t,\textup{m}_0) \to 0$ as $t \downarrow 0$, and $W_1(\mu_t,\textup{m}_1) \to 0$ as $t \uparrow 1$;
    \end{itemize}
  and $|\cD|$ is the mass measure of $\cD$ given by \cref{lemma:finitemassD}\,\ref{item:massM1} with the convention that $|\cD|((0,1)\times X)=\infty$ if $\cD$ does not have finite mass.  
\end{corollary}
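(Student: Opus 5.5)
We prove the two inequalities separately. Here $I=(0,1)$, so $T=1$, and every admissible $\mu$ built from a Borel family of probability measures has $\|\mu\|_{\mathrm{TV}}=1$, hence $c=1$ in Corollary~\ref{cor:derivation_CE}.

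\emph{Lower bound.} Fix an admissible pair $((\mu_t),\cD)$. If $\cD$ does not have finite mass, the right-hand side is $+\infty$ by convention and there is nothing to prove. If it has finite mass, Corollary~\ref{cor:derivation_CE}\,\ref{item:Varineq_Mderivation} gives that $(\mu_t)$ is a BV-curve in $(\P(X),W_1)$ with
\[
|\D\mu|\le \pr^I_\#|\cD| .
\]
By Theorem~\ref{thm:BVequiv} and Remark~\ref{remark:variationmeasure}(d), we have $W_1(\mu_s,\mu_t)\le |\D\mu|([s,t])$ for $\L^2$-a.e.\ $(s,t)$. We choose sequences $s_k\downarrow 0$ and $t_k\uparrow 1$ along which this holds. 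By the triangle inequality,
\[
W_1(\textup{m}_0,\textup{m}_1)\le W_1(\textup{m}_0,\mu_{s_k})+|\D\mu|((0,1))+W_1(\mu_{t_k},\textup{m}_1).
\]
The endpoint convergence assumption sends the two outer terms to $0$. Hence $W_1(\textup{m}_0,\textup{m}_1)\le |\D\mu|(I)\le |\cD|((0,1)\times X)$. (If the triangle inequality has to be applied in the extended metric space, this causes no difficulty: it is still valid there.)

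\emph{Upper bound.} We may assume $W_1(\textup{m}_0,\textup{m}_1)<\infty$. Then an optimal coupling $\sigma\in\mathrm{OptCpl}(\textup{m}_0,\textup{m}_1)$ exists. This follows from narrow compactness of $\mathrm{Cpl}(\textup{m}_0,\textup{m}_1)$ together with lower semicontinuity of $\int d\,\d\sigma$. For $\alpha\in(0,1)$ and $(x,y)\in X^2$, let $\gamma^{x,y,\alpha}$ be the càdlàg jump curve $x\mathds 1_{(0,\alpha)}+y\mathds 1_{[\alpha,1)}$, as in Example~\ref{ex:teleportation_flux}. Define
\[
\pi\coloneqq \big((x,y,\alpha)\mapsto\gamma^{x,y,\alpha}\big)_\#(\sigma\otimes\L^1\llcorner I).
\]
The map $(x,y,\alpha)\mapsto\gamma^{x,y,\alpha}\in D(I;X)$ is Borel, which can be checked through the evaluation maps. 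The measure $\pi$ is concentrated on $\BV(I;X)$, and
\[
\int|\D\gamma|(I)\,\d\pi=\int d(x,y)\,\d\sigma=W_1(\textup{m}_0,\textup{m}_1)<\infty .
\]
Its marginals are $\mu_t=(1-t)\textup{m}_1+t\,\textup{m}_0$ — more precisely $(e_t)_\#\pi=t\,\textup{m}_1+(1-t)\textup{m}_0$. These satisfy $W_1(\mu_t,\textup{m}_0)\le t\,W_1(\textup{m}_0,\textup{m}_1)\to 0$ as $t\downarrow 0$, by convexity of $W_1$ (use the coupling $(1-t)(\mathrm{id},\mathrm{id})_\#\textup{m}_0+t\sigma$), and symmetrically at $t\uparrow 1$. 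Theorem~\ref{thm:lifttoderivation}, which uses that $X$ is geodesic, then yields a finite-mass $\cD$ such that $(\mu,\cD)$ solves \eqref{eq:CE_D} and
\[
|\cD|(I\times X)\le \int|\D\gamma|(I)\,\d\pi=W_1(\textup{m}_0,\textup{m}_1).
\]

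\emph{Main obstacle.} The main step to check is the existence of an optimal coupling in the extended setting, together with measurability of the jump-curve map. If the existence of an optimal coupling turned out to be delicate, we would fall back on $\varepsilon$-optimal couplings. This still gives the infimum, since the statement only claims an $\inf$ and not a $\min$.
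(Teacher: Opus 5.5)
Your proof is correct. The lower bound is the paper's argument: the paper simply cites \eqref{ineq:Varineq_D}, and you usefully spell out the endpoint passage. Since the bound $W_1(\mu_s,\mu_t)\le|\D\mu|([s,t])$ only holds for $\L^2$-a.e.\ $(s,t)$, pick $s_k$ among the Fubini-good times first and then $t_k$ accordingly.

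The upper bound takes a different route.
\begin{itemize}
\item The paper takes a BV-geodesic between $\textup{m}_0$ and $\textup{m}_1$, obtains an optimal lift of it from Theorem~\ref{thm:optimal_lift}, and then applies Theorem~\ref{thm:lifttoderivation}.
\item You write the lift down explicitly: an optimal coupling $\sigma$ combined with a uniformly distributed jump time. This is the metric version of the teleportation lift in Example~\ref{ex:teleportation_flux}. You then feed it to Theorem~\ref{thm:lifttoderivation}, using only the inequality $\pr^I_\#|\cD|\le\int|\D\gamma|\,\d\pi$.
\end{itemize}

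What your route buys is that it bypasses the truncation and compactness argument behind Theorem~\ref{thm:optimal_lift} in the extended space $(\P(X),W_1)$. It also avoids having to exhibit a BV-geodesic and compute its variation. All it needs is the existence of an optimal coupling, which your tightness and lower-semicontinuity argument correctly supplies, and Borel measurability of $(x,y,\alpha)\mapsto\gamma^{x,y,\alpha}$. It also makes the minimizer explicit: unit-normalized mass is spread uniformly in time along geodesics joining $\sigma$-a.e.\ pairs.

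In fact your $\pi$ is an optimal lift of the linear interpolation $\mu_t=(1-t)\textup{m}_0+t\,\textup{m}_1$. Kantorovich duality gives $W_1(\mu_s,\mu_t)=|t-s|\,W_1(\textup{m}_0,\textup{m}_1)$, so $|\D\mu|(I)=W_1(\textup{m}_0,\textup{m}_1)=\int|\D\gamma|(I)\,\d\pi$. Hence the optimality clause of Theorem~\ref{thm:lifttoderivation} shows that your minimizer is also a minimal solution. The paper's route obtains this for an arbitrary BV-geodesic, at the cost of the heavier existence theorem.

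One small slip: your first marginal formula has $\textup{m}_0$ and $\textup{m}_1$ swapped. The corrected $(e_t)_\#\pi=(1-t)\textup{m}_0+t\,\textup{m}_1$ is the right one, and it is the one your endpoint estimates actually use.
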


\begin{proof}
Recall from Corollary~\ref{cor:derivation_CE} that for any family $(\mu_t) \subset \P (X)$ admitting a finite-mass derivation solving the continuity equation is necessarily BV.
In particular, if $W_1(\textup{m}_0,\textup{m}_1)= + \infty$, then  the statement follows from the convention on $|\cD|$.
\\
Therefore, we assume that $W_1(\textup{m}_0,\textup{m}_1)<+\infty$.
The inequality ``$\leq$'' is an immediate consequence of \eqref{ineq:Varineq_D}. 
The equality is attained by choosing the derivation constructed as in Theorem~\ref{thm:lifttoderivation} from an optimal lift $\pi$ of a BV-geodesic $(\mu_t)_{t\in [0,1]}$ i.e. a BV-curve connecting $\mu_0$ and $\mu_1$ with $|\D \mu|([0,1])=W_1(\mu_0,\mu_1)$.
\end{proof}

\appendix
\section{Extended Wasserstein spaces}\label{App:Ext_Wass_Space}
We provide a proof of the following fundamental property of the extended Wasserstein space.
We expect this to be known, but did not find it in the literature.
Although the result is used in the present paper, it is also of independent interest. 

\begin{proposition}\label{prop:wassersteinspacecomplete}
    Let $(X,d)$ be a complete and separable metric space.
    Then $(\P(X),W_p)$ is a complete extended metric space for every $p\in[1,\infty)$.
    Moreover, each of its metric components is separable.
    \end{proposition}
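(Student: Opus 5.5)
We first reduce completeness to a statement inside a single metric component. If $(\textup{m}_k)_k\subset\P(X)$ is $W_p$-Cauchy, then $W_p(\textup{m}_k,\textup{m}_l)<\infty$ for all $k,l\ge k_0$. So all these measures lie in one component $\mathcal C\coloneqq\{\textup{m}\in\P(X): W_p(\textup{m},\textup{m}_{k_0})<\infty\}$. It therefore suffices to show that each component is complete and separable.

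For completeness, fix $\bar{\textup{m}}\coloneqq\textup{m}_{k_0}$. The plan is to mimic the classical proof for $\P_p(X)$, with the moment condition $\int d(x,\bar x)^p\d\textup{m}<\infty$ replaced by finite distance to $\bar{\textup{m}}$.

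\textbf{Tightness.} First, $W_p\ge W_{p,1}$, where $W_{p,1}$ is the Wasserstein distance built from the bounded metric $d\wedge 1$. That distance metrizes narrow convergence, and $(\P(X),W_{p,1})$ is complete because $(X,d\wedge1)$ is complete and separable. Hence $(\textup{m}_k)$ converges narrowly to some $\textup{m}\in\P(X)$. Alternatively, tightness can be obtained directly via the usual $\varepsilon$-net argument applied to the Cauchy sequence.

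\textbf{Identifying the limit.} By lower semicontinuity of $W_p$ under narrow convergence (stability of optimal couplings, or the lsc of the transport cost for a lsc cost $d^p$), for each fixed $k$ we get
\[
W_p(\textup{m}_k,\textup{m})\le\liminf_{l\to\infty}W_p(\textup{m}_k,\textup{m}_l)\le\varepsilon_k\to0 .
\]
In particular $\textup{m}\in\mathcal C$, and $\textup{m}_k\to\textup{m}$ in $W_p$. The lsc step holds without any moment assumption, since the Kantorovich cost functional with lsc nonnegative cost is narrowly lsc on couplings and couplings of tight marginals are tight.

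\textbf{Separability of $\mathcal C$.} This is the main obstacle, because measures in $\mathcal C$ need not have finite $p$-th moment, so we cannot approximate by finitely supported measures directly. The plan is to fix $\textup{m}\in\mathcal C$ and a coupling $\sigma\in\mathrm{Cpl}(\bar{\textup{m}},\textup{m})$ with finite cost. Take a countable dense set $\{x_i\}\subset X$ and a Borel partition $X=\bigsqcup_j A_j$ into sets of diameter $<\delta$. Push the second marginal through the quantization map $x\mapsto x_{j(x)}$ only on a large-mass region $\{(x,y): d(x,y)\le R\}\cap(K\times X)$, and keep $\bar{\textup{m}}$'s coordinate elsewhere. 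This produces measures of the form
\[
\bar{\textup{m}}\llcorner B+\sum_j q_j\delta_{x_j}
\]
with rational weights (after renormalizing), up to $W_p$-error $\varepsilon$. Since $\bar{\textup{m}}$ itself is fixed, the resulting countable family $\{\bar{\textup{m}}\llcorner B_{\text{from a countable algebra}}+\text{finitely supported rational measure}\}$ is dense in $\mathcal C$.

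To make the index countable, we will choose $B$ from a countable generating algebra of $\mathcal B(X)$ and approximate in total variation weighted by the cost. The delicate estimate is controlling $W_p$ between $\textup{m}$ and such a hybrid measure. We plan to do this by the gluing lemma together with dominated convergence, using $\int d^p\d\sigma<\infty$ to make the tails small.
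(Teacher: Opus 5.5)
Your completeness argument is correct and is essentially the paper's: narrow precompactness of a Cauchy sequence, followed by lower semicontinuity of $W_p$. Getting the narrow limit from completeness of the Wasserstein space over the bounded metric $d\wedge 1$ is a tidy replacement for the paper's tightness lemma.

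The separability part, however, has a gap exactly at the step that is supposed to make your family countable. First, take your region $G=\{(x,y):x\in K,\ d(x,y)\le R\}$. The part you keep is the first marginal of $\sigma\llcorner G^c$, i.e.\ $\bar{\textup{m}}\llcorner K^c$ plus the first marginal of $\sigma\llcorner\{(x,y):x\in K,\ d(x,y)>R\}$. This is $\rho\,\bar{\textup{m}}$ for a Borel density $0\le\rho\le1$ that is in general not an indicator, so your hybrids are not of the form $\bar{\textup{m}}\llcorner B+\sum_j q_j\delta_{x_j}$. Second, even for genuine indicators, replacing $B$ by a set $B'$ from a countable algebra forces you to transport the mass of $\bar{\textup{m}}\llcorner(B\triangle B')$. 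The only available bound is of order $\int_{B\triangle B'}d(\cdot,x_0)^p\d\bar{\textup{m}}$. Since $\bar{\textup{m}}$ need not have finite $p$-th moment, this is small only when $B\triangle B'$ stays inside a bounded set. For general unbounded $B$, no countable family of sets $B'$ achieves this: already on $\R$ with a heavy-tailed $\bar{\textup{m}}$, unions of dyadic blocks $\bigcup_{k\in S}[2^k,2^{k+1})$, $S\subset\N$, give uncountably many classes at mutually infinite weighted distance. So the countability mechanism, which is the whole difficulty of the claim, is not justified as proposed.

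The repair is to let the kept region depend only on the first coordinate and range over a countable family from the outset. Fix $x_0\in X$, and for $R\in\N$ set $T(x,y)\coloneqq y$ if $d(x,x_0)<R$ and $T(x,y)\coloneqq x$ otherwise. Then $T_\#\sigma=\bar{\textup{m}}\llcorner\{d(\cdot,x_0)\ge R\}+\textup{m}_R$, where $\textup{m}_R$ is the second marginal of $\sigma\llcorner(B(x_0,R)\times X)$. The coupling $(y,T(x,y))_\#\sigma$ gives $W_p^p(\textup{m},T_\#\sigma)\le\int_{\{d(x,x_0)\ge R\}}d(x,y)^p\d\sigma$, which tends to $0$ as $R\to\infty$. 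Moreover, $\textup{m}_R$ has finite $p$-th moment, because $d(y,x_0)^p\le 2^{p-1}(d(x,y)^p+R^p)$ on $B(x_0,R)\times X$. Hence $\textup{m}_R$ can be $W_p$-approximated, among measures of the same mass, by $\bar{\textup{m}}(B(x_0,R))$ times finitely supported probability measures $\eta$ on your dense set with rational weights. Since the tail is common to both sides, the error of the hybrid is bounded by that of the atomic part. This yields the countable dense family $\{\bar{\textup{m}}\llcorner\{d(\cdot,x_0)\ge R\}+\bar{\textup{m}}(B(x_0,R))\,\eta\}$, with no approximation of tail sets and no gluing lemma. This is the same ``fixed tail plus finite modification'' idea as in the paper. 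The paper first reduces to countably supported measures with rational weights on a dense set. Then, in the component of a fixed such $\mu^*$, it uses measures that agree with $\mu^*$ outside finitely many atoms, so the tails are cofinite index sets and countability is automatic.
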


\begin{lemma}\label{lma:cauchyistight}
    Let $(X,d)$ be a complete and separable metric space, and let $(\mu_n)_{n \in \mathbb{N}}$ be a Cauchy sequence in $(\P(X),W_p)$, for $p \geq 1$.
    Then $\{\mu_n\}_{n\in\N}$ is tight.
\end{lemma}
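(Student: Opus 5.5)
Tightness will follow from the standard characterization: $\{\mu_n\}$ is tight iff for every $\varepsilon>0$ and $\delta>0$ there are finitely many points $x_1,\dots,x_k\in X$ with $\mu_n\big(\bigcup_{i}B(x_i,\delta)\big)\ge 1-\varepsilon$ for all $n$. Indeed, applying this with $\varepsilon 2^{-j}$ and $\delta=1/j$, intersecting over $j$, and closing gives a closed, totally bounded, hence compact (since $X$ is complete) set of mass at least $1-\varepsilon$ for every $n$. So it suffices to produce, for fixed $\varepsilon,\delta$, a finite union of $\delta$-balls that carries mass $\ge1-\varepsilon$ uniformly in $n$.

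First I would use the Cauchy property: pick $N$ with $W_p(\mu_n,\mu_N)\le \eta$ for all $n\ge N$, where $\eta>0$ is small and fixed later. In particular these distances are finite, so optimal couplings exist; and even without existence, for each $n\ge N$ there is a coupling $\sigma_n\in\mathrm{Cpl}(\mu_N,\mu_n)$ with $\int d^p\,\d\sigma_n\le (2\eta)^p$. The finitely many measures $\mu_1,\dots,\mu_N$ are each tight on the Polish space $X$, so there is a compact $K$ with $\mu_j(X\setminus K)\le\varepsilon/2$ for $j\le N$. Cover $K$ by finitely many balls $B(x_i,\delta/2)$, and set $U\coloneqq\bigcup_i B(x_i,\delta/2)\supseteq K$ and $U^\delta\coloneqq\bigcup_i B(x_i,\delta)$, which contains the $\delta/2$-neighborhood of $U$.

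For $n\ge N$, Markov's inequality with respect to $\sigma_n$ gives
\[
\mu_n(X\setminus U^\delta)\le \mu_N(X\setminus U)+\sigma_n\{(x,y):d(x,y)\ge\delta/2\}\le \frac{\varepsilon}{2}+\Big(\frac{2}{\delta}\Big)^p(2\eta)^p,
\]
since if $x\in U$ and $d(x,y)<\delta/2$ then $y\in U^\delta$. Choosing $\eta$ so that the last term is at most $\varepsilon/2$ yields $\mu_n(U^\delta)\ge 1-\varepsilon$ for all $n\ge N$, and the same holds trivially for $n\le N$ because $U\supseteq K$. This verifies the criterion.

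The only delicate point is that $W_p$ is an extended distance. This causes no difficulty here: the Cauchy property forces $W_p(\mu_n,\mu_N)<\infty$ for all large $n$, and only near-optimal couplings are needed, never optimal ones. I expect the main care to go into ordering the choices correctly ($\varepsilon,\delta$ first, then $\eta$, then $N$, then $K$), and into checking the standard reduction from the uniform finite-ball cover to the existence of a compact set.
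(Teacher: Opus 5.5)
Your proof is correct and follows essentially the same scheme as the argument the paper invokes (Lemma 6.14 in Villani's \emph{Optimal transport, old and new}). That scheme uses tightness of the finitely many measures $\mu_1,\dots,\mu_N$, a finite cover of a compact set by small balls, a uniform bound on the mass outside the enlarged cover for the tail of the sequence, and intersection over scales to obtain a closed totally bounded, hence compact, set. The only difference is in the tail bound: Villani uses Kantorovich--Rubinstein duality with the Lipschitz cutoff $(1-d(\cdot,U)/\delta)_+$ together with $W_1\le W_p$, whereas you use a near-optimal coupling and Markov's inequality; the two are interchangeable here. Your version also makes the point transparent that the extended distance causes no trouble, since only $W_p(\mu_n,\mu_N)$ for $n\ge N$ enters.
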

\begin{proof}
    Follows by the same argument as in \cite[Lemma 6.14]{Villani_oldnew}
\end{proof}
\begin{lemma}\label{lma:Ddense}
    Let $(X,d)$ be a complete and separable metric space, and $D=\{x_i\}_{i\in\N} \subset X$ a dense subset.
    Denote by $\mathcal{S}\subset \P(X)$ the set 
    \begin{align}
        \mathcal{S}\coloneqq \left\{\sum_{i\in\N}\lambda_i\delta_{x_i}\in \P(X): \lambda_i\in \mathbb{Q}\right\}.
    \end{align}
    Then $\mathcal{S}$ is dense in $(\P(X),W_p)$ for all $p\in[1,\infty)$.
\end{lemma}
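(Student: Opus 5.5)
The goal is density of $\mathcal S$ in $(\P(X),W_p)$. Since $W_p$ is an extended metric, density should mean: for every $\mu\in\P(X)$ and $\varepsilon>0$ there is $\sigma\in\mathcal S$ with $W_p(\mu,\sigma)<\varepsilon$. Note that the sums defining $\mathcal S$ are countable, so $\mathcal S$ can contain measures with infinite $p$-th moment. This matters, because a measure with infinite moment has infinite distance to every finitely supported measure. The plan is to approximate $\mu$ in two steps: first by a discrete measure on $D$ obtained from a countable partition, then by one with rational weights.

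\textbf{Step 1 (discretization).} Fix $\varepsilon>0$. The balls $B(x_i,\varepsilon)$ cover $X$ because $D$ is dense. Form the disjoint Borel partition
\[
A_i\coloneqq B(x_i,\varepsilon)\setminus\bigcup_{j<i}B(x_j,\varepsilon).
\]
Define the Borel map $T\colon X\to D$ by $T(x)=x_i$ for $x\in A_i$, so that $d(x,T(x))<\varepsilon$. Set $\nu\coloneqq T_\#\mu=\sum_i\mu(A_i)\delta_{x_i}$. The coupling $(\mathrm{id},T)_\#\mu$ shows $W_p(\mu,\nu)\le\varepsilon$. No moment assumption is used here.

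\textbf{Step 2 (rationalizing the weights).} The weights $a_i\coloneqq\mu(A_i)$ are real with $\sum_i a_i=1$, and we need rational weights $\lambda_i$ summing to $1$ with $W_p(\nu,\sum_i\lambda_i\delta_{x_i})$ small. This is the delicate step, since infinitely many weights must be adjusted at once while keeping the distance finite. The idea is to move only a small amount of mass over bounded distances, and to leave every far-away weight untouched if possible.

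First, choose $N$ with $\sum_{i>N}a_i<\eta$. For $i>N$ we would like to replace each $a_i$ by a rational $\lambda_i\le a_i$ with $a_i-\lambda_i\le 2^{-i}\eta$. The surplus $a_i-\lambda_i$ must then be transported to $x_1$ (or any fixed point), costing $d(x_i,x_1)^p(a_i-\lambda_i)$, and this sum may be infinite. To avoid that, keep the tail weights and regroup them instead. The rationals are not closed under countable sums, but a real number can be written as a countable sum of rationals. So for each $i>N$, write
\[
a_i=\sum_k q_{i,k},\qquad q_{i,k}\in\Q_{\ge0}.
\]
This does not directly help, since the atom $x_i$ can only carry one weight. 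Use the density of $D$ instead: the set $D\cap B(x_i,\varepsilon)$ is infinite unless $x_i$ is isolated. If it is infinite, split $a_i$ into rational pieces $q_{i,k}$ placed at distinct points of $D\cap B(x_i,\varepsilon)$ not used elsewhere. This costs at most $\varepsilon$ in $W_p$ and uses only rational weights. The $q_{i,k}$ can be chosen so that the partial sums converge to $a_i$, for example via binary expansions.

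Isolated points are the remaining case, and I expect them to be the main obstacle. For an isolated $x_i$ with a large irrational weight, the splitting trick fails. There I would move only a tiny mass $\delta_i\le2^{-i}\eta$ so that $a_i-\delta_i\in\Q$. I would move it into a bounded region, for example onto a fixed non-isolated point, or distribute it among finitely many other atoms. This requires controlling $\sum_i\delta_i d(x_i,\cdot)^p$, which is achieved by choosing $\delta_i\le2^{-i}\eta/(1+d(x_i,x_1)^p)$. If $X$ is entirely discrete, the final rational adjustment is absorbed by redistributing a small mass among $x_1,\dots,x_N$ so that the total weight is exactly $1$.

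Collecting the errors gives $W_p(\mu,\sigma)\lesssim\varepsilon+\eta^{1/p}$ for some $\sigma\in\mathcal S$, which proves the density.
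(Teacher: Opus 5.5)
Your Step 1 is correct and matches the paper's discretization. The gap is in Step 2, and the obstacle is arithmetic, not metric. The transport cost you first worry about is harmless. You choose $\lambda_i$ after the distances are known, so you can always take $a_i-\lambda_i\le 2^{-i}\eta/(1+d(x_i,x_1)^p)$, which is what you eventually do.

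The real difficulty is where the countably many real surpluses $\delta_i=a_i-\lambda_i$ end up. Any atom that absorbs infinitely many of them gets, in general, an irrational weight. Your closing step fails for exactly this reason:
\begin{itemize}
\item If all $\lambda_i$ with $i>N$ are rational, then $\sum_{i>N}\lambda_i$ is an infinite sum of rationals and typically irrational.
\item So $x_1,\dots,x_N$ must together carry the irrational amount $1-\sum_{i>N}\lambda_i$.
\item Finitely many rational weights cannot sum to an irrational number, so ``redistributing a small mass among $x_1,\dots,x_N$'' cannot give total mass exactly $1$.
\end{itemize}
The same objection applies elsewhere in your proposal. Dumping the $\delta_i$ onto a fixed point gives that point weight equal to its own plus $\sum_i\delta_i$; moreover, the point need not lie in $D$. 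In the non-isolated case, you rationalize the head weights $a_1,\dots,a_N$ by finite adjustments even though $\sum_{i\le N}a_i$ may be irrational. Consider $X=\N$ with the usual distance, where all balls of radius $<1$ are singletons. There neither your splitting trick nor a non-isolated sink is available. If $\mu$ also has infinite $p$-th moment, you cannot fall back on finitely supported approximations either. In that case your proposal yields no element of $\mathcal S$ at all.

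The missing idea, which the paper uses, is to cascade the remainder forward along the enumeration instead of collecting it. Discard atoms of zero weight and set $\hat\lambda_1=\lambda_1$. At step $i$:
\begin{itemize}
\item keep a rational $\tilde\lambda_i\in[0,\hat\lambda_i]$ at $x_i$;
\item choose it so the remainder satisfies $\hat\lambda_i-\tilde\lambda_i\le\lambda_i$ and $(\hat\lambda_i-\tilde\lambda_i)\,d^p(x_i,x_{i+1})<2^{-i}\varepsilon$, and also make the remainder itself small;
\item pass the remainder to the next atom: $\hat\lambda_{i+1}=\hat\lambda_i-\tilde\lambda_i+\lambda_{i+1}$.
\end{itemize}
Every final weight $\tilde\lambda_i$ is rational by construction, since the irrational part is always handed on and never accumulates. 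Because $\hat\lambda_{n+1}\to0$, you get $\sum_i\tilde\lambda_i=\lim_n\bigl(\sum_{i\le n+1}\lambda_i-\hat\lambda_{n+1}\bigr)=1$ exactly. The explicit coupling that sends $\hat\lambda_i-\tilde\lambda_i$ from $x_i$ to $x_{i+1}$ and keeps the rest on the diagonal costs at most $2\varepsilon$ in $W_p^p$. This handles isolated and non-isolated points uniformly and makes your splitting construction unnecessary.
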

\begin{proof}
    Let $\mu\in \P(X)$.
    Given $\varepsilon>0$, decompose the space into Borel sets $X_i$ so that $d(x,x_i)<\varepsilon$ for all $x\in X_i$.
    Define $\nu\coloneqq \sum_{i\in \N}\lambda_i\delta_{x_i}$, where $\lambda_i=\mu(X_i)$.
    Then $W_1(\mu,\nu)<\varepsilon$.
\\    
    We are left to approximate $\nu$ by $\mathcal{S}$. 
    We may assume that $\lambda_i>0$ for all $i\in \N$.
    Let us define $\eta=\sum_{i\in\N}\tilde\lambda_i\delta_{x_i}\in \mathcal{S}$ inductively.
    Denote by $\hat\lambda_1=\lambda_1$. 
    Let $\tilde\lambda_1\in \mathbb{Q}$, $0\le \tilde\lambda_i\le\hat\lambda_i$, be such that $(\hat\lambda_1-\tilde\lambda_1)d^p(x_1,x_2)<\varepsilon$.
    Let now $\hat\lambda_{i+1}=\hat \lambda_i-\tilde\lambda_i+\lambda_{i+1}$, and choose $\tilde\lambda_{i+1}\in \mathbb Q$, $0\le\tilde\lambda_{i+1}\le\hat\lambda_{i+1}$ such that $\hat\lambda_{i+1}-\tilde\lambda_{i+1},(\hat\lambda_{i+1}-\tilde\lambda_{i+1})d^p(x_{i+1},x_{i+2})<2^{-i}\varepsilon$.
     We also require that $\hat \lambda_{i+1}-\tilde \lambda_{i+1}\le \lambda_{i+1}$.
    Then
    \begin{align}
        1=\sum_{i\in\N} \lambda_i=\hat\lambda_1+\lim_{n\to\infty}\left[\hat\lambda_{n+1}-\hat\lambda_1+\sum_{i=1}^n\tilde\lambda_i\right]=\sum_{i\in \N} \tilde \lambda_i.
    \end{align}
    Moreover, by considering a coupling 
    \begin{align}
        \sigma\coloneqq\sum_{i\in\N} (\hat\lambda_i-\tilde\lambda_i)\delta_{(x_i,x_{i+1})}+\sum_{i\in\N}(\lambda_i-(\hat\lambda_i-\tilde\lambda_i))\delta_{(x_i,x_i)}
    \end{align}
    we see that
    \begin{align}
        W_p^p(\nu,\eta)\le \sum_{i\in\N}(\hat\lambda_i-\tilde\lambda_i)d^p(x_i,x_{i+1})\le 2\varepsilon.
    \end{align}
    Thus, $\mathcal S$ is dense in $\P(X)$.
\end{proof}

\begin{proof}[Proof of \cref{prop:wassersteinspacecomplete}]
    Let $\{\mu^n\}_{n \in \mathbb{N}}$ be a Cauchy sequence.
    By \cref{lma:cauchyistight}, the sequence is tight.
    Since $(X,d)$ is complete and separable, by Prokhorov's theorem, $\{\mu^n\}$ converges, up to a subsequence, narrowly (i.e., with respect to $C_b(X)$) to some $\mu^\infty\in \P(X)$.
    By the lower semi-continuity of the Wasserstein distance, we have for all $n$ that
    \begin{align}
        W_p(\mu^\infty,\mu^n)\le \liminf_{k\to\infty} W_p(\mu^k,\mu^n).
    \end{align}
    Therefore, since $\{\mu^n\}$ is a Cauchy sequence, it converges to $\mu^\infty$.
\\
    Next, we show the separability of the components of $\P(X)$.
    By \cref{lma:Ddense} it suffices to show the separability of the metric components of $\mathcal S$.
    Fix $\mu^*=\sum_{i\in \N}\mu^*_i\delta_{x_i}\in \mathcal{S}$.
    \\
    We will show that 
    \begin{align}
        \mathcal{F}\coloneqq \left\{\nu=\sum_i \nu_i\delta_{x_i}\in \mathcal S: \nu_i=\mu^*_i \mathrm{\ for\ all\ but\ finitely\ many\ }i\in\N \right\}.
    \end{align}
    is dense in the component of $\mu^*$.
    This set is countable: one first chooses the finite set of indices where $\nu_i\neq\mu_i^*$ and then chooses finitely many rational numbers.
    \\
    Let $\mu=\sum_{i\in\N}\mu_i\delta_{x_i}\in \mathcal{S}$ with $W_p(\mu,\mu^*)<\infty$ and let $\sigma=\sum_{i,j}\sigma_{ij}\delta_{(x_i,x_j)}$ be a coupling of $\mu$ and $\mu^*$ such that
\begin{equation}\label{eq:rationalchoose}
    \sum_{i,j\in \N} \sigma_{ij}d^p(x_i,x_j)<\infty,\quad \sigma_{ij}\in\mathbb{Q}.
\end{equation}
    Furthermore, let $N\in \N$ be such that
    \[\sum_{(i,j)\notin [0,N]^2}\sigma_{ij}d^p(x_i,x_j)<\varepsilon.\]
Define $\nu\coloneqq\sum_{j\in\N}\nu_j\delta_{x_j}\in \mathcal{F}$ (the rationality of $\nu_j$ follows from \eqref{eq:rationalchoose}), where
\[\nu_j\coloneqq 
\begin{cases}
    \mu^*_j= \sum_{i\in\N}\sigma_{ij},&  j>N\\
    \mu_j-\sum_{k>N}\sigma_{jk}+\sum_{i>N}\sigma_{ij},& j\leq N
\end{cases}.
\]
    We claim that $W_p^p(\mu,\nu)\le \varepsilon$.
    Consider $\tilde \sigma=\sum_{i,j\in\N}\tilde\sigma_{ij}\delta_{(x_i,x_j)}\in\P(X\times X)$ by
\[\tilde \sigma_{ij}\coloneqq
\begin{cases}
   \mu_i-\sum_{k>N}\sigma_{ik},& j=i\leq N\\
   \sigma_{ij},& i\in\N, j>N \text{ or } i>N,j\leq N\\
   0,& \text{else}
\end{cases}
\]
which is a coupling between $\mu$ and $\nu$.
    Then
    \begin{align}
        W_p^p(\mu,\nu)\le \sum_{i,j\in\N}\tilde\sigma_{ij}d^p(x_i,x_j)&=\sum_{i=1}^\infty\sum_{j={N+1}}^\infty\sigma_{ij}d^p(x_i,x_j)+\sum_{i={N+1}}^\infty\sum_{j=1}^N\sigma_{ij}d^p(x_i,x_j)
        \\&=\sum_{(i,j)\notin [0,N]^2}\sigma_{ij}d^p(x_i,x_j)\le\varepsilon.\tag*{\qedhere}
    \end{align}
\end{proof}

\section{Pointwise variation}\label{App:pointwise_variation}
    Here we collect some properties and results on pointwise variation which are used in the proof of \cref{thm:optimal_lift}. 
    
    Given a metric space $(X,d)$ and an arbitrary time interval $ I \subset \R$, we denote the set of curves of bounded \emph{pointwise} variation by 
    \begin{equation}\label{eq:def_pBV}
       pBV (I;X) \coloneqq \big\{ \gamma \in X^I : \, \Var(\gamma;I) < + \infty  \big\},
   \end{equation}
   where $\Var(\gamma;I)$ is defined in \eqref{def:pointwise_variation}. 

\begin{lemma}\label{lemma:lsc_pointwise_Var}
    The map $\gamma \mapsto \Var (\gamma;I)$ from $X^I \to [0,+\infty]$ is sequentially lower semi-continuous with respect to pointwise convergence. 
\end{lemma}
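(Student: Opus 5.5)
The plan is to write $\Var(\,\cdot\,;I)$ as a supremum of functions that are each continuous for pointwise convergence, and then use the standard fact that a supremum of (sequentially) continuous functions is (sequentially) lower semi-continuous.

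For every finite ordered family $\tau=\{t_0<\dots<t_{k+1}\}\subset I$, I set
\begin{equation}
S_\tau(\gamma)\coloneqq\sum_{i=0}^{k} d(\gamma_{t_i},\gamma_{t_{i+1}}),\qquad \gamma\in X^I .
\end{equation}
By the definition \eqref{def:pointwise_variation}, $\Var(\gamma;I)=\sup_\tau S_\tau(\gamma)$, with the supremum over all such finite partitions $\tau$.

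Now let $\gamma^m\to\gamma$ pointwise, meaning $\gamma^m_t\to\gamma_t$ in $(X,d)$ for every $t\in I$. Fix a partition $\tau$. Since $d\colon X\times X\to[0,\infty)$ is continuous, each summand converges, $d(\gamma^m_{t_i},\gamma^m_{t_{i+1}})\to d(\gamma_{t_i},\gamma_{t_{i+1}})$. The sum is finite, so $S_\tau(\gamma^m)\to S_\tau(\gamma)$. Therefore
\begin{equation}
S_\tau(\gamma)=\lim_{m\to\infty}S_\tau(\gamma^m)\le\liminf_{m\to\infty}\Var(\gamma^m;I).
\end{equation}
Taking the supremum over $\tau$ on the left-hand side gives $\Var(\gamma;I)\le\liminf_{m}\Var(\gamma^m;I)$, which is the claim. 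The same argument shows that $\Var(\,\cdot\,;I)$ is lower semi-continuous, and not only sequentially so, for the product topology on $X^I$. This is because each $S_\tau$ depends on only finitely many coordinates and is therefore continuous in that topology; this stronger form is what Step 2 and Step 3 of the proof of \cref{thm:optimal_lift} use.

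There is essentially no obstacle here. The only point needing care is to note that each $S_\tau$ involves finitely many evaluations. This is what makes pointwise convergence sufficient, and no uniformity in $t$ is needed. The argument also remains valid when $\Var(\gamma;I)=+\infty$ and when $I$ is replaced by an arbitrary subset, such as $\widetilde I$.
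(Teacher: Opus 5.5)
Your proof is correct and is essentially the paper's argument: fix a partition, use continuity of $d$ to pass to the limit in the finite partition sum, bound it by $\liminf_m \Var(\gamma^m;I)$, and take the supremum over partitions. Your extra remark is also correct: each partition sum depends on finitely many coordinates, so $\Var(\,\cdot\,;I)$ is lower semi-continuous for the product topology and not only sequentially, which is the form invoked in Steps 2--3 of the proof of \cref{thm:optimal_lift}.
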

\begin{proof}
    Let $(\gamma^n)_{n \in \N} \subset X^I$ be such that $\gamma^n \to \gamma $ pointwise on $I$ for some $\gamma \in X^I$. Let $t_0 <\cdots < t_{k + 1}$ with $ \{ t_i\}_{ 0 \leq i \leq k + 1}  \subset I$ be arbitrary. Then 
    \begin{equation}
        \sum_{i=0}^k d(\gamma_{t_i},\gamma_{t_{i + 1}}) = \liminf_{n \to \infty} \sum_{i=0}^k d(\gamma^n_{t_i},\gamma^n_{t_{i + 1}}) \leq  \liminf_{n \to \infty} \Var (\gamma^n).
    \end{equation}
    Taking the supremum over all such partitions gives the result. 
\end{proof}

\begin{lemma}\label{lemma:Var_dense_subset}
	Let $(X,d)$ be a metric space, $I = (0,T ) \subset \mathbb R$, and $\widetilde I\subset I$ be dense. 
    For any $\gamma\in D(I;X)$, we have 
    \begin{equation}\label{eq:equality_Var_dense}
    \Var(\gamma;I)=\Var(\gamma;\widetilde I),
    \end{equation}
    where both sides may be infinite.
\end{lemma}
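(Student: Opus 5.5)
The inequality $\Var(\gamma;\widetilde I)\le \Var(\gamma;I)$ is immediate from \eqref{def:pointwise_variation}: every finite increasing partition taken from $\widetilde I$ is also a partition taken from $I$, so the supremum over the smaller family cannot exceed the supremum over the larger one. In particular, if $\Var(\gamma;\widetilde I)=+\infty$ then both sides are infinite. It therefore suffices to prove $\Var(\gamma;I)\le \Var(\gamma;\widetilde I)$, and we may assume the right-hand side is finite.

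For the reverse inequality, fix an arbitrary partition $t_0<t_1<\cdots<t_{k+1}$ in $I$ and $\varepsilon>0$. We use right-continuity of $\gamma\in D(I;X)$ at each $t_i$. Since $\widetilde I$ is dense and $I$ is open, for each $i$ we can pick $s_i\in\widetilde I$ with $t_i\le s_i<t_{i+1}$ (and $s_{k+1}\in\widetilde I\cap (t_{k+1},T)$) such that
\[
d(\gamma_{s_i},\gamma_{t_i})<\varepsilon/(2(k+1)).
\]
The intervals $[t_i,t_{i+1})$ are disjoint, so the points $s_0<s_1<\cdots<s_{k+1}$ remain strictly increasing and form an admissible partition in $\widetilde I$. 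By the triangle inequality,
\[
\sum_{i=0}^k d(\gamma_{t_i},\gamma_{t_{i+1}})\le \sum_{i=0}^k d(\gamma_{s_i},\gamma_{s_{i+1}})+\varepsilon\le \Var(\gamma;\widetilde I)+\varepsilon .
\]
Letting $\varepsilon\to0$ and then taking the supremum over all partitions of $I$ gives $\Var(\gamma;I)\le\Var(\gamma;\widetilde I)$. Note that left limits are not needed here; right-continuity alone suffices.

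The only delicate point is making sure the approximating points $s_i$ can be chosen strictly increasing and inside $I$. Approximating each $t_i$ from the right within $(t_i,t_{i+1})$ (respectively within $(t_{k+1},T)$ for the last point), which is possible because $I=(0,T)$ is open, settles this. We also do not need to know in advance that $\Var(\gamma;I)$ is finite, since the estimate holds for every individual partition.
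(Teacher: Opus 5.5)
Your proof is correct and is essentially the paper's argument. Both approximate each partition point $t_i$ from the right by points of $\widetilde I$, use right-continuity of $\gamma$ together with strict monotonicity of the partition, and conclude $\Var(\gamma;I)\le\Var(\gamma;\widetilde I)$. The only difference is presentation: you use an explicit $\varepsilon$ and the triangle inequality, while the paper uses approximating sequences and continuity of $d$.
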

Note that if $I=[0,T]$, the equality \eqref{eq:equality_Var_dense} does not hold in general. It holds, however, if either $T\in \widetilde I$ or $\gamma$ is left-continuous at $T$.

\begin{proof}
	Since $\widetilde{I}\subset I$, the inequality
	$\Var(\gamma;\widetilde{I})\le \Var(\gamma;I)$
	is immediate. To prove the reverse inequality, fix a partition $\{t_i\}_{0\le i\le k+1}\subset I$ with $ t_0<\cdots<t_{k+1}$. 
	Since $\widetilde{I}$ is dense in $I$ and $\gamma$ is  c\`adl\`ag, for each $i=0,\dots,k+1$, there exists a sequence $(t_i^n)_n\subset \widetilde{I}$ such that
	$t_i^n \to t_i$ and $
	\gamma_{t_i^n}\to \gamma_{t_i}$.
	For $n$ sufficiently large, we  have
	$t_0^n<\cdots<t_{k+1}^n$,
	since the original partition is strictly increasing.
	Therefore, 
	\begin{equation}
	\sum_{i=0}^k d(\gamma_{t_i},\gamma_{t_{i+1}})
	=\lim_{n\to\infty}\sum_{i=0}^k d(\gamma_{t_i^n},\gamma_{t_{i+1}^n})
	\le \Var(\gamma;\widetilde{I}).
    \end{equation}
	Taking the supremum over all partitions of $I$ yields
	$\Var(\gamma;I)\le \Var(\gamma;\widetilde{I})$. 
\end{proof}

\begin{lemma}\label{lemma:cadlag_extension_dense_BV}
	Let $(X,d)$ be a complete metric space, $I=(0,T)\subset\mathbb R$, and $\widetilde I\subset I$ be dense.
	For every $\widetilde\gamma\in pBV(\widetilde I;X)$, there exists a unique curve $\gamma\in D(I;X)$
	such that
	\begin{equation}\label{eq:reconstruction_right_limit}
		\gamma_t=\lim_{\widetilde t\downarrow t,\ \widetilde t\in\widetilde I}\widetilde\gamma_{\widetilde t},
		\qquad t\in I.
	\end{equation}
	Moreover, we have 
	\begin{equation}\label{eq:Var_extension_dense}
		\Var(\gamma;I) \leq \Var(\widetilde\gamma;\widetilde I).
	\end{equation}
\end{lemma}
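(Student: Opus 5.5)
The plan is to define the candidate right-limit curve pointwise, prove that the limit exists using the Cauchy criterion together with completeness of $X$, and then verify the càdlàg property and the variation bound \eqref{eq:Var_extension_dense}. Uniqueness is immediate: any $\gamma\in D(I;X)$ satisfying \eqref{eq:reconstruction_right_limit} is determined at every $t\in I$ by that formula.

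\textbf{Existence of one-sided limits.} Fix $t\in I$ and set $M\coloneqq\Var(\widetilde\gamma;\widetilde I)<\infty$. I claim that for every $\varepsilon>0$ there is $\delta>0$ such that $d(\widetilde\gamma_s,\widetilde\gamma_r)<\varepsilon$ for all $s,r\in\widetilde I\cap(t,t+\delta)$. If this failed, one could pick points $s_1>r_1>s_2>r_2>\cdots\downarrow t$ in $\widetilde I$ with $d(\widetilde\gamma_{s_j},\widetilde\gamma_{r_j})\ge\varepsilon$; this works because the failure persists in every smaller interval $(t,t+\delta)$. The resulting partition gives $\Var(\widetilde\gamma;\widetilde I)\ge k\varepsilon$ for every $k$, which is a contradiction. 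Since $X$ is complete, the limit $\gamma_t\coloneqq\lim_{\widetilde t\downarrow t}\widetilde\gamma_{\widetilde t}$ exists. The same argument from the left gives left limits $\widetilde\gamma_{t-}\coloneqq\lim_{\widetilde t\uparrow t}\widetilde\gamma_{\widetilde t}$. Density of $\widetilde I$ ensures that both approach sets are nonempty.

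\textbf{Càdlàg property.} For right-continuity at $t$, take $t_n\downarrow t$. For each $n$, choose $\widetilde s_n\in\widetilde I$ with $t_n<\widetilde s_n<t_n+1/n$ and $d(\gamma_{t_n},\widetilde\gamma_{\widetilde s_n})<1/n$; this is possible by the definition of $\gamma_{t_n}$. Then $\widetilde s_n\downarrow t$ (along a subsequence, with $\widetilde s_n>t$), so $\widetilde\gamma_{\widetilde s_n}\to\gamma_t$ and hence $\gamma_{t_n}\to\gamma_t$. The same approximation applied to $t_n\uparrow t$ gives $\gamma_{t_n}\to\widetilde\gamma_{t-}$, because the points $\widetilde s_n$ can be chosen in $(t_n,t)$ once $t_n<t$. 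Thus $\gamma$ has left limits, and $\gamma\in D(I;X)$.

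\textbf{Variation bound.} Fix a partition $t_0<\dots<t_{k+1}$ in $I$. For each $i$, choose $\widetilde t_i^{\,n}\in\widetilde I\cap(t_i,t_i+1/n)$ with $\widetilde\gamma_{\widetilde t_i^{\,n}}\to\gamma_{t_i}$. For $n$ large these points stay strictly increasing in $i$. Then
\[
\sum_{i=0}^k d(\gamma_{t_i},\gamma_{t_{i+1}})=\lim_{n\to\infty}\sum_{i=0}^k d\bigl(\widetilde\gamma_{\widetilde t_i^{\,n}},\widetilde\gamma_{\widetilde t_{i+1}^{\,n}}\bigr)\le\Var(\widetilde\gamma;\widetilde I),
\]
and taking the supremum over partitions yields \eqref{eq:Var_extension_dense}.

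The main obstacle is the Cauchy-type argument for existence of the limits, specifically building the alternating sequence of points without losing the ordering. Everything after that is a routine approximation argument that relies only on the density of $\widetilde I$.
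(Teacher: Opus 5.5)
Your proof is correct, but the key mechanism differs from the paper's. The paper introduces the cumulative variation $V(t)\coloneqq\Var(\widetilde\gamma;\widetilde I\cap(0,t])$. This function is nondecreasing and bounded, and it satisfies $d(\widetilde\gamma_s,\widetilde\gamma_t)\le V(t)-V(s)$ for $s<t$ in $\widetilde I$. Existence of right limits then follows from the Cauchy property of $V$ along monotone sequences. Passing to the limit gives the single quantitative estimate $d(\gamma_s,\gamma_t)\le V_+(t)-V_+(s)$, where $V_+$ is the right-continuous version of $V$. From this one inequality the paper reads off right-continuity, left limits, and, by telescoping over a partition, the bound $\Var(\gamma;I)\le V_+(T)=\Var(\widetilde\gamma;\widetilde I)$.

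You argue qualitatively instead. You obtain the Cauchy property near $t$ by contradiction: infinitely many disjoint $\varepsilon$-oscillations accumulating at $t$ would force infinite variation. You then prove right-continuity, left limits, and the variation bound each by a separate approximation of points of $I$ from the right by points of $\widetilde I$. Your variation step is essentially the argument the paper uses for Lemma~\ref{lemma:Var_dense_subset}.

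Your route is more elementary and self-contained, since it needs no auxiliary monotone function or its right-continuous modification. The paper's route is more economical, because one estimate yields all three properties. It also gives extra localized information, namely $\Var(\gamma;[s,t])\le V_+(t)-V_+(s)$ for all $s<t$ in $I$.
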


\begin{proof}
		We define the non-decreasing $V\colon (0,\infty)\to [0,+\infty)$ by $
		V(t)\coloneqq \Var(\widetilde\gamma;\widetilde I\cap (0,t])$. It is clear that for every $s<t$ with $s,t\in \widetilde I$, we have 
		\begin{equation}\label{eq:dense_BV_control}
			d(\widetilde\gamma_s,\widetilde\gamma_t)\le V(t)-V(s).
		\end{equation}
		We first verify that the limit in \eqref{eq:reconstruction_right_limit} exists.
		Let $(t_n)_n\subset \widetilde I$ be any sequence such that $t_n\downarrow t$.
		Then for $m<n$, by \eqref{eq:dense_BV_control}, we have $
		d(\widetilde\gamma_{t_n},\widetilde\gamma_{t_m})
		\le V(t_m)-V(t_n) $.
		Since $(V(t_n))_n$ is monotone and bounded, it is Cauchy. Thus $(\widetilde\gamma_{t_n})_n$ is a Cauchy sequence in $X$, and since $X$ is complete, it converges. It can be easily shown that the limit does not depend on the chosen sequence. Thus \eqref{eq:reconstruction_right_limit} defines a map $
		\gamma:I\to X$.
		\\
        Consider $V_+$ the c\`adl\`ag representative of $V$. 
        Then for any $s<t\in I$, by choosing arbitrary sequences $\widetilde I\ni s_n\searrow s$ and $\widetilde I\ni t_n\searrow t$, we see that
        \begin{equation}\label{eq:dense_BV_control_2}
            d(\gamma_s,\gamma_t)=\lim_{n\to \infty} d(\widetilde\gamma_{s_n},\widetilde\gamma_{t_n})\leq \lim_{n\to\infty}V(t_n)-V(s_n)=V_+(t)-V_+(s).
        \end{equation}
       This in particular implies that $\gamma$ is right-continuous.
       The existence of left limits follows from the same argument for $\widetilde\gamma$, with now the use of \eqref{eq:dense_BV_control_2}.
       Thus $\gamma$ is c\`adl\`ag.
		\\
        Finally, we show the inequality \eqref{eq:Var_extension_dense}. 
        Let $ \{ t_i\}_{ 0 \leq i \leq k + 1}  \subset I$ with  $t_0 <\cdots < t_{k + 1}$ be an arbitrary partition. 
        By \eqref{eq:dense_BV_control_2}
        \[
        \sum_{i=0}^k d(\gamma_{t_i},\gamma_{t_{i+1}})\leq \sum_{i=0}^k V_+(t_{i+1})-V_+(t_{i}) = V_+(t_{k+1})-V_+(t_{0}) \leq  V_+(T)=\Var(\widetilde\gamma;\widetilde I)
        \]
        Taking the supremum over all partitions of $I$ completes the proof. 
\end{proof}

\section{Measurability of maps}\label{App:Measurability}

\begin{lemma}\label{lma:evaluationBorel}
    Let $(X,d)$ be a metric space, and $I=(0,T)\subset\mathbb R$. The maps
    $$
    (t,\gamma)\mapsto\gamma_{t+} = \gamma_t, \qquad (t,\gamma)\mapsto\gamma_{t-}
    $$
    from $I\times D(I;X)\to X$ are Borel measurable. 
\end{lemma}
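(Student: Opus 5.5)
The strategy is to reduce Borel measurability of both maps to measurability of the evaluation map $e\colon (t,\gamma)\mapsto \gamma_t$ on $I\times D(I;X)$, which is Borel once we know it is right-continuous in $t$ and measurable in $\gamma$ for each fixed $t$. Recall that for every fixed $t\in I$ the map $e_t\colon D(I;X)\to X$ is Borel with respect to the Skorokhod topology (\cite[Proposition 2.15]{AbediLiSchultz2024}). Since every $\gamma\in D(I;X)$ is c\`adl\`ag, the identity $\gamma_{t+}=\gamma_t$ holds automatically. It therefore suffices to treat $(t,\gamma)\mapsto\gamma_t$ and $(t,\gamma)\mapsto \gamma_{t-}$.

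\textbf{Step 1: the map $(t,\gamma)\mapsto\gamma_t$.} We approximate it by maps that are piecewise constant in time. For $k\in\N$, set $t^k(t)\coloneqq \min\{ jT2^{-k} : jT2^{-k}>t\}$ (capped inside $I$ for $t$ near $T$, e.g.\ by using $t$ itself on a final small interval handled separately) and define $E_k(t,\gamma)\coloneqq \gamma_{t^k(t)}$. On each dyadic strip $[(j-1)T2^{-k},jT2^{-k})\times D(I;X)$, the map $E_k$ equals $e_{jT2^{-k}}(\gamma)$. This is a product-measurable map, so $E_k$ is Borel on $I\times D(I;X)$, using $\mathcal B(I\times D)=\mathcal B(I)\otimes\mathcal B(D)$, which holds because $D(I;X)$ with the Skorokhod topology is separable metrizable when $X$ is separable (otherwise we would work with the product $\sigma$-algebra throughout). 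Since $t^k(t)\downarrow t$ strictly from the right and $\gamma$ is right-continuous, $E_k\to e$ pointwise. A pointwise limit of Borel maps into a metric space is Borel, which completes this step.

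\textbf{Step 2: the map $(t,\gamma)\mapsto\gamma_{t-}$.} We argue analogously, approximating from the left. Let $s^k(t)\coloneqq \max\{ jT2^{-k} : jT2^{-k}<t\}$, defined for $t>T2^{-k}$, and set $F_k(t,\gamma)\coloneqq\gamma_{s^k(t)}$; for $t\le T2^{-k}$ we put $F_k(t,\gamma)\coloneqq\gamma_t$, which is Borel by Step 1. Again $F_k$ is Borel, being piecewise given by $e_{jT2^{-k}}$ on strips. Since $s^k(t)\uparrow t$ strictly and $\gamma$ has left limits, $F_k(t,\gamma)\to\gamma_{t-}$ for every $t\in I$, and the limit is Borel.

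\textbf{Main obstacle.} The only delicate points are the identification of the Borel $\sigma$-algebra of the product with the product $\sigma$-algebra, and the measurability of each $e_t$. Both are standard for separable $X$. If $X$ is not assumed separable, I would instead note that each $E_k$ is measurable with respect to $\mathcal B(I)\otimes\mathcal B(D)\subset\mathcal B(I\times D)$, which is enough, since the product $\sigma$-algebra is always contained in the Borel one.
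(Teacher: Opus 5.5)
Your argument is correct and is essentially the paper's proof. Both approximate $\gamma_t$ and $\gamma_{t-}$ by evaluations at the nearest dyadic grid point to the right (resp.\ strictly to the left), note that these approximants are Borel because on each time strip they coincide with a fixed-time evaluation $e_q$, and conclude by right-continuity, existence of left limits, and stability of Borel measurability under pointwise limits.

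The only slip is your suggestion to set $E_k(t,\gamma)=\gamma_t$ on the last strip near $T$. In Step~1 this is circular, since it presupposes the measurability you are proving. Any fixed Borel choice there works instead (e.g.\ $E_k(t,\gamma)=\gamma_{T/2}$), because each $t<T$ eventually leaves that strip. Alternatively, localize to $(1/j,T-1/j)$ and take a countable union, as the paper does.
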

\begin{proof}
    For each $t\in I$, the evaluation map $e_t\colon\gamma\mapsto\gamma_t $ is Borel by \cite[Proposition 2.15]{AbediLiSchultz2024}.
    For the joint measurability, we argue as follows. For simplicity, assume $I=(0,1)$, and set $I_j \coloneqq (\frac{1}{j},1-\frac{1}{j})$ for $j \geq 3$. For each $n\in\N$, let $q_{n,k}\coloneqq k2^{-n}, \, k \in \{ 0,\cdots, 2^n\}$. For $t \in I_j$, define 
    \begin{equation}
    r_n(t)\coloneqq\min\{q_{n,k}:q_{n,k}\geq t\},\qquad
    l_n(t)\coloneqq\max\{q_{n,k}:q_{n,k}<t\}.
\end{equation}
If $n$ is large enough that $2^{-n} \leq 1/j$, then $r_n(t),l_n(t) \in I$ for every $t \in I_j$.
For any open $U\subset X$, $\gamma_{r_n(t)}\in U$ if and only if $t\in (q_{n,k-1},q_{n,k}]$ for some $k$ and $\gamma_{q_{n,k}}\in U$, or in other words
\[
\{(t,\gamma):\gamma_{r_n(t)}\in U\}=\bigcup_{k=1}^{2^n-1}(q_{n,k-1},q_{n,k}]\times e_{q_{n,k}}^{-1}(U).
\]
This implies that $(t,\gamma)\mapsto \gamma_{r_n(t)}$ is Borel for each $n$, and a similar argument applies also to $(t,\gamma)\mapsto \gamma_{l_n(t)}$.
Then the right continuity and the existence of left limits yield
\begin{equation}
    \gamma_t=\lim_{n\to\infty}\gamma_{r_n(t)},\qquad
    \gamma_{t-}=\lim_{n\to\infty}\gamma_{\ell_n(t)}.
\end{equation}
Thus, both maps are Borel measurable on $I_j \times D(I;X)$ as pointwise limits of Borel maps. Since $I = \cup_{j\geq 3}I_j$, they are Borel measurable on $I \times D(I;X)$.
\end{proof}

  	\begin{lemma}\label{lemma:measurability_distributional_derivative}
		For any $B \in \mathcal{B}(I)$, all the maps 
        \begin{align}
           \BV(I;\R^n)\ni  \gamma&\mapsto \boldsymbol{\D\gamma}^{\mathrm{j}}(B), \,  \boldsymbol{\D\gamma}^{\mathrm{c}}(B), \, \boldsymbol{\D\gamma}(B)\in\R^n\\
           \BV(I;\R^n)\ni\gamma&\mapsto |\boldsymbol{\D\gamma}^{\mathrm{j}}|(B), \,  |\boldsymbol{\D\gamma}^{\mathrm{c}}|(B), \, |\boldsymbol{\D\gamma}|(B)\in\R
        \end{align}
        are Borel measurable with respect to the Skorokhod topology on $\BV(I;\R^n)\subset D(I;\R^n)$.
	\end{lemma}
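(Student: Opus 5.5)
We plan to first establish measurability of $\gamma\mapsto\boldsymbol{\D\gamma}(B)$ for all Borel $B$, and then derive the remaining maps from it. For an open interval $B=(a,b)$ with $a<b$ in $I$, the distributional derivative of a càdlàg BV curve satisfies $\boldsymbol{\D\gamma}((a,b])=\gamma_b-\gamma_a$ and $\boldsymbol{\D\gamma}(\{b\})=\gamma_b-\gamma_{b-}$. Hence
\[
\boldsymbol{\D\gamma}((a,b))=\gamma_{b-}-\gamma_a .
\]
By \cref{lma:evaluationBorel}, and since $e_a$ is Borel, this is a Borel function of $\gamma$. Finite disjoint unions of half-open intervals $(a,b]$ form an algebra that generates $\mathcal B(I)$. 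The class of Borel $B$ for which $\gamma\mapsto\boldsymbol{\D\gamma}(B)$ is Borel on $\BV(I;\R^n)$ contains this algebra. It is also a monotone class: monotone limits pass through, because $\boldsymbol{\D\gamma}$ is a finite measure for each fixed $\gamma$, so $\boldsymbol{\D\gamma}(B_k)\to\boldsymbol{\D\gamma}(B)$ pointwise in $\gamma$. The monotone class theorem then gives the claim for every $B\in\mathcal B(I)$.

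Next we treat the jump part. Write
\[
\boldsymbol{\D\gamma}^{\mathrm j}(B)=\sum_{t\in J_\gamma\cap B}(\gamma_t-\gamma_{t-}).
\]
We express this as a limit over dyadic partitions. Using the grids $q_{n,k}$ of \cref{lma:evaluationBorel}, we aim to show, for $B=(a,b]$, that
\[
\boldsymbol{\D\gamma}^{\mathrm j}(B)=\lim_{\varepsilon\downarrow0}\lim_{n\to\infty}\sum_{k}\Delta_{n,k}\,\mathds 1_{\{|\Delta_{n,k}|>\varepsilon\}},\qquad \Delta_{n,k}\coloneqq\boldsymbol{\D\gamma}((q_{n,k-1},q_{n,k}]\cap B).
\]
For fixed $\varepsilon$, only finitely many jumps have size larger than $\varepsilon$. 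As the mesh shrinks, each dyadic cell carrying such a jump isolates it, and the small-jump and continuous contributions in a cell tend to zero uniformly over cells, since the atomless part of $|\boldsymbol{\D\gamma}|$ has vanishing mass on small cells. Two technical issues arise: jumps whose size equals $\varepsilon$ exactly, and the effect of the cutoff. To avoid the first, we restrict $\varepsilon$ to a countable set and take $\limsup$'s where needed; the cutoff error is controlled by $|\boldsymbol{\D\gamma}^{\mathrm j}|$ of the jumps of size at most $\varepsilon$, which tends to zero as $\varepsilon\downarrow 0$. An alternative is to sum $(\gamma_t-\gamma_{t-})$ over the countable set of $t$ where the jump exceeds $1/m$; these times can be located measurably as limits of the dyadic cell endpoints. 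Either way, the map is Borel for intervals and, by the same monotone class argument, for all Borel $B$. The continuous part is then $\boldsymbol{\D\gamma}^{\mathrm c}=\boldsymbol{\D\gamma}-\boldsymbol{\D\gamma}^{\mathrm j}$.

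For the total variations we use
\[
|\boldsymbol{\mu}|(O)=\sup_{\mathcal P}\sum_{P\in\mathcal P}|\boldsymbol{\mu}(P)|
\]
for open $O$. Here the supremum may be taken over the countably many finite partitions of $O$ into dyadic intervals with rational endpoints, together with their limits, because $|\boldsymbol\mu|$ of an open set is the increasing limit of its variation on finite unions of such intervals. This yields Borel measurability of $|\boldsymbol\mu_\gamma|(O)$ for $\boldsymbol\mu_\gamma\in\{\boldsymbol{\D\gamma},\boldsymbol{\D\gamma}^{\mathrm j},\boldsymbol{\D\gamma}^{\mathrm c}\}$. Extension to all Borel $B$ again follows from the monotone class theorem, the open sets forming a $\pi$-system.

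The main obstacle is the jump part: making the isolation of jumps into measurable operations on $\gamma$ without a pointwise description of $J_\gamma$. This requires care with the double limit and the choice of thresholds.
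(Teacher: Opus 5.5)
Your proposal is correct. For the jump part and the variations it takes a genuinely different route from the paper. The paper handles $\boldsymbol{\D\gamma}((a,b))=\gamma_{b-}-\gamma_a$ exactly as you do, but then works in the Skorokhod topology. It writes the jump part via the layer-cake identity $\boldsymbol{\D\gamma}^{\mathrm j}(B)=\int_0^L (N^+_\varepsilon-N^-_\varepsilon)\d\varepsilon$, with counting functions $N^\pm_\varepsilon(\gamma)=\#\{t\in B:\pm\Delta\gamma_t>\varepsilon\}$. It shows these are lower semicontinuous on $\{|\D\gamma|(I)\le L\}$ using time changes, and integrates by Riemann sums. The total variation comes from lower semicontinuity of $\Var(\gamma;(a,b))$, the jump variation from the analogous formula with $N^+_\varepsilon+N^-_\varepsilon$, and the continuous parts by subtraction.

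You instead reduce everything to the Borel family $\gamma\mapsto\boldsymbol{\D\gamma}(P)$. The jump part is filtered out of thresholded dyadic increments. All three variations are countable suprema over finite families of disjoint dyadic intervals inside an open set; this is valid for any finite $\R^n$-valued measure, e.g.\ by approximating the test fields in \eqref{eq:vardual} by dyadic step functions. Your route needs no analysis of the Skorokhod topology, works directly for an arbitrary norm on $\R^n$ rather than componentwise, and makes the passage from intervals to general Borel $B$ explicit. The paper's route, in exchange, avoids your double limit and threshold bookkeeping.

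Two points need tidying up.

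\emph{Small jumps.} The small-jump contributions in a cell do not tend to zero: a jump of size $\varepsilon/2$ stays in its cell for all $n$. What holds, and suffices, is the following. Set $J_\varepsilon\coloneqq\{t\in B:|\Delta\gamma_t|>\varepsilon\}$. Then $\limsup_n\max_k|\boldsymbol{\D\gamma}|\bigl((q_{n,k-1},q_{n,k}]\cap B\setminus J_\varepsilon\bigr)\le\max\{|\Delta\gamma_t|:t\in B\setminus J_\varepsilon\}$, because a finite measure on $\R$ gives short intervals mass close to at most its largest atom. Hence, for $\varepsilon$ not a jump size, the indicator eventually selects exactly the cells containing points of $J_\varepsilon$. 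Since the jump sizes depend on $\gamma$, the $\limsup_n$ along $\varepsilon=1/m$ is genuinely needed. The resulting error is at most $\sum_{0<|\Delta\gamma_t|\le 1/m}|\Delta\gamma_t|$, which tends to $0$.

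\emph{Generating algebra.} It must contain $I$ itself, so include sets such as $(a,T)$ and $(0,b]$. For these, $\boldsymbol{\D\gamma}((a,T))=\gamma_{T-}-\gamma_a$ and $\boldsymbol{\D\gamma}((0,b])=\gamma_b-\gamma_{0+}$ are again limits of evaluations.
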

\begin{proof}
	Without loss of generality, it suffices to treat the scalar case $d=1$, since the vector-valued statement follows by componentwise measurability. 
	Also, it is enough to show only for $B=(a,b) \subseteq I$.
    \smallskip
  
   \noindent
    \textbf{(i)} Measurability of $\gamma \mapsto \boldsymbol{\D\gamma}^{\mathrm{j}}(B)$.
    For any $L \in \mathbb{N}$, the subset $\BV^L(I;\R)$ of $\BV(I;\R)$ with $|{{\D\gamma}}|(I) \leq L $ is a Borel subset of $D(I;\R)$ because the map $\gamma \mapsto|{{\D\gamma}}|(I)$ from $D(I;\R) \to  [0,\infty]$ is lower semi-continuous by e.g. \cite[Lemma 2.13]{AbediLiSchultz2024}, in particular, it is Borel measurable.
    Therefore, it is enough to show the map $\gamma \mapsto \boldsymbol{\D\gamma}^{\mathrm{j}}(B)$ is Borel measurable on $\BV^L(I;\R)$ for a fixed $L$.
	\noindent
	As before, we denote by $J_\gamma \subset I$ the jump points of $\gamma$.
	By the layer-cake representation, we can write
	\begin{equation}\label{eq:jumpvariation}
		\D \gamma^{\mathrm{j}}(B)
		=\sum_{t\in J_\gamma \cap B} \Delta \gamma_t
		=\int_0^L N^+_\varepsilon(\gamma)-N^-_\varepsilon(\gamma) \d\varepsilon,
	\end{equation}
	where
	\[
	\Delta \gamma_t \coloneqq \gamma_t-\gamma_{t-} ,\quad N^\pm_\varepsilon(\gamma)\coloneqq \#\{t\in B : \, \pm\Delta \gamma_t>\varepsilon\}.
	\]
	Indeed,
	\begin{align}
		\sum_{t\in J_\gamma\cap B } \Delta \gamma_t
		&=\sum_{t\in J_\gamma\cap B }\int_0^L
		\mathds{1}_{\{\Delta \gamma_t>\varepsilon\}}-  	\mathds{1}_{\{\Delta \gamma_t<-\varepsilon\}}\d\varepsilon \\
		&=\int_0^L \left( \sum_{t\in J_\gamma \cap B } 
		\mathds{1}_{\{\Delta \gamma_t>\varepsilon\}}  -\sum_{t\in J_\gamma\cap B } 
		\mathds{1}_{\{\Delta \gamma_t<-\varepsilon\}} \right) \d\varepsilon=\int_0^L N^+_\varepsilon(\gamma)-N^-_\varepsilon(\gamma) \d\varepsilon.
	\end{align}
    Now, we show that $\gamma\mapsto N^+_\varepsilon(\gamma)$ is Borel measurable on $\BV(I;\R)$. 
	We claim that for each $\varepsilon>0$, the map $\gamma\mapsto N^+_\varepsilon(\gamma)$ is lower semicontinuous on $\BV^L(I;\R)$ with respect to the Skorokhod topology.  
	Suppose $\gamma^n\to \gamma$ in this topology. 
	Then there exists a sequence of increasing homeomorphisms $\lambda^n:I\to I$ such that $	\|\gamma-\gamma^n\circ\lambda^n\|_\infty\to 0$ and
	\begin{equation}\label{eq:norm-homeo}
		\sup_{s\neq t\in I} \left|\log \frac{\lambda^n_t-\lambda^n_s}{t-s}\right|\to 0,\quad n\to \infty;
	\end{equation}
	see \cite[Chapter 12]{Billingsley} or \cite[Section~2.3]{AbediLiSchultz2024}.
    
    \noindent
	Since $\{t\in B:\Delta \gamma_t >\varepsilon\}$ is a finite set and with \eqref{eq:norm-homeo}, there exist $\varepsilon_0,\delta_0>0$ such that for all those jump points $t$, $\Delta \gamma_t>\varepsilon+\varepsilon_0$ and 
	\[
	t\in (a+\delta_0,b-\delta_0),
	\quad \lambda^n(a+\delta_0,b-\delta_0)\subset B
	\]
	for all $n$ sufficiently large.
	Then the uniform convergence yields for all those $t$, 
    \[
    (\gamma^n\circ\lambda^n)_t-(\gamma^n\circ\lambda^n)_{t-}>\varepsilon
    \]
    for all $n$ sufficiently large.
	In particular, $N^+_\varepsilon(\gamma^n)\geq N^+_\varepsilon(\gamma)$ for those $n$. 
	The lower semi-continuity of $N^-_\varepsilon$ follows by the same argument.
    
	\noindent
	Finally, approximating the integral in \eqref{eq:jumpvariation} by Riemann sums, we conclude the measurability of the map $\gamma \mapsto \boldsymbol{\D \gamma}^{\mathrm{j}}(B)$.
    \smallskip

    \noindent
    \textbf{(ii)} Measurability of $\gamma \mapsto \boldsymbol{\D\gamma}(B), \boldsymbol{\D\gamma}^{\mathrm{c}}(B)$.
	For the full distributional derivative, by \cite[Theorem~3.28]{Ambrosio-Fusco-Pallara2000} we have
	\begin{equation}\label{eq:Dgammaab}
		{\boldsymbol{\D\gamma}}((a,b))
		=\gamma(b-)-\gamma(a)
		=\lim_{\substack{q_n\nearrow  b\\ q_n\in\mathbb{Q}}} \gamma(q_n)-\gamma(a),
	\end{equation}
	which is the limit of a sequence of evaluation maps, and hence is Borel.
    The measurability of the continuous part ${\boldsymbol{\D\gamma}}^{\mathrm{c}}(B)$ then follows from the decomposition \eqref{eq:decomposition}.
    \smallskip

    \noindent
    \textbf{(iii)} Measurability of the variations.
    Note first that $\gamma\mapsto |{\D\gamma}|(a,b)=\Var(\gamma;(a,b))$ is lower semi-continuous on from $D(I;\R) \to [0,\infty]$ e.g. by  \cite[Lemma 2.13]{AbediLiSchultz2024}, and hence measurable.  
	For the jump variation, the claim follows from the same argument as in the first step with the following analogy of \eqref{eq:jumpvariation}
	\begin{equation}
		|{\boldsymbol{\D\gamma}}^{\mathrm{j}}|(B)
		=\int_0^\infty \!\big(N^+_\varepsilon(\gamma)+N^-_\varepsilon(\gamma)\big)\, \d\varepsilon.
	\end{equation}
	The continuous variation then follows again from the decomposition.
\end{proof}

\begin{lemma}\label{lemma:Measurablity3}
For any bounded Borel map $H \colon I\times D(I;\R^n)\to \R^n$ and $h\colon I\times D(I;\R^n)\to \R$, the functions
\begin{align}
     \gamma\mapsto \int_I H(t,\gamma)\cdot\d \boldsymbol{\D\gamma}^{\mathrm{c}}(t),\quad \gamma\mapsto \int_I h(t,\gamma)\d|\boldsymbol{\D\gamma}^{\mathrm{j}}|(t)
\end{align}
are Borel measurable on $\BV(I;\R^n)\subset D(I;\R^n)$.
\end{lemma}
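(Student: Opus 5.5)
We will use a monotone class argument, reducing the claim to indicator functions of measurable rectangles, where Lemma~\ref{lemma:measurability_distributional_derivative} applies directly. Work componentwise, so it suffices to treat scalar $H=h$ and, for the continuous part, each coordinate measure $(\boldsymbol{\D\gamma}^{\mathrm{c}})_i$. Let $\mathcal H$ be the class of bounded Borel functions $h\colon I\times D(I;\R^n)\to\R$ such that
\[
\gamma\mapsto \int_I h(t,\gamma)\,\d m_\gamma(t)
\]
is Borel on $\BV(I;\R^n)$, where $m_\gamma$ is either $(\boldsymbol{\D\gamma}^{\mathrm{c}})_i$ or $|\boldsymbol{\D\gamma}^{\mathrm{j}}|$. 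The class $\mathcal H$ is a vector space and contains constants, since $\gamma\mapsto m_\gamma(I)$ is Borel by Lemma~\ref{lemma:measurability_distributional_derivative}.

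The first step is to check that $\mathcal H$ contains products $h(t,\gamma)=\mathds{1}_B(t)\mathds{1}_A(\gamma)$ with $B\in\mathcal B(I)$ and $A$ Borel in $D(I;\R^n)$. For such $h$ the integral equals $\mathds{1}_A(\gamma)\,m_\gamma(B)$, which is a product of Borel functions by Lemma~\ref{lemma:measurability_distributional_derivative}. These products form a $\pi$-system generating $\mathcal B(I)\otimes\mathcal B(D(I;\R^n))$. Because $D(I;\R^n)$ with the Skorokhod topology is Polish and $I$ is separable, this product $\sigma$-algebra coincides with the Borel $\sigma$-algebra of $I\times D(I;\R^n)$.

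The second step is closure of $\mathcal H$ under bounded monotone, or bounded pointwise, convergence. Fix $\gamma\in\BV(I;\R^n)$. Then $m_\gamma$ is a finite signed measure with $|m_\gamma|(I)\le|\boldsymbol{\D\gamma}|(I)<\infty$, so dominated convergence gives $\int h_k(t,\gamma)\,\d m_\gamma\to\int h(t,\gamma)\,\d m_\gamma$ pointwise in $\gamma$. A pointwise limit of Borel functions is Borel.

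By the functional monotone class theorem, $\mathcal H$ therefore contains all bounded Borel functions, which gives the claim. The main point of care is the identification of the Borel $\sigma$-algebra of the product with the product $\sigma$-algebra. We expect this to be routine given separability of the Skorokhod space, but it is the step that requires the Polish structure, and it should be cited, for example from Billingsley.

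Finally, since $\boldsymbol{\D\gamma}^{\mathrm{c}}$ is defined only on $\BV$, we will restrict everything to the Borel set $\BV(I;\R^n)$. By the lower semicontinuity cited earlier, this set is Borel, and the restriction causes no trouble.
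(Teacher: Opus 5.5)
Your proposal is correct and is essentially the paper's argument: a monotone-class reduction to product indicators $\mathds{1}_B(t)\mathds{1}_A(\gamma)$, for which the integral equals $\mathds{1}_A(\gamma)\,m_\gamma(B)$ and Lemma~\ref{lemma:measurability_distributional_derivative} applies, with dominated convergence (pointwise in $\gamma$) for the limit step. The differences are minor. The paper first approximates $H$ by simple functions and then applies Dynkin's $\pi$--$\lambda$ theorem on rectangles of open sets, leaving implicit the identification of $\mathcal{B}(I\times D(I;\R^n))$ with the product $\sigma$-algebra, which you explicitly flag. Also, your bound $|m_\gamma|(I)\le|\boldsymbol{\D\gamma}|(I)$ may need a norm-dependent constant for a general norm on $\R^n$; this is harmless, since only finiteness is used.
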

\begin{proof}
 Write $H$ as a limit of simple functions 
        \[H_n=\sum_{i=1}^m\lambda_i^n\mathds{1}_{B_i^n}V_i^n,\]
        where $\lambda_i^n\in \R$, $V_i^n\in \R^n$ with $|V_i^n|=1$, and $B_i^n\in \mathcal{B}(I\times D(I;\R^n))$.
        We may assume that $|H_n|\le |H|$.
        Since, for all $\gamma\in \BV(I;\R^n)$, the variation measure $|\boldsymbol{\D\gamma}^\mathrm{c}|$ is a finite measure and $H$ is bounded, we have by dominated convergence theorem that
        \begin{align}
            \int_I H(t,\gamma)\cdot \d\boldsymbol{\D\gamma}^\mathrm{c}(t)=\lim_{n\to\infty}\int_I H_n(t,\gamma)\cdot \d\boldsymbol{\D\gamma}^\mathrm{c}(t)
        \end{align}
        Therefore, by linearity of the integral, it suffices to show the Borel measurability of the map
        \begin{align}
            \gamma\mapsto \int \mathds{1}_BV\cdot\d \boldsymbol{\D\gamma}^{\mathrm{c}}(t)
        \end{align}
        for all $B\in \mathcal{B}(I\times \BV(I;\R^n))$ and $V\in \R^n$.
        Furthermore, by linearity of the integral and by the dominated convergence theorem, we observe that the collection of sets $B$ for which the above map is Borel measurable is closed under complementation of subsets in supersets, and under countable increasing unions.
        Therefore, by Dynkin's $\pi$--$\lambda$ theorem, it suffices to show for all open sets $B_1\subset I$ and $B_2\subset D(I;\R^n)$ the Borel measurability of
        \begin{align}
            \gamma\mapsto \int \mathds{1}_{B_1}(t) \mathds{1}_{B_2}(\gamma)V\cdot\d \boldsymbol{\D\gamma}^{\mathrm{c}}(t)=\mathds{1}_{B_2}(\gamma)\int \mathds{1}_{B_1}(t) V\cdot\d \boldsymbol{\D\gamma}^{\mathrm{c}}(t)=\mathds{1}_{B_2}(\gamma)V\cdot \boldsymbol{\D\gamma}^\mathrm{c}(B_1),
        \end{align}
        which follows by \cref{lemma:measurability_distributional_derivative}.
        The assertion on the jump part follows by the same argument.
\end{proof}

 \begin{lemma}\label{lemma:measurability_variation}
		Let $(\X,d)$ be a complete and separable metric space.
        For any Lipschitz function $f$ on $X$ and any bounded Borel function $h\colon I\times D(I;X) \to \R$, the functions
        \begin{align}
            \gamma\mapsto \int h(t,\gamma)\d \D (f\circ \gamma)^c(t)\quad \gamma\mapsto \int_I h(t,\gamma)\d|{\D\gamma}|^{\rm j}(t)
        \end{align}
       are Borel measurable on $\BV(I;\R^n)\subset D(I;\R^n)$.
	\end{lemma}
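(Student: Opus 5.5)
The plan is to reduce both claims to the Euclidean statements already proved, namely \cref{lemma:measurability_distributional_derivative} and \cref{lemma:Measurablity3}. The first claim is handled by composing with $f$. The second is handled by rerunning the layer-cake argument with the metric jump sizes $d(\gamma_t,\gamma_{t-})$ in place of $\Delta\gamma_t$. Throughout, the domain should be read as $\BV(I;X)\subset D(I;X)$; the appearance of $\R^n$ in the statement is a typo.

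\emph{Continuous part.} Fix $f\in\Lip(X)$ and let $\Phi_f\colon D(I;X)\to D(I;\R)$, $\Phi_f(\gamma)\coloneqq f\circ\gamma$.
\begin{itemize}
\item $\Phi_f$ is continuous for the Skorokhod topologies. If $\lambda^n$ are time changes with $\sup_t d(\gamma_t,\gamma^n_{\lambda^n_t})\to0$, then the same $\lambda^n$ work for $f\circ\gamma^n$, since $\sup_t|f(\gamma_t)-f(\gamma^n_{\lambda^n_t})|\le \Lip(f)\sup_t d(\gamma_t,\gamma^n_{\lambda^n_t})$.
\item $\Phi_f$ maps $\BV(I;X)$ into $\BV(I;\R)$, because $|\D(f\circ\gamma)|\le \Lip(f)|\D\gamma|$.
\end{itemize}
Next we perform the same monotone class reduction as in the proof of \cref{lemma:Measurablity3}. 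We approximate $h$ by simple functions with $|h_n|\le|h|$, then use dominated convergence and Dynkin's $\pi$--$\lambda$ theorem. This reduces the claim to integrands of the form $h=\mathds 1_{B_1}(t)\mathds 1_{B_2}(\gamma)$ with $B_1\subset I$ and $B_2\subset D(I;X)$ open. For such $h$ the integral equals $\mathds 1_{B_2}(\gamma)\,\D(f\circ\gamma)^{\mathrm c}(B_1)$. Its measurability follows because $\gamma\mapsto \D(f\circ\gamma)^{\mathrm c}(B_1)$ is the composition of the Borel map $\eta\mapsto \D\eta^{\mathrm c}(B_1)$ from \cref{lemma:measurability_distributional_derivative} (with $n=1$) with the continuous map $\Phi_f$.

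\emph{Jump part.} The same Dynkin reduction brings this claim down to showing that $\gamma\mapsto|\D\gamma|^{\mathrm j}(B)$ is Borel for $B=(a,b)\subseteq I$. Here we use $|\D\gamma|^{\mathrm j}=\sum_{t\in J_\gamma}d(\gamma_{t-},\gamma_t)\delta_t$.
\begin{enumerate}
\item First restrict to $\BV^L(I;X)\coloneqq\{|\D\gamma|(I)\le L\}$. This is a Borel set by the lower semicontinuity of $\gamma\mapsto\Var(\gamma;I)$ (\cite[Lemma 2.13]{AbediLiSchultz2024}).
\item On $\BV^L(I;X)$ write the layer-cake formula
\[
|\D\gamma|^{\mathrm j}(B)=\int_0^L N_\varepsilon(\gamma)\d\varepsilon,\qquad N_\varepsilon(\gamma)\coloneqq\#\{t\in B: d(\gamma_{t-},\gamma_t)>\varepsilon\}.
\]
Each $N_\varepsilon(\gamma)$ is finite, bounded by $L/\varepsilon$.
\item Show that $N_\varepsilon$ is lower semicontinuous on $\BV^L(I;X)$ for the Skorokhod topology, copying step (i) of the proof of \cref{lemma:measurability_distributional_derivative}. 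The finitely many jumps of $\gamma$ counted by $N_\varepsilon(\gamma)$ have sizes exceeding $\varepsilon+\varepsilon_0$ for some $\varepsilon_0>0$, and lie in $(a+\delta_0,b-\delta_0)$ for some $\delta_0>0$. Uniform convergence of $\gamma^n\circ\lambda^n\to\gamma$ in $d$ gives $d\bigl((\gamma^n\circ\lambda^n)_{t-},(\gamma^n\circ\lambda^n)_t\bigr)>\varepsilon$ at those times for large $n$. Since $\lambda^n$ maps $(a+\delta_0,b-\delta_0)$ into $B$ for large $n$, these are distinct jumps of $\gamma^n$ in $B$, so $N_\varepsilon(\gamma^n)\ge N_\varepsilon(\gamma)$ eventually.
\item The integrand $(\varepsilon,\gamma)\mapsto N_\varepsilon(\gamma)$ is monotone in $\varepsilon$ and lower semicontinuous in $\gamma$. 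Riemann sums, or Tonelli applied to this jointly Borel function, therefore give Borel measurability of the integral.
\end{enumerate}

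The step I expect to be the main obstacle is the lower semicontinuity of $N_\varepsilon$ in the metric setting. The point requiring care is that the left limits of $\gamma^n\circ\lambda^n$ converge to those of $\gamma$ under uniform convergence, which holds since $\sup_t d(\gamma_t,(\gamma^n\circ\lambda^n)_t)\to0$ passes to left limits. One also needs to check that distinct jump times of $\gamma$ yield distinct jump times of $\gamma^n$, which holds because each $\lambda^n$ is a bijection. Everything else is a verbatim transfer of the Euclidean measurability arguments.
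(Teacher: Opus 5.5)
Your proposal is correct and is essentially the argument the paper intends. The paper omits the proof and only says it follows like the Euclidean Lemmas~\ref{lemma:measurability_distributional_derivative} and~\ref{lemma:Measurablity3}; your Dynkin reduction, the Skorokhod-continuous map $\gamma\mapsto f\circ\gamma$ for the continuous part, and the layer-cake/lower-semicontinuity argument with metric jump sizes $d(\gamma_{t-},\gamma_t)$ carry out exactly that adaptation, and you are right that the domain should read $\BV(I;X)\subset D(I;X)$.
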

    \begin{proof}
        We omit the proof, as it follows similarly to the previous lemmas in $\R^n$.
    \end{proof}

    \begin{lemma}\label{lemma:measurabilityGeo}
        Under the assumption of \cref{thm:lifttoderivation}, the integral $\int \lambda_\gamma\d\pi(\gamma)$ is a well-defined finite Borel measure on $I\times X$, which we denote by $\lambda$.
        \\
        Moreover, for any Borel $\lambda$-integrable function $g$, 
        function
    \begin{equation}
        D(I;X)\ni\gamma\mapsto \int_{I\times \X} g(t,x)V_\gamma f(t,x)\d\lambda_\gamma (t,x)
    \end{equation}
    is $\pi$-measurable/integrable. 
    Here $\lambda_\gamma$ and $V_\gamma$ are the objects explicitly constructed in the proof of \cref{thm:lifttoderivation}.
    \end{lemma}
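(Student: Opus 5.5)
We need to prove Lemma \ref{lemma:measurabilityGeo}: that $\lambda\coloneqq\int\lambda_\gamma\,\d\pi(\gamma)$ is a well-defined finite Borel measure, and that $\gamma\mapsto\int g\,V_\gamma f\,\d\lambda_\gamma$ is $\pi$-measurable and integrable.

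For the first claim, it suffices to show that $\gamma\mapsto\lambda_\gamma(B)$ is $\pi$-measurable for every Borel $B\subset I\times X$. Countable additivity of $\lambda$ then follows from monotone convergence. Finiteness follows from $\lambda_\gamma(I\times X)=|\D\gamma|(I)$ together with \eqref{ineq:finiteEnergy}.

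By the Dynkin $\pi$--$\lambda$ argument used in Lemma~\ref{lemma:Measurablity3}, it is enough to treat product sets $B=B_1\times B_2$ with $B_1\subset I$ and $B_2\subset X$ open. For these we split according to \eqref{def:lambda_gamma}.

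The continuous part is
\[
\tilde\gamma_\#|\D\gamma|^{\mathrm c}(B)=\int_I \mathds 1_{B_1}(t)\mathds 1_{B_2}(\gamma_t)\,\d|\D\gamma|^{\mathrm c}(t).
\]
The integrand $(t,\gamma)\mapsto \mathds 1_{B_1}(t)\mathds 1_{B_2}(\gamma_t)$ is bounded Borel by Lemma~\ref{lma:evaluationBorel}. Its measurability in $\gamma$ then follows from the metric analogue of Lemma~\ref{lemma:Measurablity3}, namely Lemma~\ref{lemma:measurability_variation}. The continuous part $|\D\gamma|^{\mathrm c}$ is obtained as $|\D\gamma|-|\D\gamma|^{\mathrm j}$, and both terms are measurable by the same layer-cake and lower semicontinuity arguments.

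The jump part is
\[
\int_I \mathds 1_{B_1}(t)\, h(t,\gamma)\,\d|\D\gamma|^{\mathrm j}(t),\qquad h(t,\gamma)\coloneqq\int_0^1\mathds 1_{B_2}\bigl(\Upsilon[\gamma_{t-},\gamma_t](a)\bigr)\,\d a .
\]
Here we need $h$ to be measurable, which we get by writing $h=G\circ(\gamma_{t-},\gamma_t)$ with
\[
G(x,y)\coloneqq \int_0^1\mathds 1_{B_2}\bigl(\Upsilon[x,y](a)\bigr)\,\d a .
\]
This is the main obstacle: $\Upsilon$ is only Souslin measurable, not Borel. The plan is to note that $\sigma\mapsto\int_0^1\mathds 1_{B_2}(\sigma(a))\,\d a$ is lower semicontinuous on $\Geo(X)$ with the uniform topology, by Fatou's lemma as in Step 1 of Theorem~\ref{thm:lifttoflux}. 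Hence $G$ is Souslin measurable, and therefore universally measurable.

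Composing with the Borel maps $(t,\gamma)\mapsto(\gamma_{t-},\gamma_t)$ of Lemma~\ref{lma:evaluationBorel} yields measurability with respect to the completion of any finite measure on $I\times D(I;X)$. To make this rigorous, the plan is to choose a Borel function $\tilde G$ agreeing with $G$ outside a null set for the single finite measure
\[
\int (\gamma_{t-},\gamma_t)_\#|\D\gamma|^{\mathrm j}\,\d\pi(\gamma) \quad\text{on } X^2 .
\]
Defining this measure requires only Borel data, so there is no circularity. Replacing $\Upsilon$ by such a Borel version alters $\lambda_\gamma$ only for a $\pi$-null set of $\gamma$, which explains why the statement asserts $\pi$-measurability rather than Borel measurability.

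For the second claim, first take $g=\mathds 1_{B_1}\mathds 1_{B_2}$. Formula \eqref{eq:V_singlecurve} splits into two pieces. The first is $\int \mathds 1_{B_1}(t)\mathds 1_{B_2}(\gamma_t)\,\d\D(f\circ\gamma)^{\mathrm c}$, which is measurable by Lemma~\ref{lemma:measurability_variation}. The second is a jump integral with integrand $G_f(\gamma_{t-},\gamma_t)$, where
\[
G_f(x,y)\coloneqq\int_0^1 \mathds 1_{B_2}\bigl(\Upsilon[x,y](a)\bigr)\,\frac{(f\circ\Upsilon[x,y])'(a)}{d(x,y)}\,\d a .
\]
We handle $G_f$ as above, after writing the derivative as a limit of difference quotients, each of which is continuous in $\sigma$. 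The $\pi$-$\lambda$ argument then extends measurability to bounded Borel $g$. For general $\lambda$-integrable $g$ we truncate and pass to the limit by dominated convergence, using the bound \eqref{ineq:Vbound_singlecurve}:
\[
\Bigl|\int g\,V_\gamma f\,\d\lambda_\gamma\Bigr|\le \Lip(f)\,\|g\|_{L^1(\lambda_\gamma)}.
\]
Integrating this bound in $\pi$ and using $\int\|g\|_{L^1(\lambda_\gamma)}\,\d\pi=\|g\|_{L^1(\lambda)}<\infty$ gives integrability.
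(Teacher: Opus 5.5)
Your proposal is correct and follows essentially the same route as the paper: Souslin measurability of the geodesic selection $\Upsilon$ gives universal measurability of the jump integrands, which are then integrated against the Borel push-forward of $\int|\D\gamma|^{\mathrm j}\,\d\pi(\gamma)$ under $(t,\gamma)\mapsto(\gamma_{t-},\gamma_{t})$, and the difference-quotient bound $\Lip(f)\,d(x,y)$ gives integrability. The differences are only organizational: you use a $\pi$--$\lambda$ reduction to open rectangles, and you give an explicit Borel modification of the integrand on a null set, which spells out the transfer from $\eta$-measurability to $\pi$-measurability that the paper leaves implicit.
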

\begin{proof}
First of all, notice by \cref{lma:evaluationBorel} and \cref{lemma:measurability_variation} that
\begin{equation}
    \gamma\mapsto \int_I g(t,\gamma_t) \d |\D(f\circ \gamma)|^{\mathrm{c}}(t)
\end{equation}
is Borel and hence $\pi$-measurable for any Borel function $g$.
Thus, it suffices to consider the $\pi$-measurability of
\begin{equation}\label{eq:singlecurveexpression}
    \gamma\mapsto\int_I\int_{[0,1]} \frac{g(t,\Upsilon[\gamma_{t-},\gamma_{t+}](a))}{d(\gamma_{t-},\gamma_{t+})}\frac{\d }{\d a} (f\circ \Upsilon[\gamma_{t-},\gamma_{t+}])(a) \d a \d |\D\gamma|^{\mathrm{j}}(t).
\end{equation}
\\
   By \cref{lemma:measurability_variation}, 
   \[
\int |\D\gamma^j|(\d t)\d\pi(\gamma)
   \]
is a well-posed Borel measure on $I\times D(I;X)$, and has finite total mass by \eqref{ineq:finiteEnergy}. 
\\
    Define the map 
    \begin{equation}
        \tilde e\colon I\times D(I;X)\to I\times X^2,\quad (t,\gamma)\mapsto (t,\gamma_{t-},\gamma_{t+}),
    \end{equation}
    which is Borel measurable by \cref{lma:evaluationBorel}.
    Then consider
    \begin{equation}
        \eta\coloneqq \tilde e_{\#} \left(\int |\D\gamma^j|(\d t)\d\pi(\gamma)\right).
    \end{equation}
    We have $\eta$ a finite Borel measure on the Polish space $\bar I\times X^2$.
    In particular, $\eta$ is concentrated on those $(t,x,y)$ such that $x=\gamma_{t-}$ and $y=\gamma_{t+}$ for some $\gamma\in \spt(\pi)$ and $t\in J_\gamma$.\medskip
\\
  Now we will rewrite the integrands in \eqref{eq:singlecurveexpression} using the new coordinate of $\eta$.
  To this end, we define the following functions in $I\times X^2\times [0,1]$:
  \begin{align}
      &(t,x,y,a)\mapsto d(x,y)^{-1},\label{i}\tag{i}\\
      g_\eta\colon& (t,x,y,a)\mapsto g(t,\Upsilon[x,y](a)) \label{ii}\tag{ii}\\
      H\colon&(t,x,y,a)\mapsto \liminf_{n\to \infty} n\cdot \left( f\Big(\Upsilon[x,y](a+1/n)\Big)-f\Big(\Upsilon[x,y](a)\Big)\right)\label{iii}\tag{iii}
  \end{align}
It is clear that the function in \eqref{i} is Borel. 
We claim that the rest two functions are Souslin measurable i.e. measurable from the Souslin $\sigma$-algebra of the domain to the Borel $\sigma$-algebra of the codomain and therefore universally measurable; see e.g. \cite[Theorem 21.10]{Kechris95}.
\\
(ii) Recall that $\Upsilon$ is Souslin measurable from $X\times X$ to $\Geo(X)$.
Then the map $e_{\Upsilon}$ given as the composition of the following Souslin and Borel measurable maps, is Souslin
\begin{equation}
    e_{\Upsilon}\colon (x,y,a)\xmapsto{\text{Souslin}}  (\Upsilon[x,y],a)\xmapsto{\text{Borel}}  \Upsilon[x,y](a)\in X.
\end{equation}
Similarly, the function $g_\eta$ is Souslin as one can write
\begin{equation}
    g_\eta\colon (t,x,y,a)\xmapsto{(\mathrm{id},e_\Upsilon)}  (t,\Upsilon[x,y](a))\xmapsto{g} g(t,\Upsilon[x,y](a)).
\end{equation}
(iii) Denote by $D$ on $\Geo(X)\times [0,1]$ the function given as the liminf of the sequence $D_n$ by
\[
	D_n(\gamma,a)=
	\begin{cases}
	  n\cdot (f\bigl(\gamma(a+\tfrac1n)\bigr)-f\bigl(\gamma(a)\bigr)), & \text{if }a\in[0,1-\tfrac1n],\\[1.2em]
		0, & \text{otherwise}.
	\end{cases}
	\]
Since $f$ is continuous, $D_n$ is Borel on $\Geo(X)\times[0,1]$ for each $n$, and so is $D$.
Analogous to $g_\eta$, the function $H$ is Souslin as it is the composition of the following Souslin and Borel maps
\begin{equation}
    H\colon (t,x,y,a)\xmapsto{\text{Souslin}} (\Upsilon[x,y],a)\xmapsto{\text{Borel}} D(\Upsilon[x,y])(a). 
\end{equation}
As a consequence of the measurability of $g_{\eta}$ w.r.t.  $\eta\otimes \L^1|_{[0,1]}$, the following integral is well-posed and it satisfies
\begin{align}
    &\int\int^1_0 g_\eta(t,x,y,a)\d \L^1(a) \d\eta\eqqcolon \iiint g_\eta\circ \tilde e\d\L^1|_{[0,1]}\d |\D\gamma^j|\d \pi\\
    =&\int\int_I\int_{[0,1]} g\Big(t,\Upsilon[\gamma_{t-},\gamma_{t+}](a)\Big)\d\L^1|_{[0,1]}(a)\d |\D\gamma^j|(t)\d \pi(\gamma)\\
    =&\int \sum_{t\in J_\gamma} \int^1_0 g\Big(t,\Upsilon[\gamma_{t-},\gamma_{t+}](a)\Big) d(\gamma_{t-},\gamma_{t+})\d a\d\pi(\gamma) \\
    =&\int g \d \left(\sum_{t\in J_\gamma} \delta_t\otimes \mathcal{H}^1|_{\Upsilon[\gamma_{t-},\gamma_{t+}]}\right)\d\pi(\gamma).
\end{align}
This implies, together with the measurability of the continuous part that, $\int \lambda_\gamma\d\pi(\gamma)$ defines a finite Borel measure on $I\times X$, which we denote by $\lambda$.\medskip
\\
Now, take $g$ an arbitrary $\lambda$-integrable Borel function.
We show that the function $d(x,y)^{-1}g_\eta\cdot H$ is $\eta\otimes \L^1$-integrable.
Indeed, as $|H(\Upsilon[x,y])(a)|\leq \Lip(f)\cdot d(x,y)$, we proceed as previously
\begin{align}
    &\int \int\frac{|g_\eta|(t,x,y,a)}{d(x,y)}|H|(t,x,y,a) \d a \d\eta \leq \Lip(f)\int\int^1_0 |g_\eta|\d \L^1\d \eta\\
    \eqqcolon & \Lip(f)\iiint |g_\eta\circ \tilde e|\d\mathcal{L}^1\d |\D\gamma^j|\d \pi\leq \Lip(f)\|g\|_{L^1(\lambda)}<\infty.
\end{align}
By Fubini's theorem, the following integral as a function on $I\times X^2$
\begin{equation}
    (t,x,y)\mapsto\int\frac{g_\eta(t,x,y,a)}{d(x,y)}H(t,x,y,a) \d a 
\end{equation}
is $\eta$-measurable and integrable.
Finally, notice that $\frac{\d}{\d a} (f\circ \Upsilon[x,y])(a)=H(t,x,y,a)$ for $\L^1$-a.e. $a\in [0,1]$, we conclude that
\begin{equation}
     (t,x,y)\mapsto\int_I\int_{[0,1]} \frac{g(t,\Upsilon[x,y](a))}{d(x,y)}\frac{\d }{\d a} (f\circ \Upsilon[x,y])(a) \d a \d |\D\gamma|^{\mathrm{j}}(t)
\end{equation}
is $\eta$-measurable and integrable.
This completes the proof by the definition of $\eta$.
\end{proof}

\bibliographystyle{alpha}
\bibliography{ms}

\end{document}